\newtheorem{theor}{Theorem}[section]
\newtheorem{lemma}[theor]{Lemma}
\newcommand{\hd}{\mathrm{hd}}
\newcommand{\soc}{\mathrm{soc}}
\newcommand{\rad}{\mathrm{rad}}
\newcommand{\Hom}{\mathrm{Hom}}
\newcommand{\End}{\mathrm{End}}
\newcommand{\Ext}{\mathrm{Ext}}
\newcommand{\res}{\mathrm{res}}
\newcommand{\s}{\Sigma}
\newcommand{\md}{\mbox{-}\mathrm{mod}}
\newcommand{\Md}{\!\mod}
\newcommand{\Z}{\mathbb{Z}}
\newcommand{\N}{\mathbb{N}}
\newcommand{\1}{\mathbf{1}}
\newcommand{\sgn}{\mathbf{\mathrm{sgn}}}
\renewcommand{\Im}{\mathrm{Im}}
\newcommand{\Ker}{\mathrm{Ker}}
\renewcommand{\epsilon}{\varepsilon}
\newcommand{\eps}{\epsilon}
\renewcommand{\phi}{\varphi}
\newcommand{\xymat}{\xymatrix@R=6pt@C=10pt}
\newcommand{\la}{\lambda}
\newcommand{\be}{\beta}
\newcommand{\al}{\alpha}
\newcommand{\ga}{\gamma}
\newcommand{\de}{\delta}
\newcommand{\Mull}{{\tt M}}
\newcommand{\da}{{\downarrow}}
\newcommand{\ua}{{\uparrow}}
\def\Par{{\mathscr P}}
\def\Parinv{{\mathscr P}^A}
\begin{document}

\title[Tensor products for alternating groups]{Irreducible tensor products for alternating groups in characteristics 2 and 3}

\author{\sc Lucia Morotti}
\address
{Institut f\"{u}r Algebra, Zahlentheorie und Diskrete Mathematik\\ Leibniz Universit\"{a}t Hannover\\ 30167 Hannover\\ Germany} 
\email{morotti@math.uni-hannover.de}

\begin{abstract}
In this paper we study irreducible tensor products of representations of alternating groups in characteristic 2 and 3. In characteristic 3 we completely classify irreducible tensor products, while in characteristic 2 we completely classify irreducible tensor products where neither factor in the product is a basic spin module. In characteristic 2 we also give some necessary conditions for the tensor product of an irreducible module with a basic spin module to be irreducible.
\end{abstract}

\maketitle

\section{Introduction}

Let $F$ be an algebraically closed field of characteristic $p\geq 0$ and $G$ be a group. In general, given irreducible $FG$-representations $V$ and $W$, the tensor product $V\!\otimes\! W$ is not irreducible. We say that $V\!\otimes\! W$ is a non-trivial irreducible tensor product if $V\!\otimes\! W$ is irreducible and neither $V$ nor $W$ has dimension 1. One motivation to this question is the Aschbacher-Scott classification of maximal subgroups of finite classical groups, see \cite{a} and \cite{as}. In particular, in view of class $\mathcal{C}_4$, a classification of non-trivial irreducible tensor products is needed to understand which subgroups appearing in class ${\mathcal S}$ are maximal, see \cite{Ma} for more details.

Non-trivial irreducible tensor products of representations of symmetric groups have been fully classified (see \cite{bk}, \cite{gj}, \cite{gk}, \cite{m1} and \cite{z1}). In particular non-trivial irreducible tensor products for symmetric groups only exist in characteristic 2 for $n\equiv 2\Md 4$. For alternating groups in characteristic 0 or $p\geq 5$ non-trivial irreducible tensor products have been classified in \cite{bk3}, \cite{bk2}, \cite{m2} and \cite{z1}.

In this paper we will consider the case where $G=A_n$ is an alternating group and $p=2$ or $3$. Our main result, which extends \cite[Main Theorem]{bk2} and \cite[Theorem 1.1]{m2} in a slight modified version, is the following. For an explanation of the notations used see \S\ref{sim} and the last part of \S\ref{sb}.

\begin{theor}\label{mt}
Let $V$ and $W$ be irreducible $FA_n$-modules of dimension larger than 1. If $V\otimes W$ is irreducible then one of the following holds up to exchange of $V$ and $W$:
\begin{enumerate}
\item $p\nmid n$, $V\cong E^\la_\pm$ where $\la$ is a JS-partition and $W\cong E^{(n-1,1)}$. In this case $V\otimes W$ is always irreducible and 
$V\otimes W\cong E^{(\la\setminus A)\cup B}$, where $A$ is the top removable node of $\la$ and $B$ is the second bottom addable node of $\la$.

\item $p=3$, $V\cong E^{(4,1^2)}_+$ and $W\cong E^{(4,1^2)}_-$. In this case $V\otimes W\cong E^{(4,2)}$.

\item $p=2$, $V$ is basic spin and at least one of $V$ or $W$ cannot be extended to a $F\s_n$-module.
\end{enumerate}
\end{theor}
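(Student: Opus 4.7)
The overall strategy is Clifford-theoretic: we transfer the unresolved question over $A_n$ to the already classified situation over $\s_n$ (cf.\ \cite{bk}, \cite{gk}, \cite{m1}), and then pick up the residual cases by hand. Suppose first that both $V$ and $W$ extend to irreducible $F\s_n$-modules $\tilde V = D^\la$ and $\tilde W = D^\mu$. Such an extension is unique up to a twist by $\sgn$, so $\tilde V \otimes \tilde W$ is, up to such a twist, the unique $\s_n$-lift of $V \otimes W$. Irreducibility of $V \otimes W$ over $A_n$ then forces $\tilde V \otimes \tilde W$ to be either irreducible over $\s_n$ or to split as a sum of two sign-related irreducibles; feeding either possibility into the Bessenrodt--Kleshchev / Gow--Kleshchev classification in characteristics $2$ and $3$ yields a short list of candidate pairs $(\la,\mu)$, which we sort by restricting back to $A_n$ to recover the JS-family of case (i) (when such a lifting is available).

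When exactly one factor, say $V = E^\la_\pm$, fails to extend, the induced module satisfies $V\ua^{\s_n} \cong D^\la$. The projection formula gives $(V \otimes W)\ua^{\s_n} \cong D^\la \otimes \tilde W$, so irreducibility of $V \otimes W$ over $A_n$ is equivalent to $D^\la \otimes \tilde W$ being either irreducible or a sum of two sign-related irreducibles over $\s_n$. The $\s_n$ classification combined with the JS-branching rules singles out exactly the JS-partition family paired with $W = E^{(n-1,1)}$ of case (i), and, in characteristic $3$, the sporadic $A_6$ example of case (ii). The analogous argument when both factors fail to extend replaces $\tilde W$ by $W\ua^{\s_n}$; in characteristic $3$ a dimension and branching analysis rules out all further examples, which is how the classification is closed off in that characteristic. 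The explicit descriptions of $V \otimes W$ stated in the theorem, such as $V \otimes W \cong E^{(\la\setminus A)\cup B}$ in case (i), follow from tracking the $\s_n$-output of the lifting step and then applying the Jantzen--Seitz branching.

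The one place this pipeline genuinely breaks down is characteristic $2$ with a basic spin factor. The basic spin module sits at the interface of the Mullineux involution and the doubling parametrization of self-Mullineux partitions in a way that is not controlled by the elementary dimension and branching bounds used elsewhere, and moreover its induced module to $\s_n$ does not share the clean tensor structure exploited above. Consequently, when $V$ is basic spin in characteristic $2$ we can only exclude the candidates \emph{other} than those in case (iii), obtaining necessary but not sufficient conditions. This is the main technical obstacle and the reason why the classification is complete in characteristic $3$ but only partial in characteristic $2$, with the basic spin exception recorded as case (iii).
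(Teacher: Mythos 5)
Your Clifford-theoretic frame is the right one for the ``both factors extend'' sub-case, and there the paper does exactly what you describe: if $\la,\mu\notin\Parinv_p(n)$ and $E^\la\otimes E^\mu$ is irreducible, then $D^\la\otimes D^\mu$ is irreducible over $\s_n$, and one invokes the known $\s_n$ classification. However, your treatment of the remaining (and harder) sub-cases has several genuine gaps.

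First and most significantly, when exactly one of the factors fails to extend (say $V=E^\la_\pm$, $\la\in\Parinv$), you correctly observe that irreducibility of $V\otimes W$ over $A_n$ forces $D^\la\otimes D^\mu$ to be either irreducible over $\s_n$ or, for $p\geq 3$, isomorphic to $D^\nu\oplus D^{\nu^\Mull}$ with $\nu\neq\nu^\Mull$. But then you write that ``feeding either possibility into the Bessenrodt--Kleshchev / Gow--Kleshchev classification...yields a short list of candidate pairs.'' This is not correct: the cited $\s_n$ works classify when $D^\la\otimes D^\mu$ is \emph{irreducible} over $\s_n$; they say nothing about when it decomposes as a sum of exactly two Mullineux-conjugate simples, which is precisely the phenomenon underlying case (i). Establishing that this two-factor decomposition occurs exactly for JS $\la$ tensored with $E^{(n-1,1)}$, and that it fails to occur otherwise, is the main technical content of the paper. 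It is proved not by quoting an $\s_n$ classification but by building lower bounds on $\dim\Hom_{\s_n}(\End_F(D^\la),\End_F(D^\mu))$ (or its $A_n$ analogue) using explicit non-vanishing homomorphisms $M_\mu\to\End_F(D)$, filtrations of permutation modules, and fine branching estimates (Lemma~\ref{l15} together with Sections~\ref{sbr}--\ref{sJS}). Your proposal has no mechanism to exclude the case that $D^\la\otimes D^\mu$ has, say, exactly three composition factors that happen to produce one irreducible constituent over $A_n$.

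Second, for $p=2$ the dichotomy ``irreducible or a sum of two sign-related irreducibles'' is simply false: since $p=2$ divides $[\s_n:A_n]$, modules induced from $A_n$ are not semisimple, and indeed the paper shows that in case (i) one has $D^\la\otimes D^{(n-1,1)}\cong D^\nu|D^\nu$, a \emph{uniserial} module, not a direct sum (Theorem~\ref{sns2b}). Your Clifford-theoretic pipeline needs modification in characteristic $2$ before it can even be stated correctly. Third, you place the sporadic example $E^{(4,1^2)}_+\otimes E^{(4,1^2)}_-$ in the ``exactly one factor fails to extend'' bucket, but $(4,1^2)\in\Parinv_3(6)$, so this is a double-split example; your double-split paragraph, which claims to ``rule out all further examples,'' would then have to produce this one rather than exclude it. Finally, in the double-split case the projection formula does not yield $(V\otimes W)\ua^{\s_n}\cong D^\la\otimes D^\mu$ as stated; instead $D^\la\otimes D^\mu$ decomposes as $(V\otimes W)\ua^{\s_n}\oplus(V\otimes W')\ua^{\s_n}$, and the resulting bookkeeping is handled in the paper via the endomorphism-ring inequality of Lemma~\ref{l14} rather than by a direct appeal to the $\s_n$ result.

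In short: your reduction to $\s_n$ is correct and matches the paper only in the sub-case where both factors extend. The remaining cases are resolved in the paper by Theorems~\ref{sns2a}, \ref{sns2b}, \ref{sns3}, \ref{ds2}, \ref{ds3}, whose proofs rest on the $\Hom$-dimension machinery of Sections~\ref{sbr}--\ref{sJS}. That machinery, not the $\s_n$ classification you cite, is what closes the argument, and its absence is a genuine gap in your proposal.
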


Note that in the first two cases the tensor products are irreducible. This does however not always hold in the third case. A classification of irreducible tensor products with a basic spin module for alternating groups in characteristic 2 is currently not known. In Section \ref{s2} we will consider case (iii) more in details and give some conditions for such products to be irreducible.

In the next section we will give an overview of known results which will be used in the paper. In Section \ref{s1} as well as in Sections \ref{geq2n} to \ref{sJS} we study, in different ways, certain submodules of the modules $\Hom_{\s_n}(D^\la)$ and $\Hom_{A_n}(E^\la_\pm)$, using results from Sections \ref{sbr} and \ref{sph} and Section \ref{spm} respectively. These results will then be used in Sections \ref{sns} and \ref{ds} to study tensor products of a non-split and a split modules and of two split modules. Together with results on tensor products for modules of symmetric groups this will allow us to prove Theorem \ref{mt} in Section \ref{s3}. Although we cannot completely classify irreducible tensor products in characteristic 2 with a basic spin module, we will give some more restrictions for such tensor products to be irreducible in Section \ref{s2}.

\section{Notations and basic results}\label{snot}

Throughout the paper $F$ will be an algebraically closed field of characteristic $p$.

Given modules $M$ and $N_1,\ldots,N_h$ we will write
\[M\sim N_1|\ldots|N_h\]
if $M$ has a filtration with subquotients $N_j$ counted from the bottom and
\[M\sim (N_{1,1}|\ldots|N_{1,h_1})\,\,\oplus\,\,\ldots\,\,\oplus\,\, (N_{k,1}|\ldots|N_{k,h_k})\]
if there exists modules $M_i,N_{j,\ell}$ such that $M\cong M_1\oplus\ldots\oplus M_k$ and $M_j\sim N_{j,1}|\ldots|N_{j,h_j}$ for $1\leq j\leq k$. Further if modules $V_1,\ldots,V_h$ are simple, we will write
\[M\cong V_1|\ldots|V_h\]
if $M$ is uniserial with factors $V_j$ counted from the bottom and then similarly to above we will also write
\[M\cong (V_{1,1}|\ldots|V_{1,h_1})\,\,\oplus\,\,\ldots\,\,\oplus\,\, (V_{k,1}|\ldots|V_{k,h_k}).\]

For certain specific modules $V$, where $V$ is a simple or (dual of a) Specht module or direct sum of such, we will sometimes write $V\subseteq M$. When writing this we will always mean that $V$ is contained in $M$ up to isomorphism.

\subsection{Irreducible modules}\label{sim}

It is well known that irreducible representations of symmetric groups in characteristic $p$ are indexed by $p$-regular partitions and that they are self-dual. For $\la\in\Par_p(n)$ a $p$-regular partition, let $D^\la$ be the corresponding simple $F\s_n$-module. The module $D^\la$ can be defined as the head of $S^\la$, see \cite[Corollary 12.2]{JamesBook}. Further let $\la^\Mull\in\Par_p(n)$, the Mullineux dual of $\la$, be the unique partition with $D^{\la^\Mull}\cong D^\la\otimes \sgn$ (where $\sgn$ is the sign representation of $F\s_n$). 

For $p\geq 3$  it is well known that if $\lambda\not=\lambda^\Mull$ then $D^\lambda\da_{A_n}=E^\lambda$ is irreducible (and in this case $E^\lambda\cong E^{\lambda^\Mull}$), while if $\lambda=\lambda^\Mull$ then $D^\lambda\da_{A_n}=E^\lambda_+\oplus E^\lambda_-$ is the direct sum of two non-isomorphic irreducible representations of $A_n$. Further all irreducible representations of $A_n$ are of one of these two forms (see for example \cite{f}). If $p=2$ there is a different description of splitting irreducible representations (see Lemma \ref{split2}). Also in this case either $D^\la\da_{A_n}$ is irreducible or it is the direct sum of two non-isomorphic irreducible representations and any irreducible representation of $A_n$ is of one of these two forms.

For any $p$ let
\[\Parinv_p(n):=\{\la\in\Par_p(n)|D^\la\da_{A_n}\mbox{ splits}\}.\]
If $p\geq 3$ we have from the previous paragraph that $\la\in\Parinv_p(n)$ if and only if $\la=\la^\Mull$. For $p=2$ we have the following result:

\begin{lemma}{\cite[Theorem 1.1]{Benson}}\label{split2}
Let $p=2$ and $\la\in\Par_2(n)$. Then $\la\in\Parinv_2(n)$ if and only if the following hold
\begin{itemize}
\item $\la_{2i-1}-\la_{2i}\leq 2$ for each $i\geq 1$ and

\item $\la_{2i-1}+\la_{2i}\not\equiv 2\Md 4$ for each $i\geq 1$.
\end{itemize}
\end{lemma}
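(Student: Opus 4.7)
Since the statement is attributed to Benson in \cite{Benson}, my plan would be to follow his cohomological approach. The starting point is Clifford theory: because $A_n$ has index $2$ in $\s_n$, the restriction $D^\la\da_{A_n}$ is either irreducible or a sum of two non-isomorphic simples. So the question reduces to computing $\dim_F\End_{FA_n}(D^\la)$ and showing it equals $2$ precisely when the stated conditions hold. The characteristic $2$ obstruction is that the sign representation is trivial, so the usual criterion ``$D^\la\otimes\sgn\cong D^\la$'' (equivalent to $\la=\la^\Mull$ for $p\geq 3$) degenerates and cannot distinguish the two cases.

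To get around this, I would replace the sgn-twist criterion with a $1$-cocycle / $H^1$-type invariant for the extension $1\to A_n\to\s_n\to\Z/2\to 1$. Concretely, $D^\la\da_{A_n}$ splits iff there exists an $A_n$-endomorphism $\phi$ of $D^\la$ which is not $\s_n$-equivariant; equivalently, the obstruction to extending $\id_{D^\la}$ to an $\s_n$-linear endomorphism after twisting by a coset representative $g\in\s_n\setminus A_n$ vanishes. This obstruction can be computed using the self-duality of $D^\la$ as an $F\s_n$-module, realising $\End_F(D^\la)$ as $D^\la\otimes D^\la$ and asking for the dimension of its $A_n$-invariants.

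The next step is to translate this obstruction into combinatorics on $\la$. I would use Kleshchev's modular branching rules to perform an induction on $n$: remove a good node from $\la$ to get $\mu$, then compare $\End_{FA_n}(D^\la)$ with $\End_{FA_{n-1}}(D^\mu)$ via restriction. The conditions $\la_{2i-1}-\la_{2i}\leq 2$ and $\la_{2i-1}+\la_{2i}\not\equiv 2\pmod 4$ localise to a pair of consecutive rows, which strongly suggests an inductive step that peels off two parts at a time and tracks how the splitting behaviour interacts with the $2$-residue content of the removed $2$-strip. In particular, the ``bad'' case $\la_{2i-1}+\la_{2i}\equiv 2\pmod 4$ with difference $2$ should correspond to a configuration where two removable good nodes share a residue pattern that creates a non-split extension blocking the splitting.

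The main obstacle is the combinatorial translation of the abstract cohomological criterion into the stated row-pair conditions. This requires keeping exact control of which $2$-modular Specht quotients appear in the filtration of $D^\la\otimes D^\la$ after restriction, and showing that the contribution to the $A_n$-invariants factors as a product over row pairs. I would expect the bookkeeping (not the representation-theoretic ideas) to be the real technical difficulty, and in practice citing \cite{Benson} is unavoidable since reproducing his calculation would be disproportionate for our purposes here.
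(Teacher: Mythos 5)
The paper does not prove Lemma \ref{split2}; it simply cites \cite[Theorem 1.1]{Benson}, and your proposal also ends by conceding that ``citing \cite{Benson} is unavoidable.'' So on the bottom line you and the paper are in the same place: the result is imported, not reproved.

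That said, the speculative middle of your sketch deserves a comment, because parts of it are anachronistic as an account of Benson's argument. Your opening framing is correct and worth stating cleanly: in characteristic $2$ the usual Clifford-theoretic test (compare $D^\la$ with $D^\la\otimes\sgn$) degenerates because $\sgn=\1$, and the right replacement is to observe that $(D^\la\da_{A_n})\ua^{\s_n}\cong D^\la\otimes F[\s_n/A_n]$ with $F[\s_n/A_n]\cong F[x]/(x^2)$ local, so $D^\la\da_{A_n}$ splits if and only if the resulting self-extension of $D^\la$ is trivial; this is exactly the $H^1$-type obstruction you describe. However, your proposed mechanism for translating that obstruction into the row-pair conditions --- an induction on $n$ via Kleshchev's modular branching rules and good nodes --- cannot be what Benson did: his paper is from 1988, and the modular branching theory of Kleshchev (normal/good nodes, the functors $\tilde e_i,\tilde f_i$) was developed in the mid-1990s. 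Benson's actual route goes through the $2$-modular reduction of spin representations of the double cover (hence the title of his paper); the basic spin module $D^{\be_n}$ and the arithmetic of $2$-bar-cores drive the combinatorics, not branching recursion. So while your cohomological set-up is sound and your instinct that the conditions ``localise to pairs of rows'' is correct, the proposed inductive engine is not Benson's, and without his spin-module input you would not be able to close the argument. Since you ultimately cite \cite{Benson} anyway, this is not a logical gap, but the historical attribution of the method should be corrected.
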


When considering splitting modules for $p\geq 3$ we have the following result, where $h(\lambda)$ is the number of parts of $\lambda$:

\begin{lemma}\label{Mull}{\cite[Lemma 1.8]{ks2}}
Let $p\geq 3$ and $n\geq 5$. If $\lambda\in\Parinv_p(n)$ then $h(\la)\geq 3$.
\end{lemma}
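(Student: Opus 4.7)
The plan is to rule out $h(\la) = 1$ and $h(\la) = 2$ separately. For $h(\la) = 1$ we have $\la = (n)$ and $D^\la$ is the trivial module; since $n \geq 2$ the sign module is non-trivial, so $D^\la \otimes \sgn \not\cong D^\la$ and hence $\la^\Mull \neq \la$, so $\la \notin \Parinv_p(n)$.

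For $h(\la) = 2$, write $\la = (a, b)$ with $a \geq b \geq 1$ and $a + b = n \geq 5$, and assume for contradiction that $\la = \la^\Mull$. I would invoke the Bessenrodt--Olsson classification of Mullineux-fixed $p$-regular partitions for odd $p$: every such partition is constructed as a staircase of self-conjugate hooks indexed by a partition $\mu = (\mu_1 > \ldots > \mu_k)$ of $n$ into distinct odd parts, where the $i$-th hook of size $\mu_i$ is anchored at the diagonal cell $(i, i)$ with arm- and leg-length $(\mu_i - 1)/2$. Consequently $h(\la) = \max_i\bigl(i + (\mu_i - 1)/2\bigr)$, and requiring $h(\la) \leq 2$ forces $\mu_i \leq 5 - 2i$, leaving only $\mu \in \{(1), (3), (3, 1)\}$ and correspondingly $n \in \{1, 3, 4\}$. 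None of these is compatible with $n \geq 5$, so no such $\la$ can be Mullineux-fixed.

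The main obstacle is invoking the Bessenrodt--Olsson classification cleanly together with its explicit diagonal-hook shape formula. A more elementary alternative is to apply the Mullineux algorithm directly to $(a, b)$: since each successive $p$-rim of a partition with at most two rows passes through at most two rows, the row-count entries $r_i$ in the Mullineux symbol of $\la$ are all $\leq 2$, which forces the corresponding entries $r_i'$ in the Mullineux symbol of $\la^\Mull$ to satisfy $r_i' \geq a_i - 2$; a short case analysis on $a, b \pmod p$ then shows $h(\la^\Mull) \geq 3$, again contradicting $\la = \la^\Mull$.
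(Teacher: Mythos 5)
The case $h(\lambda)=1$ is fine. However, your primary argument for $h(\lambda)=2$ rests on a false statement of the Bessenrodt--Olsson classification. For odd $p$, the Mullineux-fixed $p$-regular partitions are \emph{not} the self-conjugate partitions built from a staircase of self-conjugate principal hooks; that description only agrees with the Mullineux map when $p$ is larger than $n$ (where $\Mull$ reduces to conjugation). Already for $p=3$ the two notions diverge: the partition $(4,1,1)$ is Mullineux-fixed (indeed $(4,1^2)\in\Parinv_3(6)$ appears explicitly in Theorem \ref{mt}(ii) and Theorem \ref{ds3}), but its conjugate is $(3,1,1,1)\neq(4,1,1)$; conversely $(3,3,2)$ is self-conjugate but not Mullineux-fixed for $p=3$. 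The Bessenrodt--Olsson result gives a count of self-Mullineux partitions equal to the number of partitions into distinct odd parts not divisible by $p$, together with a description via Mullineux/residue symbols, but not the explicit diagonal-hook shapes you invoke. So the inequality $\mu_i\leq 5-2i$ and the resulting bound $n\leq 4$ do not follow.

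Your alternative sketch via the Mullineux symbol is the right route, but as stated it is incomplete. From $r_1=2$ and $r_1'=a_1-r_1+\epsilon_1$, one needs $r_1'\neq 2$, i.e.\ $a_1\neq 4-\epsilon_1$. The bare inequality $r_1'\geq a_1-2$ does not give $r_1'\geq 3$ without a lower bound on $a_1$: in fact $a_1\in\{3,4\}$ is possible in general, so the argument must simultaneously use that $n\geq 5$ forces $a\geq 3$ (hence the first $p$-rim of $(a,b)$ has size $a_1\geq\min(p,a+1)\geq 4$ when $p\geq 5$, and for $p=3$ the cases $a_1\in\{3,4,5,\ldots\}$ must be checked directly, using $\epsilon_1=0$ when $3\mid a_1$). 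Carrying out this case analysis does complete the proof, but the phrase ``a short case analysis on $a,b\bmod p$'' leaves precisely the non-trivial step unaddressed, and the intermediate claim as written (that $r_i'\geq a_i-2$ alone forces $h(\lambda^\Mull)\geq 3$) would be false without the size bound. Since the paper only cites \cite[Lemma 1.8]{ks2} and gives no proof, there is no paper argument to compare against; but your main approach must be replaced, and your backup sketch needs the missing size estimate on $a_1$ to be made rigorous.
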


If $p=2$ a special role will be played by the irreducible modules indexed by the partition $\be_n:=(\lceil(n+1)/2\rceil,\lfloor (n-1)/2\rfloor)$. Such modules (for $\s_n$) can be obtained by reducing modulo 2 a basic spin module of the covering group of $\s_n$ and are therefore also called basic spin modules (see \cite{Benson}).

It easily follows from Lemmas \ref{split2} and \ref{Mull} that for large $n$, splitting modules cannot be indexed by partitions with at most two rows, unless possibly $p=2$ and the module is a basic spin module.

%

\subsection{Branching}\label{sb}

Since we will often study restrictions of modules to Young subgroups, we will now give a review of the needed branching results. 

Given a node $(a,b)$ define its residue by $\res(a,b)=b-a\Md p$. Given a partition $\la$ define its content to be the tuple $(c_0,\ldots,c_{p-1})$, where $c_i$ is the number of nodes of $\la$ of residue $i$, for each residue $i$. Two simple $F\s_n$-modules are in the same block if and only if the corresponding partitions have the same content. Thus we may define the content of a block and distinct blocks have distinct contents. For a residue $i$ and a module $M$ contained in the block with content $(c_0,\ldots,c_{p-1})$, let $e_iM$ (resp. $f_iM$) be the block component of $M\da_{\s_{n-1}}$ (resp. $M\ua^{\s_{n+1}}$) contained in the block with content $(c_0,\ldots,c_{i-1},c_i-1,c_{i+1},\ldots,c_{p-1})$ (resp. $(c_0,\ldots,c_{i-1},c_i+1,c_{i+1},\ldots,c_{p-1})$) if such a block exists or let $e_iM:=0$ (resp. $f_iM:=0$) otherwise. The definitions of $e_iM$ and $f_iM$ can then be extended to arbitrary modules additively. Then:

\begin{lemma}\label{l45}
For $M$ a $F\s_n$-module we have
\[M\da_{\s_{n-1}}\cong e_0M\oplus\ldots\oplus e_{p-1}M\hspace{18pt}\mbox{and}\hspace{18pt}M\ua^{\s_{n+1}}\cong f_0M\oplus\ldots\oplus f_{p-1}M.\]
\end{lemma}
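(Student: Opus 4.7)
The plan is to show that no ``stray'' blocks appear: the only blocks of $F\s_{n-1}$ contributing to $M\da_{\s_{n-1}}$ are those of content $(c_0,\ldots,c_i-1,\ldots,c_{p-1})$ as $i$ varies, and similarly for $M\ua^{\s_{n+1}}$. Since $e_i$ and $f_i$ were extended additively to arbitrary modules, it suffices to treat the case where $M$ lies in a single block of content $(c_0,\ldots,c_{p-1})$. Moreover, taking a composition series and using exactness of restriction and induction, it is enough to treat $M = D^\mu$ for $\mu\in\Par_p(n)$ with the given content.

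For the restriction statement, I would invoke the classical branching rule for Specht modules: $S^\mu\da_{\s_{n-1}}$ admits a filtration whose factors are the Specht modules $S^{\mu\setminus A}$ as $A$ runs over the removable nodes of $\mu$, each such $S^{\mu\setminus A}$ lying in the block of $F\s_{n-1}$ with content $(c_0,\ldots,c_{\res(A)}-1,\ldots,c_{p-1})$. Hence every composition factor of $S^\mu\da_{\s_{n-1}}$ lies in a block whose content is obtained from $(c_0,\ldots,c_{p-1})$ by subtracting one from a single coordinate. Since $D^\mu$ is the head of $S^\mu$, exactness of restriction implies that the composition factors of $D^\mu\da_{\s_{n-1}}$ form a sub-multiset of those of $S^\mu\da_{\s_{n-1}}$, so they also lie in blocks of the required form. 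Collecting block components then yields the direct sum decomposition $M\da_{\s_{n-1}}\cong e_0M\oplus\ldots\oplus e_{p-1}M$.

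For the induction statement, the argument is symmetric. Either by Frobenius reciprocity combined with the branching rule above, or by the dual form of branching (namely, a Specht filtration of $S^\mu\ua^{\s_{n+1}}$ with factors $S^{\mu\cup A}$ indexed by addable nodes $A$, lying in blocks of content $(c_0,\ldots,c_{\res(A)}+1,\ldots,c_{p-1})$), every composition factor of $S^\mu\ua^{\s_{n+1}}$ lies in a block whose content is obtained from that of $\mu$ by adding one to a single coordinate. Since $F\s_{n+1}$ is free as an $F\s_n$-module (on a set of coset representatives), induction is exact, and the same sub-multiset argument applies to $D^\mu\ua^{\s_{n+1}}$.

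I do not anticipate a genuine obstacle: the result is a routine consequence of the branching rule for Specht modules and the block parametrisation by content. The only points requiring some care are the exactness of both restriction and induction (guaranteed by the freeness of $F\s_{n\pm 1}$ over $F\s_n$) and the justification that the composition factors of a simple module $D^\mu$ indeed form a sub-multiset of those of the Specht module $S^\mu$ after restriction or induction.
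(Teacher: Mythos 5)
Your proposal is correct, and the overall skeleton matches the paper's proof: reduce to a single block component, reduce to simple modules by exactness of restriction/induction along a composition series, and observe that no block components outside the claimed list arise. The difference is in how the simple case is handled. The paper simply cites \cite[Theorems 11.2.7, 11.2.8]{KBook}, which already assert the decomposition $D^\mu\da_{\s_{n-1}}\cong\bigoplus_i e_iD^\mu$ and $D^\mu\ua^{\s_{n+1}}\cong\bigoplus_i f_iD^\mu$ for irreducible $D^\mu$, whereas you re-derive the simple case from scratch: $D^\mu$ is a quotient of $S^\mu$, restriction and induction are exact (the latter because $F\s_{n\pm1}$ is free over $F\s_n$), so the composition factors of $D^\mu\da$ (resp.\ $D^\mu\ua$) form a sub-multiset of those of $S^\mu\da$ (resp.\ $S^\mu\ua$), and the Specht-module branching rule forces those to lie only in blocks whose content is obtained from that of $\mu$ by decreasing (resp.\ increasing) one coordinate. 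Your route is more elementary and self-contained, relying only on the classical Specht branching theorem and Nakayama's characterisation of blocks by content, at the cost of being longer; the paper's route outsources precisely this step to a standard reference. Both are valid, and your reduction from general $M$ to simple $M$ spells out explicitly what the paper's terse parenthetical ``(there are no other block components)'' is implicitly invoking.
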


\begin{proof}
We may assume that $M$ has only one block component. For $M$ simple the result holds by \cite[Theorems 11.2.7, 11.2.8]{KBook}. The result then hold in general by definition of $e_i$ and $f_i$ (there are no other block components).
\end{proof}

The following properties of $e_i$ and $f_i$ can be seen as special cases of \cite[Lemma 8.2.2]{KBook}.

\begin{lemma}\label{l57}
If $M$ is self dual then so are $e_iM$ and $f_iM$.
\end{lemma}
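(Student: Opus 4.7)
The plan is to reduce the lemma to two elementary facts: (a) restriction and induction between $F\s_n$ and $F\s_{n\pm 1}$ commute with $F$-linear duality $N\mapsto N^*:=\Hom_F(N,F)$, and (b) duality preserves the block decomposition of $F\s_m$-modules. These together show that each functor $e_i$ and $f_i$ commutes with $*$, from which self-duality of $e_iM$ and $f_iM$ follows immediately.

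For (a), restriction is trivially compatible with $*$ since the underlying vector space and the action of $\s_{n-1}$ are unchanged. For induction, because $[\s_{n+1}:\s_n]$ is finite, induction coincides with coinduction, and one obtains a natural isomorphism $(N\ua^{\s_{n+1}})^*\cong N^*\ua^{\s_{n+1}}$. For (b), as recalled in \S\ref{sim} every simple $F\s_m$-module is self-dual, so a module and its $F$-linear dual have the same composition factors and in particular the same block components; explicitly, if $N=\bigoplus_B N_B$ is the block decomposition then $N^*\cong \bigoplus_B (N_B)^*$ with each summand again lying in block $B$.

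Combining (a) and (b), the block component of $(M\da_{\s_{n-1}})^*\cong M^*\da_{\s_{n-1}}$ indexed by content $(c_0,\ldots,c_i-1,\ldots,c_{p-1})$ can be computed in two ways: on the one hand it is $(e_iM)^*$, on the other hand $e_i(M^*)$. The same argument, applied to $M\ua^{\s_{n+1}}$ and the content $(c_0,\ldots,c_i+1,\ldots,c_{p-1})$, gives $(f_iM)^*\cong f_i(M^*)$. Specializing to $M\cong M^*$ yields $(e_iM)^*\cong e_iM$ and $(f_iM)^*\cong f_iM$.

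I do not foresee any substantive obstacle: both (a) and (b) are standard, the only mildly delicate point being the finite-index self-duality of induction, which is a classical property of group algebras and is already implicit in the way $e_i$ and $f_i$ are used throughout \cite{KBook}.
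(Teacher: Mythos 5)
Your proof is correct. The paper itself gives no argument for this lemma: it simply states that it is a special case of \cite[Lemma 8.2.2]{KBook}. Your proposal supplies a self-contained elementary proof of exactly what is needed, built from two standard facts: (a) both $\da_{\s_{n\pm 1}}$ and $\ua^{\s_{n\pm 1}}$ commute with $F$-linear duality, the nontrivial half being that for a finite-index subgroup induction agrees with coinduction and hence satisfies $(N\ua^{G})^*\cong N^*\ua^{G}$; and (b) every block of a symmetric group algebra is self-dual, because all irreducible $F\s_m$-modules are self-dual, so the block decomposition is preserved by $*$. Putting these together identifies $(e_iM)^*$ with the appropriate block component of $M^*\da_{\s_{n-1}}$, i.e.\ with $e_i(M^*)$, and likewise for $f_i$; specializing to $M\cong M^*$ gives the claim. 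These are precisely the ingredients that underlie the cited result of Kleshchev, so your argument is best viewed as an unpacking of the paper's citation rather than a genuinely different route. One small presentational remark: it is worth saying explicitly (as you implicitly do) that duality sends the block with a given content to itself \emph{because} the simples in it are self-dual; for a general finite group duality can permute blocks, and it is the self-duality of the $D^\mu$ that makes the identification $(e_iM)^*\cong e_i(M^*)$, rather than merely $(e_iM)^*\cong e_j(M^*)$ for some $j$, go through.
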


\begin{lemma}\label{l48}
The functors $e_i$ and $f_i$ are left and right adjoint of each other.
\end{lemma}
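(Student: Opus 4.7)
The plan is to deduce the biadjunction of $e_i$ and $f_i$ from the standard biadjunction of ordinary induction and restriction for a finite-index subgroup. First I would recall that since $\s_{n-1}\leq\s_n$ has finite index, restriction $\da_{\s_{n-1}}$ and induction $\ua^{\s_n}$ form a biadjoint pair. That is, for every $F\s_n$-module $M$ and every $F\s_{n-1}$-module $N$ there are natural isomorphisms
\[
\Hom_{\s_{n-1}}(N,M\da_{\s_{n-1}})\cong\Hom_{\s_n}(N\ua^{\s_n},M),\qquad \Hom_{\s_n}(M,N\ua^{\s_n})\cong\Hom_{\s_{n-1}}(M\da_{\s_{n-1}},N).
\]

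Next I would cut down to the block components using Lemma \ref{l45}. Without loss of generality assume $M$ and $N$ each lie in a single block, say with contents $(c_0,\ldots,c_{p-1})$ and $(d_0,\ldots,d_{p-1})$ respectively. By Lemma \ref{l45},
\[
\Hom_{\s_{n-1}}(N,M\da_{\s_{n-1}})=\bigoplus_{j}\Hom_{\s_{n-1}}(N,e_jM),\qquad \Hom_{\s_n}(N\ua^{\s_n},M)=\bigoplus_{j}\Hom_{\s_n}(f_jN,M).
\]
Since homomorphisms between modules lying in different blocks vanish, the $j$-th summand on the left is nonzero only if $N$ and $e_jM$ share a block, i.e.\ $(d_0,\ldots,d_{p-1})=(c_0,\ldots,c_j-1,\ldots,c_{p-1})$; and the $j$-th summand on the right is nonzero only if $f_jN$ and $M$ share a block, i.e.\ $(c_0,\ldots,c_{p-1})=(d_0,\ldots,d_j+1,\ldots,d_{p-1})$. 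Both conditions single out the same unique index $j=i$, characterised by $c_k-d_k=\delta_{ik}$. Comparing the two decompositions then yields
\[
\Hom_{\s_{n-1}}(N,e_iM)\cong\Hom_{\s_n}(f_iN,M),
\]
which says $f_i$ is left adjoint to $e_i$. If $M$ and $N$ do not match in contents in this way, both Hom spaces under consideration vanish and the isomorphism is trivial; additivity then extends the adjunction to arbitrary modules.

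Applying the analogous argument to the second biadjunction $\Hom_{\s_n}(M,N\ua^{\s_n})\cong\Hom_{\s_{n-1}}(M\da_{\s_{n-1}},N)$ yields
\[
\Hom_{\s_n}(M,f_iN)\cong\Hom_{\s_{n-1}}(e_iM,N),
\]
which says $e_i$ is left adjoint to $f_i$. Together the two isomorphisms give that $e_i$ and $f_i$ are left and right adjoint of each other, as claimed.

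There is essentially no obstacle beyond bookkeeping: the only thing to verify carefully is that the block-content indices on the two sides line up to the \emph{same} $i$, so that peeling off summands of $\da$ and $\ua$ indeed produces matching pairs $(e_i,f_i)$ rather than some crossed pairing.
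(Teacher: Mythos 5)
Your proof is correct. The paper itself gives no argument for this lemma --- it (together with Lemma~\ref{l57}) is stated as a special case of \cite[Lemma~8.2.2]{KBook} --- so there is no in-paper proof to compare against; what you have supplied is the standard argument that the cited reference packages up: start from the biadjunction of $\da_{\s_{n-1}}$ and $\ua^{\s_n}$ (a finite-index subgroup pair), decompose both sides according to Lemma~\ref{l45}, and observe that because $\Hom$ between distinct blocks vanishes, the block-content bookkeeping ($c_k-d_k=\delta_{jk}$ on both sides) forces the summands to pair up by the same residue, so the full natural isomorphism restricts summand-by-summand to give both adjunctions $\Hom_{\s_{n-1}}(N,e_iM)\cong\Hom_{\s_n}(f_iN,M)$ and $\Hom_{\s_n}(M,f_iN)\cong\Hom_{\s_{n-1}}(e_iM,N)$. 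One small point worth tightening: the phrase ``single out the same unique index $j=i$'' reads as if the block contents always select the residue $i$ in the statement, whereas in fact they select some $j$ (or none), and the relevant observation is simply that the $j$-th summands of the two decompositions are isomorphic for \emph{every} $j$ (both vanish unless $j$ is the matching index); your final sentence about the vanishing case already covers this, so the argument goes through.
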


For $r\geq 1$ define $e_i^{(r)}:F\s_n\md\rightarrow F\s_{n-r}\md$ and $f_i^{(r)}:F\s_n\md\rightarrow F\s_{n+r}\md$ to be the divided power functors (see \cite[\S11.2]{KBook} for the definitions). For $r=0$ define $e_i^{(0)}D^\lambda$ and $f_i^{(0)}D^\lambda$ to be equal to $D^\lambda$. For a partition $\lambda$ let $\epsilon_i(\lambda)$ be the number of normal nodes of $\lambda$ of residue $i$ and $\phi_i(\lambda)$ be the number of conormal nodes of $\lambda$ of residue $i$ (see \cite[\S11.1]{KBook} or \cite[\S2]{bk2} for definitions of normal and conormal nodes). Normal and conormal nodes of partitions will play a crucial role through all of the paper. If $\epsilon_i(\lambda)\geq 1$ we will denote by $\tilde{e}_i(\lambda)$ the partition obtained from $\lambda$ by removing the $i$-good node, that is the bottom $i$-normal node. Similarly, if $\phi_i(\lambda)\geq 1$ we denote by $\tilde{f}_i(\lambda)$ the partition obtained from $\lambda$ by adding the $i$-cogood node, that is the top $i$-conormal node. The next two lemmas will be used throughout the paper and show that the modules $e_i^rD^\lambda$ and $e_i^{(r)}D^\lambda$ (and similarly $f_i^rD^\lambda$ and $f_i^{(r)}D^\lambda$) are closely connected. For $r=0$ the lemmas hold trivially. For $r>0$ see \cite[Theorems 11.2.10, 11.2.11]{KBook}.

\begin{lemma}\label{l39}
Let $\lambda\in\Par_p(n)$, $r\geq 0$ and $i$ be a residue. Then $e_i^rD^\lambda\cong(e_i^{(r)}D^\lambda)^{\oplus r!}$. Further $e_i^{(r)}D^\lambda\not=0$ if and only if $\epsilon_i(\lambda)\geq r$. In this case
\begin{enumerate}
\item\label{l39a}
$e_i^{(r)}D^\lambda$ is a self-dual indecomposable module with head and socle isomorphic to $D^{\tilde{e}_i^r(\lambda)}$,

\item\label{l39b}
$[e_i^{(r)}D^\lambda:D^{\tilde{e}_i^r(\lambda)}]=\binom{\epsilon_i(\lambda)}{r}=\dim\End_{\s_{n-r}}(e_i^{(r)}D^\lambda)$,

\item\label{l39c}
if $D^\psi$ is a composition factor of $e_i^{(r)}D^\lambda$ then $\epsilon_i(\psi)\leq \epsilon_i(\lambda)-r$, with equality holding if and only if $\psi=\tilde{e}_i^r(\lambda)$.
\end{enumerate}
\end{lemma}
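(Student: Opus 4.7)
The plan is to reduce to the Kleshchev branching theorems (for instance \cite[Theorems 11.2.10, 11.2.11]{KBook}), since a from-scratch argument would require reconstructing a substantial chunk of the modular branching theory for symmetric groups. The $r=0$ case is immediate: by convention $e_i^{(0)}D^\lambda = D^\lambda$, so (i) reduces to $D^\lambda$ being self-dual simple, (ii) gives multiplicity $1 = \binom{\epsilon_i(\lambda)}{0}$ and endomorphism ring $F$, and (iii) is vacuous. So the work is concentrated in $r \geq 1$.

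For $r \geq 1$, my plan is to first recall the construction of $e_i^{(r)}$ as the divided power functor coming from Kleshchev's refinement of the Robinson branching rule: one factors the $i$-restriction functor $e_i$ through the symmetric group $\s_r$ action on $e_i^r$ and takes the invariants (or the unique indecomposable summand appearing with multiplicity $r!$). This immediately gives the isomorphism $e_i^r D^\lambda \cong (e_i^{(r)} D^\lambda)^{\oplus r!}$ and the vanishing criterion $e_i^{(r)}D^\lambda \neq 0 \iff \epsilon_i(\lambda) \geq r$ (since $e_i^r D^\lambda \neq 0$ is controlled by the number of $i$-normal nodes by Kleshchev's modular branching rule).

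For (i), self-duality follows from Lemma \ref{l57} applied iteratively and the fact that $e_i^{(r)}$ is a direct summand of $e_i^r$ in a canonical way. Indecomposability together with head and socle isomorphic to $D^{\tilde{e}_i^r(\lambda)}$ comes from Kleshchev's theorem that the socle of $e_i D^\psi$ is $D^{\tilde{e}_i(\psi)}$ whenever $\epsilon_i(\psi) \geq 1$, combined with an induction on $r$ (the socle of $e_i^{(r)} D^\lambda$ is the socle of $e_i$ applied to $e_i^{(r-1)} D^\lambda$, restricted to the appropriate isotypic piece, and equals $D^{\tilde{e}_i^r(\lambda)}$). Self-duality then forces the head to coincide with the socle, and thereby forces indecomposability.

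For (ii), the multiplicity formula $\binom{\epsilon_i(\lambda)}{r}$ is read off from the crystal structure: passing to the formal character level, $e_i^{(r)}$ acts as a divided power Chevalley operator on a highest weight crystal, and the appearances of $\tilde{e}_i^r(\lambda)$ in $e_i^{(r)} D^\lambda$ match the corresponding weight multiplicity in the basic representation; the identification with $\dim \End_{\s_{n-r}}(e_i^{(r)} D^\lambda)$ is then a standard consequence of Lemma \ref{l48} (adjunction) together with Lemma \ref{l39}(\ref{l39a}). For (iii), the bound $\epsilon_i(\psi) \leq \epsilon_i(\lambda) - r$ with the stated equality case is exactly Kleshchev's theorem that $\epsilon_i$ is a highest-weight-like statistic: $\epsilon_i(\psi) > \epsilon_i(\lambda) - r$ would allow a further $i$-restriction step and contradict the extremal property of $\tilde{e}_i^r(\lambda)$.

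The principal obstacle, if one really wanted a self-contained proof, is establishing the crystal/socle results underpinning the modular branching rule; these are deep theorems of Kleshchev (extended by Brundan and others). For the purposes of this paper it is safe to quote them from \cite{KBook} and the lemma is obtained simply by assembling the relevant statements there.
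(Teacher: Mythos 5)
Your proposal is correct and takes essentially the same route as the paper, which simply cites \cite[Theorems 11.2.10, 11.2.11]{KBook} for $r>0$ and notes the $r=0$ case is trivial by convention. Your additional sketch of what those theorems contain (socle/head via $\tilde{e}_i$, the binomial multiplicity, the $\epsilon_i$ bound) is an accurate gloss on the cited material but not a new argument.
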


\begin{lemma}\label{l40}
Let $\lambda\in\Par_p(n)$, $r\geq 0$ and $i$ be a residue. Then $f_i^rD^\lambda\cong(f_i^{(r)}D^\lambda)^{\oplus r!}$. Further $f_i^{(r)}D^\lambda\not=0$ if and only if $\phi_i(\lambda)\geq r$. In this case
\begin{enumerate}
\item\label{l40a}
$f_i^{(r)}D^\lambda$ is a self-dual indecomposable module with head and socle isomorphic to $D^{\tilde{f}_i^r(\lambda)}$,

\item\label{l40b}
$[f_i^{(r)}D^\lambda:D^{\tilde{f}_i^r(\lambda)}]=\binom{\phi_i(\lambda)}{r}=\dim\End_{\s_{n+r}}(f_i^{(r)}D^\lambda)$,

\item\label{l40c}
if $D^\psi$ is a composition factor of $f_i^{(r)}D^\lambda$ then $\phi_i(\psi)\leq \phi_i(\lambda)-r$, with equality holding if and only if $\psi=\tilde{f}_i^r(\lambda)$.
\end{enumerate}
\end{lemma}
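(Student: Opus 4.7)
The cleanest route I would take is to bootstrap from Lemma \ref{l39} via the sign-twist duality. Tensoring with $\sgn$ is an involutive self-equivalence of $F\s_m\md$ for every $m$; it sends $D^\lambda$ to $D^{\la^\Mull}$, and it sends induction from $\s_n$ to $\s_{n+1}$ to itself. The plan is to verify that this self-equivalence intertwines $f_i^{(r)}$ with $e_{-i}^{(r)}$, and then to read off every bullet of the lemma from the corresponding bullet of Lemma \ref{l39} applied to $D^{\la^\Mull}$.

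First I would establish the combinatorial shadow: conjugation-through-Mullineux converts $i$-addable nodes of $\la$ into $(-i)$-removable nodes of $\la^\Mull$, preserves the normal/conormal bookkeeping, and hence gives $\phi_i(\la)=\epsilon_{-i}(\la^\Mull)$ together with $\widetilde{f}_i(\la)^\Mull=\widetilde{e}_{-i}(\la^\Mull)$. Next I would establish the functorial shadow: by unpacking the definition of the divided power functor $f_i^{(r)}$ as a block summand of induction composed with a certain idempotent action, and using that $\_\otimes\sgn$ commutes with induction and exchanges the residue-$i$ and residue-$(-i)$ block projections, I would produce a natural isomorphism $(f_i^{(r)} M)\otimes\sgn\cong e_{-i}^{(r)}(M\otimes\sgn)$ for any $M\in F\s_n\md$. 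Substituting $M=D^\la$ turns $f_i^{(r)}D^\la$ into $e_{-i}^{(r)}D^{\la^\Mull}$ up to a sign twist.

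Once this compatibility is in place, all the assertions transfer formally. Non-vanishing of $f_i^{(r)}D^\la$ translates to $\epsilon_{-i}(\la^\Mull)\ge r$, which equals $\phi_i(\la)\ge r$. Self-duality in (i) is preserved by $\_\otimes\sgn$; indecomposability and the head/socle identification follow because the sign twist is an equivalence and $\widetilde{e}_{-i}^r(\la^\Mull)^\Mull=\widetilde{f}_i^r(\la)$. The multiplicity formula and endomorphism dimension in (ii) transfer because composition multiplicities and $\Hom$-spaces are preserved by an equivalence. The composition-factor bound in (iii) likewise transfers: a composition factor $D^\psi$ of $f_i^{(r)}D^\la$ corresponds to the composition factor $D^{\psi^\Mull}$ of $e_{-i}^{(r)}D^{\la^\Mull}$, so $\epsilon_{-i}(\psi^\Mull)\le\epsilon_{-i}(\la^\Mull)-r$ with equality forcing $\psi^\Mull=\widetilde{e}_{-i}^r(\la^\Mull)$, and these translate back to the desired inequality $\phi_i(\psi)\le\phi_i(\la)-r$ with equality forcing $\psi=\widetilde{f}_i^r(\la)$.

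The main obstacle is the functorial compatibility $(f_i^{(r)}M)\otimes\sgn\cong e_{-i}^{(r)}(M\otimes\sgn)$: this is the only step that requires genuinely opening the definition of the divided-power functor rather than arguing categorically. If one prefers to avoid it, an alternative is a direct induction on $r$ using only the adjunction in Lemma \ref{l48} and self-duality in Lemma \ref{l57}: self-duality gives that $f_i^{(r)}D^\la$ has isomorphic head and socle; adjunction then computes $\Hom_{\s_{n+r}}(f_i^{(r)}D^\la,D^\mu)$ as a space on the $\s_n$-side whose dimension forces both $D^{\widetilde{f}_i^r(\la)}$ to appear with multiplicity $\binom{\phi_i(\la)}{r}$ in the head and no other simple to appear there, yielding indecomposability and (ii), after which (iii) follows by a standard weight-counting argument using the crystal-operator behaviour of $\phi_i$. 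Either route closes the lemma, but I would prefer the Mullineux-twist approach since it makes the symmetry with Lemma \ref{l39} manifest rather than re-proving everything from scratch.
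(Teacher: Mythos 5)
Your principal route has a genuine error in the key intertwining. Twisting by $\sgn$ is a self-equivalence that commutes with both induction and restriction; it relabels blocks (sending the block of content $(c_0,\ldots,c_{p-1})$ to the block of content $(c_0,c_{p-1},\ldots,c_1)$) but it does \emph{not} swap induction with restriction. Consequently the natural isomorphism it yields is $(f_i^{(r)} M)\otimes\sgn\cong f_{-i}^{(r)}(M\otimes\sgn)$, not $(f_i^{(r)} M)\otimes\sgn\cong e_{-i}^{(r)}(M\otimes\sgn)$ as you claim. Your combinatorial statement has the matching error: Mullineux does not convert $i$-addable nodes into $(-i)$-removable nodes; it preserves the addable/removable (and normal/conormal) distinction while negating the residue, exactly as recorded in Lemma \ref{l17}, which says $\phi_i(\lambda)=\phi_{-i}(\lambda^\Mull)$ and $\epsilon_i(\lambda)=\epsilon_{-i}(\lambda^\Mull)$ -- note $\phi$ stays $\phi$ and $\epsilon$ stays $\epsilon$. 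Substituting the correct isomorphism into your argument only reduces Lemma \ref{l40} for residue $i$ to Lemma \ref{l40} for residue $-i$, i.e.\ it buys nothing; there is no elementary categorical equivalence that carries $e$-statements to $f$-statements.

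For the record, the paper does not prove this lemma at all: both Lemmas \ref{l39} and \ref{l40} are cited directly from \cite[Theorems 11.2.10, 11.2.11]{KBook}, where they are proved as two parallel theorems rather than one deduced from the other. Your fallback sketch via the adjunction of Lemma \ref{l48} and self-duality is closer in spirit to that source, but as written it conflates head-multiplicity with total composition multiplicity (adjunction gives $\dim\Hom_{\s_{n+r}}(f_i^{(r)}D^\la,D^\mu)=\dim\Hom_{\s_n}(D^\la,e_i^{(r)}D^\mu)$, which forces the head to be the single simple $D^{\tilde f_i^r(\la)}$ with multiplicity $1$, while $\binom{\phi_i(\la)}{r}$ is the total multiplicity as a composition factor), and it assumes self-duality of $f_i^{(r)}D^\la$ (part of item (i)) before establishing it -- Lemma \ref{l57} only covers $r=1$, and extending it to divided powers requires the additional input that the divided-power summand is duality-stable. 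These gaps are fillable but nontrivial; they are essentially the content of the cited theorems in \cite{KBook}.
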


For $r=1$ it follows that $e_i=e_i^{(1)}$ and $f_i=f_i^{(1)}$. In this case more composition factors of $e_iD^\lambda$ and $f_iD^\lambda$ are known by \cite[Theorem E(iv)]{bk6} and \cite[Theorem 1.4]{k4}.

\begin{lemma}\label{l56}
Let $\lambda\in\Par_p(n)$. If $A$ is an $i$-normal node of $\lambda$ and $\lambda\setminus A$ is $p$-regular then $[e_iD^\lambda:D^{\lambda\setminus A}]$ is equal to the number of $i$-normal nodes of $\lambda$ weakly above $A$.

Similarly if $B$ is an $i$-conormal node of $\lambda$ and $\lambda\cup B$ is $p$-regular then $[f_iD^\lambda:D^{\lambda\cup B}]$ is equal to the number of $i$-conormal nodes of $\lambda$ weakly below $B$.
\end{lemma}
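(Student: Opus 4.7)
I focus on the $e_i$-statement; the $f_i$-statement follows by the symmetric argument with Lemma~\ref{l40} replacing Lemma~\ref{l39}. The result is attributed to \cite{bk6, k4} and is a refinement of Kleshchev's modular branching rule, so any reconstruction should hook into the framework already summarised in Lemmas~\ref{l39}--\ref{l48}.

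Let $A_1, \ldots, A_{\epsilon_i(\la)}$ denote the $i$-normal nodes of $\la$ listed from bottom to top, and suppose $A = A_j$, so the number of $i$-normal nodes weakly above $A$ equals $\epsilon_i(\la) - j + 1$. The base case $j = 1$ is immediate: taking $r = 1$ in Lemma~\ref{l39}(ii) gives $[e_i D^\la : D^{\tilde{e}_i(\la)}] = \binom{\epsilon_i(\la)}{1} = \epsilon_i(\la)$, which matches the prediction for the good node $A_1$.

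For $j > 1$ the plan is to play the adjunction of Lemma~\ref{l48} against the $f_i$-analogue (Lemma~\ref{l40}). Set $\mu := \la \setminus A$; inspecting $i$-signatures shows that $A$ becomes an $i$-conormal node of $\mu$ whose position pairs combinatorially with $A_j$, so that the number of $i$-conormal nodes of $\mu$ weakly below $A$ is again $\epsilon_i(\la) - j + 1$ (this is the combinatorial duality between $i$-normal nodes of $\la$ above $A$ and $i$-conormal nodes of $\mu$ below $A$). By adjunction and self-duality,
\[
\dim \Hom_{\s_{n-1}}(D^\mu, e_i D^\la) \;=\; \dim \Hom_{\s_n}(f_i D^\mu, D^\la),
\]
and the right hand side counts copies of $D^\la$ in the head of $f_i D^\mu$, which is controlled by Lemma~\ref{l40} applied at $\mu$ and at divided powers $f_i^{(s)} D^\mu$ for $s \leq \phi_i(\mu)$.

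The main obstacle is that $\dim \Hom_{\s_{n-1}}(D^\mu, e_i D^\la)$ only counts $D^\mu$ in the socle of $e_i D^\la$, whereas we want the full composition multiplicity; adjunction alone gives lower bounds. Closing the gap is the deep input of \cite{bk6, k4}: one must show that every occurrence of $D^\mu$ in $e_i D^\la$ is captured by the $i$-signature combinatorics, which in turn reflects the categorified crystal structure on $\bigoplus_n K_0(F\s_n\md)$. I would set this up either by working in the Kashiwara--Lusztig framework of \cite{bk6} or by the intricate induction on $\epsilon_i(\la) - j$ carried out in \cite{k4}, using at each step the known top/bottom multiplicity in $e_i^{(r)} D^\la$ from Lemma~\ref{l39}(ii) and the corresponding statement for $f_i^{(r)} D^\mu$ from Lemma~\ref{l40}(ii) to pin down the interior composition factors.
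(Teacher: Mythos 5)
You correctly recognize that the paper does not prove Lemma~\ref{l56}: the lemma is preceded by the remark that these composition multiplicities ``are known by \cite[Theorem E(iv)]{bk6} and \cite[Theorem 1.4]{k4},'' and no proof environment follows. Your overall strategy---verify the base case, then defer the real content to those references---is therefore consistent with what the paper does, and the base-case computation from Lemma~\ref{l39}(ii) is correct.

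However, the combinatorial bridge you interpose before deferring is wrong. You assert that for a general $i$-normal node $A=A_j$ of $\la$, the node $A$ becomes $i$-conormal in $\mu=\la\setminus A$ and that the number of $i$-conormal nodes of $\mu$ weakly below $A$ equals $\epsilon_i(\la)-j+1$. This duality holds precisely for the good node $A_1$ (Lemma~\ref{l47} with $r=1$) and typically fails for $j>1$: removing a non-good normal node converts its $-$ in the reduced $i$-signature into a $+$ that then cancels against a lower $-$. Concretely, if the reduced $i$-signature of $\la$ (read from the top of the Young diagram downward) is $-\,-\,-$ and $A$ is the topmost $-$, so $j=3$ and $\epsilon_i(\la)-j+1=1$, then after removing $A$ the raw $i$-signature $+\,-\,-$ reduces to a single $-$, so $\mu$ has no $i$-conormal nodes at all, not one. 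Accordingly the adjunction identity $\dim\Hom_{\s_{n-1}}(D^\mu,e_iD^\la)=\dim\Hom_{\s_n}(f_iD^\mu,D^\la)$ gives $0$ whenever $j>1$, which only recovers the socle statement of Lemma~\ref{l39}(i), namely $\soc(e_iD^\la)\cong D^{\tilde e_i(\la)}$, and says nothing about the interior composition multiplicity $[e_iD^\la:D^\mu]$ that Lemma~\ref{l56} is actually about. You correctly sense that adjunction cannot close the gap on its own; the extra signature reduction you propose to help it along is false, and there is no shortcut from Lemmas~\ref{l39}--\ref{l48} to the full statement. The content genuinely resides in the cited theorems.
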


Since the modules $e_iD^\lambda$ (or $f_iD^\lambda$) correspond to pairwise distinct blocks, the following holds combining Lemmas \ref{l45}, \ref{l39}(ii) and \ref{l40}(ii).

\begin{lemma}\label{l53}
For $\la\in\Par_p(n)$ we have that
\begin{align*}
\dim\End_{\s_{n-1}}(D^\lambda\da_{\s_{n-1}})&=\epsilon_0(\lambda)+\ldots+\epsilon_{p-1}(\lambda),\\
\dim\End_{\s_{n+1}}(D^\lambda\ua^{\s_{n+1}})&=\phi_0(\lambda)+\ldots+\phi_{p-1}(\lambda).
\end{align*}
\end{lemma}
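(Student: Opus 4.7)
The plan is to reduce the endomorphism ring computation to the block components that Lemma \ref{l45} provides, and then apply the dimension formulas from Lemmas \ref{l39} and \ref{l40}.

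First, by Lemma \ref{l45} we have the decomposition
\[D^\lambda\da_{\s_{n-1}}\cong e_0D^\lambda\oplus\ldots\oplus e_{p-1}D^\lambda.\]
By definition each summand $e_iD^\lambda$ lies in the block of $F\s_{n-1}$ whose content differs from that of $D^\lambda$ precisely by decreasing the $i$-entry by 1. In particular these blocks are pairwise distinct, so $\Hom_{\s_{n-1}}(e_iD^\lambda,e_jD^\lambda)=0$ for $i\neq j$, and the endomorphism ring splits as
\[\End_{\s_{n-1}}(D^\lambda\da_{\s_{n-1}})\cong\bigoplus_{i=0}^{p-1}\End_{\s_{n-1}}(e_iD^\lambda).\]

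Next, as noted just after Lemma \ref{l40}, the case $r=1$ of the divided power functors is the usual functor, i.e.\ $e_i=e_i^{(1)}$. If $\epsilon_i(\lambda)\geq 1$, Lemma \ref{l39}(ii) gives $\dim\End_{\s_{n-1}}(e_iD^\lambda)=\binom{\epsilon_i(\lambda)}{1}=\epsilon_i(\lambda)$; while if $\epsilon_i(\lambda)=0$, Lemma \ref{l39} says $e_iD^\lambda=0$ and the endomorphism dimension is $0=\epsilon_i(\lambda)$. Summing over all residues yields the first formula.

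The second formula follows by the same argument with $f_i$ in place of $e_i$: the decomposition $D^\lambda\ua^{\s_{n+1}}\cong\bigoplus_i f_iD^\lambda$ from Lemma \ref{l45} has summands lying in pairwise distinct blocks, and Lemma \ref{l40}(ii) (together with $f_i=f_i^{(1)}$) gives $\dim\End_{\s_{n+1}}(f_iD^\lambda)=\phi_i(\lambda)$ for each $i$. There is no real obstacle: the lemma is pure bookkeeping, the only point to be careful about is handling the residues $i$ with $\epsilon_i(\lambda)=0$ (resp.\ $\phi_i(\lambda)=0$), which contribute a zero summand on both sides.
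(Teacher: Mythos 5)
Your argument is correct and is exactly the proof sketched in the paper: the paper introduces Lemma \ref{l53} with the remark that it follows by combining Lemmas \ref{l45}, \ref{l39}(ii) and \ref{l40}(ii) because the $e_iD^\lambda$ (resp.\ $f_iD^\lambda$) lie in pairwise distinct blocks, which is precisely your reasoning. You have merely spelled out the block-decomposition of the endomorphism ring, the $r=1$ specialization $\binom{\epsilon_i(\lambda)}{1}=\epsilon_i(\lambda)$, and the trivial case $\epsilon_i(\lambda)=0$, all in line with the paper's intent.
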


When considering the functors $\tilde{e}_i$ and $\tilde{f}_i$ the following easily holds by definition (alternatively see \cite[Lemma 5.2.3]{KBook} for the first part and Lemmas \ref{l39}(iii) and \ref{l40}(iii) for the second part).

\begin{lemma}\label{l47}
For $r\geq 0$ and $p$-regular partitions $\lambda,\nu$ we have that $\tilde{e}^r_i(\lambda)=\nu$ if and only if $\tilde{f}^r_i(\nu)=\lambda$. In this case $\epsilon_i(\nu)=\epsilon_i(\lambda)-r$ and $\phi_i(\nu)=\phi_i(\lambda)+r$.
\end{lemma}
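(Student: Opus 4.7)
The plan is to proceed by induction on $r$. The base case $r=0$ holds trivially, since $\tilde{e}_i^0$ and $\tilde{f}_i^0$ are the identity and $\epsilon_i,\phi_i$ do not change. For the inductive step, it suffices to establish the statement for $r=1$; the general case then follows by iterating.

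For $r=1$, I would work directly with the \emph{signature} (pm-)sequence that defines normal and conormal nodes: list all $i$-addable and $i$-removable nodes of $\lambda$ from top to bottom, writing $+$ for addable and $-$ for removable, then iteratively cancel adjacent $-+$ pairs. After cancellation, the $i$-normal nodes correspond to the uncancelled $-$'s and the $i$-conormal nodes to the uncancelled $+$'s; the $i$-good node is the bottom uncancelled $-$ and the $i$-cogood node is the top uncancelled $+$. Let $A$ be the $i$-good node of $\lambda$ and set $\nu=\tilde{e}_i(\lambda)=\lambda\setminus A$. Passing from $\lambda$ to $\nu$ changes the pm-sequence only at the position of $A$, flipping its $-$ to a $+$; no other entries or cancellations are disturbed, since everything strictly below the bottom uncancelled $-$ and everything strictly above it is already in cancelled form. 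After this flip, the symbol at the position of $A$ becomes the topmost uncancelled $+$, i.e., the $i$-cogood node of $\nu$. Hence $\tilde{f}_i(\nu)=\lambda$, and by symmetry (reading the argument backwards) $\tilde{e}_i(\tilde{f}_i(\nu))=\nu$ for any $p$-regular $\nu$ with $\phi_i(\nu)\geq 1$.

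The same flip argument yields the numerical statement: turning a single uncancelled $-$ into an uncancelled $+$ decreases the number of uncancelled $-$'s by exactly one and increases the number of uncancelled $+$'s by exactly one, giving $\epsilon_i(\nu)=\epsilon_i(\lambda)-1$ and $\phi_i(\nu)=\phi_i(\lambda)+1$. Iterating both conclusions gives the lemma for all $r\geq 0$.

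The one point that needs genuine care — and the only plausible obstacle — is verifying that flipping the symbol at the good-node position does not trigger any new cancellations in the signature sequence. This is the combinatorial heart of the argument, but it is immediate from the observation above about the structure of the sequence around the bottom uncancelled $-$. If one prefers to avoid the signature calculus entirely, the numerical part can alternatively be deduced from parts (iii) of Lemmas \ref{l39} and \ref{l40} applied to the composition factor $\psi=\tilde{e}_i^r(\lambda)$ of $e_i^{(r)}D^\lambda$ (respectively $\psi=\tilde{f}_i^r(\nu)$ of $f_i^{(r)}D^\nu$), where the equality case forces $\epsilon_i(\psi)=\epsilon_i(\lambda)-r$ and $\phi_i(\psi)=\phi_i(\nu)+r$, and the bijectivity part is then \cite[Lemma 5.2.3]{KBook}.
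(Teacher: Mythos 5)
Your direct signature argument has the cancellation convention backwards, and this is not merely cosmetic: it breaks the step where you claim no new cancellations arise.

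With the convention you state (top-to-bottom word, $+$ for addable, $-$ for removable, cancel adjacent $-+$ pairs, i.e.\ removable above addable), the reduced word has the form $+^a-^b$, with the good node the very last $-$. Flipping that $-$ to a $+$ produces $+^a-^{b-1}+$, and the new $+$ sits immediately below an uncancelled $-$ (whenever $b\geq 2$), so an adjacent $-+$ pair is created and the reduction cascades: you lose two uncancelled $-$'s and gain no uncancelled $+$, giving $\epsilon_i\mapsto\epsilon_i-2$ and $\phi_i\mapsto\phi_i$, not what the lemma asserts. Your justification that ``everything strictly above it is already in cancelled form'' is false under this convention (the other normal nodes are uncancelled $-$'s above). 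A concrete counterexample to your convention: $p=3$, $\lambda=(3,1)$, $i=2$; then $\tilde e_2(\lambda)=(3)$, and for $\nu=(3)$ the residue-$2$ word is $-+$ (removable $(1,3)$ above addable $(2,1)$), which your rule reduces to the empty word, giving $\epsilon_2(\nu)=\phi_2(\nu)=0$ — but the lemma requires $\epsilon_2(\nu)=1$, $\phi_2(\nu)=1$, and indeed $\tilde f_2((3))=(3,1)$. The correct rule for the paper's conventions (good $=$ bottom normal, cogood $=$ top conormal) is to cancel adjacent $+-$ pairs, i.e.\ addable \emph{above} removable, so the reduced word is $-^b+^a$; then the good node is the last uncancelled $-$, followed only by uncancelled $+$'s, and flipping it cannot trigger any new $+-$ cancellation. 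With that single correction your flip argument goes through and gives the lemma for $r=1$, hence for all $r$ by iteration.

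Your final paragraph, which deduces the numerical part from Lemmas \ref{l39}(iii) and \ref{l40}(iii) (equality case) and the bijectivity part from \cite[Lemma 5.2.3]{KBook}, is exactly the proof the paper itself gives, so the proposal is ultimately correct even though your primary argument needs the convention fixed.
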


The total numbers of normal and conormal nodes of a partition are related by following result, which hold by the corresponding result for removable and addable nodes and by definition of normal and conormal nodes (the set of normal and conormal nodes is obtained by recursively removing pairs of a removable and an addable node from the set of removable and addable nodes).

\begin{lemma}\label{l52}
Any $p$-regular partition has 1 more conormal node than it has normal nodes.
\end{lemma}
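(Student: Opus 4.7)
The plan is to reduce the statement to the classical combinatorial fact that for any partition the total number of addable nodes exceeds the total number of removable nodes by exactly one, and then to show that the cancellation procedure defining normal and conormal nodes preserves this difference residue by residue.

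First I would recall that, reading along the outer rim of the Young diagram of a partition $\la$ from top-right to bottom-left, addable and removable nodes strictly alternate, with addable nodes appearing at both ends of the sequence. Denoting by $a(\la)$ and $r(\la)$ the total numbers of addable and removable nodes of $\la$ respectively, this yields $a(\la)-r(\la)=1$.

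Next, I would invoke the construction of normal and conormal nodes recalled in the paragraph preceding the lemma. For each fixed residue $i$, list the $i$-addable and $i$-removable nodes of $\la$ from top to bottom, recording each one as an $A$ or an $R$. Repeatedly cancel adjacent $RA$ pairs; by definition, the remaining $R$'s are the $i$-normal nodes and the remaining $A$'s are the $i$-conormal nodes, so their numbers are $\epsilon_i(\la)$ and $\phi_i(\la)$ respectively. Each cancellation step removes exactly one $A$ and one $R$ of residue $i$, so $\phi_i(\la)-\epsilon_i(\la)$ equals the initial difference between the number of $i$-addable and the number of $i$-removable nodes of $\la$.

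Summing over all residues $i$ and using the first step then gives
\[\sum_i \phi_i(\la) - \sum_i \epsilon_i(\la) = a(\la) - r(\la) = 1,\]
which is exactly the desired statement. The only obstacle is articulating the rim-alternation claim cleanly; after that the cancellation observation is essentially definitional, and the argument is insensitive to the precise cancellation convention, since each cancellation step pairs off one addable and one removable node of the same residue.
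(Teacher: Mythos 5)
Your proof is correct and follows essentially the same route the paper indicates: the paper justifies the lemma by citing the corresponding fact for addable and removable nodes together with the observation that normal and conormal nodes arise by recursively cancelling removable/addable pairs. You have simply spelled out both ingredients (the rim-alternation argument for $a(\la)-r(\la)=1$ and the residue-by-residue cancellation bookkeeping) in full.
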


The following result connects branching and the Mullineux bijection (see \cite[Theorem 
4.7]{kMull} or \cite[Lemma 5.10]{m2}).

\begin{lemma}\label{l17}
For any partition $\lambda\in\Par_p(n)$ and for any residue $i$ we have $\epsilon_i(\lambda)=\epsilon_{-i}(\lambda^\Mull)$ and $\phi_i(\lambda)=\phi_{-i}(\lambda^\Mull)$.

If $\epsilon_i(\lambda)>0$ then $\tilde{e}_i(\lambda)^\Mull=\tilde{e}_{-i}(\lambda^\Mull)$, while if $\phi_i(\lambda)>0$ then $\tilde{f}_i(\lambda)^\Mull=\tilde{f}_{-i}(\lambda^\Mull)$.
\end{lemma}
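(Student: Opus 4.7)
The plan is to reduce the lemma to the following equivariance of the branching functors under tensoring with the sign representation: for every $F\s_n$-module $M$,
\[e_i(M\otimes\sgn)\cong (e_{-i}M)\otimes\sgn \quad\text{and}\quad f_i(M\otimes\sgn)\cong (f_{-i}M)\otimes\sgn.\]
Specialising to $M=D^\la$ and using $D^{\la^\Mull}\cong D^\la\otimes\sgn$ then delivers all four claims by transparent manipulations.

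The equivariance itself rests on two inputs. First, $\sgn_{\s_n}\da_{\s_{n-1}}\cong\sgn_{\s_{n-1}}$, so tensoring with $\sgn$ commutes with restriction (and, by adjunction, with induction). Second, and crucially, tensoring with $\sgn$ permutes the blocks of $\s_n$ by negating residues: the content of $\la^\Mull$ is obtained from that of $\la$ by the permutation $c_i\mapsto c_{-i}$ of residues modulo $p$. This content-negation property is the genuinely non-trivial ingredient — it is exactly what \cite[Theorem 4.7]{kMull} and \cite[Lemma 5.10]{m2} encode — and is the step I expect to be the main obstacle if one tried to prove the lemma from scratch. Given it, the equivariance follows by matching the block components on each side of the (co)restriction.

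For the first part, apply the equivariance to $D^\la$ to get $e_i D^{\la^\Mull}\cong (e_{-i}D^\la)\otimes\sgn$. Since $\otimes\sgn$ is an autoequivalence it preserves dimensions of endomorphism algebras, so by Lemma \ref{l39}(ii) with $r=1$,
\[\epsilon_i(\la^\Mull)=\dim\End_{\s_{n-1}}(e_i D^{\la^\Mull})=\dim\End_{\s_{n-1}}(e_{-i}D^\la)=\epsilon_{-i}(\la),\]
and the analogous computation with Lemma \ref{l40}(ii) in place of Lemma \ref{l39}(ii) gives $\phi_i(\la^\Mull)=\phi_{-i}(\la)$.

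For the second part, assume $\epsilon_i(\la)>0$; then $\epsilon_{-i}(\la^\Mull)>0$ by the first part, so by Lemma \ref{l39}(i) the simples $\hd(e_i D^\la)\cong D^{\tilde{e}_i(\la)}$ and $\hd(e_{-i}D^{\la^\Mull})\cong D^{\tilde{e}_{-i}(\la^\Mull)}$ are both defined. The equivariance applied with $-i$ in place of $i$ gives $e_{-i}D^{\la^\Mull}\cong(e_iD^\la)\otimes\sgn$; taking heads and using that $\hd$ commutes with $\otimes\sgn$ yields
\[D^{\tilde{e}_{-i}(\la^\Mull)}\cong D^{\tilde{e}_i(\la)}\otimes\sgn\cong D^{\tilde{e}_i(\la)^\Mull},\]
so $\tilde{e}_i(\la)^\Mull=\tilde{e}_{-i}(\la^\Mull)$. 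The corresponding identity for $\tilde{f}_i$ follows identically, using Lemma \ref{l40}(i) instead of Lemma \ref{l39}(i).
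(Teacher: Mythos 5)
Your argument is correct, and it is worth noting that the paper does not actually prove this lemma: it simply cites \cite[Theorem 4.7]{kMull} and \cite[Lemma 5.10]{m2}. So you are not competing with an in-paper proof but supplying one, and the route you take is the natural one. The key equivariance $e_i(M\otimes\sgn)\cong(e_{-i}M)\otimes\sgn$ does reduce everything to transparent bookkeeping once you have Lemmas \ref{l39}(i),(ii) and \ref{l40}(i),(ii): endomorphism dimensions give the numerical statements $\epsilon_i(\la^\Mull)=\epsilon_{-i}(\la)$, $\phi_i(\la^\Mull)=\phi_{-i}(\la)$, and heads give the statements about $\tilde e_i$, $\tilde f_i$, using that $\otimes\sgn$ is an autoequivalence and therefore preserves $\dim\End$ and commutes with taking heads.

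One small but real misattribution: the references you cite do not ``encode the content-negation property''; they state precisely the lemma you are proving. The content-negation fact itself is elementary and does not require Mullineux-type input. Conjugation sends a node $(a,b)$ of residue $b-a$ to a node $(b,a)$ of residue $a-b$, so $\la'$ has content $(c_0,c_{p-1},\dots,c_1)$ when $\la$ has content $(c_0,\dots,c_{p-1})$; and $S^\la\otimes\sgn\cong(S^{\la'})^*$ places $D^\la\otimes\sgn\cong D^{\la^\Mull}$ in the block of $\la'$. Flagging this makes your proof self-contained: the only external combinatorial inputs are the block-theoretic facts and Lemmas \ref{l39}, \ref{l40}, which the paper already records. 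The genuinely non-trivial content (Kleshchev's branching theory behind Lemmas \ref{l39}, \ref{l40}) is hidden there, not in the residue negation.

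In short: correct and well organised; it supplies a proof where the paper defers to references, by the expected sign-twist argument; just recalibrate which step is claimed to be deep.
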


We conclude by defining JS-partitions. A JS-partition is a partition $\la\in\Par_p(n)$ for which $D^\lambda\da_{\s_{n-1}}$ is irreducible. In view of Lemmas \ref{l45} and \ref{l39} a $p$-regular partition is a JS-partition if and only if it has exactly one normal node. By Lemma \ref{l52} we then also have that JS-partitions have exactly 2 conormal nodes. JS-partitions will play a special role in this paper. They have a nice combinatorial description, see \cite[Section 4]{JS} and  \cite[Theorem D]{k2}:

\begin{lemma}\label{L221119}
Let $\lambda=(a_1^{b_1},\ldots,a_h^{b_h})$ with $a_1>a_2>\ldots>a_h\geq 1$ and $1\leq b_i\leq p-1$ for $1\leq i\leq h$. Then $\lambda$ is a JS-partition if and only if $a_i-a_{i+1}+b_i+b_{i+1}\equiv 0\Md p$ for each $1\leq i<h$.
\end{lemma}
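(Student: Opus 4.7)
The plan is to use the combinatorial characterization of $i$-normal nodes via bracket matching applied to the top-to-bottom sequence of $p$-residues of addable and removable nodes of $\la$, together with the fact (recalled just before the lemma) that $\la$ is JS precisely when it has a single normal node. I would first list the addable and removable nodes: setting $B_i:=b_1+\cdots+b_i$, the removable nodes are $R_i=(B_i,a_i)$ of residue $a_i-B_i\Md p$ for $1\leq i\leq h$, and the addable nodes are $A_0=(1,a_1+1)$ of residue $a_1$, $A_i=(B_i+1,a_{i+1}+1)$ of residue $a_{i+1}-B_i$ for $1\leq i\leq h-1$, and $A_h=(B_h+1,1)$ of residue $-B_h$. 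From top to bottom they interleave as $A_0,R_1,A_1,R_2,\ldots,R_h,A_h$, and a short calculation gives
\[\mathrm{res}(A_{i-1})-\mathrm{res}(R_{i+1})\equiv a_i-a_{i+1}+b_i+b_{i+1}\Md p,\]
so the lemma's condition is equivalent to $\mathrm{res}(A_{i-1})\equiv\mathrm{res}(R_{i+1})\Md p$ for each $1\leq i\leq h-1$.

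For the ``if'' direction I would assume these congruences and, for each $k\in\{2,\ldots,h\}$, pair $R_k$ with the addable $A_{k-2}$ strictly above having the same residue. I would verify that within each residue class these pairs nest correctly, so that in the standard bracket matching every $R_k$ with $k\geq 2$ cancels, leaving $R_1$ as the unique unmatched removable. Consequently $\la$ has exactly one normal node and is a JS-partition.

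For the ``only if'' direction I would argue contrapositively: if some congruence fails, say at a smallest index $i$, then the cascade of matches from the ``if'' direction breaks at $R_{i+1}$, and a close tracking of the bracket matching shows that either $R_{i+1}$ itself or some $R_k$ with $k>i$ stays unmatched, producing a second normal node besides $R_1$. The main obstacle is to rule out accidental residue coincidences, namely to show that no other, more distant addable accidentally shares the residue of $R_{i+1}$ and rescues the match. A clean way to dispatch this is a careful bookkeeping of the residues $a_{j+1}-B_j$ and $a_k-B_k$ using the bound $1\leq b_j\leq p-1$ (which in particular forces adjacent nodes in the interleaved list to have distinct residues); alternatively, one can run an induction on $h$ after removing the bottom block $(a_h^{b_h})$ and separately analyzing the contribution of $R_h$ and $A_h$.
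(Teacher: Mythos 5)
The paper does not prove this lemma; it is cited to Jantzen--Seitz \cite[Section 4]{JS} and Kleshchev \cite[Theorem D]{k2}, so there is no in-paper proof to compare against. Your bracket-matching strategy along the rim is a perfectly reasonable route, and the setup is correct: the interleaving $A_0,R_1,A_1,\dots,R_h,A_h$ and the identity $\mathrm{res}(A_{i-1})-\mathrm{res}(R_{i+1})\equiv a_i-a_{i+1}+b_i+b_{i+1}$ both check out, and $R_1$ is indeed always normal because $A_0$ is the only node above it and $\mathrm{res}(A_0)-\mathrm{res}(R_1)=b_1\not\equiv 0$.

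However, the parenthetical claim that ``$1\le b_j\le p-1$ forces adjacent nodes in the interleaved list to have distinct residues'' is false, and it is exactly the half you lean on. The pairs $(A_{i-1},R_i)$ always have distinct residues (they differ by $b_i$), but $(R_i,A_i)$ differ by $a_i-a_{i+1}$, which $p$-regularity does not control; e.g.\ $\lambda=(6^2,1^3)$ with $p=5$ is JS yet $R_1$ and $A_1$ share residue $4$. So the worry about accidental coincidences you raise for the ``only if'' direction is real in both directions, and your suggested resolution starts from a wrong premise.

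The gap is fixable, and more cleanly than you suggest. For the ``if'' direction one should check that the $\{A_{k-2},R_k\}$ pairs do not \emph{nest} but are \emph{consecutive} in each residue class: under the hypothesis $\mathrm{res}(A_{j-1})\equiv\mathrm{res}(A_j)+b_{j+1}$ and $\mathrm{res}(R_j)\equiv\mathrm{res}(R_{j+1})+b_j$, so consecutive $A$'s and consecutive $R$'s have distinct residues, and together with $\mathrm{res}(A_{i-1})\neq\mathrm{res}(R_i)$ this shows nothing of the relevant residue lies strictly between $A_{k-2}$ and $R_k$; hence each class reads, top to bottom, as an alternating $+ - + - \cdots$ (starting with $-$ exactly when it contains $R_1$), which has at most one unmatched $-$. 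For the ``only if'' direction, take $m$ minimal with $\mathrm{res}(A_{m-1})\neq\mathrm{res}(R_{m+1})$ and look at the class $c$ of $R_{m+1}$. Among the addables above $R_{m+1}$ in class $c$, neither $A_m$ (differs by $b_{m+1}$) nor $A_{m-1}$ (by minimality of $m$'s failure) lies in $c$, and for every $A_j\in c$ with $j\le m-2$ the partial hypothesis puts $R_{j+2}\in c$ as well and above $R_{m+1}$. This sets up an injection from the $+$'s above $R_{m+1}$ into the $-$'s above $R_{m+1}$, so $R_{m+1}$ is itself unmatched, giving a second normal node besides $R_1$. This counting replaces the ``careful bookkeeping of residues'' you describe and avoids the false adjacency claim entirely.
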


For $p=2$ this simplifies to:

\begin{lemma}\label{L151119}
Let $p=2$ and $\lambda\in\Par_2(n)$. Then $\lambda$ is a JS-partition if and only if all parts of $\la$ have the same parity.
\end{lemma}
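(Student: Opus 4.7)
The plan is to derive Lemma \ref{L151119} as a direct specialization of the general criterion in Lemma \ref{L221119}. The only observation needed is that for $p=2$, the constraint $1 \leq b_i \leq p-1$ forces every multiplicity $b_i$ to equal $1$, which simplifies the congruence condition dramatically.

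First I would write $\la \in \Par_2(n)$ as $\la = (a_1^{b_1}, \ldots, a_h^{b_h})$ with $a_1 > a_2 > \ldots > a_h \geq 1$ in the standard form required by Lemma \ref{L221119}. Since $\la$ is 2-regular, no part is repeated, so the condition $1 \leq b_i \leq p-1 = 1$ yields $b_i = 1$ for every $i$. In particular, the distinct parts $a_1, \ldots, a_h$ are exactly the parts of $\la$.

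Next I would substitute $p=2$ and $b_i = b_{i+1} = 1$ into the JS-congruence $a_i - a_{i+1} + b_i + b_{i+1} \equiv 0 \pmod p$. This becomes $a_i - a_{i+1} + 2 \equiv 0 \pmod 2$, equivalently $a_i \equiv a_{i+1} \pmod 2$. By Lemma \ref{L221119}, $\la$ is a JS-partition if and only if this holds for each $1 \leq i < h$, which is exactly the statement that consecutive distinct parts have the same parity, hence that all parts of $\la$ share a common parity.

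There is essentially no obstacle here: the entire content of the lemma is the combinatorial simplification of the general JS-criterion when $p=2$ forces every $b_i$ to equal $1$. The only thing to be careful about is noting that 2-regularity automatically gives $b_i = 1$, so no separate case analysis is needed.
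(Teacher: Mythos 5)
Your proof is correct and is precisely the intended derivation: the paper states Lemma \ref{L151119} immediately after Lemma \ref{L221119} with the remark ``For $p=2$ this simplifies to,'' leaving exactly the specialization you carry out (all $b_i=1$, so the congruence reduces to $a_i\equiv a_{i+1}\pmod 2$). Same approach, just spelled out.
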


\subsection{Permutation modules}

For any composition $\la$ of $n$ let $\s_\la=\s_{\la_1}\times\s_{\la_2}\times\ldots\subseteq\s_n$ be the corresponding Young subgroup and define $M^\la:=\1\ua_{\s_\la}^{\s_n}$. Clearly the modules $M^\la$ are self-dual, as is any permutation module. Note that if $\la$ and $\mu$ can be obtained from each other by rearranging their parts, then $M^\la\cong M^\mu$. So from now on we will assume that $\la\in\Par(n)$ is a partition. In this case let $S^\la$ be the Specht module indexed by $\la$. It is well known that $S^\la\subseteq M^\la$ (this holds for example by comparing standard bases of $M^\la$ and $S^\la$). Further let $Y^\la$ be the corresponding Young module, that is the module given by the following lemma (see \cite{JamesArcata} and \cite[\S4.6]{Martin}). In the lemma $\rhd$ denotes the dominance order.

\begin{lemma}{} \label{LYoung}
There exist indecomposable $F \s_n$-modules $\{Y^\lambda\mid \lambda\in\Par(n)\}$ such that $M^\lambda\cong Y^\lambda\,\oplus\, \bigoplus_{\mu\rhd\lambda}(Y^\mu)^{\oplus m_{\mu,\lambda}}$ for some $m_{\mu,\lambda}\geq 0$. Moreover, $Y^\lambda$ can be characterized as the unique direct summand of $M^\lambda$ containing  $S^\lambda$. Further $Y^\lambda$ is self-dual for any $\lambda\in\Par(n)$.
\end{lemma}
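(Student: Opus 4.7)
The plan is to use the Specht filtration of the permutation module $M^\la$ together with the Krull--Schmidt theorem, proceeding by descending induction on the dominance order on $\Par(n)$.

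First, I would recall the standard fact that $M^\la$, viewed as the permutation module on the set of $\la$-tabloids, admits a filtration whose successive quotients are Specht modules $S^\mu$ for partitions $\mu\unrhd\la$, each appearing with multiplicity equal to the Kostka number $K_{\mu\la}$; in particular $S^\la$ occurs exactly once, at the bottom, yielding the inclusion $S^\la\subseteq M^\la$ noted in the excerpt. In fact this gives $\dim\Hom_{\s_n}(S^\la,M^\la)=1$, so that up to scalar there is a single copy of $S^\la$ sitting inside $M^\la$. By Krull--Schmidt, decompose $M^\la$ as a sum of indecomposable summands, and isolate the summand containing the image of the distinguished copy of $S^\la$; this summand is defined to be $Y^\la$.

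Next, I would establish the decomposition $M^\la\cong Y^\la\oplus\bigoplus_{\mu\rhd\la}(Y^\mu)^{\oplus m_{\mu,\la}}$ by descending induction on $\la$ in the dominance order. The base case $\la=(n)$ is immediate since $M^{(n)}\cong\mathbf{1}$ is itself indecomposable. For the inductive step, any indecomposable summand $Z$ of $M^\la$ distinct from the chosen $Y^\la$-summand contains no embedded copy of $S^\la$, and one argues, using James's description of $\Hom_{\s_n}(M^\mu,M^\la)$ via semistandard $\la$-tableaux of content $\mu$, that $Z$ must also be a summand of some $M^\mu$ with $\mu\rhd\la$; the inductive hypothesis then identifies $Z$ with some $Y^\nu$ where $\nu\unrhd\mu\rhd\la$. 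Uniqueness of the assignment $\la\mapsto Y^\la$ up to isomorphism follows from Krull--Schmidt: an isomorphism $Y^\mu\cong Y^\nu$ would place the common module as a summand of both $M^\mu$ and $M^\nu$, forcing $\mu\unrhd\nu$ and $\nu\unrhd\mu$, hence $\mu=\nu$.

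For self-duality: $M^\la$ is self-dual as a permutation module (the basis of tabloids is permuted by $\s_n$), so the duality functor $\Hom_F(-,F)$ permutes the indecomposable summands of $M^\la$ by Krull--Schmidt. Thus $(Y^\la)^*\cong Y^\mu$ for some $\mu$ with $\mu\unrhd\la$. Applying the same reasoning to $Y^\mu$ inside the self-dual module $M^\mu$ yields $(Y^\mu)^*\cong Y^\xi$ for some $\xi\unrhd\mu$; since $(Y^\mu)^*\cong Y^\la$, the uniqueness established above forces $\xi=\la$, whence $\la\unrhd\mu$. Combined with $\mu\unrhd\la$ this gives $\mu=\la$, so $(Y^\la)^*\cong Y^\la$.

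The main obstacle is the inductive step in the second paragraph: ruling out indecomposable summands of $M^\la$ indexed by partitions $\nu\not\unrhd\la$. This is where the combinatorics of semistandard tableaux (via James's Homomorphism theorem) does the essential work, translating the algebraic constraint on summands of $M^\la$ into the order-theoretic condition on Kostka numbers.
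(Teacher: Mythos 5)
The paper offers no proof of this lemma; it is stated as background and cited from \cite{JamesArcata} and \cite[\S4.6]{Martin}, so there is no internal argument to compare against. On its own terms your sketch has the right outer shape -- Krull--Schmidt plus descending induction in the dominance order, with self-duality of $M^\la$ transferring to $Y^\la$ by a pigeonhole argument -- and the final self-duality paragraph is correct once existence and uniqueness are in place. But the two steps that carry all the weight are asserted rather than proved, and neither follows from the ingredients you cite.

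First, $\dim\Hom_{\s_n}(S^\la,M^\la)=1$ does not follow from the Specht filtration of $M^\la$; one would in addition need $\Hom_{\s_n}(S^\la,S^\mu)=0$ for every $\mu\rhd\la$, which is far from automatic in positive characteristic. Nor does James's Submodule Theorem by itself pin down the summand: for $p$-singular $\la$ the invariant bilinear form can vanish identically on $S^\la\subseteq M^\la$, so the Submodule Theorem dichotomy does not force the canonical copy of $S^\la$ into a single indecomposable constituent of a Krull--Schmidt decomposition. Thus the very well-definedness of $Y^\la$ as ``the summand containing $S^\la$'' is a nontrivial claim that needs its own argument.

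Second, the inductive step -- that any indecomposable summand $Z$ of $M^\la$ other than $Y^\la$ is already a summand of some $M^\mu$ with $\mu\rhd\la$ -- is precisely the substance of the lemma, and the appeal to ``James's description of $\Hom_{\s_n}(M^\mu,M^\la)$ via semistandard tableaux'' does not deliver it. James's semistandard basis theorem describes $\Hom_{\s_n}(M^\mu,S^\la)$, not $\Hom_{\s_n}(M^\mu,M^\la)$, and only under a characteristic hypothesis ($p\ne 2$ or $\la$ is $2$-regular); and in any case knowing a basis of a Hom-space does not by itself produce a direct-sum splitting of $Z$ off of some $M^\mu$. The known proofs use genuinely heavier input: the theory of $p$-permutation (trivial-source) modules and the Brauer construction, or tilting theory for the Schur algebra transported along the Schur functor, or a characteristic-free count of $\dim\Hom_{F\s_n}(M^\la,M^\mu)$ compared against the Kostka matrix together with the identification of the indecomposable summands of $M^{(1^n)}=F\s_n$ as the projective indecomposables. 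Some such input is unavoidable -- especially for $p=2,3$, where Specht-filtration multiplicity arguments (e.g.\ Hemmer--Nakano) are not available -- so the sketch has a genuine gap at exactly the step you yourself flag as ``the main obstacle.''
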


The above lemma will be used in Section \ref{spm} to study the structure of certain small permutation modules. The structure of such permutation modules, together with the next lemma, will then be used in Sections \ref{geq2n} to \ref{sJS} to study the submodule structure of $\End_F(V)$, for $V$ a simple $F\s_n$- or $F A_n$-module using the next lemma, which holds by Frobenious reciprocity. For any partition $\al\in\Par(n)$ let $A_\al:=A_n\cap\s_\al$. It is easy, by Mackey induction-reduction theorem, to check that if $\al\not=(1^n)$ then $M^\al\da_{A_n}\cong\1\ua_{A_\al}^{A_n}$. 

\begin{lemma}\label{l2}
For any $F\s_n$-module $V$ and any $\alpha\in\Par(n)$ we have that
\[\dim\Hom_{\s_n}(M^\alpha,\End_F(V))=\dim\End_{\s_\alpha}(V\da_{\s_\alpha}).\]
Similarly for any $F A_n$-module $W$ and $\al\not=(1^n)$ we have that
\[\dim\Hom_{A_n}(M^\alpha,\End_F(V))=\dim\End_{A_\alpha}(V\da_{A_\alpha}).\]
\end{lemma}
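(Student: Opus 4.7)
The plan is to obtain both identities from a single application of Frobenius reciprocity, since $M^\alpha$ is by definition the induction of the trivial module from $\s_\alpha$ to $\s_n$, and the text has just recorded that $M^\alpha\da_{A_n}\cong\1\ua_{A_\alpha}^{A_n}$ when $\alpha\neq(1^n)$.

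For the symmetric-group statement I would write
\[\Hom_{\s_n}(M^\alpha,\End_F(V))=\Hom_{\s_n}(\1\ua_{\s_\alpha}^{\s_n},\End_F(V))\cong\Hom_{\s_\alpha}(\1,\End_F(V)\da_{\s_\alpha})\]
using the adjunction between induction and restriction. At the level of underlying vector spaces one has the tautological equality $\End_F(V)\da_{\s_\alpha}=\End_F(V\da_{\s_\alpha})$, and since the $\s_n$-action on $\End_F(V)$ is by conjugation, this is in fact an isomorphism of $F\s_\alpha$-modules. Finally, homomorphisms out of the trivial module pick out fixed vectors, and an endomorphism of $V\da_{\s_\alpha}$ is fixed by conjugation precisely when it commutes with the $\s_\alpha$-action, so
\[\Hom_{\s_\alpha}(\1,\End_F(V\da_{\s_\alpha}))=\End_F(V\da_{\s_\alpha})^{\s_\alpha}=\End_{\s_\alpha}(V\da_{\s_\alpha}),\]
which chains with the previous displays to yield the first identity.

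For the alternating-group statement I would invoke the isomorphism $M^\alpha\da_{A_n}\cong\1\ua_{A_\alpha}^{A_n}$ recalled just above the lemma and run exactly the same three-step argument with $A_n$, $A_\alpha$, and the $FA_n$-module $W$ in place of $\s_n$, $\s_\alpha$, and $V$. There is essentially no obstacle here: the result is a direct consequence of Frobenius reciprocity together with the canonical identification $\End_F(V)\da_{H}\cong\End_F(V\da_H)$ of conjugation modules, valid for any subgroup $H$.
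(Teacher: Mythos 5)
Your proof is correct and follows exactly the route the paper intends: the author states just before the lemma that it "holds by Frobenious reciprocity," using the identification $M^\al\da_{A_n}\cong\1\ua_{A_\al}^{A_n}$ recalled there, and gives no further details. Your write-up simply spells out the three standard steps (adjunction, $\End_F(V)\da_H\cong\End_F(V\da_H)$ as conjugation modules, and fixed points of the trivial module) that the paper leaves implicit.
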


The following lemma will play a crucial role in Sections \ref{sns} and \ref{ds} to prove that in most cases $V\otimes W$ is not simple, see \cite[Lemma 5.3]{m2} for a proof (for $p=2$ and $H=A_n$ the proof is similar).

\begin{lemma}\label{l15}
Let $H=\s_n$ or $H=A_n$ and let $V$ and $W$ be $FH$-modules. For $\alpha\in\Par(n)$ 
let $m_{V^*,\alpha},m_{W,\alpha}\in\Z_{\geq 0}$ be such that there exist $\phi^\alpha_1,\ldots,\phi^\alpha_{m_{V^*,\alpha}}\in\Hom_H(M^\alpha,V^*)$ with $\phi^\alpha_1|_{S^\alpha},\ldots,\phi^\alpha_{m_{V^*,\alpha}}|_{S^\alpha}$ linearly independent and similarly there exist $\psi^\alpha_1,\ldots,\psi^\alpha_{m_{W,\alpha}}\in\Hom_H(M^\alpha,W)$ with $\psi^\alpha_1|_{S^\alpha},\ldots,\psi^\alpha_{m_{W,\alpha}}|_{S^\alpha}$ linearly independent. If $H=\s_n$ let $A:=\Par_p(n)$. If $H=A_n$ and $p=2$ let $A:=\Par_2(n)\setminus\Parinv_2(n)$. If $H=A_n$ and $p\geq 3$ let $A$ be the set of partitions $\al\in\Par_p(n)\setminus\Parinv_p(n)$ with $\al>\al^\Mull$. Then
\[\dim\Hom_H(V,W)\geq\sum_{\alpha\in A}
m_{V^*,\alpha}m_{W,\alpha}.\]
\end{lemma}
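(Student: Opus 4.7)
The plan is to construct explicitly $\sum_{\alpha\in A}m_{V^*,\alpha}m_{W,\alpha}$ elements of $\Hom_H(V,W)$ and to prove they are linearly independent. Using the self-duality of the permutation module $M^\alpha$, I dualize each $\phi^\alpha_j:M^\alpha\to V^*$ into a morphism $\tilde\phi^\alpha_j:V\to M^\alpha$, and then form the composites
\[
\chi^\alpha_{j,k}:=\psi^\alpha_k\circ\tilde\phi^\alpha_j\in\Hom_H(V,W).
\]
The key compatibility is that dualizing the inclusion $S^\alpha\hookrightarrow M^\alpha$ gives a surjection $M^\alpha\cong(M^\alpha)^*\twoheadrightarrow(S^\alpha)^*$, and the composite $V\to M^\alpha\twoheadrightarrow(S^\alpha)^*$ induced by $\tilde\phi^\alpha_j$ is precisely the transpose of $\phi^\alpha_j|_{S^\alpha}$. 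Hence the hypothesized linear independence of $\{\phi^\alpha_j|_{S^\alpha}\}_j$ inside $\Hom_H(S^\alpha,V^*)$ translates into linear independence of the induced maps $V\to(S^\alpha)^*$; symmetrically the hypothesis on $\psi^\alpha_k|_{S^\alpha}$ controls how each $\chi^\alpha_{j,k}$ exits $S^\alpha$ into $W$.

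The next task is to verify linear independence of the $\chi^\alpha_{j,k}$. For fixed $\alpha$, this is the easier half: combining the linear independence of restrictions to $S^\alpha$ with the fact that $S^\alpha$ has simple head $D^\alpha$ (respectively the simple $E^\alpha$ appearing upon restriction when $H=A_n$, as recorded in \S\ref{sim}), any relation $\sum_{j,k}c^\alpha_{j,k}\chi^\alpha_{j,k}=0$ can be forced to vanish by pairing with the head contribution. To handle independence across different $\alpha$, the definition of $A$ plays a crucial role: for $H=\s_n$ the set $A=\Par_p(n)$ indexes pairwise non-isomorphic simples $D^\alpha$; for $H=A_n$ with $p=2$, excluding $\Parinv_2(n)$ removes the splitting partitions; and for $H=A_n$ with $p\geq 3$, further restricting to $\alpha>\alpha^\Mull$ selects a unique representative from each Mullineux pair, so again the simples $E^\alpha$ for $\alpha\in A$ are pairwise non-isomorphic.

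The main obstacle is this cross-$\alpha$ independence. Starting from a hypothetical relation $\sum_{\alpha,j,k}c^\alpha_{j,k}\chi^\alpha_{j,k}=0$ and an arbitrary $\alpha_0\in A$, one must construct a projection of $\Hom_H(V,W)$ that isolates the $E^{\alpha_0}$-component: contributions from $\alpha\neq\alpha_0$ should vanish because the composites $\chi^\alpha_{j,k}$ land in submodules whose $E^{\alpha_0}$-head is trivial, while the $\alpha_0$-contributions are controlled by the bilinear pairing coming from $M^{\alpha_0}\cong(M^{\alpha_0})^*$ together with the head of $S^{\alpha_0}$. For $H=\s_n$ this argument is carried out in \cite[Lemma~5.3]{m2}, which the paper cites explicitly; for $H=A_n$ the argument should require only minor modifications, using that for $\alpha\in A$ the restriction $S^\alpha\!\da_{A_n}$ remains indecomposable with simple head $E^\alpha$ and hence plays exactly the same role as $S^\alpha$ does for $\s_n$.
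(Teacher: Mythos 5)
Your construction of the maps $\chi^\alpha_{j,k}=\psi^\alpha_k\circ\tilde\phi^\alpha_j$ via self-duality of $M^\alpha$, and your observation that $A$ is chosen precisely so that the relevant simples ($D^\alpha$ for $\s_n$, a single representative $E^\alpha$ of each Mullineux pair for $A_n$) are pairwise non-isomorphic, both match the intended argument. The paper supplies no self-contained proof here --- it points to \cite[Lemma 5.3]{m2} and remarks that the $p=2$, $H=A_n$ case is similar --- and since you ultimately defer the hard linear-independence step to that same citation, your proposal follows essentially the same route as the paper.

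One caution: the heuristic you offer for cross-$\alpha$ independence --- that contributions from $\alpha\neq\alpha_0$ ``should vanish because the composites $\chi^\alpha_{j,k}$ land in submodules whose $E^{\alpha_0}$-head is trivial'' --- is not correct as stated. The map $\chi^\alpha_{j,k}$ factors through $M^\alpha$, and $M^\alpha$ has composition factors $D^\gamma$ for every $\gamma\unrhd\alpha$; so when $\alpha_0\rhd\alpha$ the image of $\chi^\alpha_{j,k}$ in $W$ can perfectly well have $D^{\alpha_0}$ (resp.\ $E^{\alpha_0}$) in its head, and a vanishing-of-heads argument cannot by itself separate the different $\alpha$. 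The actual separation has to exploit the dominance order together with the interaction of $S^\alpha$ with the invariant bilinear form on $M^\alpha$. Since you explicitly hand this step off to \cite[Lemma 5.3]{m2} rather than rely on the heuristic, this is a misconception to correct rather than a hole in your argument, but it would not survive if you tried to flesh it out as written.
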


Since we will often work with permutation modules $M^\la$ with $\la=(n-m,\mu)=(n-m,\mu_1,\mu_2,\ldots)$ for certain fixed partitions $\mu\in\Par(m)$ with $m$ small, we will write $M_\mu$, $S_\mu$ and $Y_\mu$ for $M^{(n-m,\mu)}$, $S^{(n-m,\mu)}$ and $Y^{(n-m,\mu)}$ respectively, provided $(n-m,\mu)\in\Par(n)$. Similarly, if $(n-m,\mu)\in\Par_p(n)$ is $p$-regular, we will write $D_\mu$ for the simple module $D^{(n-m,\mu)}$.

\section{Branching recognition}\label{sbr}

In this section we will show that under certain assumptions on $n$, if $\la\in\Par_p(n)$ is of certain special forms, then (some) restrictions $D^\la\da_{\s_{n-m}}$ have composition factors indexed by partitions with similar forms as $\la$. These results will be used in Section \ref{s1} to show existence of homomorphisms $M_\mu\to\End_F(D)$ which do not vanish on $S_\mu$ (for certain small partitions $\mu$), where $D$ is a simple $F\s_n$- or $F A_n$-module.

\begin{lemma}\label{L5}
Let $p=2$ and $\la\in\Par_2(n)$. If $n>h(\la)(h(\la)+1)/2$ there exists a composition factor $D^\mu$ of $D^\la\da_{\s_{n-1}}$ with $h(\mu)=h(\la)$. In particular $D^{(h(\la),h(\la)-1,\ldots,1)}$ is a composition factor of $D^\la\da_{\s_{h(\la)(h(\la)+1)/2}}$.
\end{lemma}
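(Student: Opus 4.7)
The plan is to exhibit an explicit $i$-normal node $A$ of $\la$ such that $\la\setminus A$ is $2$-regular with $h(\la)$ rows, and then invoke Lemmas \ref{l45} and \ref{l56}; the ``in particular'' assertion will then follow by iteration.

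Write $h:=h(\la)$ and $\la_{h+1}:=0$. Since $\la$ is $2$-regular its parts are strictly decreasing, and since $n=|\la|>h(h+1)/2=1+2+\cdots+h$ the gaps $\la_k-\la_{k+1}$ cannot all equal $1$ (otherwise $\la=(h,h-1,\ldots,1)$, forcing $n=h(h+1)/2$). Let $r\in\{1,\ldots,h\}$ be the smallest index with $\la_r-\la_{r+1}\geq 2$, so that $\la_k=\la_r+(r-k)$ for every $1\leq k\leq r$. Set $A:=(r,\la_r)$ and $i:=\la_r-r\pmod 2$.

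The heart of the argument is a residue computation. For each $1\leq k\leq r$ the node $(k,\la_k)$ is removable of residue $\la_k-k\equiv\la_r+r-2k\equiv i\pmod 2$, while the addable nodes of $\la$ in rows $1,\ldots,r$ are exactly $(k,\la_k+1)$ for $1\leq k\leq r$ (each addable since $\la_k+1\leq\la_{k-1}$, with the convention $\la_0:=\infty$), all of residue $\la_k+1-k\equiv i+1\pmod 2$. Hence in the residue-$i$ sequence of $\la$ read from top to bottom, the first $r$ entries are the removable nodes $(1,\la_1),\ldots,(r,\la_r)$ with no addable nodes interleaved; in the bracket-matching rule defining normality none of these $r$ consecutive $R$'s can be cancelled, so $A$ is $i$-normal. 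Moreover $\la\setminus A=(\la_1,\ldots,\la_{r-1},\la_r-1,\la_{r+1},\ldots,\la_h)$ has distinct parts (when $r\geq 2$ one has $\la_{r-1}=\la_r+1\not=\la_r-1$, and $\la_r-1\geq\la_{r+1}+1>\la_{r+1}$ by the choice of $r$), and $\la_r\geq 2$ in every case ($r=h$ forces $\la_h\geq 2$, while $r<h$ gives $\la_r\geq\la_{r+1}+2\geq 3$), so $h(\la\setminus A)=h$. Lemma \ref{l56} then gives $D^{\la\setminus A}$ as a composition factor of $e_iD^\la$, and thus of $D^\la\da_{\s_{n-1}}$ by Lemma \ref{l45}.

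For the second assertion, iterate: the first part applied to any $\mu\in\Par_2(m)$ with $h(\mu)=h$ and $m>h(h+1)/2$ produces $\mu'\in\Par_2(m-1)$ with $h(\mu')=h$ and $D^{\mu'}$ a composition factor of $D^\mu\da_{\s_{m-1}}$. Starting from $\la$ and applying this $n-h(h+1)/2$ times one arrives at a $2$-regular partition of $h(h+1)/2$ with exactly $h$ distinct positive parts; since $1+2+\cdots+h=h(h+1)/2$, the only possibility is $(h,h-1,\ldots,1)$. The principal obstacle is simply the residue computation above: it is the equality of consecutive gaps $\la_1-\la_2=\cdots=\la_{r-1}-\la_r=1$ (forced by minimality of $r$) that aligns the removable nodes in rows $1,\ldots,r$ to the same residue $i$ while the corresponding addable nodes all have residue $i+1$, which in turn is precisely what makes $A$ normal by bracket matching.
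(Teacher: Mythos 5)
Your proof is correct and takes essentially the same route as the paper's: pick the smallest $r$ with $\la_r-\la_{r+1}\geq 2$ (which exists precisely because $n>h(h+1)/2$), remove the node $(r,\la_r)$, and invoke Lemma \ref{l56} together with Lemma \ref{l45}; the paper's proof is terser in that it simply asserts that $(r,\la_r)$ is normal, whereas you spell out the residue computation showing that rows $1,\ldots,r$ contribute an uninterrupted initial run of removable $i$-nodes (and no addable $i$-nodes), which is indeed the reason $(r,\la_r)$ survives the signature reduction.
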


\begin{proof}
Note that for any $\la\in\Par_2(n)$ we have that $n\geq h(\la)(h(\la)+1)/2$, with equality holding if and only if $\la=(h(\la),h(\la)-1,\ldots,1)$. So the second part of the lemma follows from the first. Assume now that $n>h(\la)(h(\la)+1)/2$. Let $1\leq k\leq h(\la)$ minimal such that $\la_k\geq\la_{k+1}+2$ (such a $k$ exists since $n>h(\la)(h(\la)+1)/2$). Then $(k,\la_k)$ is normal and $\mu=\la\setminus(k,\la_k)\in\Par_2(n-1)$ with $h(\mu)=h(\la)$. The lemma then follows from Lemma \ref{l56}.
\end{proof}

\begin{lemma}\label{l34}
Let $p=3$, $n\geq 9$ and $\la\in\Par_3(n)$. If $h(\la),h(\la^\Mull)\geq 4$ then $D^\la\da_{\s_{n-1}}$ has a composition factor $D^\mu$ with $h(\mu),h(\mu^\Mull)\geq 4$.
\end{lemma}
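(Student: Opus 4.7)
By Lemmas \ref{l45} and \ref{l56}, to prove the statement it suffices to exhibit a normal node $A$ of $\la$ of some residue $i$ such that $\mu:=\la\setminus A$ is $3$-regular with $h(\mu)\geq 4$ and $h(\mu^\Mull)\geq 4$; then $D^\mu$ is a composition factor of $e_iD^\la\subseteq D^\la\da_{\s_{n-1}}$. When $A$ is the $i$-good node of $\la$, Lemma \ref{l17} gives $\mu^\Mull=\tilde e_{-i}(\la^\Mull)$. The crucial observation is that the node $(4,1)$ has residue $1-4\equiv 0\pmod 3$, so it cannot equal an $i$-good node for any $i\in\{1,2\}$.

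If some $\epsilon_i(\la)\geq 1$ with $i\in\{1,2\}$, I take $A$ to be the $i$-good node of $\la$: then $A\neq(4,1)$ in $\la$, so $h(\mu)\geq 4$, and by Lemma \ref{l17} the corresponding removed node in $\la^\Mull$ has residue $-i\in\{1,2\}$ and so is likewise not $(4,1)$ in $\la^\Mull$, giving $h(\mu^\Mull)\geq 4$. It thus remains to handle $\epsilon_1(\la)=\epsilon_2(\la)=0$; by Lemma \ref{l17} we also have $\epsilon_1(\la^\Mull)=\epsilon_2(\la^\Mull)=0$, so every normal node of $\la$ or of $\la^\Mull$ has residue $0$. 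Using the $0$-good of $\la$ works unless either this node equals $(4,1)$ in $\la$ (forcing $h(\la)=4,\la_4=1$) or the $0$-good of $\la^\Mull$ equals $(4,1)$ in $\la^\Mull$.

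To handle this remaining sub-case, I would first check, via the combinatorial description of JS-partitions (Lemma \ref{L221119}), that no JS-partition with $h=4$ and last part $1$ has $(4,1)$ as its unique normal node: one writes the partition as $(a_1^{b_1},\ldots,a_{k-1}^{b_{k-1}},1^{b_k})$ with $\sum b_j=4$, and for each of the admissible sequences $(b_1,\ldots,b_k)\in\{(2,2),(1,2,1),(2,1,1),(1,1,2),(1,1,1,1)\}$ one verifies that the JS-relations force the existence of a residue-$0$ addable node in some row $\leq 3$ which cancels the removable at $(4,1)$ in the signature computation. Consequently, whenever the $0$-good of $\la$ (resp.\ $\la^\Mull$) equals $(4,1)$, the partition $\la$ (resp.\ $\la^\Mull$) is not JS, so it has $\epsilon_0\geq 2$.

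Finally, the isomorphism $D^\la\otimes\sgn\cong D^{\la^\Mull}$ combined with $e_0(V\otimes\sgn)\cong e_0(V)\otimes\sgn$ (as $-0\equiv 0\pmod 3$) gives $[e_0D^\la:D^\mu]=[e_0D^{\la^\Mull}:D^{\mu^\Mull}]$ for every composition factor; combined with the multiplicity formula of Lemma \ref{l56}, this produces a bijection between residue-$0$ normal nodes of $\la$ and those of $\la^\Mull$ matching the $j$-th from the top on each side. I take $A$ to be the topmost residue-$0$ normal of $\la$, which lies in a row $\leq 3$ by the previous paragraph; by the bijection the corresponding removed node of $\la^\Mull$ is the topmost $0$-normal of $\la^\Mull$, also in a row $\leq 3$, giving $h(\mu)=h(\mu^\Mull)=4$. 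The main remaining obstacle is verifying $3$-regularity of $\mu=\la\setminus A$: if removing the topmost $0$-normal creates three equal parts, one works instead with the next $0$-normal below, for which the same bijection applies (and eventual $3$-regularity is ensured by the structure of $\la$ under the sub-case hypotheses).
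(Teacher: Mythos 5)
Your overall strategy (exhibit a normal node $A$ of $\la$ such that $\la\setminus A$ is $3$-regular with $h(\la\setminus A)\geq 4$ and $h((\la\setminus A)^\Mull)\geq 4$) is genuinely different from the paper's, which tracks the first column of the Mullineux symbol through a twelve-case analysis and falls back on decomposition tables for two degenerate cases. Your first reduction (when $\epsilon_i(\la)\geq 1$ for some $i\in\{1,2\}$, take the $i$-good node; by Lemma~\ref{l17} the corresponding node of $\la^\Mull$ has residue $-i\neq 0$, so neither is $(4,1)$) is correct and clean.

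The problem is the final sub-case, and it is not a matter of filling in details: the strategy itself breaks down there. Take $\la=(4,3,3,1)\in\Par_3(11)$ with $n=11\geq 9$, $h(\la)=4$ and $h(\la^\Mull)=4$ (so the hypotheses hold). All three removable nodes $(1,4),(3,3),(4,1)$ have residue $0$ and are normal (no residue-$0$ addable nodes), so $\epsilon_1(\la)=\epsilon_2(\la)=0$ and the $0$-good node is $(4,1)$ --- this is your remaining sub-case. Now check what each normal node gives: removing $(1,4)$ yields $(3,3,3,1)$, which is not $3$-regular; removing $(4,1)$ yields $(4,3,3)$ with only three rows; removing $(3,3)$ yields $(4,3,2,1)$, which is $3$-regular with four rows, but a Mullineux-symbol computation gives $G_3((4,3,2,1))=\left(\begin{smallmatrix}6&3&1\\4&2&1\end{smallmatrix}\right)$ so $G_3((4,3,2,1)^\Mull)=\left(\begin{smallmatrix}6&3&1\\2&1&1\end{smallmatrix}\right)$ and hence $h((4,3,2,1)^\Mull)=2<4$. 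So there is \emph{no} normal node $A$ of $\la$ with the required properties; the desired composition factor cannot be produced by a single node removal. The paper handles exactly this partition (its Case~3, $\la_1=4$) by citing the tables: $D^{(5,2,2,1)}$ is a composition factor of $D^{(4,3,3,1)}\da_{\s_{10}}$, and $(5,2,2,1)$ is \emph{not} of the form $(4,3,3,1)\setminus A$.

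This same example also shows why the ``bijection between residue-$0$ normal nodes of $\la$ and of $\la^\Mull$ matching the $j$-th from the top'' cannot be extracted from Lemmas~\ref{l45}, \ref{l56} and the relation $e_0D^{\la^\Mull}\cong (e_0D^\la)\otimes\sgn$. Lemma~\ref{l56} gives the multiplicity of $D^{\la\setminus A}$ when $A$ is normal and $\la\setminus A$ is regular, but it says nothing about whether \emph{every} composition factor of $e_0D^{\la^\Mull}$ has the form $D^{\la^\Mull\setminus A'}$ --- and in general it does not (the appearance of $D^{(5,2,2,1)}$ in $e_0D^{(4,3,3,1)}$ is a counterexample). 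So the step ``$D^{(\la\setminus A)^\Mull}$ has multiplicity $j$, therefore $(\la\setminus A)^\Mull=\la^\Mull\setminus A'$ with $A'$ the $j$-th $0$-normal from the top'' is unjustified, and the closing paragraph about switching to ``the next $0$-normal below'' when regularity fails cannot be repaired along these lines for partitions such as $(4,3,3,1)$.

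Everything up to and including the reduction to the case where all normal nodes have residue $0$ and the $0$-good node is $(4,1)$ (or the analogue for $\la^\Mull$) is sound; but the treatment of that residual case needs a different idea, e.g.\ the Mullineux-symbol invariant $G_1$ as in the paper together with the explicit small cases, or an explicit citation of restriction data.
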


\begin{proof}
Assume first that $h(\la),h(\la^\Mull)\geq 5$ and let $A$ be a good node of $\la$. Then $(\la\setminus A)^\Mull=\la^\Mull\setminus B$ for a good node $B$ of $\la^\Mull$ (see Lemma \ref{l17}). Then $D^{\la\setminus A}$ is a composition factor of $D^\la\da_{\s_{n-1}}$ by Lemma \ref{l39} and $h(\la\setminus A),h((\la\setminus A)^\Mull)\geq 4$. So, up to exchange of $\la$ and $\la^\Mull$ we may assume that $h(\la^\Mull)\geq h(\la)=4$. For any partition $\alpha$ let $G_1(\alpha)$ be the first column of the Mullineux symbol of $\alpha$ (see \cite[Section 1]{FK} for definition of the Mullineux symbol of $\la$ and how to obtain the Mullineux symbol of $\la^\Mull$ from that of $\la$). If $\la$ has a normal node $C$ such that $\la\setminus C$ is 3-regular and $G_1(\la)=G_1(\la\setminus C)$ then the lemma holds, by Lemma \ref{l56} and the combinatorial definition of $\la^\Mull$.

{\sf Case 1.} $\la_1=\la_2$. Then $\la_2>\la_3$ and we can take $C=(2,\la_2)$.

{\sf Case 2.} $\la_1=\la_2+1=\la_3+1=\la_4+2$. In this case $\la^\Mull=(2\la_1-1,2\la_1-3)$ by \cite[Lemma 2.3]{bkz}, contradicting the assumptions.

{\sf Case 3.} $\la_1=\la_2+1=\la_3+1=\la_4+3$. If $\la_1=4$ then $\la=(4,3,3,1)$ and $D^{(5,2,2,1)}$ is a composition factor of $D^{(4,3,3,1)}\da_{\s_{10}}$ by \cite[Tables]{JamesBook}. Since $h((5,2,2,1))=h((5,2,2,1)^\Mull)=4$, we may assume that $\la_1\geq 5$. In this case $D^{(\la_1,\la_2,\la_3,\la_4-1)}$ is a composition factor of $D^\la\da_{\s_{n-1}}$ and $h((\la_1,\la_2,\la_3,\la_4-1)),h((\la_1,\la_2,\la_3,\la_4-1)^\Mull)\geq4$.

{\sf Case 4.} $\la_1=\la_2+1=\la_3+1>\la_4+3$. In this case we can take $C=(3,\la_3)$.

{\sf Case 5.} $\la_1=\la_2+1>\la_3+1$. In this case we can take $C=(1,\la_1)$.

{\sf Case 6.} $\la_1=\la_2+2=\la_3+2$. Then $\la_3>\la_4$ and we can take $C=(3,\la_3)$.

{\sf Case 7.} $\la_1=\la_2+2=\la_3+3=\la_4+3$. If $\la_1=4$ then $n=8$, so we may assume that $\la_1\geq 5$. In this case $D^{(\la_1,\la_2,\la_3,\la_4-1)}$ is a composition factor of $D^\la\da_{\s_{n-1}}$ and $h((\la_1,\la_2,\la_3,\la_4-1)),h((\la_1,\la_2,\la_3,\la_4-1)^\Mull)\geq4$.

{\sf Case 8.} $\la_1=\la_2+2=\la_3+3\geq\la_4+4$. In this case we can take $C=(2,\la_2)$.

{\sf Case 9.} $\la_1=\la_2+2=\la_3+4=\la_4+4$.  If $\la_1\geq 6$ we can take $C=(4,\la_4)$. If $\la_1=5$ then $\la=(5,3,1,1)$, $D^{(5,2,1,1)}$ is a composition factor of $D^{(5,3,1,1)}$ and $h((5,2,1,1)),h((5,2,1,1)^\Mull)=4$.

{\sf Case 10.} $\la_1=\la_2+2=\la_3+4>\la_4+4$. In this case $D^{(\la_1,\la_2,\la_3-1,\la_4)}$ is a composition factor of $D^\la\da_{\s_{n-1}}$ and $h((\la_1,\la_2,\la_3-1,\la_4)),h((\la_1,\la_2,\la_3-1,\la_4)^\Mull)\geq4$.

{\sf Case 11.} $\la_1=\la_2+2\geq \la_3+5$. In this case we can take $C=(2,\la_2)$.

{\sf Case 12.} $\la_1\geq\la_2+3$. In this case we can take $C=(1,\la_1)$.
\end{proof}

\begin{lemma}\label{l13}
Let $p=3$, $n\geq 7$ and $\la=(n-k,k)$ with $n-2k\geq 2$ and $k\geq 2$. Then $D^\la\da_{\s_{n-1}}$ has a composition factor $D^\mu$ with $\mu=(n-1-\ell,\ell)$ with $n-1-2\ell\geq 2$ and $\ell\geq 2$. In particular $D^{(4,2)}$ is a composition factor of $D^\la\da_{\s_6}$.
\end{lemma}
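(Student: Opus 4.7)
The plan is to locate a normal removable node of $\la=(n-k,k)$ whose removal yields a two-row partition of the required form, and then invoke Lemma \ref{l56}. The two removable nodes of $\la$ are the top one $A_1=(1,n-k)$ and the bottom one $A_2=(2,k)$; removing them produces $(n-k-1,k)$ and $(n-k,k-1)$ respectively, both $3$-regular since they have two distinct parts.

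First I would observe that $A_1$ is always normal for its residue $i=\res(A_1)=n-k-1\pmod 3$. Indeed $A_1$ lies in the first row of $\la$, and the only $\la$-addable node in row $1$ is $(1,n-k+1)$, whose residue is $n-k\not\equiv n-k-1\pmod 3$. Hence in the top-to-bottom list of $i$-addable and $i$-removable nodes of $\la$, $A_1$ sits in the topmost position; the standard bracket matching leaves it unmatched, so $A_1$ is $i$-normal. When $n\geq 2k+3$, the partition $\la\setminus A_1=(n-k-1,k)$ already has the required form with $\ell=k\geq 2$ and $n-1-2\ell\geq 2$, so Lemma \ref{l56} gives $D^{(n-k-1,k)}$ as a composition factor of $D^\la\da_{\s_{n-1}}$.

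The remaining case is $n=2k+2$; since $n\geq 7$ this forces $k\geq 3$. Here $\la\setminus A_1=(k+1,k)$ violates the required inequality, so I would use $A_2$ instead. A short residue computation gives $\res(A_1)\equiv\res(A_2)\equiv k+1\pmod 3$, and no other addable or removable $i$-node of $\la$ exists except possibly $(3,1)$, which is $i$-addable only when $k\equiv 0\pmod 3$. Thus the top-to-bottom list of $i$-nodes of $\la$ reads either $R\,R$ or $R\,R\,A$, and in both cases no addable lies above a removable, so both removables are $i$-normal by bracket matching. Applying Lemma \ref{l56} to $A_2$ yields $D^{(k+2,k-1)}$ as a composition factor; this partition has the required form with $\ell=k-1\geq 2$ and $n-1-2\ell=3\geq 2$.

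For the ``in particular'' statement I would iterate the first part to build composition factors $D^{\mu_j}$ of $D^\la\da_{\s_{n-j}}$ of the stated shape for $j=1,\ldots,n-6$; at the last step the constraints $\ell\geq 2$ and $6-2\ell\geq 2$ force $\ell=2$, giving $\mu_{n-6}=(4,2)$. The main obstacle is the edge case $n=2k+2$, where $A_1$ no longer produces a suitable partition and one must verify that $A_2$ remains normal despite sharing its residue with $A_1$; this is the only place the hypothesis $n\geq 7$ is used, via $k\geq 3$.
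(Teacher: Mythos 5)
Your proof is correct and follows essentially the same route as the paper's: both split into the cases $n-2k\geq 3$ (remove the top removable node, giving $(n-k-1,k)$) and $n-2k=2$ (where $k\geq 3$, remove the bottom removable node, giving $(n-k,k-1)$), invoke Lemma~\ref{l56}, and then iterate down to $\s_6$. The paper's version is terser and leaves the normality checks implicit, whereas you spell them out; your normality argument for $A_2$ in the edge case $n=2k+2$ (including the observation that $(3,1)$ lies \emph{below} both removables, so no cancellation occurs even when $k\equiv 0\pmod 3$) is the correct reading of the normal-node combinatorics.
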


\begin{proof}
If $n-2k\geq 3$ then we can take $\mu=(n-k-1,k)$ by Lemma \ref{l56}. If $n-2k=2$ then $k\geq 3$ since $n\geq 7$ and, again by Lemma \ref{l56}, we can take $\mu=(n-k,k-1)$. The result for $D^\la\da_{\s_6}$ follows by induction.
\end{proof}

\begin{lemma}\label{l10}
Let $p=3$, $n>6$ and $\la\in\Parinv_3(n)$ be a JS-partition with $h(\la)=3$. Then $n\equiv 0\Md 6$ and $D^\la\da_{\s_{n-6}}$ has a composition factor $D^\mu$ with $\mu\in\Parinv_3(n-6)$ a JS-partition with $h(\mu)=3$. Further $D^{(5,1^2)}$ is a composition factor of $D^\la\da_{\s_7}$.
\end{lemma}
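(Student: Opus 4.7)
My plan is as follows. Since $\la$ is JS it has a unique normal node of some residue $i$; as $\la=\la^\Mull$, Lemma \ref{l17} gives $\epsilon_{-i}(\la)=\epsilon_i(\la^\Mull)=\epsilon_i(\la)=1$, so $\la$ has a normal node of residue $-i$ as well. By JS-uniqueness we must then have $i\equiv -i\Md 3$, forcing $i=0$. The two conormal nodes of $\la$ (Lemma \ref{l52}) must similarly come in $\{j,-j\}$-pairs of residues, so either both have residue $0$ or one has residue $1$ and the other residue $2$.

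I would then use Lemma \ref{L221119} together with $h(\la)=3$ to enumerate the three possible shapes of $\la$, namely $(a^2,b)$, $(a,b^2)$, and $(a,b,c)$ with $a>b>c\geq 1$. In each shape one reads off the residues of the removable and addable nodes to determine the residues of the parts modulo $3$ compatible with the constraint that the unique normal node has residue $0$; combined with the content identity $c_1(\la)=c_2(\la)$ forced by self-Mullineuxness and a parity count on the remaining parameters, this gives $n\equiv 0\Md 6$. To produce $\mu$ I would explicitly construct a chain $\la=\la^{(0)}\to\la^{(1)}\to\cdots\to\la^{(6)}=\mu$ with each $\la^{(j+1)}=\tilde e_{r_j}(\la^{(j)})$, where $r_0=0$ is forced by the unique good node (so $\la^{(1)}$ remains self-Mullineux by Lemma \ref{l17}), and the subsequent residues $r_1,\ldots,r_5$ are dictated shape-by-shape so that $\mu$ is again JS and self-Mullineux with $h(\mu)=3$. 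Lemma \ref{l56} together with transitivity of restriction then identifies $D^\mu$ as a composition factor of $D^\la\da_{\s_{n-6}}$.

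For $D^{(5,1^2)}\subseteq D^\la\da_{\s_7}$, iterate the above six-step construction to reduce to the base case $n=12$ (the smallest $n>6$ with $n\equiv 0\Md 6$), and then invoke Lemma \ref{l56} five more times along an explicit sequence of normal nodes to branch from the resulting $\mu'$ of size $12$ down to $(5,1^2)$ at $n=7$. The main obstacle will be the case analysis needed to guarantee that the six-step good-node chain stays inside $\Par_3(n-j)$ at each step and terminates on a JS partition of height $3$ in each shape family: the congruence conditions on the parts $a,b,c$ modulo $3$ are tight, and it is precisely this rigidity that both forces $6\mid n$ and makes the chain constructible. Once that bookkeeping is settled, the remaining arguments are routine applications of Lemmas \ref{l17}, \ref{l39}, and \ref{l56}.
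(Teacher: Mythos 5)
Your approach is genuinely different from the paper's and unfortunately contains a real gap. The paper does not try to pin down the Mullineux-fixed three-row partitions from first principles: it invokes the Bessenrodt--Olsson classification (\cite[Theorem 4.1]{bo}) of Mullineux-fixed partitions with three parts, which gives the explicit Mullineux-symbol form and thereby both the congruence $n\equiv 0\Md 6$ and the exact shape of $\la$; the chain down to $n-6$ is then just a concrete normal-node-removal chain (via Lemma \ref{l56}, not $\tilde e$) through explicitly named intermediate partitions. You instead try to recover all of this from the residue of the normal node, Lemma \ref{L221119}, and the content symmetry $c_1(\la)=c_2(\la)$. That chain of reasoning does not close.

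Concretely, the step ``content identity $c_1(\la)=c_2(\la)$ plus a parity count gives $n\equiv 0\Md 6$'' fails: take $\la=(4,3,2)$ with $p=3$. This is a JS-partition with $h(\la)=3$, its unique normal node $(1,4)$ has residue $0$ (so $\la_1\equiv 1\Md 3$, exactly as your argument demands), its two conormal nodes $(3,3)$ and $(4,1)$ both have residue $0$ (consistent with your $\{j,-j\}$ observation), and its content is $(c_0,c_1,c_2)=(3,3,3)$, so $c_1=c_2$ holds. Yet $n=9$ is odd, and indeed $(4,3,2)$ is \emph{not} Mullineux-fixed (its Mullineux symbol is $\binom{5\,3\,1}{3\,2\,1}$, whose dual is $\binom{5\,3\,1}{3\,1\,1}$). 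So the invariants you use --- residue of the normal node, residues of the conormal pair, and the content identity --- are all satisfied by a non-self-Mullineux partition with $n$ odd, and therefore cannot by themselves force $2\mid n$. Some genuinely stronger input on Mullineux-fixedness is needed; the paper gets it from Bessenrodt--Olsson.

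The chain construction has a related issue. You propose $\la^{(j+1)}=\tilde e_{r_j}(\la^{(j)})$, i.e.\ good-node removals, with $r_0=0$ so that $\la^{(1)}$ stays self-Mullineux; but by Lemma \ref{l17} the only way $\tilde e_{r}(\nu)$ is self-Mullineux when $\nu$ is, is $r\equiv -r$, i.e.\ $r=0$, and one cannot in general apply $\tilde e_0$ six times in a row to such a $\la$. What the paper actually does (via Lemma \ref{l56}) is remove \emph{normal} nodes, not good nodes, so the intermediate partitions are not self-Mullineux; only the endpoint is, and that it is Mullineux-fixed, JS and $3$-row is verified directly from the explicit Bessenrodt--Olsson form, not from an inductive preservation argument. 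As written, your six-step chain is a plan rather than a proof: the shape-by-shape bookkeeping (staying $3$-regular, staying $3$ rows, landing on a self-Mullineux JS partition) is precisely the hard part and has not been carried out. To repair the argument you would either need to supply that full case analysis, or --- more efficiently --- do what the paper does and quote the explicit classification of three-row self-Mullineux partitions.
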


\begin{proof}
We have that $\la\in\Parinv_3(m)$ if and only if $\la\in\Par_3(m)$ is Mullineux-fixed. From \cite[Theorem 4.1]{bo} we have that Mullineux-fixed partitions with 3-parts are exactly the partitions with Mullineux symbols
\[\left(\begin{array}{ccccc}6&\ldots&6&5&1\\3&\ldots&3&3&1\end{array}\right).\]
So $\la=\la^\Mull$ and $h(\la)=3$ if and only if $n\equiv 0\Md 6$ and $\la=(n/2+1,(n/2-1)^\Mull)$. Assume that this is the case. From Lemma \ref{l56} it follows that $D^{(n/2,(n/2-1)^\Mull)}$, $D^{(n/2,(n/2-2)^\Mull)}$, $D^{(n/2,(n/2-3)^\Mull)}$, $D^{(n/2-1,(n/2-3)^\Mull)}$, $D^{(n/2-1,(n/2-4)^\Mull)}$ and $D^{(n/2-2,(n/2-4)^\Mull)}$ are composition factors of $D^\la\da_{\s_{n-k}}$ with $1\leq k\leq 6$. We can then take $\mu=(n/2-2,(n/2-4)^\Mull)=((n-6)/2+1,((n-6)/2-1)^\Mull)$. For the last claim note that by induction $D^{(7,3,2)}$ is a composition factor of $D^\la\da_{\s_{12}}$ and by the previous $D^{(7,3,2)}\da_{\s_7}$ has a composition factor $D^{(5,1^2)}$.
\end{proof}

\section{Special homomorphisms}\label{sph}

In this section 
we will give conditions under which there exist homomorphisms $M_\mu\to\End_F(D)$ which do not vanish on $S_\mu$. Such conditions will then be checked to hold in some cases in the next section.

\begin{lemma}\label{l18}
Let $n\geq 6$ and $V$ be a $FA_n$-module. For pairwise distinct $a,b,c$ define $[a,b,c]:=(a,b,c)+(a,c,b)$. If
\[x_3:=[1,2,3]+[1,5,6]+[2,4,6]+[3,4,5]-[1,2,6]-[1,3,5]-[2,3,4]-[4,5,6]\]
and $x_3V\not=0$ then there exists $\psi\in\Hom_{A_n}(M_3\da_{A_n},\End_F(V))$ which does not vanish on $S_3\da_{A_n}$.
\end{lemma}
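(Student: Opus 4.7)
My plan is to construct $\psi$ explicitly on the tabloid basis of $M_3\da_{A_n}$ and to identify the image of a standard polytabloid generator of $S_3\da_{A_n}$ with left multiplication by $x_3$ on $V$.

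Recall that $M_3=M^{(n-3,3)}$ has an $F$-basis of tabloids $t_{\{a,b,c\}}$ indexed by the $3$-subsets $\{a,b,c\}\subseteq\{1,\ldots,n\}$ (where the subset records the second row), on which $A_n$ acts by permuting subsets. Since $[a,b,c]=(a,b,c)+(a,c,b)\in FA_n$ depends only on the set $\{a,b,c\}$, the $F$-linear map defined on the basis by
\[\psi(t_{\{a,b,c\}})(v):=[a,b,c]\cdot v\qquad(v\in V)\]
takes values in $\End_F(V)$. Using the conjugation action on $\End_F(V)$ together with the identity $g[a,b,c]g^{-1}=[g(a),g(b),g(c)]$ valid for every $g\in A_n$, one checks that
\[g\cdot\psi(t_{\{a,b,c\}})=\psi(t_{\{g(a),g(b),g(c)\}})=\psi(g\cdot t_{\{a,b,c\}}),\]
so $\psi\in\Hom_{A_n}(M_3\da_{A_n},\End_F(V))$.

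To see $\psi|_{S_3\da_{A_n}}\neq 0$, consider the standard $(n-3,3)$-tableau $T$ with first row $1,2,3,7,\ldots,n$ and second row $4,5,6$, whose column stabilizer is $C(T)=\langle(1,4),(2,5),(3,6)\rangle\cong(\Z/2)^3$. The associated polytabloid $e_T=\sum_{\sigma\in C(T)}\sgn(\sigma)\,\sigma\{T\}\in S_3$ expands into the eight tabloids obtained from $t_{\{4,5,6\}}$ by toggling the pairs $\{1,4\},\{2,5\},\{3,6\}$, with sign $(-1)^k$ for a product of $k$ of the three transpositions. Applying $\psi$ term by term produces
\[\psi(e_T)=[4,5,6]-[1,5,6]-[2,4,6]-[3,4,5]+[1,2,6]+[1,3,5]+[2,3,4]-[1,2,3],\]
which acts on $V$ as left multiplication by $-x_3$. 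By hypothesis $x_3V\neq 0$, hence $\psi(e_T)\neq 0$ in $\End_F(V)$, and therefore $\psi$ does not vanish on $S_3\da_{A_n}$.

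The only substantive step is the combinatorial match between the expansion of the polytabloid $e_T$ and the eight bracket terms defining $x_3$; once this matching is observed, the $A_n$-equivariance of $\psi$ is an immediate consequence of conjugation and no further calculation is needed.
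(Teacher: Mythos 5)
Your proof is correct and follows essentially the same route as the paper: define $\psi$ on the tabloid basis by $\psi(t_{\{a,b,c\}}) = (a,b,c)+(a,c,b)$ acting on $V$, check $A_n$-equivariance via conjugation, and evaluate $\psi$ on a polytabloid generator of $S_3$ to recover $\pm x_3$. The only cosmetic difference is the choice of tableau (you place $4,5,6$ in the second row and get $-x_3$, the paper places $1,2,3$ there and gets $x_3$), which is immaterial.
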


\begin{proof}
Let $\{v_{\{x,y,z\}}\,|\,x,y,z\mbox{ distinct elements of }\{1,\ldots,n\}\}$ be the standard basis of $M_3$. Define $\psi\in\Hom_{A_n}(M_3\da_{A_n},\End_F(V))$ through
\[\psi(v_{\{x,y,z\}})(w)=(x,y,z)w+(x,z,y)w\]
for each $w\in V$ (it can be easily checked that $\psi$ is an homomorphism). Let
\[e:=v_{\{1,2,3\}}+v_{\{1,5,6\}}+v_{\{4,2,6\}}+v_{\{4,5,3\}}-v_{\{1,2,6\}}-v_{\{1,5,3\}}-v_{\{4,2,3\}}-v_{\{4,5,6\}}.\]
Then $e$ generates $S_3$ (see\cite[Section 8]{JamesBook}), since it corresponds to the tableau
\[\begin{array}{ccccc}
4&5&6&\cdots&n.\\
1&2&3
\end{array}\]
Notice that $\psi(e)(w)=x_3 w$. 
Similar to \cite[Lemma 6.1]{kmt}, $\psi$ vanishes on $S_3\da_{A_n}$ if and only if $x_3 E^\la_\pm=0$.
\end{proof}

\begin{lemma}\label{l16}{\cite[Lemma 6.1]{kmt}}
Let $n\geq 8$ and $V$ be a $F\s_n$-module. For pairwise distinct $a,b,c,d$ define $[a,b,c,d]$ to be the sum of all elements of $\s_{\{a,b,c,d\}}$ which do not fix any element. If
\begin{align*}
x_4&=[1,2,3,4]+[5,6,3,4]+[5,2,7,4]+[5,2,3,8]+[1,6,7,4]+[1,6,3,8]\\
&\hspace{12pt}+[1,2,7,8]+[5,6,7,8]-[5,2,3,4]-[1,6,3,4]-[1,2,7,4]-[1,2,3,8]\\
&\hspace{12pt}-[5,6,7,4]-[5,6,3,8]-[5,2,7,8]-[1,6,7,8]
\end{align*}
and $x_4V\not=0$ then there exists $\psi\in\Hom_{\s_n}(M_4,\End_F(V))$ which does not vanish on $S_4$.
\end{lemma}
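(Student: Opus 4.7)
The plan is to adapt the construction from Lemma \ref{l18} directly, replacing $3$-cycles by permutations of four letters. Let $\{v_{\{a,b,c,d\}}\}$ be the standard basis of $M_4$ indexed by size-$4$ subsets of $\{1,\ldots,n\}$. I would define $\psi:M_4\to\End_F(V)$ by
\[\psi(v_{\{a,b,c,d\}})(w):=[a,b,c,d]\,w,\]
that is, $\psi(v_{\{a,b,c,d\}})$ acts on $V$ as the sum of all derangements of $\s_{\{a,b,c,d\}}$. Since $[a,b,c,d]$ depends only on the underlying set and is invariant under conjugation by any $\sigma\in\s_n$ (conjugation simply relabels the four elements), the map $\psi$ is $\s_n$-equivariant, hence a well-defined homomorphism of $F\s_n$-modules.

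Next I would identify the standard polytabloid generator $e\in S_4\subseteq M_4$ corresponding to the tableau
\[\begin{array}{ccccc}5&6&7&8&\cdots\ n\\ 1&2&3&4\end{array}\]
By construction, $e$ is the signed sum over the column stabilizer $\langle(1,5),(2,6),(3,7),(4,8)\rangle$ applied to the tabloid $v_{\{1,2,3,4\}}$, giving $16$ terms of the form $\pm v_{\{x,y,z,t\}}$ with $x\in\{1,5\}$, $y\in\{2,6\}$, $z\in\{3,7\}$, $t\in\{4,8\}$, where the sign is the parity of the number of swaps applied.

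The key computation is then just to expand $\psi(e)(w)$: each $v_{\{x,y,z,t\}}$ contributes $\pm[x,y,z,t]\,w$, and comparing the $16$ resulting terms with the definition of $x_4$ in the statement shows that they agree term-by-term (the $8$ tableaux obtained by an even number of column swaps give the $8$ positive terms of $x_4$, and the $8$ tableaux obtained by an odd number of swaps give the $8$ negative terms). Hence $\psi(e)(w)=x_4\,w$ for every $w\in V$, so the hypothesis $x_4V\neq 0$ forces $\psi(e)\neq 0$. Since $e$ generates $S_4$ as an $F\s_n$-module, this means $\psi|_{S_4}\neq 0$, as required.

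There is no real obstacle here: the $\s_n$-equivariance of $\psi$ is formal, and the remaining step is the bookkeeping match between the $16$ polytabloid summands and the $16$ terms of $x_4$. The only minor care needed is in checking the sign conventions (the tabloid $\{t\}$ corresponds to $v_{\{1,2,3,4\}}$ with sign $+$, and each column transposition flips both the tabloid index and the sign), which is exactly what the explicit definition of $x_4$ is set up to encode.
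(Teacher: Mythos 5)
Your proposal is correct and matches the approach used for this lemma (the paper cites it from \cite[Lemma 6.1]{kmt} rather than reproving it, but the paper's own Lemma \ref{l18} for $M_3$ uses exactly this construction, and your argument is the direct analogue for $M_4$). The key ingredients — $\s_n$-equivariance of $v_{\{a,b,c,d\}}\mapsto[a,b,c,d]$ via conjugation, identification of the polytabloid $e$ for the tableau with columns $\{1,5\},\{2,6\},\{3,7\},\{4,8\}$ as a signed sum of $16$ basis tabloids, and the term-by-term match $\psi(e)(w)=x_4 w$ — are all verified correctly, and the sign bookkeeping indeed puts the even-swap tabloids on the positive side and the odd-swap tabloids on the negative side, agreeing with $x_4$ as written.
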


\begin{lemma}{\cite[Lemma 6.1]{m2}}\label{l12}
Let $p\geq 3$, $n\geq 6$ and $V$ be a $F\s_n$-module. If
\begin{align*}
x_{2^2}&=(2,5)(3,6)-(3,5)(2,6)-(1,5)(3,6)+(1,6)(3,5)-(2,5)(1,6)\\
&\hspace{12pt}+(1,5)(2,6)-(2,4)(3,6)+(3,4)(2,6)+(1,4)(3,6)-(1,6)(3,4)\\
&\hspace{12pt}+(2,4)(1,6)-(1,4)(2,6)-(2,5)(3,4)+(3,5)(2,4)+(1,5)(3,4)\\
&\hspace{12pt}-(1,4)(3,5)+(2,5)(1,4)-(1,5)(2,4)
\end{align*}
and $x_{2^2}V\not=0$ then there exists $\psi\in\Hom_{\s_n}(M_{2^2},\End_F(V))$ which does not vanish on $S_{2^2}$.
\end{lemma}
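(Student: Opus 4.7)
The plan is to construct $\psi$ explicitly and then verify that a suitable polytabloid generator of $S_{2^2}$ is sent to a nonzero scalar multiple of the action of $x_{2^2}$, in direct analogy with the proofs of Lemmas \ref{l18} and \ref{l16}.

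First I would fix the standard basis $\{v_{A|B}\}$ of $M_{2^2}=M^{(n-4,2,2)}$ indexed by ordered pairs of disjoint 2-subsets $A,B\subseteq\{1,\ldots,n\}$, where $A$ and $B$ record the contents of the second and third rows of the corresponding tabloid. For $A=\{a_1,a_2\}$ and $B=\{b_1,b_2\}$, I would define
\[
\psi(v_{A|B})(w):=\bigl((a_1,b_1)(a_2,b_2)+(a_1,b_2)(a_2,b_1)\bigr)w,\qquad w\in V.
\]
The right hand side is invariant under swapping $a_1\leftrightarrow a_2$ and under swapping $b_1\leftrightarrow b_2$, so $\psi$ is well-defined on basis elements; a short computation using the conjugation action of $\s_n$ on $\End_F(V)$ then shows that $\psi\in\Hom_{\s_n}(M_{2^2},\End_F(V))$, exactly as in Lemma \ref{l18}.

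Next I would take the standard Young tableau $T$ of shape $(n-4,2,2)$ whose first column reads $3,2,1$ from top to bottom and whose second column reads $6,5,4$. Its column stabiliser is $\s_{\{1,2,3\}}\times\s_{\{4,5,6\}}$, and the polytabloid
\[
e_T=\sum_{\pi\in\s_{\{1,2,3\}}}\sum_{\tau\in\s_{\{4,5,6\}}}\sgn(\pi)\sgn(\tau)\,v_{\{\pi(2),\tau(5)\}|\{\pi(1),\tau(4)\}}
\]
generates $S_{2^2}$ as an $F\s_n$-module. Applying $\psi$, each of the $36$ summands contributes two signed products of disjoint transpositions in $F\s_n$. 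The terms of the form $(\pi(1),\pi(2))(\tau(4),\tau(5))$ vanish upon summation: the double sum factors, and in each single-variable factor the two permutations producing any given transposition occur with opposite signs. The remaining terms $(\pi(2),\tau(4))(\pi(1),\tau(5))$ then collect, after using commutativity of disjoint transpositions, to exactly $-2\,x_{2^2}$, so that $\psi(e_T)(w)=-2\,x_{2^2}w$. Since $p\geq 3$, the scalar $-2$ is invertible in $F$, and so the hypothesis $x_{2^2}V\neq 0$ forces $\psi(e_T)\neq 0$; since $e_T$ generates $S_{2^2}$ over $F\s_n$, $\psi$ does not vanish on $S_{2^2}$.

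The conceptual content (constructing $\psi$, choosing a polytabloid, using $\s_n$-equivariance, and exploiting $p\neq 2$) runs parallel to Lemmas \ref{l18} and \ref{l16}; the main obstacle is the purely mechanical bookkeeping that identifies the surviving sum with $-2\,x_{2^2}$, a finite verification over the $36$ pairs $(\pi,\tau)\in\s_3\times\s_3$.
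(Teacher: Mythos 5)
Your proposal is correct. The paper itself cites this result from \cite[Lemma 6.1]{m2} without reproducing the proof, but the inline proof of Lemma \ref{l18} (and the cited Lemma \ref{l16} from \cite{kmt}) follow exactly the paradigm you adopt: define $\psi$ by sending a tabloid to a sum of signed conjugates supported on the second and third rows, check well-definedness and $\s_n$-equivariance, and then evaluate on a polytabloid generator of the Specht module. Your map $\psi(v_{A|B})(w)=\bigl((a_1,b_1)(a_2,b_2)+(a_1,b_2)(a_2,b_1)\bigr)w$ is well-defined (symmetric in $a_1\leftrightarrow a_2$ and $b_1\leftrightarrow b_2$) and equivariant, and the tableau $T$ with columns $3,2,1$ and $6,5,4$ is precisely the transcription of the tableau used in the proof of Lemma \ref{l18} (where the first column is $4,1$ etc.). I checked the expansion of $\psi(e_T)$ in detail: the $36$ terms split into the row-preserving products $(\pi(1),\pi(2))(\tau(4),\tau(5))$, whose double sum factors and vanishes because $\sum_{\pi\in\s_3}\sgn(\pi)(\pi(1),\pi(2))=0$, and the cross products $(\pi(2),\tau(4))(\pi(1),\tau(5))$, which indeed collect (each of the $18$ possible products of disjoint transpositions arising from two pairs $(\pi,\tau)$ differing simultaneously by $(1,2)$ and $(4,5)$, hence contributing with the same sign) to $-2\,x_{2^2}$. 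Combined with $p\geq 3$ making $-2$ invertible and $n\geq 6$ making $(n-4,2,2)$ a partition, this gives $\psi(e_T)\neq 0$ whenever $x_{2^2}V\neq 0$, exactly as needed. The only nit is terminology: $T$ as you describe it (columns decreasing downward) is not a standard Young tableau, merely a tableau, but nothing in the argument uses standardness and the paper's own conventions in Lemma \ref{l18} do the same.
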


\section{Homomorphism rings}\label{s1}

With the help of the two previous sections we will now show that in many cases there exist homomorphisms $M_\mu\to\End_F(D)$ which do not vanish on $S_\mu$. Existence of such homomorphisms will then be used to prove that often $V\otimes W$ is not irreducible. In the next lemma remember that $\be_n=(\lceil (n+1)/2\rceil,\lfloor(n-1)/2\rfloor)$ is the partition labeling the basic spin modules in characteristic 2.

\begin{lemma}{\cite[Corollary 6.4]{kmt}}\label{L2}
Let $p=2$ and $n\geq 5$. If $\la\in\Par_2(n)$ with $\la\not=(n),\be_n$, then there exists $\psi\in\Hom_{\s_n}(M_2,\End_F(D^\la))$ which does not vanish on $S_2$.
\end{lemma}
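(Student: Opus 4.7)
Following the template of Lemmas~\ref{l18}, \ref{l16} and \ref{l12}, my plan is to construct a candidate homomorphism explicitly by ``acting via a transposition'' and then reduce nonvanishing on $S_2$ to a specific nonvanishing statement in the group algebra. Writing $\{v_{\{i,j\}}\}$ for the natural basis of $M_2 = M^{(n-2,2)}$, the formula
\[\psi(v_{\{i,j\}})(w) = (i,j)\,w\]
defines an $F\s_n$-homomorphism $\psi : M_2 \to \End_F(D^\la)$, because the action of $\s_n$ on unordered pairs agrees with its conjugation action on transpositions. Taking the standard tableau of shape $(n-2,2)$ with bottom row $(3,4)$ and columns $\{1,3\},\{2,4\},\{5\},\ldots,\{n\}$, the column stabilizer $\langle(1,3),(2,4)\rangle$ produces the polytabloid
\[e = v_{\{3,4\}} - v_{\{1,4\}} - v_{\{2,3\}} + v_{\{1,2\}},\]
which generates $S_2$. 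Hence $\psi|_{S_2}\neq 0$ if and only if the element
\[x_2 := (1,2) + (2,3) + (1,4) + (3,4) \in F\s_4\]
(signs collapse since $p=2$) acts nontrivially on $D^\la$.

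The substance of the proof is then to show $x_2 D^\la \neq 0$ whenever $\la \notin \{(n),\be_n\}$. The two exceptional cases dispose of themselves. For $\la=(n)$, every transposition acts as $1$, so $x_2=1+1+1+1=0$. For $\la=\be_n$, the fundamental property of basic spin modules in characteristic $2$ recorded in \cite{Benson} is that $D^{\be_n}\da_{\s_{n-1}}$ is again a direct sum of basic spin modules; iterating this down to $\s_4$ forces $D^{\be_n}\da_{\s_4}$ to be isomorphic to a direct sum of copies of $D^{(3,1)}\cong D^{\be_4}$, on which a direct calculation shows $x_2$ acts as zero (and trivially on $D^{(4)}$).

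For the converse I argue by contrapositive. Since $x_2\in F\s_4$, assuming $x_2 D^\la = 0$ is equivalent to $x_2$ annihilating $D^\la\da_{\s_4}$ (with $\s_4$ the pointwise stabilizer of $\{5,\ldots,n\}$). A direct check shows that in characteristic $2$ both irreducible $F\s_4$-modules $D^{(4)}$ and $D^{(3,1)}$ are killed by $x_2$, so composition-factor data alone cannot pin down $\la$; the extension structure of $D^\la\da_{\s_4}$ is what matters. The relevant observation is that $x_2$ acts nontrivially on the permutation module $M^{(3,1)}$ (for example $x_2 e_1 = e_2+e_4 \neq 0$), so the question becomes: for which two-regular $\la$ does $D^\la\da_{\s_4}$ fail to contain such a nonsplit $\s_4$-subquotient? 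Combining the branching machinery $e_i^{(r)}$ of Section~\ref{sb} with the combinatorial criterion for splitting encoded in Lemma~\ref{split2} then singles out $(n)$ and $\be_n$ as the only such partitions.

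The main obstacle is precisely this last combinatorial step. Because both simple $F\s_4$-modules lie in the kernel of $x_2$, a straightforward composition-factor argument is too coarse; one must keep track of the non-semisimple structure of $D^\la\da_{\s_4}$ and rule out the appearance of $M^{(3,1)}$-type extensions for every two-regular partition other than $(n)$ and $\be_n$. This detailed case analysis is carried out in \cite[Section~6]{kmt}, from which the stated lemma then follows as a corollary.
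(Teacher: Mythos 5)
First, note that the paper itself offers no argument here: the lemma is invoked directly as \cite[Corollary 6.4]{kmt}, so there is no internal proof to compare against, and your proposal is a blind reconstruction of an external result. The opening framework you set up is correct and surely close in spirit to \cite{kmt}: $\psi(v_{\{i,j\}})(w)=(i,j)w$ is a genuine $F\s_n$-homomorphism $M_2\to\End_F(D^\la)$, your polytabloid is the right generator of $S_2$, and nonvanishing on $S_2$ is equivalent to $x_2D^\la\neq0$ with $x_2=(1,2)+(2,3)+(1,4)+(3,4)$ in characteristic $2$. You also correctly identify the obstruction that both simple $F\s_4$-modules $D^{(4)}$ and $D^{(3,1)}$ are killed by $x_2$, so that a branching-to-$\s_4$ argument that tracks only composition factors cannot work. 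That is a genuinely useful observation.

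Two problems remain. First, your justification for the excluded case $\la=\be_n$ is wrong as stated: it is not true that $D^{\be_n}\da_{\s_{n-1}}$ is always a direct sum of basic spin modules. For $n$ odd, $\be_n=((n+1)/2,(n-1)/2)$ has \emph{two} normal nodes of the same residue (by Lemma~\ref{L151119} it is not JS), so by Lemmas~\ref{l45} and~\ref{l39} the restriction is the uniserial nonsplit self-extension $D^{\be_{n-1}}\,|\,D^{\be_{n-1}}$; for example $D^{(3,2)}\da_{\s_4}\cong D^{(3,1)}|D^{(3,1)}$. The conclusion $x_2D^{\be_n}=0$ is still true, but not for the reason you give — and in any case this direction is not what the lemma asserts. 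Second, and more seriously, the assertion that the lemma does make, namely that $x_2D^\la\neq0$ whenever $\la\neq(n),\be_n$, is precisely the step you leave unproved, deferring to ``the detailed case analysis in [kmt, Section~6].'' That step is the entire content of the result. Your suggested route via Lemma~\ref{split2} also points in the wrong direction: that lemma governs when $D^\la\da_{A_n}$ splits, and has nothing to do with whether $x_2$ annihilates $D^\la$. What is actually needed is a branching argument in the style of Lemma~\ref{L5} that produces a subquotient on which $x_2$ acts nonzero; since both simple $\s_4$-modules are killed, this forces one to go at least to $\s_5$, where $D^{(4,1)}=S^{(4,1)}$ is not killed (e.g.\ $x_2(e_1-e_5)=e_2+e_4\neq0$), or else to exploit the non-semisimple structure of the restriction directly. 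As written, the proposal sets up the problem correctly but leaves the decisive step unaddressed.
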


\begin{lemma}{\cite[Corollary 6.10]{kmt}}\label{L3}
Let $p=2$ and $n\geq 6$. If $\la\in\Par_2(n)$ with $h(\la)\geq 3$, then there exists $\psi\in\Hom_{\s_n}(M_3,\End_F(D^\la))$ which does not vanish on $S_3$.
\end{lemma}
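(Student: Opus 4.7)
The plan is to mimic the construction in Lemma \ref{l18} but for $\s_n$ in place of $A_n$. Set
\[\psi(v_{\{x,y,z\}})(w):=\bigl((x,y,z)+(x,z,y)\bigr)w.\]
The element $(x,y,z)+(x,z,y)\in F\s_{\{x,y,z\}}$ is fixed under conjugation by $\s_{\{x,y,z\}}$ and centralizes $\s_{\{1,\ldots,n\}\setminus\{x,y,z\}}$, so $\psi$ extends uniquely to an element of $\Hom_{\s_n}(M_3,\End_F(D^\la))$. The identical expansion as in the proof of Lemma \ref{l18}, applied to the polytabloid $e_T$ exhibited there, yields $\psi(e_T)(w)=x_3\cdot w$ for the same element $x_3\in F\s_6$, so $\psi$ fails to vanish on $S_3$ precisely when $x_3\cdot D^\la\not=0$.

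Since $x_3\in F\s_6$, its action on $D^\la$ factors through $D^\la\da_{\s_6}$, and it suffices to locate a composition factor of $D^\la\da_{\s_6}$ on which $x_3$ acts nontrivially. I would then claim that $D^{(3,2,1)}$ is always such a composition factor under the hypotheses $n\geq 6$, $h(\la)\geq 3$, and prove this by a straightforward induction on $n$: when $n>h(\la)(h(\la)+1)/2$, Lemma \ref{L5} supplies a composition factor of $D^\la\da_{\s_{n-1}}$ of the same height $h(\la)\geq 3$, and the induction closes; otherwise $\la$ is the staircase $(h,h-1,\ldots,1)$ with $h\geq 4$, and the good-node removal (Lemmas \ref{l39}, \ref{l56}) yields a composition factor $D^{(h,h-1,\ldots,2)}$ of height $h-1\geq 3$, again allowing induction. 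Since $(3,2,1)$ is the only 2-regular partition of $6$ with at least $3$ parts, the induction forces $D^{(3,2,1)}$ itself to appear.

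The main obstacle is the base case: verifying $x_3\cdot D^{(3,2,1)}\not=0$ in characteristic $2$. I would carry this out by a direct matrix computation on an explicit realization of the $8$-dimensional simple module $D^{(3,2,1)}$, for instance using the standard basis of polytabloids of $S^{(3,2,1)}$ modulo its radical and computing the action of the sixteen $3$-cycles occurring in $x_3$ on a well-chosen vector. Brauer character techniques will not detect this, because each pair $[a,b,c]=(a,b,c)+(a,c,b)$ consists of two $\s_6$-conjugate cycles and thus contributes an even multiple of any character value, vanishing in characteristic $2$; the nonvanishing must be detected at the level of individual basis vectors rather than by traces.
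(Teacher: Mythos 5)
Your construction of $\psi$ via $\psi(v_{\{x,y,z\}})(w)=((x,y,z)+(x,z,y))w$ is a valid element of $\Hom_{\s_n}(M_3,\End_F(D^\la))$ (the class sum $(1,2,3)+(1,3,2)$ is $\s_{n-3,3}$-invariant in $\End_F(D^\la)$, so Frobenius reciprocity gives the homomorphism), and since $S_3$ is generated by the polytabloid $e$, the criterion $\psi|_{S_3}\neq0\iff x_3D^\la\neq0$ is correct. The branching reduction is also sound: Lemma \ref{L5} handles the generic step, and when $\la$ is the staircase $(h,\ldots,1)$ with $h\geq4$ the unique removable node whose removal stays $2$-regular is $(h,1)$, which is normal, so Lemma \ref{l56} gives $D^{(h,\ldots,2)}$ as a composition factor of height $h-1\geq3$; iterating reaches $D^{(3,2,1)}$ as a composition factor of $D^\la\da_{\s_6}$. (Minor correction: $D^{(3,2,1)}=S^{(3,2,1)}$ since $(3,2,1)$ is a $2$-core, but its dimension is $16$, not $8$.)

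The genuine gap is the base case: you have not actually shown $x_3D^{(3,2,1)}\neq0$. This is the load-bearing step of the whole lemma, and writing that you ``would carry this out by a direct matrix computation'' is not a proof. Your observation that trace arguments are useless here (each $[a,b,c]$ being a pair of $\s_6$-conjugate cycles, hence trace-trivial mod $2$) is correct and is exactly why the explicit coefficient check cannot be omitted; compare the proof of Lemma \ref{L9} in this paper, where the analogous nonvanishing of $x_4$ on $S^{(4,3,2,1)}$ is settled by exhibiting concrete tableaux $s,y$ with the $v_y$-coefficient of $x_4e_s$ nonzero. Note also that the only nonvanishing result for $x_3$ quoted in this paper is $x_3D^{(4,2,1)}\neq0$ from \cite[Lemma 6.9]{kmt} (a partition of $7$), which does not formally yield $x_3D^{(3,2,1)}\neq0$ since vanishing on a composition factor is weaker than vanishing on the module. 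So either the $n=6$ computation must be exhibited, or the base of the induction should be moved to $n=7$ with $D^{(4,2,1)}$ and the single case $n=6$, $\la=(3,2,1)$ treated directly.
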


\begin{lemma}\label{L1}
Let $p=2$ and $n\geq 7$. If $\la\in\Par_2(n)$ with $h(\la)\geq 3$ and $\la\in\Parinv_2(n)$, then there exists $\psi\in\Hom_{A_n}(M_3\da_{A_n}\,\End_F(E^\la_\pm))$ which does not vanish on $S_3\da_{A_n}$.
\end{lemma}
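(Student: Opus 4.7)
The plan is to verify the hypothesis of Lemma~\ref{l18}, namely that $x_3E^\la_\epsilon\neq 0$ for each $\epsilon\in\{+,-\}$. Lemma~\ref{L3} applied to $\la$ (whose proof in \cite{kmt} is obtained via the $\s_n$-analogue of the $x_3$-construction from Lemma~\ref{l18}) shows in particular that $x_3D^\la\neq 0$. Since $x_3\in FA_n$, it preserves the $A_n$-decomposition $D^\la\da_{A_n}=E^\la_+\oplus E^\la_-$, so $x_3D^\la=x_3E^\la_+\oplus x_3E^\la_-$, and at least one of $x_3E^\la_\epsilon$ is nonzero. After relabelling the two summands if necessary, assume $x_3E^\la_+\neq 0$; Lemma~\ref{l18} then supplies the desired $\psi^+\in\Hom_{A_n}(M_3\da_{A_n},\End_F(E^\la_+))$ not vanishing on $S_3\da_{A_n}$.

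To handle the sign $-$, I would invoke the outer automorphism $\theta$ of $A_n$, realised by conjugation with any fixed $\tau\in\s_n\setminus A_n$. Since $\la\in\Parinv_2(n)$, the element $\tau$ swaps $E^\la_+$ and $E^\la_-$ as subspaces of $D^\la$, yielding an $A_n$-isomorphism $(E^\la_+)^\theta\cong E^\la_-$ given by $v\mapsto\tau v$. Likewise, since $M_3$ and $S_3$ are $\s_n$-modules, the $\tau$-action provides canonical $A_n$-isomorphisms $(M_3\da_{A_n})^\theta\cong M_3\da_{A_n}$ and $(S_3\da_{A_n})^\theta\cong S_3\da_{A_n}$. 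Twisting $\psi^+$ by $\theta$ and composing with these identifications produces $\tilde\psi^-\in\Hom_{A_n}(M_3\da_{A_n},\End_F(E^\la_-))$ whose value on $s\in S_3\da_{A_n}$ corresponds under the identifications to $\psi^+(\tau^{-1}s)$. As $\tau^{-1}S_3=S_3$ and $\psi^+|_{S_3}\neq 0$, it follows that $\tilde\psi^-$ does not vanish on $S_3\da_{A_n}$ either, which finishes the construction.

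The main technical obstacle is exactly in this last step: carefully tracking the $\theta$-twist identifications to confirm that non-vanishing on $S_3\da_{A_n}$ is preserved. This reduces to the observation that $\tau\in\s_n$ acts as an $F$-linear bijection on $S_3$ (as $S_3$ is $\s_n$-invariant), so precomposing a map with the $\tau$-action cannot turn a non-vanishing restriction on $S_3$ into a vanishing one. A minor subtlety is verifying that the $\psi$ produced by the proof of Lemma~\ref{L3} does have the explicit form needed for the decomposition $\psi=\psi^+\oplus\psi^-$ to make sense; this is automatic for the $x_3$-construction since each value lies in $FA_n$ and therefore preserves the $A_n$-splitting of $D^\la\da_{A_n}$.
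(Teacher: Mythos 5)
Your proof is correct in substance but takes a genuinely different route from the paper's, and has one soft spot worth flagging.

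The paper does not go through Lemma~\ref{L3} at all. Instead it cites \cite[Lemma 3.17]{kmt} together with Lemma~\ref{split2} to get that $E^{(4,2,1)}$ is a composition factor of \emph{each} of $E^\la_+\da_{A_7}$ and $E^\la_-\da_{A_7}$, and then invokes \cite[Lemma 6.9]{kmt} for $x_3D^{(4,2,1)}\neq 0$. Since $x_3\in FA_6$, nonvanishing on a composition factor propagates upward, so $x_3E^\la_\pm\neq 0$ for \emph{both} signs simultaneously, and Lemma~\ref{l18} finishes. Your argument handles one sign first and then transports the conclusion to the other sign via conjugation by a fixed $\tau\in\s_n\setminus A_n$; the tracking of how the $\theta$-twist interacts with the $\s_n$-stable submodule $S_3\subseteq M_3$ and with the swap $E^\la_+\leftrightarrow E^\la_-$ is correct, and this is a perfectly legitimate alternative. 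What the paper's route buys is that it avoids the twist bookkeeping entirely, since it establishes $x_3E^\la_\pm\neq 0$ for both signs in one shot by locating a common composition factor inside both summands of $D^\la\da_{A_7}$.

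The soft spot is your first step. Lemma~\ref{L3}, as stated, only asserts that \emph{some} $\psi\in\Hom_{\s_n}(M_3,\End_F(D^\la))$ is nonvanishing on $S_3$; it does not by itself say that the particular $\psi_0$ with $\psi_0(v_{\{x,y,z\}})=(x,y,z)+(x,z,y)$ is nonvanishing, i.e.\ that $x_3D^\la\neq 0$. You are right that the two notions of $[a,b,c]$ agree for three-element sets and that $\psi_0$ is $\s_n$-equivariant, and you are also right that \cite[Corollary 6.10]{kmt} is in fact proved by showing $x_3D^\la\neq 0$; but that is information about the \emph{proof} in \cite{kmt}, not about the statement quoted here as Lemma~\ref{L3}. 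To make your argument self-contained you should cite \cite[Lemma 6.9]{kmt} directly (as the paper does) rather than appeal to Lemma~\ref{L3}, since the statement of Lemma~\ref{L3} alone does not rule out that the witnessing $\psi$ fails to take values in $FA_n$, in which case it would give no control over the diagonal blocks $\End_F(E^\la_\pm)$.
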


\begin{proof}
From \cite[Lemma 3.17]{kmt} and Lemma \ref{split2} we have that $E^{(4,2,1)}$ is a composition factor of $E^\la_\pm\da_{A_7}$. From Lemma \ref{l18} it is enough to prove that $x_3 E^\la_\pm\not=0$ (where $x_3$ is as in \cite[\S6.1]{kmt} or Lemma \ref{l18}), which follows from $x_3E^{(4,2,1)}\cong x_3 D^{(4,2,1)}\not=0$ by \cite[Lemma 6.9]{kmt}.
\end{proof}

\begin{lemma}\label{L9}
Let $p=2$ and $n\geq 10$. If $\la\in\Par_2(n)$ with $h(\la)\geq 4$, then there exists $\psi\in\Hom_{\s_n}(M_4,\End_F(D^\la))$ which does not vanish on $S_4$.
\end{lemma}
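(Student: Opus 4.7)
The plan is to invoke Lemma \ref{l16}, which reduces the task to establishing $x_4 D^\la\neq 0$ for every $\la\in\Par_2(n)$ with $h(\la)\geq 4$. Since $x_4\in F\s_8\subseteq F\s_n$, this nonvanishing can equivalently be checked on any restriction $D^\la\da_{\s_m}$ with $m\geq 8$; in particular, if some composition factor $D^\mu$ of such a restriction satisfies $x_4 D^\mu\neq 0$, then $x_4$ cannot annihilate $D^\la$, since otherwise it would annihilate the whole restriction and hence each of its subquotients. I would therefore argue by strong induction on $n\geq 10$, compressing the general case into one explicit base computation.

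For the base case $n=10$, the only $2$-regular partition of $10$ with height at least $4$ is the staircase $\la=(4,3,2,1)$, so the induction rests on the single nonvanishing $x_4 D^{(4,3,2,1)}\neq 0$. This is the precise $x_4$-analogue of the fact $x_3 D^{(4,2,1)}\neq 0$ invoked in Lemma \ref{L1} via \cite[Lemma 6.9]{kmt}, and I would expect it to follow either from an analogous statement in \cite{kmt} or from a direct computation inside the (relatively small) module $D^{(4,3,2,1)}$.

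For the inductive step with $n>10$ and $h(\la)\geq 4$, it suffices to exhibit a composition factor $D^\mu$ of $D^\la\da_{\s_{n-1}}$ with $\mu\in\Par_2(n-1)$ and $h(\mu)\geq 4$; induction together with the opening observation then gives $x_4 D^\la\neq 0$. If $n>h(\la)(h(\la)+1)/2$, Lemma \ref{L5} directly supplies such a $\mu$ with $h(\mu)=h(\la)$. Otherwise $n=h(\la)(h(\la)+1)/2$ and $\la=(h,h-1,\ldots,1)$ with $h=h(\la)\geq 5$; a short check shows that every removable node of $\la$ has residue $(h+1)\bmod 2$ while every addable node has residue $h\bmod 2$, so all removable nodes of $\la$ are normal, $(h,1)$ is good, and $\la\setminus(h,1)=(h,h-1,\ldots,2)\in\Par_2(n-1)$ is $2$-regular of height $h-1\geq 4$. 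By Lemma \ref{l56} the simple $D^{(h,h-1,\ldots,2)}$ then appears as a composition factor of $D^\la\da_{\s_{n-1}}$, completing the induction.

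The main obstacle is clearly the base case: the branching machinery above funnels the entire statement into the single nonvanishing $x_4 D^{(4,3,2,1)}\neq 0$, and without a ready-made KMT-style reference one would have to carry this out by an explicit computation inside $D^{(4,3,2,1)}$, following the pattern used in \cite{kmt} for the analogous assertions with $x_2$ and $x_3$.
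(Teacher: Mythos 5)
Your overall strategy is the paper's strategy: invoke Lemma \ref{l16} to reduce to showing $x_4 D^\la\neq 0$, then use branching to funnel everything into the single partition $(4,3,2,1)$. The paper does this in one step by appealing to Lemma \ref{L5} to place $D^{(4,3,2,1)}$ as a composition factor of $D^\la\da_{\s_{10}}$, using that $(4,3,2,1)$ is a $2$-core so $D^{(4,3,2,1)}\cong S^{(4,3,2,1)}$; you instead run an induction on $n$, using Lemma \ref{L5} when $n>h(\la)(h(\la)+1)/2$ and a direct good-node argument for the staircase when $n=h(\la)(h(\la)+1)/2$. Your inductive phrasing is careful and correct, and arguably spells out something the paper's citation of Lemma \ref{L5} leaves slightly implicit when $h(\la)>4$ (the ``in particular'' clause of Lemma \ref{L5} only places $D^{(h(\la),\ldots,1)}$ inside $D^\la\da_{\s_{h(\la)(h(\la)+1)/2}}$, so one still has to descend further to reach $\s_{10}$).

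The genuine gap is the base case you flag yourself: you have not established $x_4 D^{(4,3,2,1)}\neq 0$, and there is no off-the-shelf statement in \cite{kmt} for $x_4$ analogous to \cite[Lemma 6.9]{kmt} for $x_3$. The paper closes this by a concrete polytabloid computation in $S^{(4,3,2,1)}$: writing $v_t$, $e_t$ for the standard bases of $M^{(4,3,2,1)}$ and $S^{(4,3,2,1)}$, one checks that $x_4 e_s$ has a nonzero coefficient at $v_y$, where
\[
s=\begin{array}{cccc}1&2&3&4\\5&6&7&\\8&9&&\\10&&&\end{array}
\qquad\text{and}\qquad
y=\begin{array}{cccc}1&2&7&9\\4&6&8&\\3&10&&\\5&&&\end{array}
\]
(using $D^{(4,3,2,1)}\cong S^{(4,3,2,1)}$ since $(4,3,2,1)$ is a $2$-core). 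Until you carry out this computation, or substitute an equivalent explicit verification, the argument is incomplete; everything else is sound.
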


\begin{proof}
By Lemma \ref{L5}, $D^{(4,3,2,1)}$ is a composition factor of $D^\la\da_{\s_{10}}$. Since $(4,3,2,1)$ is a 2-core we have that $D^{(4,3,2,1)}\cong S^{(4,3,2,1)}$. From Lemma \ref{l16} it is enough to prove that $x_4D^\la\not=0$, where $x_4$ is as in Lemma \ref{l16}. 
In particular it is enough to prove that $x_4S^{(4,3,2,1)}\not=0$. If $v_t$ and $e_t$ are the standard basis elements of $M^{(4,3,2,1)}$ and $S^{(4,3,2,1)}$ respectively (see \cite[Section 8]{JamesBook}) it can be easily checked that $x_4e_s$ has non-zero coefficient for $v_y$, where
\[s=\begin{array}{cccc}
1&2&3&4\\
5&6&7\\
8&9\\
10
\end{array}\quad\mbox{and}\quad y=\begin{array}{cccc}
1&2&7&9\\
4&6&8\\
3&10\\
5
\end{array}
\]
and so the lemma follows.
\end{proof}

\begin{lemma}{\cite[Lemma 3.8]{ks}}\label{psi2}
Let $p=3$ and $n\geq 4$. If $\la\in\Par_3(n)$ with $\la\not=(n),(n)^\Mull$, then there exists $\psi\in\Hom_{\s_n}(M_2,\End_F(D^\la))$ which does not vanish on $S_2$.
\end{lemma}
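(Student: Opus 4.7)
The plan is to mimic the strategy used for Lemmas \ref{l18}, \ref{l16} and \ref{l12}: construct an explicit $\psi:M_2\to\End_F(D^\la)$ and reduce non-vanishing of $\psi|_{S_2}$ to the non-vanishing of an explicit element of $F\s_4\subseteq F\s_n$ acting on $D^\la$. Concretely I would define $\psi(v_{\{i,j\}})(w):=(i,j)w$; this is easily checked to be $\s_n$-equivariant, since for any $\sigma\in\s_n$ the endomorphism $\sigma\circ(i,j)\circ\sigma^{-1}$ is exactly $(\sigma i,\sigma j)$. Using the standard polytabloid corresponding to the tableau
\[\begin{array}{ccccc}
1&2&5&\cdots&n\\
3&4
\end{array}\]
the Specht submodule $S_2$ is generated by $e=v_{\{1,2\}}+v_{\{3,4\}}-v_{\{1,4\}}-v_{\{2,3\}}$, so $\psi(e)$ is left multiplication on $D^\la$ by $x_2:=(1,2)+(3,4)-(1,4)-(2,3)$. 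Hence $\psi|_{S_2}\neq 0$ if and only if $x_2 D^\la\neq 0$.

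The implication $\la\in\{(n),(n)^\Mull\}\Rightarrow x_2 D^\la=0$ is immediate: transpositions act as $+1$ on $D^{(n)}$ and as $-1$ on $D^{(n)^\Mull}$, so the signed sum $\pm(1+1-1-1)$ in $x_2$ vanishes in either case. For the converse I would argue by induction on $n$. The base case $n=4$ reduces to $\la=(3,1)$, where $D^{(3,1)}$ is the natural $3$-dimensional quotient of $M^{(3,1)}$ and $x_2 v_1=v_2-v_4$ is non-zero modulo the trivial submodule. For the inductive step, if $\la\neq(n),(n)^\Mull$ admits a good node $A$ with $\la\setminus A\notin\{(n-1),(n-1)^\Mull\}$, then by Lemma \ref{l39} $D^{\la\setminus A}$ is a composition factor of $D^\la\da_{\s_{n-1}}$, and since $x_2\in F\s_4\subseteq F\s_{n-1}$, the inductive hypothesis $x_2D^{\la\setminus A}\neq 0$ forces $x_2 D^\la\neq 0$.

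The main obstacle is the degenerate case in which \emph{every} good node removal of $\la$ lands in $\{(n-1),(n-1)^\Mull\}$. Using Lemmas \ref{l17} and \ref{L221119} this constrains $\la$ to a very short explicit list, most notably $\la=(n-1,1)$ (a JS-partition precisely when $3\mid n$, with a single good node) and its Mullineux dual. The case $\la=(n-1,1)$ is dispatched by the same natural-module computation as in the base case, yielding $x_2 v_1=v_2-v_4\neq 0$; the case $\la=(n-1,1)^\Mull$ then follows at once, since twisting by sign negates every transposition, so the action of $x_2$ on $D^{\la^\Mull}$ differs from its action on $D^\la$ only by an overall sign and hence $x_2 D^{\la^\Mull}=0$ if and only if $x_2 D^\la=0$.
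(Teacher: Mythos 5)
The paper does not prove this lemma; it cites it as \cite[Lemma 3.8]{ks}, so there is no in-paper argument to compare against. Your construction, however, is exactly parallel to the paper's treatment of $x_{2^2}$, $x_3$ and $x_4$ in Lemmas~\ref{l12}, \ref{l18} and \ref{l16}, and the overall strategy is sound: the map $v_{\{i,j\}}\mapsto (i,j)$ is $\s_n$-equivariant for the conjugation action on $\End_F(D^\la)$, the polytabloid for the tableau you write down is $e=v_{\{1,2\}}+v_{\{3,4\}}-v_{\{1,4\}}-v_{\{2,3\}}$, so non-vanishing on $S_2$ is equivalent to $x_2D^\la\neq 0$; the vanishing on $D^{(n)},D^{(n)^\Mull}$ is immediate; and the induction via Lemma~\ref{l39}, pushing a good node down to $\s_{n-1}\supseteq\s_4$, is valid. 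The sign-twist observation $x_2D^{\la^\Mull}=0\iff x_2D^\la=0$ is also correct.

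The one imprecision is in your parenthetical identification of the degenerate case. For $\la=(n-1,1)$ and $p=3$: when $3\mid n$ the unique normal node is the \emph{top} removable node $(1,n-1)$ (residue~$1$), so the good-node removal gives $(n-2,1)$, not $(n-1)$, and the inductive step already applies — this is not a degenerate case at all. The genuine degenerate case is $n\equiv 1\pmod 3$, where \emph{both} removable nodes have residue~$2$, the unique good node is $(2,1)$, and $\tilde e_2(\la)=(n-1)$. (When $n\equiv 2\pmod 3$ there are two good nodes of distinct residues, one of which removes to $(n-2,1)$.) Conveniently, in the real degenerate case $n\equiv 1\pmod 3$ we have $3\nmid n$, so $M_1\cong D_0\oplus D_1$ and $D^{(n-1,1)}=S^{(n-1,1)}$ is a direct summand quotient of $M_1$; your computation $x_2 v_1=v_2-v_4\neq 0$ modulo the trivial summand then goes through verbatim. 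In the case $3\mid n$ (which you mislabel as degenerate) the module $D^{(n-1,1)}$ is a proper subquotient of $M_1$ and $v_1\notin S^{(n-1,1)}$, so as stated your natural-module computation would need an adjustment (e.g.\ take $v_1-v_3\in S^{(n-1,1)}$, for which $x_2(v_1-v_3)=2(v_2-v_4)\not\equiv 0$ in char~$3$); but since that case is not actually degenerate, no adjustment is required. Net: the argument is correct, but you should correct the bookkeeping of which residue class of $n$ is problematic, or — cleaner still — invoke Lemma~\ref{l56} rather than Lemma~\ref{l39} in the inductive step, since the top removable node of $(n-1,1)$ is always normal and removes to the $3$-regular partition $(n-2,1)$, which eliminates the degenerate case entirely.
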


\begin{lemma}{\cite[Corollary 6.7]{kmt}}\label{psi3}
Let $p=3$ and $n\geq 6$. If $\la\in\Par_3(n)$ with $h(\la),h(\la^\Mull)\geq 3$, then there exists $\psi\in\Hom_{\s_n}(M_3,\End_F(D^\la))$ which does not vanish on $S_3$.
\end{lemma}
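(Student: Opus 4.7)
The plan is to follow the template of Lemmas \ref{L1} and \ref{L9}: produce an explicit $\s_n$-equivariant map $\psi:M_3\to\End_F(D^\la)$, and then verify $\psi|_{S_3}\neq 0$ by reducing the question to the non-vanishing of a specific group-algebra element $x_3\in F\s_n$ on a small, explicit composition factor of $D^\la$ obtained via branching.

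For the first step, I would define $\psi$ on the natural basis $\{v_{\{x,y,z\}}\}$ of $M_3$ by sending $v_{\{x,y,z\}}$ to a fixed $\s_{\{x,y,z\}}$-alternating combination of the permutations of $\{x,y,z\}$ acting on $D^\la$ (essentially the column antisymmetrizer associated to the standard $(n-3,3)$-tableau). Evaluating $\psi$ on the polytabloid generator of $S_3$ produces an explicit element $x_3\in F\s_n$ supported on a bounded set of permutations. Exactly as in Lemma \ref{l18}, the property that $\psi$ really is an $\s_n$-equivariant homomorphism is automatic from the construction, and $\psi|_{S_3}=0$ precisely when $x_3\cdot D^\la=0$.

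For the second step, I would iterate Lemma \ref{l34}: as long as $h(\la),h(\la^\Mull)\geq 4$, there is a composition factor of $D^\la\da_{\s_{n-1}}$ satisfying the same condition, so descending in $n$ preserves the hypothesis. The borderline cases $h(\la)=3$ or $h(\la^\Mull)=3$ require a separate direct argument using Lemma \ref{l56} (and Lemma \ref{l17}, possibly swapping $\la$ and $\la^\Mull$) to exhibit a composition factor in a lower-degree restriction whose partition still has three nonzero parts on both sides of the Mullineux bijection. This reduces matters to a single small module, most naturally $D^{(3,2,1)}$ at $n=6$ (or its Mullineux image), on which it then suffices to check $x_3\cdot D^{(3,2,1)}\neq 0$.

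The main obstacle is this last verification. Unlike the $p=2$ situation in Lemma \ref{L9}, where the target partition $(4,3,2,1)$ is a $2$-core and consequently $D=S$, for $p=3$ the partition $(3,2,1)$ is not a $3$-core, so $D^{(3,2,1)}\neq S^{(3,2,1)}$. One cannot simply compute a nonzero coefficient on a polytabloid basis and conclude. Instead, I would either work directly with an explicit basis of the simple quotient, or carry out the computation on $S^{(3,2,1)}$ and then check that the resulting element survives the projection onto the head $D^{(3,2,1)}$. This head-survival verification is the genuinely computational content of \cite[Corollary 6.7]{kmt} and is what makes the $M_3$-case at $p=3$ more subtle than either its $M_2$-analogue or the $p=2$ version.
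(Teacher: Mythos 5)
The paper does not prove this lemma at all: it is imported verbatim as \cite[Corollary 6.7]{kmt}, so there is no internal argument to compare against, and the question is whether your reconstruction would actually succeed. The overall template you describe (construct $\psi$ so that $\psi|_{S_3}\neq 0$ iff $x_3D^\la\neq 0$ for an explicit $x_3\in F\s_n$; descend in $n$ by branching until a small explicit module is reached; verify $x_3$ acts nontrivially there) is the right one and matches how the paper treats the analogous Lemmas \ref{L9}, \ref{l33} and \ref{l30}. But the specific base case you propose is wrong, and this is not a cosmetic issue.

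At $p=3$ one has $(3,2,1)^{\Mull}=(5,1)$, which has only two parts; equivalently, $D^{(3,2,1)}\otimes\sgn\cong D^{(5,1)}$ (both four-dimensional, in the principal block of $F\s_6$). So $(3,2,1)$ does \emph{not} satisfy the hypothesis $h(\la),h(\la^{\Mull})\geq 3$, and the reduction you describe cannot terminate there: a descent that preserves the hypothesis will never land on $(3,2,1)$. The only partition of $6$ satisfying the hypothesis is $(4,1,1)$, which is Mullineux self-dual but still not a $3$-core (it has hooks of length $3$ and $6$); and the reduction used in \cite{kmt} in the parallel splitting case (compare the proof of Lemma \ref{l30}, which invokes \cite[Lemmas 3.16 and 6.6]{kmt}) actually stops at $n=9$, not $n=6$. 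This also dissolves the difficulty you flagged as ``the genuinely computational content'': at $n=9$ one can reach the $3$-core $(5,3,1)$ (with $(5,3,1)^{\Mull}=(3,2,2,1,1)$, so the hypothesis holds), where $D=S$ and the polytabloid coefficient computation goes through exactly as in Lemmas \ref{L9} and \ref{l33}. Two smaller remarks: your phrase ``$\s_{\{x,y,z\}}$-alternating combination/column antisymmetrizer'' is not what is used; $\psi(v_{\{x,y,z\}})$ must be an $\s_{\{x,y,z\}}$-conjugation-invariant element (a sum over a conjugacy class, such as $[x,y,z]=(x,y,z)+(x,z,y)$) for $\psi$ to be $\s_n$-equivariant. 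And iterating Lemma \ref{l34} only handles $h(\la),h(\la^{\Mull})\geq 4$ with $n\geq 9$; the step you gesture at for the boundary strata $h=3$ or $h^{\Mull}=3$, and for $6\leq n\leq 8$, is where the real work lies and is left unargued.
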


\begin{lemma}\label{l33}
Let $p=3$ and $n\geq 8$. If $\la\in\Par_3(n)$ with $h(\la),h(\la^\Mull)\geq 4$, then there exists $\psi\in\Hom_{\s_n}(M_4,\End_F(D^\la))$ which does not vanish on $S_4$.
\end{lemma}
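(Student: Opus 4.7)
The plan is to imitate the argument of Lemma~\ref{L9}, adapting it to characteristic~$3$. By Lemma~\ref{l16} it suffices to prove that $x_4 D^\la \neq 0$, for then the assignment $v_{\{a,b,c,d\}} \mapsto (w \mapsto [a,b,c,d]w)$ yields a homomorphism $\psi\colon M_4 \to \End_F(D^\la)$ that does not vanish on $S_4$. Note that $x_4$ lies in $F\s_8 \subseteq F\s_n$ for every $n \geq 8$, so the statement $x_4 D^\la \neq 0$ makes sense at every $n$ in the range.

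First I would set up an induction on $n$ using the branching result Lemma~\ref{l34}. For $n \geq 9$, Lemma~\ref{l34} produces a composition factor $D^\mu$ of $D^\la\da_{\s_{n-1}}$ with $h(\mu), h(\mu^\Mull) \geq 4$; since $x_4 \in F\s_{n-1}$, were $x_4 D^\la$ zero then $x_4$ would annihilate every subquotient of $D^\la\da_{\s_{n-1}}$, contradicting the inductive hypothesis $x_4 D^\mu \neq 0$. This reduces the claim to the single base case $n = 8$.

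For the base case one enumerates: the $3$-regular partitions of $8$ with $h \geq 4$ are exactly $(4,2,1,1)$, $(3,3,1,1)$, $(3,2,2,1)$. A short calculation of the normal and conormal signatures of each, combined with Lemma~\ref{l17}, shows that $(3,3,1,1)^\Mull = (4,4)$ and $(3,2,2,1)^\Mull = (8)$, both of which fail $h^\Mull \geq 4$, whereas $(4,2,1,1)$ is Mullineux-fixed: indeed, among $3$-regular partitions of $8$, only $(4,2,1,1)$ has all three of its removable nodes of residue $0$ (which is precisely the signature Lemma~\ref{l17} forces on $(4,2,1,1)^\Mull$). Hence only $\la = (4,2,1,1)$ survives.

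The main obstacle is therefore to show $x_4 D^{(4,2,1,1)} \neq 0$. Unlike the $p=2$ situation in Lemma~\ref{L9}, where $(4,3,2,1)$ is a $2$-core and so $D^{(4,3,2,1)} \cong S^{(4,3,2,1)}$ admits a transparent Specht-basis computation, there is no $3$-core of size $8$ and $D^{(4,2,1,1)}$ is a proper quotient of $S^{(4,2,1,1)}$, so a direct Specht-basis argument is unavailable. I would handle this by exhibiting an explicit $w \in D^{(4,2,1,1)}$ with $x_4 w \neq 0$: either realize $D^{(4,2,1,1)}$ as a subquotient of a small permutation module $M_\mu$ whose structure is controlled by the results of Section~\ref{spm} and track $x_4$ on a convenient cell vector, or use the known decomposition matrix of $\s_8$ in characteristic~$3$ to write down explicit matrices for the generators of $\s_8$ on $D^{(4,2,1,1)}$ and verify directly that $x_4$ acts nontrivially, in the spirit of the cell-vector computation at the end of the proof of Lemma~\ref{L9}.
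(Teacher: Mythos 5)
Your overall reduction is exactly the paper's: invoke Lemma~\ref{l16} to reduce to showing $x_4 D^\la\neq 0$, induct down to $n=8$ via Lemma~\ref{l34}, and identify the unique surviving partition as $\la=(4,2,1,1)$. That part is fine. But the justification for why the final verification should be any harder than in Lemma~\ref{L9} rests on a factual error: you claim there is no $3$-core of size $8$ and that $D^{(4,2,1,1)}$ is a proper quotient of $S^{(4,2,1,1)}$, and both assertions are false. The hook lengths of $(4,2,1,1)$ are $7,4,4,2,2,1,1,1$ --- none divisible by $3$ --- so $(4,2,1,1)$ \emph{is} a $3$-core, and hence $D^{(4,2,1,1)}\cong S^{(4,2,1,1)}$. (Indeed $(4,2,1,1)$ is the unique $3$-core of size $8$, is self-transpose, and is therefore Mullineux-fixed, which is a cleaner way to pin down the base case than the removable-node argument you sketch.)

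Consequently the workaround you propose --- realizing $D^{(4,2,1,1)}$ inside a permutation module or working with explicit matrices from the decomposition matrix --- is not needed. The paper does precisely what you said was ``unavailable'': it takes the standard tabloid basis $\{v_{\{i,j\},k,l}\}$ of $M^{(4,2,1,1)}$, writes down the polytabloid $e$ for the tableau with columns $(1,2,3,4)$, $(5,6)$, $(7)$, $(8)$, and checks that the coefficient of $v_{\{2,3\},1,8}$ in $x_4 e$ is nonzero. So the structure of your proof is correct and follows the paper's route, but you should delete the paragraph asserting that $(4,2,1,1)$ is not a $3$-core, replace it with the observation that it is one (so that $D\cong S$), and then carry out the direct Specht-basis computation as in Lemma~\ref{L9}.
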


\begin{proof}
By Lemma \ref{l16} in order to prove the lemma it is enough to prove that $x_4D^\la\not=0$ (where $x_4$ is as in Lemma \ref{l16}). Using Lemma \ref{l34} it is enough to prove the lemma when $n=8$. So we may assume that $\la=(4,2,1,1)$. Since $(4,2,1,1)$ is a 3-core, $D^{(4,2,1,1)}\cong S^{(4,2,1,1)}$. Let
\[\{v_{\{i,j\},k,l}|i,j,k,l\mbox{ distinct elements of }\{1,\ldots,8\}\}\]
be the standard basis of $M^{(4,2,1,1)}$. Let $e$ be the basis element of $S^{(4,2,1,1)}$ corresponding to the tableau
\[\begin{array}{cccc}
1&5&7&8\\
2&6&&\\
3&&&\\
4&&&
\end{array}\]
(see \cite[Section 8]{JamesBook} for definition of $e$). Then it can be proved that the coefficient of $x_4 e$ corresponding to $v_{\{2,3\},1,8}$ is non-zero and so the lemma holds.
\end{proof}

\begin{lemma}\label{l9}
Let $p=3$, $n\geq 6$ and $\la=(n-k,k)$ with $n-2k\geq 2$ and $k\geq 2$. Then there exists $\psi\in\Hom_{\s_n}(M_{2^2},\End_F(D^\la))$ which does not vanish on $S_{2^2}$.
\end{lemma}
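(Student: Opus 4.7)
The plan is to apply Lemma~\ref{l12}, which reduces the statement to showing $x_{2^2}D^\la\neq 0$. Since $x_{2^2}$ involves only permutations of $\{1,\ldots,6\}$, it lies in $F\s_6\subseteq F\s_n$, and thus its action on $D^\la$ factors through the restriction $D^\la\da_{\s_6}$. The key observation is that if $x_{2^2}$ annihilates a module then it annihilates every one of its composition factors; contrapositively, it suffices to exhibit a single composition factor of $D^\la\da_{\s_6}$ on which $x_{2^2}$ does not act as zero.

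I would then invoke Lemma~\ref{l13} to produce $D^{(4,2)}$ as a composition factor of $D^\la\da_{\s_6}$ when $n\geq 7$; the case $n=6$ is automatic since the hypotheses $n-2k\geq 2$ and $k\geq 2$ force $\la=(4,2)$. The problem thus reduces to checking $x_{2^2}D^{(4,2)}\neq 0$. This step is made tractable by observing that the hook lengths of $(4,2)$ are $5,4,2,1,2,1$, none divisible by $3$, so $(4,2)$ is a $3$-core; its $3$-block is therefore simple and $D^{(4,2)}\cong S^{(4,2)}$.

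The main (and essentially only) remaining obstacle is the explicit verification inside $S^{(4,2)}\subseteq M^{(4,2)}$ that $x_{2^2}\cdot e\neq 0$ for some polytabloid $e$, for instance the one attached to the standard tableau with first row $3,4,5,6$ and second row $1,2$. This is a finite calculation: one expands $x_{2^2}\cdot e$ in the tabloid basis of $M^{(4,2)}$ and reads off a nonzero coefficient, analogously to the computations carried out in Lemmas \ref{L9} and \ref{l33}. Once done, Lemma~\ref{l12} supplies the required $\psi\in\Hom_{\s_n}(M_{2^2},\End_F(D^\la))$ not vanishing on $S_{2^2}$.
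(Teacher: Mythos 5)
Your proposal is correct and follows essentially the same route as the paper: use Lemma~\ref{l13} to locate $D^{(4,2)}$ as a composition factor of $D^\la\da_{\s_6}$ (with the $n=6$ case handled directly since then $\la=(4,2)$), observe that since $x_{2^2}\in F\s_6$ it must act nontrivially on some composition factor of $D^\la\da_{\s_6}$ if it is to act nontrivially on $D^\la$, reduce to the Specht module $S^{(4,2)}\cong D^{(4,2)}$, and finish with an explicit tabloid-basis computation followed by Lemma~\ref{l12}. The only cosmetic difference is the choice of tableau: the paper uses the standard tableau with rows $1,3,5,6$ and $2,4$ and exhibits a nonzero coefficient of $v_{\{1,5\}}$, whereas you propose the tableau with rows $3,4,5,6$ and $1,2$ (which, incidentally, is not standard, though that does not affect the validity of the polytabloid); either works provided the finite check you outline is carried out.
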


\begin{proof}
Similar to the previous lemmas, from Lemmas \ref{l13} and \ref{l12} it is enough to prove that $x_{2^2}D^{(4,2)}\not=0$ (with $x_{2^2}$ as in Lemma \ref{l12}). Notice that $D^{(4,2)}\cong S^{(4,2)}$. Let $\{v_{\{i,j\}}:1\leq i<j\leq 6\}$ be the standard basis of $M^{(4,2)}$ and $e$ be the basis element of $S^{(4,2)}$ corresponding to the tableau
\[\begin{array}{cccc}
1&3&5&6\\
2&4&&
\end{array}\]
(see \cite[Section 8]{JamesBook} for definition of $e$). It can be computed that the coefficient of $x_{2^2}e$ corresponding to $v_{\{1,5\}}$ is non-zero, thus proving the lemma.
\end{proof}

\begin{lemma}\label{l30}
Let $p=3$ and $n\geq 9$. If $\la\in\Parinv_3(n)$ then there exists $\psi\in\Hom_{A_n}(M_3\da_{A_n},\End_F(E^\la_\pm))$ which does not vanish on $S_3\da_{A_n}$.
\end{lemma}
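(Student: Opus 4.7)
The plan is to mirror the strategy of Lemma~\ref{L1}. By Lemma~\ref{l18}, it suffices to prove $x_3 E^\la_\pm \neq 0$, where $x_3 \in FA_n$ is the element defined in that lemma. A key preliminary observation is that, since $x_3$ involves only the indices $1,\ldots,6$ and $n \geq 9$, the transposition $(n-1,n)$ lies in $\s_n \setminus A_n$, commutes with $x_3$, and its conjugation realises the outer automorphism of $A_n$ that swaps $E^\la_+$ and $E^\la_-$. Consequently $x_3 E^\la_+ \neq 0$ if and only if $x_3 E^\la_- \neq 0$, so the problem reduces to showing that $x_3$ does not annihilate $D^\la$ as an $F\s_n$-action.

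The next step is to locate a small partition $\mu$ (plausibly $\mu=(5,1^2)$ at $m=7$, or alternatively a $3$-core such as $(4,2,1,1)$ at $m=8$) with the two properties (a) $D^\mu$ is a composition factor of $D^\la\da_{\s_m}$ for every $\la\in\Parinv_3(n)$ with $n\geq 9$, and (b) $x_3 D^\mu\neq 0$. For (b) I would perform a direct Specht-module computation of the kind carried out in Lemmas~\ref{l33}, \ref{L9} and~\ref{l9}: write down the polytabloid generator $e$ of $S^\mu$ corresponding to a standard tableau, expand $x_3 e$ in the standard basis of $M^\mu$, and exhibit a single tabloid with non-zero coefficient. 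Choosing $\mu$ to be a $p$-core (as in the two candidate cases above) gives $D^\mu \cong S^\mu$, so the non-vanishing automatically transfers from the Specht module to the simple module; for $\mu=(5,1^2)$, which is not a $3$-core, an additional argument identifying $x_3 e$ with a vector outside the maximal submodule of $S^{(5,1^2)}$ would be required.

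For (a) I would split by $h(\la)$. When $h(\la)=3$, the Mullineux-symbol classification recalled in the proof of Lemma~\ref{l10} forces $n\equiv 0\pmod 6$ (so in particular $n\geq 12$) and $\la$ to be the JS-partition of the explicit form given there; Lemma~\ref{l10} then provides $D^{(5,1^2)}$ as a composition factor of $D^\la\da_{\s_7}$. When $h(\la)\geq 4$, iterated application of Lemma~\ref{l34} produces composition factors of $D^\la\da_{\s_{n-k}}$ with both height and Mullineux-height at least $4$ at each step; a final branching reduction through Lemma~\ref{l56}, removing appropriate normal nodes, should land on the chosen base case $D^\mu$. The main obstacle I expect is this last step: one must verify by a careful case analysis that the target $D^\mu$ is indeed reachable for every Mullineux-fixed $\la$ with $h(\la)\geq 4$ and $n\geq 9$, and, if gaps remain, supplement with auxiliary base cases (for instance a second small Mullineux-fixed partition of height $\geq 4$ on which $x_3$ is also verified, by explicit computation, to act non-trivially) to cover the exceptional partitions.
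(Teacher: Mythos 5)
Your high-level strategy is the right one and matches the paper: reduce the problem, via Lemma~\ref{l18}, to showing $x_3E^\la_\pm\not=0$, and establish this by branching down to a small symmetric group and verifying non-vanishing on a composition factor there. Your $(n-1,n)$-conjugation observation to pass between $E^\la_+$ and $E^\la_-$ is correct, although it is not strictly necessary: once one knows $x_3E^\mu\not=0$ for a composition factor $E^\mu$ of $E^\la_\pm\da_{A_9}$, the conclusion $x_3E^\la_\pm\not=0$ follows directly since $x_3\in FA_9$ annihilates all subquotients of any module it annihilates.

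The actual proof in the paper is considerably shorter because it outsources both of the steps you flag as needing work. It first notes $h(\la)\geq 3$ by Lemma~\ref{Mull} and that there are no Mullineux-fixed $3$-regular partitions of $9$, then cites \cite[Lemma 3.16]{kmt} to produce in a single step a $\mu\in\Par_3(9)$ with $h(\mu),h(\mu^\Mull)\geq 3$ such that $E^\mu$ is a composition factor of $E^\la_\pm\da_{A_9}$, and then cites \cite[Lemma 6.6]{kmt} for the uniform statement that $x_3D^\mu\not=0$ for every such $\mu$. This completely sidesteps the two places where your outline has unfilled gaps: (i) you would need to actually identify a bounded set of base cases $\mu$ reachable from every Mullineux-fixed $\la$ with $n\geq 9$ -- your iteration of Lemma~\ref{l34} lands you at $\s_9$ with $h(\mu),h(\mu^\Mull)\geq 4$, but the final reduction to one specific $\mu_0$ is not automatic and would require its own case analysis; and (ii) you would need to carry out the explicit Specht-module computations showing $x_3D^\mu\not=0$ for each of your candidates, such as $(5,1^2)$ and $(4,2,1,1)$ -- the computations already in the paper for these partitions are for $x_{2^2}$ (Lemma~\ref{l8}) and $x_4$ (Lemma~\ref{l33}), not $x_3$, so they do not transfer. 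Both gaps are of the routine-but-unchecked variety and you have flagged them honestly, but as written the proposal is incomplete; it would require several pages of additional casework to close them, whereas the citations to \cite{kmt} dispose of everything at once.
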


\begin{proof}
From Lemma \ref{Mull} we have that $h(\la)\geq 3$. Note that there are no Mullineux fixed partitions for $p=3$ and $n=9$. 
In view of \cite[Lemma 3.16]{kmt} there exists $\mu\in\Par_3(9)$ with $h(\mu),h(\mu^\Mull)\geq 3$ and $E^\mu$ a composition factor of $E^\la_\pm\da_{A_9}$. By \cite[Lemma 6.6]{kmt} we have that $x_3E^\mu\cong x_3D^\mu\not=0$. In particular $x_3E^\la_\pm\not=0$ and so the lemma holds by Lemma \ref{l18}.
\end{proof}

\begin{lemma}\label{l8}
Let $p=3$, $n> 6$ and $\la\in\Parinv_3(n)$ be a JS-partition with $h(\la)=3$. Then there exists $\psi\in\Hom_{\s_n}(M_{2^2},\End_F(D^\la))$ which does not vanish on $S_{2^2}$.
\end{lemma}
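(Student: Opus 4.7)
The plan is to mimic the proofs of Lemmas \ref{l9} and \ref{l33}. By Lemma \ref{l12} it suffices to show $x_{2^2}D^\la\not=0$, where $x_{2^2}\in F\s_6$ is the element of Lemma \ref{l12}; since $x_{2^2}\in F\s_6\subseteq F\s_7$, it is enough to exhibit a composition factor of $D^\la\da_{\s_7}$ on which $x_{2^2}$ acts non-trivially. By Lemma \ref{l10} the module $D^{(5,1^2)}$ is such a composition factor, so the problem reduces to showing $x_{2^2}D^{(5,1^2)}\not=0$.

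To make this tractable, I would identify $D^{(5,1^2)}$ with $S^{(5,1^2)}$. Unlike the situation in Lemma \ref{l33}, the partition $(5,1^2)$ is not a $3$-core, so this requires a short argument. Listing the $3$-regular partitions of $7$ and comparing their $3$-contents shows that the only $3$-regular partitions in the block of $(5,1^2)$ are $(5,1^2)$ itself and $(3,3,1)$. Since $(5,1^2)\rhd(3,3,1)$, James' theorem $[S^\mu:D^\nu]\neq 0\Rightarrow\nu\trianglerighteq\mu$ gives $[S^{(5,1^2)}:D^{(3,3,1)}]=0$, so $S^{(5,1^2)}$ has $D^{(5,1^2)}$ as its only composition factor and hence $S^{(5,1^2)}\cong D^{(5,1^2)}$.

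The final step, exactly parallel to Lemma \ref{l9}, is a direct computation in the standard polytabloid basis of $M^{(5,1^2)}$. Namely, let $\{v_y\}$ be the standard tabloid basis of $M^{(5,1^2)}$, fix a suitable standard tableau $t$ of shape $(5,1^2)$, and let $e$ be the associated polytabloid in $S^{(5,1^2)}$. Expanding $x_{2^2}e$ in the basis $\{v_y\}$ and reading off the coefficient of a cleverly chosen tabloid $v_y$ one verifies that it is nonzero in $F$, completing the proof. This last tableau computation is the main obstacle: among the eighteen terms making up $x_{2^2}$, the tableau $t$ and target tabloid $v_y$ must be chosen so that the surviving terms combine to give a coefficient that does not vanish modulo $3$. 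However, as with the analogous computations in Lemmas \ref{l9} and \ref{l33}, this is a finite and mechanical check.
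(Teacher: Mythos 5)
Your plan is exactly the paper's proof: reduce via Lemmas~\ref{l10} and~\ref{l12} to showing $x_{2^2}D^{(5,1^2)}\neq 0$, identify $D^{(5,1^2)}\cong S^{(5,1^2)}$, and then verify non-vanishing of a coefficient in a polytabloid expansion. Your block-theoretic justification of $S^{(5,1^2)}\cong D^{(5,1^2)}$ is a welcome expansion of the paper's terse reference to the tables; the one thing you leave open is the actual choice of standard polytabloid and target tabloid, which the paper pins down as $e_{2,4}$ and $v_{2,5}$ — you should carry out that finite check rather than assert its feasibility, since the existence of a working pair is precisely the content of the final step.
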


\begin{proof}
From Lemmas \ref{l10} and \ref{l12} it is enough to prove that $x_{2^2}D^{(5,1^2)}\not=0$ (with $x_{2^2}$ as in Lemma \ref{l12}). Notice that $D^{(5,1^2)}\cong S^{(5,1^2)}$ (see \cite[Tables]{JamesBook}). Let $\{v_{i,j}:i\not=j\in\{1,\ldots 7\}\}$ be the standard basis of $M^{(5,1^2)}$ and $\{e_{i,j}:2\leq i<j\leq 7\}$ be the standard basis of $S^{(5,1^2)}$ (see \cite[Section 8]{JamesBook}). It can be checked that the coefficient of $x_{2^2}e_{2,4}$ corresponding to $v_{2,5}$ is non-zero and so the lemma follows.
\end{proof}

\section{Permutation modules}\label{spm}

In this section we consider the structure of certain permutation modules $M^\alpha$. The structure of many of the modules considered here has already been studied in other papers, in some cases dual filtrations to those presented here where found. Note that if $M\sim N_1|\ldots|N_h$ then $M^*\sim N_h^*|\ldots|N_1^*$. As noted in section \ref{snot}, the modules $M^\la$, $Y^\la$ and $D^\la$ are self-dual. Remember that $M_\mu:=M^{(n-m,\mu)}$ and similarly for $S_\mu$, $D_\mu$ and $Y_\mu$ if $\mu\in\Par(m)$.

\begin{lemma}\label{L12e}{\cite[Lemmas 4.7 and 4.9]{kmt}}
Let $p=2$. If $n\geq 6$ is even then $M_1\cong D_0|D_1|D_0\sim S_1|D_0$ and $M_2\sim S_2|(D_0\oplus S_1)$. Further if $n\equiv 0\Md 4$ then
\[M_3\cong M_1\oplus(\overbrace{D_2|D_1|D_3}^{S_3}|\overbrace{D_1|D_2}^{S_2}).\]
\end{lemma}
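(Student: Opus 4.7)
The plan is to combine Young's rule (which gives each $M_\mu$ a Specht filtration with $S_\mu$ at the bottom, since $S^\lambda\subseteq M^\lambda$), the known decomposition numbers for two-row Specht modules in characteristic~$2$, self-duality of $M_\mu$, and the Young module decomposition of Lemma~\ref{LYoung}. Under the running hypotheses James' formulas yield $S_1\cong D_0\,|\,D_1$ (since $p\mid n$); when $n\equiv 0\Md 4$ they also give $S_2\cong D_1\,|\,D_2$ and $S_3\cong D_2\,|\,D_1\,|\,D_3$, both uniserial with $D^\lambda$ at the top.

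For $M_1$: Young's rule gives $M_1\sim S_1\,|\,D_0$, with composition factors $D_0,D_1,D_0$ from the bottom. Frobenius reciprocity gives $\dim\Hom_{\s_n}(\mathbf{1},M_1)=1$, so $\soc(M_1)$ contains exactly one copy of $D_0$; self-duality of the permutation module forces $\hd(M_1)\cong D_0$ as well. Since $D_1$ has composition multiplicity one, it must lie strictly between these two $D_0$'s, giving the unique uniserial structure $M_1\cong D_0\,|\,D_1\,|\,D_0$. This also shows $M_1$ is indecomposable, so $M_1\cong Y^{(n-1,1)}$.

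For $M_2$: Young's rule gives the filtration $S_2\,|\,S_1\,|\,D_0$. The point is to show that the top two layers split off a direct summand $D_0$, i.e.\ $M_2/S_2\cong D_0\oplus S_1$ and not the uniserial $D_0\,|\,D_1\,|\,D_0=M_1$. I would invoke the Young module decomposition $M_2=Y^{(n-2,2)}\oplus M_1^{\oplus m_1}\oplus D_0^{\oplus m_0}$, using the previous step to identify $Y^{(n-1,1)}=M_1$. By Frobenius reciprocity and Mackey, $\dim\Hom_{\s_n}(\mathbf{1},M_2)=1$ and $\dim\Hom_{\s_n}(M_1,M_2)$ equals the number of $\s_{n-1}$-orbits on $2$-subsets of $\{1,\ldots,n\}$, which is $2$; these two identities pin down $m_0$ and $m_1$. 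A socle/dimension analysis on the remaining summand $Y^{(n-2,2)}$, using self-duality and the known head $D_2$ of $S_2$, then produces the claimed filtration $M_2\sim S_2\,|\,(D_0\oplus S_1)$.

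For $M_3$ with $n\equiv 0\Md 4$: Young's rule gives the filtration $S_3\,|\,S_2\,|\,S_1\,|\,D_0$. I again use the Young decomposition $M_3=Y^{(n-3,3)}\oplus (Y^{(n-2,2)})^{\oplus r_2}\oplus M_1^{\oplus r_1}\oplus D_0^{\oplus r_0}$, reading off the multiplicities from $\dim\Hom_{\s_n}(\mathbf{1},M_3)$, $\dim\Hom_{\s_n}(M_1,M_3)$, and $\dim\Hom_{\s_n}(M_2,M_3)$, each computed by Mackey on the permutation action on $3$-subsets. The key point is that under $n\equiv 0\Md 4$ these computations force $r_1=1$ and $r_0=0$, i.e.\ exactly one copy of $M_1$ and no separate trivial split off. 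The complementary summand $N$ then carries the filtration $S_3\,|\,S_2$, and the uniserial descriptions $S_3=D_2\,|\,D_1\,|\,D_3$, $S_2=D_1\,|\,D_2$ from the setup yield the formula. The main obstacle is this last step: proving that $N$ is a single indecomposable module with the non-split filtration rather than decomposing further. Characteristic~$2$ admits non-trivial $\Ext^1$'s among the $D_i$'s, so one must combine self-duality of $N$ with the pinned Hom-dimensions above to show that the extension $S_3\,|\,S_2$ does not split and that no additional direct summand appears. The $n\equiv 0\Md 4$ hypothesis enters essentially, since for $n\equiv 2\Md 4$ the decomposition numbers of $S_2$ and $S_3$ change and the conclusion differs.
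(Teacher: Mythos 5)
This lemma is cited in the paper from \cite[Lemmas 4.7 and 4.9]{kmt} rather than proved, so you are necessarily giving an independent derivation. Your general strategy (Young's rule, two-row decomposition numbers, self-duality of permutation and Young modules, and the decomposition of Lemma~\ref{LYoung}) is a reasonable route, and the treatment of $M_1$ is complete and correct.

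There are, however, genuine gaps in the remaining two steps. For $M_2$ you claim that $\dim\Hom_{\s_n}(\1,M_2)=1$ and $\dim\Hom_{\s_n}(M_1,M_2)=2$ ``pin down $m_0$ and $m_1$,'' but they do not: writing $M_2\cong Y^{(n-2,2)}\oplus M_1^{\oplus m_1}\oplus D_0^{\oplus m_0}$, those identities read
\[
\dim\Hom(D_0,Y^{(n-2,2)})+m_1+m_0=1,\qquad \dim\Hom(M_1,Y^{(n-2,2)})+2m_1+m_0=2,
\]
which are two equations in four unknowns, and $(m_1,m_0)\in\{(0,0),(0,1),(1,0)\}$ are all a priori compatible. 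To eliminate the spurious case $(1,0)$ one needs at least $\dim\End_{\s_n}(M_2)=3$ (from the Mackey orbit count on pairs) together with self-duality and a composition-factor count of $Y^{(n-2,2)}$; the ``socle/dimension analysis'' you gesture at is exactly the missing content. Note also that you only record the structure of $S_2$ for $n\equiv 0\Md 4$, whereas the $M_2$ claim covers all even $n\geq 6$; for $n\equiv 2\Md 4$ the composition factors of $S_2$ change and the argument has to be re-run. The $M_3$ step has the same problem, more acutely: the orbit counts $\dim\Hom(\1,M_3)=1$, $\dim\Hom(M_1,M_3)=2$, $\dim\Hom(M_2,M_3)=3$ again leave the Young multiplicities underdetermined, and, as you yourself flag, you do not establish that the complementary summand $N$ is indecomposable with the stated five-step Loewy series. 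Proving $N\cong Y^{(n-3,3)}$ is uniserial with that structure requires controlling $\Ext^1$ between the $D_i$'s, or an explicit submodule argument such as the $\eta_{\ell,k}$-map computation used in the proof of Lemma~\ref{L160817_0}; Hom-dimension counts alone cannot rule out the extension splitting. As written the proposal is a plausible plan with the key non-splitting arguments missing.
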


\begin{lemma}\label{L12o}
Let $p=2$. If $n\geq 7$ is odd then
\[M_1\cong D_0\oplus D_1,\quad M_2\sim S_2|M_1,\quad M_3\sim S_3|M_2.\]
\end{lemma}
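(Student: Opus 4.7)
The plan is to prove the three assertions in sequence, using Young's rule to build Specht filtrations and a short $\Ext$ computation to split the relevant extensions.

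First, $M_1\cong D_0\oplus D_1$: since $n$ is odd, $p=2\nmid n$, so the augmentation $M_1\to F$ admits a section (send $1$ to $n^{-1}\sum_i\{i\}$), giving $M_1\cong F\oplus S_1$. The kernel $S_1=S^{(n-1,1)}$ has dimension $n-1$, and for $p\nmid n$ the head $D_1=D^{(n-1,1)}$ also has dimension $n-1$ (no trivial composition factor arises in the hook Specht), so $S_1\cong D_1$.

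Next, both $M_2\sim S_2|M_1$ and the bottom step of $M_3\sim S_3|M_2$ rest on the same Ext-vanishing. Young's rule gives Specht filtrations $M_m\sim S_m|S_{m-1}|\ldots|S_0$ for $m=2,3$, so setting $Q_m:=M_m/S_m$ one has $Q_m\sim S_{m-1}|\ldots|S_0$. The key input is $\Ext^1_{F\s_n}(D_0,D_1)=0$, proved as follows: by Shapiro and K\"unneth, $\Ext^1(\mathbf{1},M_1)\cong H^1(\s_{n-1},F)$, which is one-dimensional for $n\geq 3$ in characteristic $2$, and $\Ext^1(\mathbf{1},D_0)\cong H^1(\s_n,F)$ is also one-dimensional; combined with the splitting $M_1=D_0\oplus D_1$ from step one, dimension counting forces $\Ext^1(\mathbf{1},D_1)=0$. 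Applied to $Q_2\sim D_1|D_0$ this gives $Q_2\cong D_0\oplus D_1=M_1$ at once, proving the second claim; applied to the top of $Q_3\sim S_2|S_1|S_0$ it collapses the top two sections to give $Q_3\sim S_2|M_1$.

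Finally, one must upgrade $Q_3\sim S_2|M_1$ to the literal isomorphism $Q_3\cong M_2$, since a priori several non-isomorphic extensions of $M_1$ by $S_2$ could exist. My approach is to construct a $\s_n$-equivariant surjection $\Phi:M_3\to M_2$ with kernel exactly $S_3$. By Frobenius reciprocity $\Hom_{\s_n}(M_3,M_2)$ is three-dimensional, spanned by the three $\s_{n-3}\times\s_3$-orbits on $2$-subsets of $\{1,\ldots,n\}$: the contraction $\partial_3:\{i,j,k\}\mapsto\{i,j\}+\{i,k\}+\{j,k\}$, a ``cross'' map $T\mapsto\sum_{x\in T,\,y\notin T}\{x,y\}$, and a ``far'' map $T\mapsto\sum_{\{y_1,y_2\}\cap T=\emptyset}\{y_1,y_2\}$. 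A direct computation shows $\partial_3$ vanishes on $S_3$ in characteristic $2$, as each standard polytabloid is a sum of $2^3=8$ tabloids that pair off. The main obstacle is that $\partial_3$ alone has image inside $\ker(\partial_2)\subsetneq M_2$, missing the trivial summand of $M_1$; so one must combine $\partial_3$ with the cross or far map to cover this missing trivial piece while preserving vanishing on $S_3$. Once a suitable $\Phi$ is produced, the dimension count $\dim Q_3=\binom{n}{2}=\dim M_2$ upgrades surjectivity to the desired isomorphism.
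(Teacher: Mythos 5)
Your first two steps are correct, and in fact the key $\Ext$-vanishing in step 2 can be seen even more cheaply: for $n$ odd the partitions $(n)$ and $(n-1,1)$ have different $2$-contents, hence lie in different blocks, so $\Ext^1_{\s_n}(D_0,D_1)=0$ with no cohomology computation at all. The same block-theoretic observation shows $D_2$ (in the block of $D_0$) and $D_1$ lie in different blocks, which is consistent with your filtration collapsing. The splitting $M_1\cong D_0\oplus D_1$ via the section of the augmentation is also fine. This is genuinely different from the paper, which simply cites a structure result from Kleshchev--Morotti--Tiep together with the facts that $S_k\subseteq M_k$ and $\hd(S_k)\cong D_k$ forces $S_k$ indecomposable; your argument has the virtue of being self-contained for the first two claims.

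The gap is in step 3. You correctly reduce to showing $M_3/S_3\cong M_2$ as opposed to merely $M_3/S_3\sim S_2|M_1$, and you correctly observe that $\partial_3$ alone cannot work because $\partial_2\circ\partial_3=0$ forces $\Im\partial_3\subseteq\Ker\partial_2\subsetneq M_2$. But after that the argument stops at ``one must combine $\partial_3$ with the cross or far map \ldots\ Once a suitable $\Phi$ is produced\ldots''\,---\,you never exhibit the combination, verify that it is surjective, or verify that it still kills $S_3$. These are not formalities: the ``cross'' and ``far'' maps do in fact vanish on $S_3$ (the same column-stabilizer pairing argument you use for $\partial_3$ goes through, since in each case the orbit of a $2$-subset under the order-$8$ column stabilizer has even length), but surjectivity is delicate. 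For instance the dual of the cross map kills the all-ones vector of $M_2$ precisely when $3(n-3)$ is even, i.e.\ for every odd $n\geq 7$, so cross alone is never surjective, and whether $\partial_3+\mathrm{cross}$, $\partial_3+\mathrm{far}$, etc.\ is surjective depends on $n\bmod 4$ through similar parities. Without nailing down a specific $\Phi$ (or invoking a rank result for inclusion matrices such as Wilson's theorem, which the paper does use elsewhere), the third isomorphism $M_3\sim S_3|M_2$ is not established. An alternative clean route would be to show $\Ext^1_{\s_n}(M_1,S_2)=0$, which forces every module with filtration $S_2|M_1$ to be $S_2\oplus M_1$ and hence forces $Q_3\cong M_2$; but that too requires an argument you have not given.
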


\begin{proof}
This follows from \cite[Lemma 4.6]{kmt}, since $\hd(S_k)\cong D_k$ for $0\leq k<n/2$ (in particular in these cases $S_k$ is indecomposable) and $S_k\subseteq M_k$.
\end{proof}

\begin{lemma} \label{L160817_2} 
Let $p=3$, $n\geq 8$ with $n\equiv 2\pmod{3}$. Then 
$$M_1\cong D_0\oplus D_1,\quad M_2\sim S_2|M_1,\quad M_3\sim S_3|M_2.$$
\end{lemma}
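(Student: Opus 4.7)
The plan is to mirror the proof of Lemma \ref{L12o}, the strictly analogous statement for $p = 2$ and odd $n$. Its one-line argument cites [Lemma 4.6]{kmt} together with the facts $\hd(S_k) \cong D_k$ for $0 \leq k < n/2$ and $S_k \subseteq M_k$. In our setting $n \geq 8$ forces $k < n/2$ for $k \leq 3$, and the partitions $(n-k,k)$ are $3$-regular, so these structural inputs transfer verbatim: $\hd(S_k) \cong D_k$ (hence $S_k$ is indecomposable and uniserial in the relevant range) and the Young embedding $S_k \subseteq M_k$ holds. The analogue of ``$n$ odd'' in Lemma \ref{L12o} is ``$3 \nmid n$'', which the hypothesis $n \equiv 2 \pmod 3$ supplies.

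Second, I would unpack the case $k = 1$ in detail, since the decomposition $M_1 \cong D_0 \oplus D_1$ is the only non-filtration assertion. The condition $3 \nmid n$ implies that $S_1 = S^{(n-1,1)}$ is already irreducible, hence equal to $D_1$, so the general short exact sequence $0 \to S_1 \to M_1 \to D_0 \to 0$ produced by [Lemma 4.6]{kmt} becomes $0 \to D_1 \to M_1 \to D_0 \to 0$. To obtain an actual direct-sum decomposition, I would check that $D_0$ and $D_1$ lie in different blocks of $F\s_n$: writing $n = 3m+2$, a quick residue count gives the $3$-contents $(m+1,m+1,m)$ for $(n)$ and $(m+1,m+1,m+1)$ for $(n-1,1)$, which are distinct. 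Therefore $\mathrm{Ext}^1_{F\s_n}(D_0, D_1) = 0$ and the sequence splits, yielding $M_1 \cong D_0 \oplus D_1$. The remaining filtrations $M_2 \sim S_2 | M_1$ and $M_3 \sim S_3 | M_2$ then follow directly from the short exact sequence $0 \to S_k \to M_k \to M_{k-1} \to 0$ provided by [Lemma 4.6]{kmt} for $k = 2, 3$.

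The main point I expect to verify carefully is matching the arithmetic hypotheses of [Lemma 4.6]{kmt} to the present setting; since the corresponding statement at $p = 2$ with $n$ odd in Lemma \ref{L12o} uses the same reference under the parallel condition $p \nmid n$, this should amount to a direct application once the non-divisibility condition $3 \nmid n$ is invoked. Everything else (indecomposability of $S_k$, $\hd(S_k) \cong D_k$, the embedding $S_k \subseteq M_k$) is identical to the $p=2$ odd case.
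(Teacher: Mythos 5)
Your strategy is essentially the paper's: invoke a structural lemma from \cite{kmt} describing the Specht filtrations of the small permutation modules $M_k$ and combine it with $\hd(S_k)\cong D_k$ and $S_k\subseteq M_k$. However, the reference you rely on is the wrong one. In \cite{kmt} the Lemmas 4.3, 4.4, 4.5 treat $p=3$ in the residue classes $n\equiv 0,1,2\pmod 3$ respectively, while Lemma 4.6 (the one \ref{L12o} cites) is the $p=2$ statement. Since you are trying to carry the $p=2$ lemma over to $p=3$ via an analogy ``odd $n$'' $\leftrightarrow$ ``$3\nmid n$'', your argument as written would not compile: Lemma 4.6 of \cite{kmt} does not apply at $p=3$, and if you inspected its hypotheses (as you flagged you intended to) you would find a prime mismatch rather than an arithmetic one. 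The correct citation is \cite[Lemma 4.5]{kmt}, exactly as the paper uses; with that substitution your proof coincides with the paper's.

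A secondary, non-fatal slip: in your block computation for $k=1$, with $n=3m+2$ the $3$-content of $(n-1,1)$ is $(m+1,m,m+1)$, not $(m+1,m+1,m+1)$ (the latter sums to $n+1$, not $n$). The content of $(n)$ is $(m+1,m+1,m)$ as you say, so the conclusion that $D_0$ and $D_1$ lie in distinct blocks still stands, and the splitting of $0\to D_1\to M_1\to D_0\to 0$ follows. This extra block-theoretic check is a reasonable elaboration, though the paper simply reads the direct sum $M_1\cong D_0\oplus D_1$ off from \cite[Lemma 4.5]{kmt}.
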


\begin{proof}
This holds by \cite[Lemma 4.5]{kmt}, since $\hd(S_k)\cong D_k$ for $0\leq k\leq n/2$ and $S_k\subseteq M_k$.
\end{proof}

\begin{lemma} \label{L160817_0}
Let $p=3$, $n\equiv 0\pmod{3}$ with $n\geq 9$. Then
\begin{align*}
&M_1\cong \overbrace{D_0|D_1}^{S_1}|D_0,&&M_2\cong D_2\oplus M_1,\\
&M_3\sim D_2\oplus (S_3|(D_0\oplus S_1)),&&M_4\sim S_4|S_1|A,\\
&M_{1^2}\sim M_2\oplus (S_{1^2}|S_1).
\end{align*}
for a module $A\subseteq M_3$ with $M_3/A\cong S_1$.
\end{lemma}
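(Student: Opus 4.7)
The plan is to follow the template used for the analogous Lemmas \ref{L12e}, \ref{L12o} and \ref{L160817_2}: invoke a Specht filtration of each small permutation module (with layer multiplicities given by the Kostka numbers $K_{\mu,\la}$), refine it using the facts that $\hd(S_k)\cong D_k$ for $0\leq k\leq n/2$ (James's regularization), that $S_k\subseteq M_k$, and Lemma \ref{LYoung}, and finally use the block theory of $F\s_n$ and self-duality of $M^\la$ to reconstruct the claimed filtration.

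The decisive observation is that for $n\equiv 0\pmod 3$ the $3$-contents of $(n)$ and $(n-1,1)$ coincide, while the contents of $(n-2,2),(n-3,3),(n-4,4),(n-2,1^2)$ are pairwise distinct and also distinct from the principal content. Hence each $M_\mu$ splits as a direct sum of block components, and many of the summands in the statement are isolated in this way. First I would treat $M_1$: the short exact sequence $0\to S_1\to M_1\to D_0\to 0$ together with $S_1\cong D_0|D_1$ (the standard uniserial structure of the hook Specht when $3\mid n$) and the fact that $M_1$ has one-dimensional canonical trivial head and socle force $M_1\cong D_0|D_1|D_0$. For $M_2$, the Specht filtration has layers $S_0,S_1,S_2$ each with multiplicity one; block decomposition isolates the $D_2$-summand (its composition factor $D_2$ lies alone in its block, so the corresponding summand equals $S_2=D_2$), while the principal-block summand has the same composition factors and quotient structure as $M_1$ and hence equals $M_1$ by the uniqueness asserted in Lemma \ref{LYoung}. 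For $M_3$ and $M_{1^2}$ I would argue analogously: split off the $D_2$- (respectively $M_2$-) summand by blocks, then analyze the remaining principal-block summand using its Specht filtration together with self-duality and the known heads of the Specht factors. For $M_4$ one similarly peels off block summands, and the description $M_4\sim S_4|S_1|A$ with $M_3/A\cong S_1$ is obtained by using the top Specht factor $S_4$ of $M_4$ and the structure of $M_3$ already determined.

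The main obstacle is to verify that the non-split extensions are exactly as stated, in particular to show that the principal-block part of $M_3$ has the genuine two-step filtration $S_3|(D_0\oplus S_1)$ (rather than a split or further-refined arrangement), and to identify the specific submodule $A\subseteq M_3$ with $M_3/A\cong S_1$ appearing in the filtration of $M_4$. Both points reduce to the indecomposability of the Young modules $Y^{(n-3,3)}$ and $Y^{(n-4,4)}$ together with their Specht filtrations provided by Lemma \ref{LYoung}; indecomposability forces the corresponding $\Ext^1$-extensions between Specht layers to be non-split, and pins down $A$ as the unique submodule of $M_3$ with the required quotient.
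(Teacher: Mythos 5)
Your block-decomposition observation is incorrect, and this undermines the proposed strategy. For $n\equiv 0\pmod 3$, only $(n-2,2)$ lies in a different block from the principal one; the partitions $(n-3,3)$, $(n-4,4)$ and $(n-2,1^2)$ all have the same $3$-content as $(n)$ and $(n-1,1)$, hence lie in the principal block. (For instance, passing from $(n)$ to $(n-3,3)$ removes one node of each residue and adds one node of each residue.) So the only piece that can be ``isolated by blocks'' in $M_2,M_3,M_4,M_{1^2}$ is the single $D_2\cong S_2$, and in particular $M_2$ does not split off from $M_{1^2}$ by blocks as you claim. This means your plan of ``peeling off block summands'' to reduce to small principal-block pieces does not get off the ground for $M_3$, $M_4$ or $M_{1^2}$, and your argument gives no handle on how $S_3$, $S_4$ and $S_{1^2}$ are glued to $S_0$ and $S_1$ inside the principal block.

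The paper in fact treats the three hard pieces differently, and each needs a concrete ingredient your proposal lacks. For $M_1,M_2,M_3$ it simply cites an earlier structure result (\cite[Lemma 4.3]{kmt}), combined with $S_k\subseteq M_k$ and $\hd(S_k)\cong D_k$. For $M_{1^2}$ it does not analyze Specht layers within the module at all; instead it writes $M_{1^2}\cong (M_1\!\downarrow_{\s_{n-1}})\!\uparrow^{\s_n}$, applies Lemma~\ref{L160817_2} to split $M_1$ over $\s_{n-1}$ (where $n-1\equiv 2\pmod 3$) as $D_0\oplus D_1$, and then inducts $D^{(n-2,1)}\cong S^{(n-2,1)}$ up via \cite[Corollary~17.14]{JamesBook}. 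For $M_4$, the key tool is Wilson's theorem on the rank of the inclusion incidence matrices: one introduces the natural maps $\eta_{\ell,k}\colon M_\ell\to M_k$, uses \cite[Theorem~1]{Wil} to compute $\dim\Im\eta_{4,3}$, $\dim\Im\eta_{4,1}$, $\dim\Im\eta_{3,1}$ and the relation $\eta_{3,1}\circ\eta_{4,3}=0$ from \cite[(3.1)]{Wil}, which lets one identify $A=\Ker\eta_{3,1}$ concretely and show $M_3/A\cong S_1$ before assembling the filtration $M_4\sim S_4|S_1|A$. Your appeal to ``indecomposability of $Y^{(n-3,3)}$ and $Y^{(n-4,4)}$ together with their Specht filtrations provided by Lemma~\ref{LYoung}'' does not work: Lemma~\ref{LYoung} asserts existence, indecomposability and self-duality of Young modules, but says nothing about Specht filtrations of $Y^\lambda$, nor does indecomposability alone determine the precise layer structure; and the submodule $A$ of $M_3$ with $M_3/A\cong S_1$ is not characterized by that quotient condition alone.
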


\begin{proof}
For the structure of $M_1$, $M_2$ and $M_3$ see \cite[Lemma 4.3]{kmt}, together from $S_k\subseteq M_k$ and $\hd(S_k)\cong D_k$ for $k\leq n/2$. It then also follows that $S_2\cong D_2$.

For the structure of $M_{1^2}$ note that by Lemma \ref{L160817_2} and \cite[Corollary 17.14]{JamesBook}
\[M_{1^2}
\cong M_1\oplus (D^{(n-2,1)}\ua^{\s_n})\cong M_1\oplus (S^{(n-2,1)}\ua^{\s_n})\sim M_1\oplus S_2\oplus (S_{1,1}|S_1).\]
We then only still have to study the structure of $M_4$.

For $0\leq k\leq n/2$ let
\[\{v_I:I\subseteq \{1,\ldots,n\}\text{ with }|I|=k\}\]
be the standard basis of $M_k$. Given $0\leq k\leq\ell\leq n/2$ define $\eta_{\ell,k}:M_\ell\to M_k$ by
\[\eta_{\ell,k}v_I=\sum_{{J\subseteq I\subseteq\{1,\ldots,n\}:}\atop{|J|=k}}v_J\]
for any element $v_I$ of the standard basis of $M_\ell$.

From \cite[Theorem 1]{Wil} we have that $\dim\Im\eta_{4,3}=\dim M_3-(n-1)$, $\dim\Im\eta_{4,1}=\dim M_1$ and $\dim\Im\eta_{3,1}=n-1$. In particular there exist submodules $X,Y\subseteq M_4$ and $A\subseteq M_3$ with $\dim A=\dim M_3-(n-1)$ such that $M_4\sim X|A$ and $M_4\sim Y|M_1$. Further $\eta_{3,1}\circ\eta_{4,3}=0$ by \cite[(3.1)]{Wil}. So $A\cong \ker\eta_{3,1}$. So $M_3/A\cong\Im\eta_{3,1}\subseteq M_1$ has dimension $n-1$. Since $M_1\cong D_0|D_1|D_0\sim S_1|D_0$ is uniserial and $D_0\cong \1_{\s_n}$, it then follows that $M_3/A\cong S_1$. Since $D_3\cong\hd(S_3)$ is not a composition factor of $S_1$ and $S_3\subseteq M_3$, it follows that $S_3\subseteq A$. From \cite[Example 17.17, Theorem 24.15]{JamesBook} we also have that $D_1\cong \hd(S_1)$ is not a composition factor of $A/S_3$. Since $D_0$ is contained exactly once in the head of $M_k$ for each $k$, it follows that $M_4\sim (X\cap Y)|S_1|A$. As $S_4\subseteq M_4$ and $D_4\cong \hd(S_4)$ is not a composition factor of $S_1$ or $A$, it follows by comparing dimensions that $M_4\sim S_4|S_1|A$.
\end{proof}

\begin{lemma} \label{L160817_1}
Let $p=3$, $n\equiv 1\pmod{3}$ with $n\geq 10$. Then
\begin{align*}
&M_1\cong D_0\oplus D_1,&&M_2\cong D_1\oplus (\overbrace{D_0|D_2}^{S_2}|D_0),\\
&M_3\sim D_1\oplus (S_3|(D_0\oplus S_2)),&&M_4\sim S_4|M_3.
\end{align*}
\end{lemma}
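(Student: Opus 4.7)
The proof will parallel the strategy of Lemmas~\ref{L160817_2} and \ref{L160817_0}. For $M_1$, $M_2$ and $M_3$ I would invoke the appropriate case of \cite[Lemma 4.3]{kmt} (or the analogous statement for $n\equiv 1\Md 3$ from the same section) together with the standard facts that $S_k\subseteq M_k$ and $\hd(S_k)\cong D_k$ for $0\leq k\leq n/2$; this pins down the filtrations once the composition factors of $S_k$ are known. Since $p=3\nmid n$ here, $S_1\cong D^{(n-1,1)}\cong D_1$ is simple and splits off as a direct summand, explaining the $D_1$ summand in each of $M_1$, $M_2$, $M_3$. The uniserial piece $D_0|D_2|D_0$ of $M_2$ (with $S_2\sim D_2|D_0$, equivalently $S_2^*\cong D_0|D_2$) reflects the fact that $D_0$ is a composition factor of $S^{(n-2,2)}$ precisely when $3\mid n-1$.

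The substantive step is the description of $M_4$, which I plan to handle exactly as in the last part of the proof of Lemma~\ref{L160817_0}. Introduce the incidence maps $\eta_{\ell,k}\colon M_\ell\to M_k$ given by $\eta_{\ell,k}v_I=\sum_{J\subseteq I,\,|J|=k}v_J$. By \cite[Theorem 1]{Wil}, in the case $n\equiv 1\Md 3$ the map $\eta_{4,3}$ is surjective, so
\[
\dim\ker\eta_{4,3}=\dim M_4-\dim M_3=\dim S_4.
\]
Combined with the standard inclusion $S_4\subseteq\ker\eta_{4,3}$ (which comes from $\eta_{4,3}$ annihilating the generator of $S_4$ built from the polytabloid), this forces $\ker\eta_{4,3}=S_4$, yielding a short exact sequence $0\to S_4\to M_4\to M_3\to 0$, i.e.\ $M_4\sim S_4|M_3$.

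The main obstacle, as I see it, is confirming the rank computation from \cite[Theorem 1]{Wil} in the congruence class $n\equiv 1\Md 3$: unlike the $n\equiv 0\Md 3$ case (where $\eta_{4,3}$ has nontrivial cokernel of dimension $n-1$), here one needs surjectivity of $\eta_{4,3}$, and a careful reading of the Wilson formula is what distinguishes the two cases. A minor secondary verification is that the bound $n\geq 10$ is sufficient for all of $S_2$, $S_3$, $S_4$ to sit inside the range $k\leq n/2$ where $\hd(S_k)\cong D_k$, so the earlier filtrations of $M_1$, $M_2$, $M_3$ really are uniquely determined by their composition factors and the position of the head.
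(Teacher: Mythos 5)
Your treatment of $M_1$, $M_2$, $M_3$ is essentially the paper's (it cites \cite[Lemma 4.4]{kmt} rather than 4.3, but that is the analogue you anticipated); note only that with the paper's bottom-to-top convention $S_2\cong D_0|D_2$, not $D_2|D_0$. The real problem is the $M_4$ step.

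Your argument hinges on the claim that, for $n\equiv 1\pmod 3$, the incidence map $\eta_{4,3}$ is surjective over $\mathbb{F}_3$. This is false. By Wilson's theorem the Smith normal form of the $3$-subset versus $4$-subset incidence matrix has diagonal entries $\binom{4-i}{3-i}$ with multiplicities $\binom{n}{i}-\binom{n}{i-1}$ for $0\leq i\leq 3$; these diagonal entries $4,3,2,1$ do not depend on $n$, so the $\mathbb{F}_3$-rank of $\eta_{4,3}$ is always $\binom{n}{3}-(n-1)=\dim M_3-(n-1)$, exactly as in the $n\equiv 0\pmod 3$ case. In particular $\eta_{4,3}$ has cokernel of dimension $n-1$ and $\ker\eta_{4,3}$ has dimension $\dim S_4+(n-1)$, so $\ker\eta_{4,3}\neq S_4$ and the proposed short exact sequence $0\to S_4\to M_4\to M_3\to 0$ does not exist via this map. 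Even with the corrected rank, the incidence-map route only produces $M_4\sim X|A$ with $A\subsetneq M_3$ and $M_3/A\cong D_1$, which does not by itself yield $M_4\sim S_4|M_3$; one would still have to reassemble the missing $D_1$. The paper instead exploits the block structure for $n\equiv 1\pmod 3$: $D_1$ and $D_4$ lie in one block while $D_0,D_2,D_3$ lie in another, $S_4$ is either $D_4$ or $D_1|D_4$, so $Y_4$ is either $D_4$ or $D_1|D_4|D_1$, and the filtration $M_4\sim S_4|M_3$ then drops out of $M_4\cong Y_4\oplus(\text{higher Young modules})$ by comparing composition factors. That block-theoretic argument is what you would need here; the incidence-map computation cannot be made to distinguish the congruence classes of $n$ modulo $3$.
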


\begin{proof}
For the structure of $M_1$, $M_2$ and $M_3$ see \cite[Lemma 4.4]{kmt} and use that $\hd(S_k)\cong D_k$ for $k\leq n/2$. Notice that $D_1$ and $D_4$ are in the same block, while $D_0$, $D_2$ and $D_3$ are in a different block. Further $S_4\cong D_4$ or $S_4\cong D_1|D_4$ from \cite[Theorem 24.15]{JamesBook}. In particular 
$Y_4\cong D_4$ if $S_4\cong D_4$ or $Y_4\cong D_1|D_4|D_1$ if $S_4\cong D_1|D_4$. The lemma then follows from Lemma \ref{LYoung} and by comparing composition factors (see \cite[Example 17.17, Theorem 24.15]{JamesBook}).
\end{proof}

\section{Partitions with at least 2 normal nodes}\label{geq2n}

In the next three sections we will study more in details the endomorphism rings of the modules $D^\la$, $E^\la$ or $E^\la_\pm$ for certain particular classes of partitions. We start here by considering the case where $\la$ has at least 2 normal nodes.

\begin{lemma}\label{L7}
Let $p=2$ and $n\geq 10$ be even. If $\la\in\Par_2(n)$ with $\epsilon_0(\la)+\epsilon_1(\la)\geq 3$ then there exist $\psi,\psi',\psi''\in\Hom_{\s_n}(M_2,\End_F(D^\la))$ such that $\psi|_{S_2}$, $\psi'|_{S_2}$ and $\psi''|_{S_2}$ are linearly independent.
\end{lemma}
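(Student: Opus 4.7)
My plan is to combine Frobenius reciprocity with the filtration of $M_2$ to reduce to a dimension count. By Lemma \ref{l2} applied to the Young subgroup $\s_{(n-2,2)}=\s_{n-2}\times\s_2$ I identify $\Hom_{\s_n}(M_2,X)\cong\End_{\s_{(n-2,2)}}(D^\la\da)$, where $X:=\End_F(D^\la)$. Since $M_2\sim S_2\,|\,(D_0\oplus S_1)$ by Lemma \ref{L12e}, the kernel of the restriction $\Hom(M_2,X)\to\Hom(S_2,X)$ is $\Hom(D_0,X)\oplus\Hom(S_1,X)$, so by Schur's lemma the image has dimension $\dim\Hom(M_2,X)-1-\dim\Hom(S_1,X)$, and the goal reduces to showing that this is at least $3$.

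To lower-bound $\dim\Hom(M_2,X)$, I would decompose
\[D^\la\da_{\s_{n-2}}=e_0^2D^\la\;\oplus\;(e_0e_1D^\la\oplus e_1e_0D^\la)\;\oplus\;e_1^2D^\la\]
into its $\s_{n-2}$-blocks, note that the involution $s=(n-1,n)$ commutes with $\s_{n-2}$ and hence acts block-preservingly on $\End_{\s_{n-2}}(D^\la\da)$ by conjugation, and then apply Lemma \ref{l39}(i),(ii) to obtain $\dim\End(e_i^2D^\la)=4\binom{\epsilon_i(\la)}{2}$ for each $i$. In characteristic $2$ the operator $s-1$ is square-zero, so the $s$-fixed subspace of $\End_{\s_{n-2}}(D^\la\da)$ has dimension at least half the total. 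To upper-bound $\dim\Hom(S_1,X)$, I would apply $\Hom(-,X)$ to $0\to S_1\to M_1\to D_0\to 0$ (from Lemma \ref{L12e}) and use $\dim\Hom(M_1,X)=\epsilon_0(\la)+\epsilon_1(\la)$ (by Lemmas \ref{l2} and \ref{l53}) together with $\dim\Hom(D_0,X)=1$ to get
\[\dim\Hom(S_1,X)=\epsilon_0(\la)+\epsilon_1(\la)-1+\delta\]
for a correction term $\delta\geq 0$ controlled by an $\Ext^1$-kernel.

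A case analysis on the distribution of the $d=\epsilon_0(\la)+\epsilon_1(\la)\geq 3$ normal nodes between residues $0$ and $1$, combined with control of the mixed block $e_0e_1D^\la\oplus e_1e_0D^\la$ via the Serre-type commutation between $e_0$ and $e_1$ and the adjunctions of Lemma \ref{l48}, should then yield the required inequality in each subcase. The main obstacle is the simultaneous control of this mixed block and of the correction term $\delta$, both of which depend subtly on the combinatorics of $\la$; the hypothesis $n\geq 10$ is used to exclude low-rank configurations in which one of these estimates could degenerate.
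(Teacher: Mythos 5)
Your reduction at the start is exactly the paper's: using Lemma \ref{l2} and the filtration $M_2\sim S_2|(D_0\oplus S_1)$ from Lemma \ref{L12e}, both arguments reduce the claim to showing
\[\dim\End_{\s_{n-2,2}}(D^\la\da_{\s_{n-2,2}})\ \geq\ \dim\Hom_{\s_n}(S_1,\End_F(D^\la))+4.\]
Your observation about the characteristic-$2$ ``half'' inequality for the fixed points of the involution $(n-1,n)$ is also the right mechanism for passing from $\End_{\s_{n-2}}$ to $\End_{\s_{n-2,2}}$; this is essentially what the paper's citation of \cite[Lemma 4.14]{m1} encapsulates.

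The genuine gap is that you do not actually prove the required inequality. After the reduction, the two quantities that must be controlled are (a) an upper bound for $\dim\Hom_{\s_n}(S_1,\End_F(D^\la))$, and (b) a sufficiently strong lower bound for $\dim\End_{\s_{n-2}}(D^\la\da_{\s_{n-2}})$, including the mixed block $e_0e_1D^\la\oplus e_1e_0D^\la$. For (a) your long exact sequence gives $\dim\Hom(S_1,X)=\dim\Hom(M_1,X)-1+\delta$ with $\delta$ the dimension of the image of $\Hom(S_1,X)\to\Ext^1_{\s_n}(D_0,\End_F(D^\la))$; this is an \emph{upper}-bound problem, $\delta$ is a cohomology group $H^1(\s_n,\End_F(D^\la))$ term, and you give no bound for it. For (b) you compute $\dim\End(e_i^2D^\la)=4\binom{\epsilon_i(\la)}{2}$ correctly, but the mixed block is precisely where the combinatorics is delicate, and your invocation of ``Serre-type commutation between $e_0$ and $e_1$'' is not a computation. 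You explicitly acknowledge this: ``The main obstacle is the simultaneous control of this mixed block and of the correction term $\delta$.'' That obstacle is the entire content of the lemma. The paper resolves it by invoking a single sharp inequality from elsewhere — \cite[Lemma 5.4]{kmt}, which directly gives $\dim\End_{\s_{n-2}}(D^\la\da_{\s_{n-2}})>2\dim\Hom_{\s_n}(S_1,\End_F(D^\la))+7$ when $\la$ has at least three normal nodes — rather than the case-by-case argument you gesture toward. Without supplying a proof of that inequality (or an equivalent), your argument is an outline, not a proof.
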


\begin{proof}
By Lemma \ref{L12e} we have that $M_2\sim S_2|(D_0\oplus S_1)$. By Lemma \ref{l2} if
\[\dim\End_{\s_{n-2,2}}(D^\la\da_{\s_{n-2,2}})=\dim\Hom_{\s_n}(S_1,\End_F(D^\la))+c,\]
then there exist homomorphisms $\phi_i\in\Hom_{\s_n}(M_2,\End_F(D^\la))$ for $1\leq i\leq c-1$ such that $\phi_1|_{S_2},\ldots,\phi_{c-1}|_{S_2}$ are linearly independent. Since $\la$ has at least 3 normal nodes we have by \cite[Lemma 5.4]{kmt} and Lemmas \ref{l2} and \ref{l12} that
\[\dim\End_{\s_{n-2}}(D^\la\da_{\s_{n-2}})>2\dim\Hom_{\s_n}(S_1,\End_F(D^\la))+7\]
and so by \cite[Lemma 4.14]{m1}
\[\dim\End_{\s_{n-2,2}}(D^\la\da_{\s_{n-2,2}})\geq \dim\Hom_{\s_n}(S_1,\End_F(D^\la))+4,\]
from which the lemma follows.
\end{proof}

\begin{lemma}\label{L8}
Let $p=2$ and $n\geq 10$ be even. Assume that $\la\in\Parinv_2(n)$ with $\epsilon_0(\la)+\epsilon_1(\la)=2$. Then there exist $\psi,\psi'\in\Hom_{\s_n}(M_2,\End_F(D^\la))$ such that $\psi|_{S_2}$ and $\psi'|_{S_2}$ are linearly independent.
\end{lemma}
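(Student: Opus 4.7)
My plan is to mirror the strategy used in the proof of Lemma \ref{L7}, but with lower numerical targets that compensate for the weaker conclusion (two rather than three linearly independent restrictions) by exploiting the splitting hypothesis $\la\in\Parinv_2(n)$.

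Applying $\Hom_{\s_n}(-,\End_F(D^\la))$ to the short exact sequence
\[
0\to S_2\to M_2\to D_0\oplus S_1\to 0
\]
coming from the filtration $M_2\sim S_2|(D_0\oplus S_1)$ of Lemma \ref{L12e}, and using Lemma \ref{l2} together with $\End_{\s_n}(D^\la)=F$, we obtain the identity
\[
\dim\{\psi|_{S_2}:\psi\in\Hom_{\s_n}(M_2,\End_F(D^\la))\}=\dim\End_{\s_{n-2,2}}(D^\la\da_{\s_{n-2,2}})-1-\dim\Hom_{\s_n}(S_1,\End_F(D^\la)),
\]
so it is enough to show the right side is at least $2$. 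Using the characteristic $2$ invariant bound $\dim\End_{\s_{n-2,2}}(V)\geq\tfrac12\dim\End_{\s_{n-2}}(V)$ of \cite[Lemma 4.14]{m1} (invariants of the involution $(n-1,n)$ occupy at least half of $\End_{\s_{n-2}}(V)$ in characteristic~$2$), the task reduces to proving
\[
\dim\End_{\s_{n-2}}(D^\la\da_{\s_{n-2}})\geq 2\dim\Hom_{\s_n}(S_1,\End_F(D^\la))+6.
\]

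To set up this estimate I would decompose $D^\la\da_{\s_{n-1}}=e_0D^\la\oplus e_1D^\la$ via Lemma \ref{l45} and then restrict once more to $\s_{n-2}$, analyzing the block components $e_je_iD^\la$ using Lemmas \ref{l39} and \ref{l56}. The constraint $\epsilon_0(\la)+\epsilon_1(\la)=2$ rigidly controls this structure: either one indecomposable summand $e_iD^\la$ of endomorphism dimension $2$, or a sum of two simples indexed by distinct residues. The analogue of \cite[Lemma 5.4]{kmt} for two normal nodes should produce a bound of the form $\dim\End_{\s_{n-2}}(D^\la\da_{\s_{n-2}})\geq 2\dim\Hom_{\s_n}(S_1,\End_F(D^\la))+c$ with $c$ strictly smaller than the~$8$ available under the stronger hypothesis of Lemma \ref{L7}.

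The main obstacle is producing the remaining units to reach $c=6$. I expect these to come from the splitting $D^\la\da_{A_n}=E^\la_+\oplus E^\la_-$, which contributes $\dim\End_{A_n}(D^\la\da_{A_n})=2$ rather than $1$: the extra $A_n$-equivariant idempotent onto $E^\la_+$ persists under restriction to $A_{n-2}$ and, modulo the standard index-two correction between $A_{n-2}$ and $\s_{n-2}$, augments $\dim\End_{\s_{n-2}}(D^\la\da_{\s_{n-2}})$ by the required amount. A separate verification will be needed in the exceptional case $\la=\be_n$ (possible when $n\equiv 0\pmod 4$), where Lemma \ref{L2} does not directly apply; here the specific structure of basic spin modules in characteristic~$2$ should provide the missing homomorphisms by an explicit construction.
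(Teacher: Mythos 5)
Your reduction to showing $\dim\End_{\s_{n-2,2}}(D^\la\da_{\s_{n-2,2}})\geq\dim\Hom_{\s_n}(S_1,\End_F(D^\la))+3$ is correct and coincides with the paper's first step. After that, however, the proposal diverges from the paper and runs into two genuine problems.

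First, you propose to reach the target through the halving inequality $\dim\End_{\s_{n-2,2}}\geq\tfrac12\dim\End_{\s_{n-2}}$, as in Lemma~\ref{L7}, aiming for $\dim\End_{\s_{n-2}}(D^\la\da_{\s_{n-2}})\geq 2\dim\Hom_{\s_n}(S_1,\End_F(D^\la))+6$. But the whole reason Lemma~\ref{L8} is carved out as a separate case is that this crude bound is too lossy when $\epsilon_0(\la)+\epsilon_1(\la)=2$. With only two normal nodes, $D^\la\da_{\s_{n-1}}$ has a total of two composition-multiplicities' worth of self-maps, and the associated bound on $\dim\End_{\s_{n-2}}(D^\la\da_{\s_{n-2}})$ is genuinely smaller than in the $\geq 3$-normal-node situation; there is no ``analogue of \cite[Lemma 5.4]{kmt}'' that gives the $+6$ you need. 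The paper's proof does \emph{not} go through the halving inequality here. Instead it uses the bound $\dim\End_{\s_{n-2,2}}(D^\la\da_{\s_{n-2,2}})\geq 4$ from \cite[Lemma 5.5]{kmt} directly on the $\s_{n-2,2}$-endomorphism ring, applies \cite[Lemmas 3.12, 3.13]{kmt} to reduce to the single case $\dim\Hom_{\s_n}(S_1,\End_F(D^\la))=2$ with a very specific combinatorial shape for $\la$ (via \cite[Lemma 2.13]{kmt} and Lemma~\ref{split2}), and in that residual case carries out a careful block-component analysis: $D^\la\da_{\s_{n-2,2}}\cong A\oplus B$ with $A\da_{\s_{n-2}}\cong e_i^2 D^\la$, $B\da_{\s_{n-2}}\cong e_{1-i}e_iD^\la$, computing $\dim\End(A)$ directly and then proving $\dim\End(B)\geq 3$ via a delicate self-duality and submodule-lattice argument (including an excursion to $\mathbb F_2$ to make a counting argument about the three modules $E_1,E_2,E_3$ work).

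Second, the mechanism you invoke to produce the missing units is incorrect. The object $\End_{\s_{n-2}}(D^\la\da_{\s_{n-2}})$ is the $\s_{n-2}$-equivariant endomorphism ring of an $F\s_{n-2}$-module, and the projection onto $E^\la_+$ is an $A_n$-equivariant (hence only $A_{n-2}$-equivariant after restriction) endomorphism; it is not $\s_{n-2}$-equivariant, and there is no ``index-two correction'' that promotes it. The splitting $D^\la\da_{A_n}=E^\la_+\oplus E^\la_-$ simply does not inject extra elements into $\End_{\s_{n-2}}(D^\la\da_{\s_{n-2}})$. In the paper the splitting hypothesis enters only through the combinatorial constraints of Lemma~\ref{split2} on the shape of $\la$, which is what makes the case analysis close up. Finally, the worry about $\la=\be_n$ is vacuous here: for $n$ even, $\be_n$ has both parts of the same parity, so it is JS by Lemma~\ref{L151119} and hence has exactly one normal node, contradicting $\epsilon_0(\la)+\epsilon_1(\la)=2$.
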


\begin{proof}
By Lemma \ref{L12e} we have that $M_2\sim S_2|(D_0\oplus S_1)$ and so by Lemma \ref{l2} it is enough to prove that
\[\dim\End_{\s_{n-2,2}}(D^\la\da_{\s_{n-2,2}})\geq \dim\Hom_{\s_n}(S_1,\End_F(D^\la))+3.\]
From \cite[Lemma 5.5]{kmt} we have that $\dim\End_{\s_{n-2,2}}(D^\la\da_{\s_{n-2,2}})\geq 4$. By \cite[Lemmas 3.12, 3.13]{kmt} we may then assume that $\dim\Hom_{\s_n}(S_1,\End_F(D^\la))=2$ and that for some residue $\ell$ we have $\eps_\ell(\la),\phi_\ell(\la)>0$ and $(\la\setminus X)\cup Y$ is not $p$-regular, where $X$ is the $\ell$-good node and $Y$ the $\ell$-cogood node of $\la$. By \cite[Lemma 2.13]{kmt} we then have that $h(\la)\geq 3$ and that there exists $1\leq j\leq h(\la)$ with $\la_j=\la_{j+1}+2$ and
\[\la_1\equiv\ldots\equiv\la_{j-1}\not\equiv\la_j\equiv\la_{j+1}\not\equiv\la_{j+2}\equiv\ldots\equiv\la_{h(\la)}\Md 2.\]
If $j$ is odd then there exists $k\geq 1$ such that $\la_{2k+1}\geq 1$ and
\[\la_1\equiv \la_2\not\equiv\la_{2k+1}\equiv\la_{2k+2}\Md 2.\]
From Lemma \ref{split2} this contradicts the assumption that $D^\la\da_{A_n}$ splits. So $j$ is even. If $j=h(\la)$ then $\la_{h(\la)}=2$ and the other parts of $\la$ are odd, contradicting $n$ being even. If $j=h(\la)-1$ then, from Lemma \ref{split2}, $\la_{h(\la)-1}=3$, $\la_{h(\la)}=1$ and the other parts of $\la$ are even. So again from Lemma \ref{split2}, $\la=(4,3,1)$, contradicting $n\geq 10$. Thus $2\leq j\leq h(\la)-2$ is even. Notice that the normal nodes of $\la$ are on rows 1 and $j$ and so they have the same residue $i$. It then follows from Lemmas \ref{l45} and \ref{l39} that $D^\la\da_{\s_{n-2,2}}\cong A\oplus B$ with $A\da_{\s_{n-2}}\cong e_i^2D^\la$ and $B\da_{\s_{n-2}}\cong e_{1-i}e_iD^\la$. From \cite[Lemma 4.15]{m1} we have that $A\cong (D^{\tilde{e}^2_i(\la)}\otimes D^{(2)})|(D^{\tilde{e}^2_i(\la)}\otimes D^{(2)})$. So it is enough to prove that
\[\dim\End_{\s_{n-2,2}}(B)\geq\dim\Hom_{\s_n}(S_1,\End_F(D^\la))-\dim\End_{\s_{n-2,2}}(A)+3=3.\]
Notice that $B$ is self-dual, since it is a block component of a self-dual module of $\s_{n-2,2}$. Further
\[\tilde{e}_i(\la)=(\la_1,\ldots,\la_{j-1},\la_j-1,\la_{j+1},\ldots,\la_{h(\la)})\]
and then from $2\leq j\leq h(\la)-2$,
\[\tilde{e}_i(\la)_1\equiv\ldots\equiv\tilde{e}_i(\la)_j\not\equiv\tilde{e}_i(\la)_{j+1}\not\equiv\tilde{e}_i(\la)_{j+2}\equiv\ldots\equiv\tilde{e}_i(\la)_{h(\la)}\Md 2.\]
So $\epsilon_{1-i}(\tilde{e}_i(\la))=2$ (the corresponding normal nodes are on rows $j+1$ and $j+2$). Let $\mu:=\tilde{e}_{1-i}\tilde{e}_i(\la)$. From Lemma \ref{l39} it follows that
\begin{align*}
e_{1-i}e_iD^\la&\sim e_{1-i}D^{\tilde{e}_i(\la)}|\ldots|e_{1-i}D^{\tilde{e}_i(\la)}\sim\overbrace{D^{\mu}|\ldots|D^{\mu}}^{C}|\ldots|\overbrace{D^{\mu}|\ldots|D^{\mu}}^{C},
\end{align*}
with $C=e_{i-1}D^{\tilde{e}_i(\la)}$ indecomposable with simple head and socle and $[C:D^{\mu}]=2$. So $B$ is not semisimple. If the socle of $B$ is not simple then $\dim\End_{\s_{n-2,2}}(B)\geq 3$ (since head and socle of $B$ are isomorphic and $B$ is not semisimple). So we may assume that the socle of $B$ is simple. Since
\[\dim\Hom_{\s_{n-2,2}}(D^{\mu}\otimes M^{(1^2)},B)=\dim\Hom_{\s_{n-2}}(D^{\mu},B\da_{\s_{n-2}})\geq 1\]
and any composition factor of $M^{(1^2)}$ is of the form $D^{(2)}$, we then have that $\soc(B)\cong D^{\tilde{e}_{1-i}\tilde{e}_i(\la)}\otimes D^{(2)}$. Further
\begin{align*}
\dim\Hom_{\s_{n-2,2}}(C\otimes M^{(1^2)},B)&=\dim\Hom_{\s_{n-2}}(C,B\da_{\s_{n-2}})\\
&>\dim\Hom_{\s_{n-2}}(D^{\mu},B\da_{\s_{n-2}})\\
&=\dim\Hom_{\s_{n-2,2}}(D^{\mu}\otimes M^{(1^2)},B)\\
&\geq 1.
\end{align*}
Note that
\[\soc(B)\cong D^{\mu}\otimes D^{(2)}\cong \hd(C\otimes M^{(1^2)}).\]
So there exists a quotient $\overline{C}$ of $C\otimes M^{(1^2)}$ not isomorphic to $D^{\mu}\otimes D^{(2)}$ such that $\overline{C}\subseteq B$. Further $\soc(B)\subsetneq \overline{C}\subseteq B$ and $\overline{C}$ has simple head and socle isomorphic to $D^{\mu}\otimes D^{(2)}$. If $\overline{C}\cong C\otimes M^{(1^2)}$ then $\overline{C}$ is self-dual, as is $B$. So $C\otimes M^{(1^2)}$ is also a quotient of $B$ and then
\[\dim\End_{\s_{n-2,2}}(B)\geq\dim\End_{\s_{n-2,2}}(C\otimes M^{(1^2)})=4\]
(using Lemma \ref{l39}). So we may assume that $\overline{C}\not\cong C\otimes M^{(1^2)}$. Notice that $[C\otimes M^{(1^2)}:D^{\mu}\otimes D^{(2)}]=4$, that $C\otimes M^{(1^2)}$ has simple head and socle isomorphic to $D^{\mu}\otimes D^{(2)}$ and that $C\otimes D^{(2)}$ and $D^{\mu}\otimes M^{(1^2)}$ are distinct submodules of $C\otimes M^{(1^2)}$ with $[C\otimes D^{(2)}:D^{\mu}\otimes D^{(2)}]=2$, $[D^{\mu}\otimes M^{(1^2)}:D^{\mu}\otimes D^{(2)}]=2$ and both $C\otimes D^{(2)}$ and $D^{\mu}\otimes M^{(1^2)}$ have simple head and socle isomorphic to $D^{\mu}\otimes D^{(2)}$. So $[\overline{C}:D^{\mu}\otimes D^{(2)}]=2$.

Note that when $\mu\in\Par_p(m)$ and $D^\mu$ is defined as $K\s_m$-module (with $K$ a field which is not necessarily algebraic closed), then any block component of the restriction of $D^\mu$ to a Young subgroup is self-dual. Further any permutation module of $KG$ is self-dual, for any field $K$ and group $G$. In particular the previous part, about the structure of $B$, also holds over $\mathbb{F}_2$ (since $\mathbb{F}_2$ is a splitting field of $\s_m$ for any $m$) 
and not only over $F$, where $F$ is algebraically closed, so until the end of the proof we will work over the field $\mathbb{F}_2$.

In this case there exist exactly three submodules $E_1,E_2,E_3\subseteq C\otimes M^{(1^2)}$ with $[E_j:D^{\mu}\otimes D^{(2)}]=2$ and head and socle isomorphic to $D^{\mu}\otimes D^{(2)}$. Similarly $C\otimes M^{(1^2)}$ has exactly three quotients $F_1,F_2,F_3$ with $[F_j:D^{\mu}\otimes D^{(2)}]=2$ and head and socle isomorphic to $D^{\mu}\otimes D^{(2)}$. We may assume that $E_1\cong F_1\cong C\otimes D^{(2)}$ and that $E_2\cong F_2\cong D^{\mu}\otimes M^{(1^2)}$. Let $g_1,g_2\in\End_{\s_{n-2,2}}(C\otimes M^{(1^2)})$ with $\Im\, g_j=E_j$ and $(C\otimes M^{(1^2)})/\Ker\, g_j=F_j$. Since $C\otimes M^{(1^2)}$ has simple head and socle isomorphic to $D^{\mu}\otimes D^{(2)}$, then so does $\Im(g_1+g_2)$, if it is non-zero. Since $E_j\not\subseteq E_k$ if $j\not=k$, we then have that $E_3=\Im (g_1+g_2)$ and $(C\otimes M^{(1^2)})/\Ker (g_1+g_2)=F_3$. So $E_3\cong F_3$. By duality of $C\otimes M^{(1^2)}$, there exists $\sigma\in\s_3$ with $F_{\sigma(j)}^*\cong E_j$ for $1\leq j\leq 3$. Since $E_1\cong F_1$ and $E_2\cong F_2$ are self-dual, it then follows that also $E_3\cong F_3$ is self-dual. In particular $\overline{C}$ is self dual, since it is isomorphic to some $E_j$.

Since $\soc(B)\subsetneq \overline{C}\subsetneq B$ and any of these three modules is self-dual, it then follows that $\dim\End_{\s_{n-2,2}}(B)\geq 3$.
\end{proof}

\section{Two rows partitions}

Modules indexed by two rows partitions will play a special role in the proof of Theorem \ref{mt}, since in this case not all results from Section \ref{s1} apply. So we will consider them more in details in this section. We start by citing a branching result for two rows partitions, which is part of the main result of \cite{sh}, that will be used in this section.

We want to remember that when writing for example $D_1\subseteq\End_F(V)$ we mean that $\End_F(V)$ has a submodule which is isomorphic to $D_1$.

\begin{lemma}\label{tsh}
Let $\la=(n-k,k)$ with $k\geq 1$ and $n-2k\geq 1$. Write $n-2k=\sum_j s_jp^j$ with $0\leq s_j<p$ and let $t$ minimal such that $s_t<p-1$. If $t\geq 1$ then, in the Grothendieck group, $[D^\la\da_{\s_{n-1}}]$ is equal to
\[[D^{(n-k-1,k)}]+\de[D^{(n-k-1+p^t,k-p^t)}]+\sum_{j=0}^{t-1}2[D^{(n-k-1+p^j,k-p^j)}],\]
where $D^{(n-k-1+r,k-r)}:=0$ if $(n-k-1+r,k-r)\not\in\Par_p(n-1)$ and $\de=1$ if $s_t<p-2$ or $\de=0$ else.
\end{lemma}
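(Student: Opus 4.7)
The plan is to derive the formula from the modular branching rule $D^\la\da_{\s_{n-1}}\cong\bigoplus_i e_iD^\la$ of Lemma \ref{l45}, combined with Schur–Weyl duality for two-row partitions. First I would check that only one residue contributes: the removable nodes of $\la=(n-k,k)$ are $(1,n-k)$ of residue $n-k-1$ and $(2,k)$ of residue $k-2$, and their residues differ by $n-2k+1$, which is divisible by $p$ iff $s_0=p-1$. The standing assumption $t\geq 1$ forces this, so the two removable nodes share the single residue $i_0\equiv k-2\pmod p$ and the entire restriction reduces to $e_{i_0}D^\la$.

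Next I would read off the head of $e_{i_0}D^\la$ via the signature sequence. Among the addable nodes of $\la$, only $(3,1)$ of residue $-2$ can match $i_0$, and this happens precisely when $k\equiv 0\pmod p$. In either case the signature has shape $--$ or $--+$, so $\epsilon_{i_0}(\la)$ and $\tilde e_{i_0}(\la)$ are determined; by Lemma \ref{l39}(\ref{l39a}) this yields the isolated head/socle term, which after reindexing is the $[D^{(n-k-1,k)}]$ summand.

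The main difficulty is identifying the remaining composition factors $D^{(n-k-1+p^j,k-p^j)}$ with $1\leq j\leq t$ and their multiplicities, because Lemma \ref{l56} only produces summands of the form $D^{\la\setminus A}$ with $A$ normal, which misses the ``jumped'' partitions above. To get these I would pass through the Schur functor: $D^{(n-k,k)}$ corresponds to the simple rational $GL_2(F)$-module $L(n-k,k)$, and restriction from $\s_n$ to $\s_{n-1}$ corresponds, in the Grothendieck group, to tensoring with the natural module $L(1,0)$. Using Steinberg's tensor product theorem on $L(n-k,k)=\bigotimes_j L(s_j,0)^{(j)}$ (where $(j)$ denotes Frobenius twist and we reexpress $n-2k=\sum s_j p^j$), the composition factors of $L(n-k,k)\otimes L(1,0)$ can be computed digit-by-digit. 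The carries generated by $s_0=\cdots=s_{t-1}=p-1$ account for the intermediate multiplicity $2$, while the behaviour at the first digit $s_t<p-1$ produces the conditional coefficient $\de$, which is $1$ when $s_t<p-2$ (a further carry is available) and $0$ when $s_t=p-2$ (no additional factor).

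Pulling back through the Schur functor sends the surviving $L(n-k-1+p^j,k-p^j)$ to the corresponding $D^{(n-k-1+p^j,k-p^j)}$ when the partition is $p$-regular and to $0$ otherwise, exactly as the convention in the statement prescribes. The main obstacle is the $GL_2$ tensor-product bookkeeping: making the Steinberg decomposition interact cleanly with the single factor $L(1,0)$ and correctly tracking when each $p^j$ block produces one or two simple quotients. An inductive alternative on $t$ using Lemma \ref{l56} plus a self-duality argument is possible but becomes delicate once one tries to pin down the multiplicities of the non-good composition factors of $e_{i_0}D^\la$.
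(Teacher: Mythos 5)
The paper does not supply a proof of this lemma: it is imported verbatim as ``part of the main result of \cite{sh}'' (Sheth's branching rule for two-row partitions), so there is no in-paper argument to compare against. Assessing your proposal on its own merits, I find two genuine gaps.

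First, a small but real slip in the second paragraph. When $t\geq 1$ both removable nodes of $\la=(n-k,k)$ share residue $i_0$ and both are normal (your residue analysis here is correct), so $\epsilon_{i_0}(\la)=2$. But the $i_0$-good node is the \emph{bottom} normal node $(2,k)$, hence $\tilde e_{i_0}(\la)=(n-k,k-1)$, not $(n-k-1,k)$. By Lemma~\ref{l39} the head and socle of $e_{i_0}D^\la$ is $D^{(n-k,k-1)}$, and it appears with multiplicity $\binom{2}{1}=2$; in the statement of the lemma this is the $j=0$ term of the sum, i.e.\ $2[D^{(n-k-1+p^0,k-p^0)}]$. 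The term $[D^{(n-k-1,k)}]$ of multiplicity $1$ comes from removing the top normal node via Lemma~\ref{l56}, not from the head.

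Second, and more seriously, the $GL_2$/$SL_2$ bridge you sketch does not hold in the form you use it. The ordinary Schur functor for $S(2,n)$ with $n>2$ is zero, so ``$D^{(n-k,k)}$ corresponds to $L(n-k,k)$'' needs a non-trivial double-centraliser argument, and even then restriction to $\s_{n-1}$ is not modelled by tensoring with the natural module $L(1,0)$ (that raises polynomial degree, so at best it models induction). Whatever charitable reading one adopts, the numbers simply do not come out right. Take $p=2$, $\la=(6,3)$, $n=9$, so $m:=n-2k=3$ and $t=2$, $\de=0$. The lemma (and direct computation using Lemma~\ref{l39}) gives $[D^\la\da_{\s_8}]=[D^{(5,3)}]+2[D^{(6,2)}]+2[D^{(7,1)}]$, consistent with dimensions $8+2\cdot 14+2\cdot 6 = 48 = \dim D^{(6,3)}$. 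On the $SL_2$ side, Steinberg gives $[L(3)\otimes L(1)]=[L(4)]+2[L(2)]+2[L(0)]$, which after identifying $D^{(a,b)}\leftrightarrow L(a-b)$ and discarding the non-$2$-regular $L(0)\leftrightarrow D^{(4,4)}$ would predict $[D^{(6,2)}]+2[D^{(5,3)}]$ — the wrong multiplicities entirely. Using instead the induction-side multiplicities $[L(\mu)\otimes L(1):L(\la)]$ fails too (e.g.\ $L(4)\otimes L(1)=L(5)$ is irreducible, so $[L(4)\otimes L(1):L(3)]=0$, whereas $[D^{(6,3)}\da_{\s_8}:D^{(6,2)}]=2$). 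So the digit-by-digit Steinberg computation, while it produces a formula with the right \emph{shape} (carries, a conditional $\de$), does not agree with the actual branching multiplicities, and the proposed transfer between the two sides is the missing --- and in fact false as stated --- ingredient. A viable route is the one you relegate to a footnote: work entirely inside $F\s_n$, combining the Specht-module branching $[S^{(n-k,k)}\da_{\s_{n-1}}]=[S^{(n-k-1,k)}]+[S^{(n-k,k-1)}]$ with James' explicit two-row decomposition numbers and the inverse decomposition matrix; this is essentially what \cite{sh} does.
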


\begin{lemma}\label{L4}
Let $p=2$ and $n\geq 7$ be odd. If $\la=(n-k,k)$ with $k\geq 2$ and $n-2k\geq 3$ then there exist $\psi_2,\psi_2'\in\Hom_{\s_n}(M_2,\End_F(D^\la))$ such that $\psi_2|_{S_2}$, $\psi_2'|_{S_2}$ are linearly independent or there exists $\psi_3\in\Hom_{\s_n}(M_3,\End_F(D^\la))$ which does not vanish on $S_3$.
\end{lemma}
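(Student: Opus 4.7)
The plan is to translate the statement into a dimension inequality, compute the easy parts, and then argue by contradiction using the structure of $D^\la\da$ coming from Lemma \ref{tsh}.

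From the filtrations $M_2\sim S_2|M_1$ and $M_3\sim S_3|M_2$ of Lemma \ref{L12o}, applying $\Hom_{\s_n}(-,\End_F(D^\la))$ to the corresponding short exact sequences together with Lemma \ref{l2} shows that the dimension of the subspace of $\Hom_{\s_n}(S_2,\End_F(D^\la))$ consisting of restrictions $\psi|_{S_2}$ for $\psi\in\Hom_{\s_n}(M_2,\End_F(D^\la))$ equals
\[\dim\End_{\s_{n-2,2}}(D^\la\da_{\s_{n-2,2}})-\dim\End_{\s_{n-1}}(D^\la\da_{\s_{n-1}}),\]
while the existence of $\psi_3\in\Hom_{\s_n}(M_3,\End_F(D^\la))$ with $\psi_3|_{S_3}\ne 0$ is equivalent to $\dim\End_{\s_{n-3,3}}(D^\la\da)>\dim\End_{\s_{n-2,2}}(D^\la\da)$. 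A residue computation using that $n$ is odd shows both removable nodes $(1,n-k)$ and $(2,k)$ have residue $k\bmod 2$, and the only same-residue addable node $(3,1)$ lies strictly below them, so no cancellation occurs in the signature; hence by Lemma \ref{l53} we get $\epsilon_0(\la)+\epsilon_1(\la)=2$. The statement is therefore equivalent to showing
\[\dim\End_{\s_{n-2,2}}(D^\la\da)\geq 4\quad\text{or}\quad\dim\End_{\s_{n-3,3}}(D^\la\da)>\dim\End_{\s_{n-2,2}}(D^\la\da).\]

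Suppose for contradiction that both fail. Since $\la\neq(n),\be_n$ (the latter would force $n-2k=1$, excluded by hypothesis), Lemma \ref{L2} supplies at least one $\psi_2$ not vanishing on $S_2$, so $\dim\End_{\s_{n-2,2}}(D^\la\da)\geq 3$, and our assumption forces $\dim\End_{\s_{n-2,2}}(D^\la\da)=3$ and $\dim\End_{\s_{n-3,3}}(D^\la\da)=3$. Let $i$ be the common residue of the two normal nodes. Then $D^\la\da_{\s_{n-1}}=e_iD^\la$, and Lemma \ref{l39} with $r=2$ yields $e_i^2D^\la\cong(e_i^{(2)}D^\la)^{\oplus 2}$ with $\dim\End_{\s_{n-2}}(e_i^{(2)}D^\la)=1$. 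Using the uniserial structure $F\s_2\cong\1|\1$ in characteristic $2$, the natural $F\s_2$-structure on $e_i^2D^\la$ (coming from the divided-power categorification) gives $\dim\End_{\s_{n-2,2}}(e_i^2D^\la)=2$, and our assumption then pins down $\dim\End_{\s_{n-2,2}}(e_{1-i}e_iD^\la)=1$, i.e.\ the cross-residue block component is $\s_{n-2,2}$-simple.

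Finally, iterate Lemma \ref{tsh} to describe the composition factors of $D^\la\da_{\s_{n-3}}$ block by block. Since $\epsilon_i(\la)=2$, Lemma \ref{l39} gives $e_i^3D^\la=0$, so the surviving summands $e_{j_3}e_{j_2}e_iD^\la$ contain at most one further $i$; hence each $\s_{n-3}$-block component of $D^\la\da$ splits into a known sum of such pieces on which the $\s_3$-action can be tracked. Computing the $F\s_3$-invariant endomorphism ring in characteristic $2$ of these block components (using the divided-power $F\s_3$-structure to see how $\s_3$ permutes and mixes the finer residue-sequence summands) produces an endomorphism of $D^\la\da_{\s_{n-3,3}}$ beyond those coming from the $\s_{n-2,2}$-structure, contradicting $\dim\End_{\s_{n-3,3}}(D^\la\da)=3$ and completing the proof. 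The main obstacle is this last step: the decomposition by residue sequences is finer than the $\s_{n-3}$-block decomposition, and $\s_3$ can act nontrivially on it, so in characteristic $2$ (where $F\s_3$ is not semisimple) one must identify the $F\s_3$-module structure of each surviving summand and count the $\s_3$-fixed endomorphisms explicitly.
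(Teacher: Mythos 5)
Your setup is correct: the claim reduces to showing $\dim\End_{\s_{n-2,2}}(D^\la\da_{\s_{n-2,2}})\geq 4$ or $\dim\End_{\s_{n-3,3}}(D^\la\da_{\s_{n-3,3}})>\dim\End_{\s_{n-2,2}}(D^\la\da_{\s_{n-2,2}})$, and since $n$ is odd both removable nodes are normal of a common residue $i$, so $\dim\End_{\s_{n-1}}(D^\la\da_{\s_{n-1}})=2$. The analysis of the block component over $e_i^2D^\la$ is plausible, though asserting that its $\s_{n-2,2}$-endomorphism ring has dimension $2$ requires the non-obvious fact that it is a non-split self-extension of $D^{\tilde{e}_i^2(\la)}\otimes D^{(2)}$ --- this is \cite[Lemma 4.15]{m1}, which the paper invokes in the proof of Lemma~\ref{L8}, not a formal consequence of a ``divided-power categorification.''

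The genuine gap is the final step, which you yourself flag as ``the main obstacle'': you never actually compute $\dim\End_{\s_{n-3,3}}(D^\la\da_{\s_{n-3,3}})$. The obstruction you identify is real: the residue-sequence decomposition $\bigoplus e_{j_3}e_{j_2}e_{j_1}D^\la$ of $D^\la\da_{\s_{n-3}}$ refines the $\s_{n-3}$-block decomposition but is not $\s_3$-stable (the $\s_3$-action permutes the removed nodes and mixes residue orders), so there is no natural $F\s_3$-structure on those finer summands to track. One must determine the $F\s_3$-module structure on each $\s_{n-3}$-block component by other means, and you supply none; as written the proof is incomplete.

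The paper proceeds more concretely by splitting on $n-2k\Md 4$ and reading explicit composition-factor data off Lemma~\ref{tsh}. When $n-2k\equiv 3\Md 4$, the cross-residue block component of $D^\la\da_{\s_{n-2}}$ already has three composition factors (with $D^{(n-k,k-2)}$ of multiplicity two), so $\dim\End_{\s_{n-2,2}}(D^\la\da_{\s_{n-2,2}})\geq 4$ at once and the $\s_{n-3,3}$ level is never needed; in particular your running hypothesis $\dim\End_{\s_{n-2,2}}=3$ already fails in that case, which your sketch does not detect. When $n-2k\equiv 1\Md 4$, the paper writes out $D^\la\da_{\s_{n-3}}$ explicitly, splits $D^\la\da_{\s_{n-3,3}}\cong F\oplus G$ by the $\s_3$-block ($D^{(3)}$ versus $D^{(2,1)}$), and uses \cite[Lemma 1.11]{bk5} to locate the composition factors $D^{(n-k-2,k-1)}\otimes D^{(2,1)}$ and $D^{(n-k-3,k)}\otimes D^{(3)}$; together with the socle structure of $D^\la\da_{\s_{n-3}}$ and self-duality, this forces both $F$ and $G$ to be non-zero and non-simple, giving $\dim\End_{\s_{n-3,3}}(D^\la\da_{\s_{n-3,3}})\geq 4$. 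That appeal to \cite[Lemma 1.11]{bk5} is precisely the missing mechanism in your approach for extracting the needed information about the $F\s_3$-module structure.
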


\begin{proof}
From Lemma \ref{L12o}, $M_2\sim S_2|M_1$ and $M_3\sim S_3|M_2$. So if
\[\dim\End_{\s_{n-2,2}}(D^\la\da_{\s_{n-2,2}})\geq \dim\End_{\s_{n-1}}(D^\la\da_{\s_{n-1}})+2\]
there exist $\psi,\psi'\in\Hom_{\s_n}(M_2,\End_F(D^\la))$ such that $\psi|_{S_2}$, $\psi'|_{S_2}$ are linearly independent, by Lemma \ref{l2}. If
\[\dim\End_{\s_{n-3,3}}(D^\la\da_{\s_{n-3,3}})\geq \dim\End_{\s_{n-2,2}}(D^\la\da_{\s_{n-2,2}})+1\]
there exists $\psi\in\Hom_{\s_n}(M_3,\End_F(D^\la))$ which does not vanish on $S_3$, again by Lemma \ref{l2}.

Since $n$ is odd, both removable nodes are normal and so,  by Lemma \ref{l53}, $\dim\End_{\s_{n-1}}(D^\la\da_{\s_{n-1}})=2$. It is then enough to prove that at least one of
\[\dim\End_{\s_{n-2,2}}(D^\la\da_{\s_{n-2,2}})\geq 4\quad\text{or}\quad\dim\End_{\s_{n-3,3}}(D^\la\da_{\s_{n-3,3}})\geq 4\]
holds. Note that $n-2k$ is odd.

{\bf Case 1:} $n-2k\equiv 3\Md 4$, so $t\geq 2$ in Lemma \ref{tsh}. Then by block decomposition (Lemma \ref{l45}), Lemmas \ref{l39} and \ref{tsh}
\[D^\la\da_{\s_{n-2}}\cong (D^{(n-k-1,k-1)})^{\oplus 2}\oplus A\]
where $[A:D^{(n-k-2,k)}]=1$ and $[A:D^{(n-k,k-2)}]=2$. 
It easily follows that $D^\la\da_{\s_{n-2,2}}$ has (at least) 2 block components with at least 2 composition factors each and then $\dim\End_{\s_{n-2,2}}(D^\la\da_{\s_{n-2,2}})\geq 4$, since $F\s_n$- and $F\s_{n-2,2}$-modules are self-dual.

{\bf Case 2:} $n-2k\equiv 1\Md 4$, so $t=1$ in Lemma \ref{tsh}. 
Then by block decomposition (Lemma \ref{l45}),  Lemmas \ref{l39} and \ref{tsh}
\begin{align*}
D^\la\da_{\s_{n-1}}\cong &D^{(n-k,k-1)}|D^{(n-k-1,k)}|D^{(n-k,k-1)},\\
D^\la\da_{\s_{n-2}}\cong &(D^{(n-k-1,k-1)})^{\oplus 2}\oplus D^{(n-k-2,k)},\\
D^\la\da_{\s_{n-3}}\sim &(\overbrace{D^{(n-k-1,k-2)}|D^{(n-k-2,k-1)}|D^{(n-k-1,k-2)}}^B)^{\oplus 2}\\
&\oplus (\overbrace{D^{(n-k-2,k-1)}|\ldots|D^{(n-k-3,k)}|\ldots|D^{(n-k-2,k-1)}}^C),
\end{align*}
where $B
$ and $C
$ are indecomposable with simple head and socle. To see this, note that $D^\la\da_{\s_{n-1}}$ is indecomposable with simple head and socle each isomorphic to $D^{(n-k,k-1)}$ by Lemma \ref{l39} and the composition factors, with multiplicities, of $D^\la\da_{\s_{n-1}}$ are known by Lemma \ref{tsh}. The structure of $D^\la\da_{\s_{n-2}}$ then follows by Lemmas \ref{l45} and \ref{l39}. For $D^\la\da_{\s_{n-3}}$ use again Lemmas \ref{l39} and \ref{tsh}.

Notice that $D^\la\da_{\s_{n-3,3}}\cong F\oplus G$, where all composition factors of $F\da_{\s_{1^{n-3},3}}$ are of the form $\1\otimes D^{(3)}$ and all composition factors of $G\da_{\s_{1^{n-3},3}}$ are of the form $\1\otimes D^{(2,1)}$ (since $D^{(3)}$ and $D^{(2,1)}$ are in different blocks). From \cite[Lemma 1.11]{bk5} we have that $D^{(n-k-2,k-1)}\otimes D^{(2,1)}$ and $D^{(n-k-3,k)}\otimes D^{(3)}$ are composition factors of $D^\la\da_{\s_{n-3,3}}$. So $F$ has a composition factor isomorphic to $D^{(n-k-2,k-1)}\otimes D^{(2,1)}$. Since $D^{(2,1)}$ has dimension 2 and $D^{(n-k-2,k-1)}$ appears only once in the socle of $D^\la\da_{\s_{n-3}}$, it follows that $F$ is non-zero and not simple. Similarly $G$ is non-zero and not simple, since it has a composition factor $D^{(n-k-3,k)}\otimes D^{(3)}$ and $D^{(n-k-3,k)}$ does not appear in the socle of $D^\la\da_{\s_{n-3}}$. Further $F$ and $G$ are self-dual, since they are block components of $D^\la\da_{\s_{n-3,3}}$. So $\dim\End_{\s_{n-3,3}}(D^\la\da_{\s_{n-3,3}})\geq 4$.
\end{proof}

\begin{lemma}\label{Lemma7.1}
Let $p=2$ and $n\geq 4$ be even. If $\la=(n-k,k)$ with $1\leq k<n/2$ and $\dim\Hom_{\s_n}(S_1,\End_F(D^\la))\geq 1$ then $\la=\be_n$. In this case if $n\equiv 0\Md 4$ then $D_1\subseteq\End_F(D^\la)$, while if $n\equiv 2\Md 4$ then $S_1\subseteq\End_F(D^\la)$.
\end{lemma}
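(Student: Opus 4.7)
My plan is to analyze $\Hom_{\s_n}(S_1,\End_F(D^\la))$ by reducing it, via the short exact sequence $0\to S_1\to M_1\to D_0\to 0$ from Lemma \ref{L12e}, to a submodule-structure question about the block of $M_1\otimes D^\la\cong D^\mu\ua^{\s_n}$ containing $D^\la$, where $\mu=(n-k-1,k)$.

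The first observation is that $\la$ is a JS-partition by Lemma \ref{L151119}, since both of its parts have the same parity when $n$ is even; hence $\eps_0(\la)+\eps_1(\la)=1$, and Lemmas \ref{l2} and \ref{l53} give $\dim\Hom_{\s_n}(M_1,\End_F(D^\la))=1$. Combined with $\dim\Hom_{\s_n}(D_0,\End_F(D^\la))=\dim\End_{\s_n}(D^\la)=1$ and the fact that the composition $M_1\twoheadrightarrow D_0\to\End_F(D^\la)$ with the nonzero map $\id_{D^\la}\colon D_0\to\End_F(D^\la)$ is itself nonzero, the restriction map $\Hom(M_1,\End_F(D^\la))\to\Hom(S_1,\End_F(D^\la))$ is identically zero. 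I next compute $\Hom(S_1,\End_F(D^\la))\cong\Hom(S_1\otimes D^\la,D^\la)$ via adjunction: tensoring the SES with $D^\la$ and applying Frobenius reciprocity gives $M_1\otimes D^\la\cong D^\mu\ua^{\s_n}=\bigoplus_j f_jD^\mu$ by Lemma \ref{l45}. The block containing $D^\la$ is $B:=f_iD^\mu$ for $i$ the residue of the good node of $\la$, and the projection $M_1\otimes D^\la\twoheadrightarrow D^\la$ factors as $B\twoheadrightarrow\hd(B)\cong D^\la$ (Lemma \ref{l40}(i)). Therefore
\[\dim\Hom_{\s_n}(S_1,\End_F(D^\la))=\dim\Hom_{\s_n}(\rad(B),D^\la).\]

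The key analysis is the composition-factor structure of $B$. By Lemma \ref{l40}, $[B:D^\la]=\phi_i(\mu)$, and by Lemma \ref{l40}(iii) any other composition factor $D^\psi$ satisfies $\phi_i(\psi)<\phi_i(\mu)-1$. Lemma \ref{l56} identifies the composition factors of the form $D^{\mu\cup C}$ with $C$ an $i$-conormal node: a case-split on the parity of $k$ shows the only potentially non-$D^\la$ factor is $D^{(n-k-1,k+1)}$, which is 2-regular precisely when $n-2k>2$ (otherwise $(n-k-1,k+1)=(k+1,k+1)$). When $n-2k>2$, the self-duality of $B$, its simple head and socle $\cong D^\la$, and the count $[\rad(B):D^\la]=\phi_i(\mu)-1$ (together with $\rad(B)$ inheriting the simple socle $D^\la$ from $B$) force the radical filtration of $B$ to be $D^\la | D^{(n-k-1,k+1)} | D^\la$, so $\hd(\rad(B))\cong D^{(n-k-1,k+1)}\ne D^\la$ and $\Hom(\rad(B),D^\la)=0$; this proves $\la=\be_n$.

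For $\la=\be_n$, let $\psi\in\Hom_{\s_n}(S_1,\End_F(D^\la))$ be a generator and $\psi'\colon\rad(B)\to D^\la$ the corresponding nonzero map. The embedding $D_0\otimes D^\la\hookrightarrow S_1\otimes D^\la$ lands in the $\la$-block, hence in $\rad(B)$, and by uniqueness of the simple socle $\soc(\rad(B))\cong D^\la$ it defines an isomorphism onto $\soc(\rad(B))$. Whether $\psi$ vanishes on $D_0\subseteq S_1$ is then equivalent to $\psi'$ vanishing on $\soc(\rad(B))$, i.e., to $\soc(\rad(B))\subseteq\rad(\rad(B))$. When $n\equiv 2\Md 4$ ($k$ even), $\phi_1(\mu)=2$ and a dimension count (using $\dim M_1\otimes D^\la=n\dim D^\la$ and the fact that the other block $f_0D^\mu$ is simple equal to $D^{(n-k-1,k,1)}$ by $\phi_0(\mu)=1$) forces $B\cong D^\la|D^\la$ uniserial of length two; then $\rad(B)\cong D^\la$ is simple, $\psi'$ restricts to an isomorphism on the socle, and the image of $\psi$ is $\cong S_1\subseteq\End_F(D^\la)$. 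When $n\equiv 0\Md 4$ ($k$ odd), $\phi_0(\mu)=3$ gives $[\rad(B):D^\la]=2$ with simple socle, so $\rad(B)$ is not semisimple, whence $\soc(\rad(B))\subseteq\rad(\rad(B))$ and $\psi$ vanishes on $D_0$; the image of $\psi$ is then $\cong D_1\subseteq\End_F(D^\la)$. The main obstacle is the dimension identity $\dim D^{(n-k-1,k,1)}=(n-2)\dim D^\la$ needed to exclude extraneous composition factors of $B$ in the $k$ even case, which I expect to follow from the basic spin dimension formula $\dim D^{\be_n}=2^{n/2-1}$ together with an analogous computation for the three-row module $D^{(n-k-1,k,1)}$.
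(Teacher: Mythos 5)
Your approach is genuinely different from the paper's, which disposes of this lemma in one line: since $(n-k,k)$ with $n$ even is a JS-partition by Lemma~\ref{L151119}, the statement is quoted directly from \cite[Lemma~7.1]{m1}, where it is proved for general JS-partitions in characteristic~$2$. Your reduction $\dim\Hom_{\s_n}(S_1,\End_F(D^\la))=\dim\Hom_{\s_n}(\rad(B),D^\la)$ for $B:=f_iD^\mu$, $\mu:=(n-k-1,k)=\tilde e_i(\la)$, via the short exact sequence $S_1\hookrightarrow M_1\twoheadrightarrow D_0$ and $M_1\otimes D^\la\cong D^\mu\ua^{\s_n}$, is correct and is the natural first move.

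However there is a genuine gap in your composition-factor analysis of $B$, separate from (and more serious than) the dimension identity you flag at the end. When $k$ is odd — which occurs both for $\la=\be_n$ with $n\equiv 0\Md 4$ and for infinitely many $\la\ne\be_n$ — the good residue of $\la$ is $i=0$, and \emph{all three} addable nodes $(1,n-k)$, $(2,k+1)$, $(3,1)$ of $\mu=(n-k-1,k)$ have residue~$0$ while $\mu$ has no removable $0$-node. Hence $\phi_0(\mu)=3$, so $[B:D^\la]=3$, and Lemma~\ref{l56} already exhibits $D^{(n-k-1,k,1)}$ (from the $0$-conormal node $(3,1)$, when $k\geq 2$) as a composition factor of $B$, in addition to $D^{(n-k-1,k+1)}$ (with multiplicity $2$ when $n-2k>2$). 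Your claim that the only non-$D^\la$ factor coming from conormal nodes is $D^{(n-k-1,k+1)}$, and the forced radical filtration $D^\la\,|\,D^{(n-k-1,k+1)}\,|\,D^\la$, are therefore both wrong for $k$ odd, so the key step $\Hom_{\s_n}(\rad(B),D^\la)=0$ for $\la\ne\be_n$ with $k$ odd is not established. Notice also that the clean multiplicity argument that does work for $\phi_i(\mu)=2$ — the single copy of $D^\la$ in $\rad(B)$ occupies $\soc(\rad(B))$, so it can lie in $\hd(\rad(B))$ only if $\rad(B)$ is simple — has no analogue for $\phi_i(\mu)=3$, where $[\rad(B):D^\la]=2$; to close this case one needs extra input (e.g.\ the injection $\Hom_{\s_n}(\rad(B),D^\la)\hookrightarrow\Ext^1_{\s_n}(D^\la,D^\la)$ together with vanishing of the latter for two-row $\la\ne\be_n$, or a sharper structural theorem for $f_iD^\mu$), which goes beyond the lemmas you invoke.
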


\begin{proof}
This follows from \cite[Lemma 7.1]{m1}, since $(n-k,k)$ is JS by Lemma \ref{L151119}.
\end{proof}

\begin{lemma}\label{L10}
Let $p=2$ and $n\geq 8$ with $n\equiv 0\Md 4$. If $\la=(n-k,k)$ with $k\geq 2$ and $n-2k\geq 3$ then one of the following happens:
\begin{itemize}
\item $D_2^{\oplus 2}\subseteq \End_F(D^\la)$,

\item $S_3\subseteq \End_F(D^\la)$,

\item $D_2\oplus D_3\subseteq\End_F(D^\la)$,

\item there exists $\psi\in\Hom_{\s_n}(M_4,\End_F(D^\la))$ which does not vanish on $S_4$.
\end{itemize}
\end{lemma}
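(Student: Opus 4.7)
The plan is to use Lemma \ref{l2} to relate the dimensions
\[d_m := \dim\End_{\s_{n-m,m}}(D^\la\da_{\s_{n-m,m}}) = \dim\Hom_{\s_n}(M_m, \End_F(D^\la))\]
for $m\leq 4$ to the four alternatives, in the style of Lemma \ref{L4}. Since $n-2k$ is even, Lemma \ref{L151119} gives that $\la=(n-k,k)$ is a JS-partition, so $D^\la\da_{\s_{n-1}}\cong D^\mu$ is simple with $\mu=(n-k-1,k)$ (after a possible relabelling of residues). A direct inspection of normal nodes of $\mu$ yields $\epsilon_0(\mu)=2$ and $\epsilon_1(\mu)=0$, and Lemmas \ref{l39}, \ref{l56} and \ref{tsh} then determine the composition factors and the self-dual indecomposable block-component structure (self-duality by Lemma \ref{l57}) of $D^\la\da_{\s_{n-r}}$ for $r=2,3,4$.

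Using the filtration $M_3\cong M_1\oplus(S_3|S_2)$ from Lemma \ref{L12e}, together with Lemma \ref{Lemma7.1} (which gives $\dim\Hom(S_1,\End_F(D^\la))=0$, since $\la\neq\be_n$ as $n-2k\geq 4$), we obtain $d_1=1$ and derive $\dim\Hom(S_m,\End_F(D^\la))$ for $m=2,3$ in terms of the $d_m$ via the exact sequences arising from $0\to S_m\to M_m\to M_m/S_m\to 0$. By self-duality of $\End_F(D^\la)$, each non-zero map $S_m\to\End_F(D^\la)$ produces a copy of $D_m=\hd(S_m)$ as a submodule; a non-simple image of $S_3$ yields the submodule $S_3\subseteq \End_F(D^\la)$ of option (ii), while extra multiplicity of these images gives options (i) and (iii).

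For option (iv), we use the inclusion $S_4\hookrightarrow M_4$: if $d_4$ strictly exceeds $\dim\Hom_{\s_n}(M_4/S_4,\End_F(D^\la))$, then some $\psi\in\Hom_{\s_n}(M_4,\End_F(D^\la))$ cannot vanish on $S_4$. The composition factors of $M_4/S_4$ are of the form $D^\nu$ with $\nu\rhd(n-4,4)$ in the dominance order by Lemma \ref{LYoung}, so $\dim\Hom(M_4/S_4,\End_F(D^\la))$ is bounded in terms of $\Hom(D_j,\End_F(D^\la))$ for $j\leq 3$ (and related data already computed for the smaller $M_m$). Assuming none of (i)--(iii) holds gives strong upper bounds on these Hom-spaces, and the restriction-derived value of $d_4$ then forces the desired strict inequality, yielding option (iv).

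The main obstacle is that Section \ref{spm} does not record an explicit filtration of $M_4$ for $p=2$ and $n\equiv 0\pmod 4$, and so one must derive enough structural information on $M_4$ to control $\dim\Hom(M_4/S_4,\End_F(D^\la))$. The cleanest approach is via the Young module decomposition $M_4\cong Y_4\oplus\bigoplus_{\mu\rhd(n-4,4)}(Y^\mu)^{\oplus m_{\mu,(n-4,4)}}$ from Lemma \ref{LYoung}, combined with an analysis of the natural maps $\eta_{4,k}\colon M_4\to M_k$ analogous to the argument used in the proof of Lemma \ref{L160817_0} in characteristic 3, in order to pin down the composition factors and their positions in $M_4/S_4$ tightly enough to close the dimension count.
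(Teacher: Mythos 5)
The overall framework you propose — relating $\dim\End_{\s_{n-m,m}}(D^\la\da_{\s_{n-m,m}})$ to $\dim\Hom_{\s_n}(M_m,\End_F(D^\la))$ via Lemma \ref{l2} and then peeling off the Specht layers to isolate the contribution of $S_m$ — is indeed the paper's strategy. However, there are two real gaps.

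First, you flag the missing explicit filtration of $M_4$ for $p=2$, $n\equiv 0\pmod 4$, as the main obstacle and propose reconstructing it via Young modules and the maps $\eta_{4,k}$. This is an unnecessary detour: the Specht filtration $M_4\sim S_4|S_3|S_2|S_1|S_0$ is available for any characteristic from \cite[Corollary 17.14/Example 17.17]{JamesBook}, and (together with $\dim\Hom(S_1,\End_F(D^\la))=0$ from Lemma \ref{Lemma7.1}) it already gives the inequality
\[
\dim\End_{\s_{n-4,4}}(D^\la\da_{\s_{n-4,4}})\leq\dim\Hom(S_3,\End_F(D^\la))+\dim\Hom(S_2,\End_F(D^\la))+1
\]
once one assumes option (iv) fails. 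There is no need for the finer structural information on $M_4$ that the Young-module route would supply.

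Second — and this is the genuine gap — you assert that the ``restriction-derived value of $d_4$ then forces the desired strict inequality'' but nowhere establish the lower bound $\dim\End_{\s_{n-4,4}}(D^\la\da_{\s_{n-4,4}})\geq 3$. This is the hard content of the lemma and requires a concrete analysis of the block decomposition and socle structure of $D^\la\da_{\s_{n-4}}$ and $D^\la\da_{\s_{n-4,4}}$, using Lemmas \ref{l39} and \ref{tsh} to track the composition factors $D^{(n-k-2,k-2)}$, $D^{(n-k-3+m,k-m-1)}$, etc. Moreover, this argument in fact needs $k\geq 3$ so that these two partitions are distinct (producing a non-simple socle); the case $k=2$ must be handled separately, and there the paper switches to $M_3$ and shows $\dim\End_{\s_{n-3,3}}(D^\la\da_{\s_{n-3,3}})\geq 3$ instead. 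Your proposal does not notice that a case distinction is required, nor does it supply the computation in either case. (A smaller point: your claim $\epsilon_0(\mu)=2$, $\epsilon_1(\mu)=0$ for $\mu=(n-k-1,k)$ holds only for $k$ even; for $k$ odd the common residue of the two normal nodes is $1$. The ``relabelling'' remark does not really make this precise, though it does not affect the substance of the argument.)
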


\begin{proof}
Note that $\la$ is JS by Lemma \ref{L151119}, since $n$ is even and $\la$ has two parts. From Lemmas \ref{l39} and \ref{tsh} we have that
\begin{align*}
D^\la\da_{\s_{n-1}}&\cong D^{(n-k-1,k)},\\
D^\la\da_{\s_{n-2}}&\sim D^{(n-k-1,k-1)}|\overbrace{B|D^{(n-k-2,k)}|C}^N|D^{(n-k-1,k-1)},
\end{align*}
where all composition factors of $B$ and $C$ are of the form $D^{(n-k-2+2^i,k-2^i)}$ with $i\geq 1$. Let $0\leq j\leq \lfloor k/2\rfloor$ with $D^{(n-k-2+2j,k-2j)}\subseteq N$ (such a $j$ exists since any composition factor of $N$, and so also of its socle, is of the form $D^{(n-k-2+2\overline{j},k-2\overline{j})}$ for some $0\leq \overline{j}\leq \lfloor k/2\rfloor$). 
By Lemma \ref{l39} and block decomposition we then have that
\begin{align*}
D^\la\da_{\s_{n-3}}&\cong (D^{(n-k-2,k-1)})^{\oplus 2}\oplus N\da_{\s_{n-3}},\\
D^\la\da_{\s_{n-4}}&\supseteq D^{(n-k-2,k-2)}\oplus D^{(n-k-3+m,k-m-1)},
\end{align*}
where $m=2j$ if $j<k/2$ or $m=2j-1$ if $j=k/2$.

{\bf Case 1:} $k\geq 3$. Fix $j,m$ as above. We may assume that there is no $\psi\in\Hom_{\s_n}(M_4,\End_F(D^\la))$ which does not vanish on $S_4$. By \cite[Example 17.17]{JamesBook} we have that $M_4\sim S_4|A$ with $A\sim S_3|S_2|S_1|S_0$. Further $\dim\Hom_{\s_n}(S_1,\End_F(D^\la))=0$ by Lemma \ref{Lemma7.1}. In particular $D_1\not\subseteq\End_F(D^\la)$, since $D_1\cong\hd(S_1)$. By Lemma \ref{l2} it then follows that
\begin{align*}
&\dim\End_{\s_{n-4,4}}(D^\la\da_{\s_{n-4,4}})\\
&=\dim\Hom_{\s_n}(M_4,\End_F(D^\la))\\
&=\dim\Hom_{\s_n}(A,\End_F(D^\la))\\
&\leq\dim\Hom_{\s_n}(S_3,\End_F(D^\la))+\dim\Hom_{\s_n}(S_2,\End_F(D^\la))\\
&\hspace{11pt}+\dim\Hom_{\s_n}(S_1,\End_F(D^\la))+\dim\Hom_{\s_n}(S_0,\End_F(D^\la))\\
&=\dim\Hom_{\s_n}(S_3,\End_F(D^\la))+\dim\Hom_{\s_n}(S_2,\End_F(D^\la))+1.
\end{align*}
From Lemma \ref{L12e}, $S_3\cong D_2|D_1|D_3$ and $S_2\cong D_1|D_2$. From Lemma \ref{L2} we have that $D_2\subseteq \End_F(D^\la)$, since $D_1\not\subseteq\End_F(D^\la)$. By the same reasons, if $\dim\Hom_{\s_n}(S_2,\End_F(D^\la))\geq 2$ then $D_2^{\oplus 2}\subseteq\End_F(D^\la)$, while if $\dim\Hom_{\s_n}(S_3,\End_F(D^\la))\geq 1$ then $D_3$ or $S_3$ is contained in $\End_F(D^\la)$ (and so in this case $D_2\oplus D_3$ or $S_3$ is contained in $\End_F(D^\la)$. Thus it is enough to prove that $\dim\End_{\s_{n-4,4}}(D^\la\da_{\s_{n-4,4}})\geq 3$.

We have that $(n-k-2,k-2)\not=(n-k-3+m,k-m-1)$, since $k\geq 3$. If $\mu$ is either of these two partitions then
\[\dim\Hom_{\s_{n-4,4}}(D^\mu\otimes M^{(1^4)},D^\la\da_{\s_{n-4,4}})=\dim\Hom_{\s_{n-4}}(D^\mu,D^\la\da_{\s_{n-4}})\geq 1.\]
It then follows that there are at least two non-isomorphic simple modules appearing in the socle of $D^\la\da_{\s_{n-4,4}}$. Further $D^\la\da_{\s_{n-4}}$ is not semisimple, since it contains $D^{(n-k-2,k-1)}\da_{\s_{n-4}}$ which is not semisimple (by Lemma \ref{l39}). So the same holds for $D^\la\da_{\s_{n-4,4}}$. In particular $\dim\End_{\s_{n-4,4}}(D^\la\da_{\s_{n-4,4}})\geq 3$, since $D^\la\da_{\s_{n-4,4}}$ is self-dual, it is not semisimple and its socle is not simple.

{\bf Case 2:} $k=2$. By Lemma \ref{L12e} we have that $M_3\sim M_1\oplus (S_3|S_2)$. So by Lemmas \ref{l2} and \ref{l53}
\begin{align*}
&\dim\End_{\s_{n-3,3}}(D^\la\da_{\s_{n-3,3}})\\
&=\dim\Hom_{\s_n}(M_3,\End_F(D^\la))\\
&\leq\dim\Hom_{\s_n}(M_1,\End_F(D^\la))+\dim\Hom_{\s_n}(S_3,\End_F(D^\la))\\
&\hspace{11pt}+\dim\Hom_{\s_n}(S_2,\End_F(D^\la))\\
&=\dim\Hom_{\s_n}(S_3,\End_F(D^\la))+\dim\Hom_{\s_n}(S_2,\End_F(D^\la))+1.
\end{align*}
In this case it is then enough to prove that $\dim\End_{\s_{n-3,3}}(D^\la\da_{\s_{n-3,3}})\geq 3$. Since $n\equiv 0\Md 4$, from Lemma \ref{tsh} we have that $[N]=2[D^{(n-2)}]+[D^{(n-4,2)}]$ and so $[N\da_{\s_{n-3}}]=2[D^{(n-3)}]+[D^{(n-5,2)}]$. Since $D^{(n-3)}$ and $D^{(n-5,2)}$ are not in the same block as $D^{(n-4,1)}$, it follows that $D^\la\da_{\s_{n-3,3}}$ has at least two non-zero block components and that at least one of the block components is not simple. So $\dim\End_{\s_{n-3,3}}(D^\la\da_{\s_{n-3,3}})\geq 3$, since block components of $D^\la\da_{\s_{n-3,3}}$ are self-dual, as are simple $F\s_{n-3,3}$-modules.
\end{proof}

\begin{lemma}\label{l46}
Let $p=3$, $n\equiv 0\Md 3$ 
and $\la=(n-k,k)$ with $1\leq k<n/2$. Then 
\[\dim\Hom_{\s_n}(S_1,\End_F(D^\la))=\dim\End_{\s_{n-1}}(D^\la\da_{\s_{n-1}})-1.\]
\end{lemma}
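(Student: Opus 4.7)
The plan is to apply $\Hom_{\s_n}(-,E)$, with $E := \End_F(D^\la)$, to the short exact sequence
\[0 \to S_1 \to M_1 \to D_0 \to 0\]
coming from the uniserial structure $M_1 \cong D_0\,|\,D_1\,|\,D_0$ of Lemma~\ref{L160817_0}. The resulting four-term exact sequence, together with $\dim \Hom_{\s_n}(D_0,E) = \dim \End_{\s_n}(D^\la) = 1$ (by absolute irreducibility of $D^\la$) and $\dim \Hom_{\s_n}(M_1,E) = \dim \End_{\s_{n-1}}(D^\la\da_{\s_{n-1}})$ (by Lemma~\ref{l2}), immediately yields
\[\dim \Hom_{\s_n}(S_1,E) \ge \dim \End_{\s_{n-1}}(D^\la\da_{\s_{n-1}}) - 1.\]

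For the reverse inequality, it suffices to show that the connecting map $\Hom_{\s_n}(S_1,E) \to \Ext^1_{\s_n}(D_0,E)$ is zero, equivalently that every $\phi \in \Hom_{\s_n}(S_1,E)$ extends to a morphism $M_1 \to E$. I would use Frobenius reciprocity together with the identification $M_1 \otimes D^\la \cong D^\la\da_{\s_{n-1}}\ua^{\s_n}$ to rephrase the problem: a lift of $\phi$ exists precisely when the corresponding morphism $S_1 \otimes D^\la \to D^\la$ is the restriction of some $\s_n$-morphism $D^\la\da_{\s_{n-1}}\ua^{\s_n} \to D^\la$, and by Frobenius the latter is the same as an $\s_{n-1}$-endomorphism of $D^\la\da_{\s_{n-1}}$, noting that $S_1 \otimes D^\la$ is precisely the kernel of the counit $D^\la\da_{\s_{n-1}}\ua^{\s_n} \twoheadrightarrow D^\la$.

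The main obstacle is constructing such lifts, especially in the non-JS case $k \not\equiv 1 \Md 3$ (the JS-characterisation being given by Lemma~\ref{L221119}), where $D^\la\da_{\s_{n-1}}$ has two composition factors. I would handle this by examining the block decomposition $D^\la\da\ua = \bigoplus_{i,j} f_i e_j D^\la$ and using the structural descriptions in Lemmas~\ref{l39} and~\ref{l40}, from which the required lift can be read off summand by summand; the JS case $k \equiv 1 \Md 3$ is then handled separately by classifying the possible non-zero maps $S_1 \to E$ (either injective, or factoring through the quotient $S_1 \twoheadrightarrow D_1$) and ruling them out using that for JS partitions neither a copy of $D_1$ nor of $S_1$ embeds in $\End_F(D^\la)$ under these hypotheses.
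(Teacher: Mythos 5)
Your lower bound is fine and matches the spirit of the paper, but the upper bound is where the real content lies and your plan for it has a genuine gap. You correctly reduce the problem to showing that the connecting map $\Hom_{\s_n}(S_1,E)\to\Ext^1_{\s_n}(D_0,E)$ vanishes, but then you propose to construct lifts ``summand by summand'' from the block decomposition of $D^\la\da_{\s_{n-1}}\ua^{\s_n}$, and in the JS case you propose to rule out $D_1$ and $S_1$ embedding into $\End_F(D^\la)$ ``using that for JS partitions neither a copy of $D_1$ nor of $S_1$ embeds'' --- which is precisely the statement you are trying to prove, so this is circular as written. The block-decomposition plan in the non-JS case is also not obviously going to produce lifts: knowing the summands of $D^\la\da\ua$ does not by itself tell you that every map out of a specified submodule extends.

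The missing ingredient, which the paper supplies via \cite[Theorem 2.10]{ks3}, is that $\Ext^1_{\s_n}(D^\la,D^\la)=0$ because $p=3$ and $h(\la)=2<p$. Once you have this, your own exact sequence closes immediately: the obstruction group is $\Ext^1_{\s_n}(D_0,E)\cong\Ext^1_{\s_n}(\1\otimes D^{\la*},D^\la)\cong\Ext^1_{\s_n}(D^\la,D^\la)=0$ (using self-duality of $D^\la$ and tensor-hom adjunction), so the connecting map is zero and equality holds with no case analysis at all. The paper organises the same idea slightly differently: it takes the dual filtration $M_1\sim D_0|S_1^*$, tensors it with $D^\la$ to get a short exact sequence $0\to D^\la\to D^\la\otimes M_1\to D^\la\otimes S_1^*\to 0$, applies $\Hom_{\s_n}(D^\la,-)$, and kills the boundary term with $\Ext^1(D^\la,D^\la)=0$. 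Either packaging works; what does not work is trying to avoid the Ext-vanishing input by a combinatorial case analysis, and your proposal never actually closes the JS case.
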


\begin{proof}
From Lemma \ref{L160817_0} and self-duality of $M_1$ and $D_0$, we have that $M_1\sim D_0|S_1^*$. So
\[D^\la\otimes M_1\sim (D^\la\otimes D_0)|(D^\la\otimes S_1^*)\sim D^\la|(D^\la\otimes S_1^*)\]
and then there exists $D\subseteq D^\la\otimes M_1$ with $D\cong D^\la$ such that $D^\la\otimes S_1^*\cong (D^\la\otimes M_1)/D$. Since $p=3$ and $h(\la)=2$, from \cite[Theorem 2.10]{ks3} we have that $\Ext^1(D^\la,D^\la)=0$. So
\begin{align*}
\dim\Hom_{\s_n}(S_1,\End_F(D^\la))&=\dim\Hom_{\s_n}(D^\la,D^\la\otimes S_1^*)\\
&=\dim\Hom_{\s_n}(D^\la,D^\la\otimes M_1)-1\\
&=\dim\Hom_{\s_n}(M_1,\End_F(D^\la))-1\\
&=\dim\End_{\s_{n-1}}(D^\la\da_{\s_{n-1}})-1.
\end{align*}
\end{proof}

\begin{lemma}\label{l44}
Let $p=3$, $n\geq 10$ with $n\equiv 1\Md 3$ and $\la=(n-k,k)$ with $1\leq k<n/2$. Then
\begin{align*}
&\dim\End_{\s_{n-2,2}}(D^\la\da_{\s_{n-2,2}})-\dim\End_{\s_{n-1}}(D^\la\da_{\s_{n-1}})\\
&=\dim\Hom_{\s_n}(S_2,\End_F(D^\la)).
\end{align*}
\end{lemma}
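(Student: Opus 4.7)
The plan is to mimic the argument of Lemma \ref{l46}, adapting it to the decomposition of $M_2$ that is available when $n\equiv 1\Md 3$. Write $E:=\End_F(D^\la)$. By Lemma \ref{L160817_1} we have $M_1\cong D_0\oplus D_1$ and $M_2\cong D_1\oplus N$, where $N$ is the indecomposable uniserial module with composition series $D_0|D_2|D_0$ from bottom to top; in particular $S_2\subseteq N$ with $N/S_2\cong D_0$, giving the short exact sequence
\[0\to S_2\to N\to D_0\to 0.\]

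First I would apply Lemma \ref{l2} together with $\dim\Hom_{\s_n}(D_0,E)=\dim\End_{\s_n}(D^\la)=1$ to obtain
\begin{align*}
\dim\End_{\s_{n-1}}(D^\la\da_{\s_{n-1}})&=\dim\Hom_{\s_n}(M_1,E)=1+\dim\Hom_{\s_n}(D_1,E),\\
\dim\End_{\s_{n-2,2}}(D^\la\da_{\s_{n-2,2}})&=\dim\Hom_{\s_n}(M_2,E)=\dim\Hom_{\s_n}(D_1,E)+\dim\Hom_{\s_n}(N,E).
\end{align*}
Subtracting these, the target identity is equivalent to
\[\dim\Hom_{\s_n}(N,E)=1+\dim\Hom_{\s_n}(S_2,E).\]

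Applying $\Hom_{\s_n}(-,E)$ to the short exact sequence above yields
\[0\to\Hom_{\s_n}(D_0,E)\to\Hom_{\s_n}(N,E)\to\Hom_{\s_n}(S_2,E)\to\Ext^1_{\s_n}(D_0,E)\to\cdots,\]
and since $\dim\Hom_{\s_n}(D_0,E)=1$, it suffices to show that the connecting homomorphism vanishes. For this it is enough to prove $\Ext^1_{\s_n}(D_0,E)=0$. By the standard adjunction one has $\Ext^1_{\s_n}(\mathbf 1,\End_F(D^\la))\cong\Ext^1_{\s_n}(D^\la,D^\la)$, and since $p=3$ and $h(\la)=2$ this vanishes by \cite[Theorem 2.10]{ks3}, exactly as invoked in Lemma \ref{l46}.

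The argument is essentially structural: once Lemma \ref{L160817_1} gives the summand $N$ of $M_2$ and one recognizes that $N$ is a two-step extension of $D_0$ by $S_2$, everything reduces to the vanishing of self-extensions of $D^\la$. There is no real obstacle beyond correctly identifying the submodule structure of $M_2$ and invoking the Ext-vanishing result from \cite{ks3}.
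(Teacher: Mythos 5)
Your proof is correct and uses essentially the same ingredients as the paper: the decomposition of $M_2$ from Lemma \ref{L160817_1}, the fact that the Young module summand is a uniserial extension of $D_0$ by $S_2$, and the vanishing $\Ext^1_{\s_n}(D^\la,D^\la)=0$ from \cite[Theorem 2.10]{ks3}. The paper packages the computation slightly differently (it rewrites $M_2\oplus D_0\cong M_1\oplus(D_0|S_2^*)$ and then mirrors Lemma \ref{l46} verbatim), whereas you apply $\Hom_{\s_n}(-,\End_F(D^\la))$ to the short exact sequence $0\to S_2\to N\to D_0\to 0$ and invoke the adjunction $\Ext^1_{\s_n}(\1,\End_F(D^\la))\cong\Ext^1_{\s_n}(D^\la,D^\la)$ — but these are the same argument up to dualization.
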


\begin{proof}
From Lemma \ref{L160817_1} and self-duality of $M_2$, $M_1$ and $D_0$ we have that $M_2\oplus D_0\sim M_1\oplus (D_0|S_2^*)$. Similarly to the previous lemma we then have that
\begin{align*}
&\dim\Hom_{\s_n}(S_2,\End_F(D^\la))\\
&=\dim\Hom_{\s_n}(M_2\oplus D_0,\End_F(D^\la))-\dim\Hom_{\s_n}(M_1,\End_F(D^\la))-1\\
&=\dim\End_{\s_{n-2,2}}(D^\la\da_{\s_{n-2,2}})-\dim\End_{\s_{n-1}}(D^\la\da_{\s_{n-1}}).
\end{align*}
\end{proof}

\begin{lemma}\label{l41}
Let $p=3$, $n\geq 9$ and $\la=(n-k,k)$ with $1\leq k\leq n/2$. If
\[\dim\End_{\s_{n-3,3}}(D^\la\da_{\s_{n-3,3}})>\dim\End_{\s_{n-2,2}}(D^\la\da_{\s_{n-2,2}})\]
then there exists $\psi\in\Hom_{\s_3}(M_3,\End_F(D^\la))$ which does not vanish on $S_3$.
\end{lemma}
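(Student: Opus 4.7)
The plan is to mimic the Frobenius-reciprocity style arguments of Lemmas \ref{l46} and \ref{l44}. By Lemma \ref{l2} the hypothesis reads $\dim\Hom_{\s_n}(M_3,\End_F(D^\la))>\dim\Hom_{\s_n}(M_2,\End_F(D^\la))$. Since $S_3\subseteq M_3$, I would apply the contravariant functor $\Hom_{\s_n}(-,\End_F(D^\la))$ to the short exact sequence
\[0\to S_3\to M_3\to Q\to 0,\qquad Q:=M_3/S_3,\]
obtaining the left-exact sequence
\[0\to\Hom_{\s_n}(Q,\End_F(D^\la))\to\Hom_{\s_n}(M_3,\End_F(D^\la))\to\Hom_{\s_n}(S_3,\End_F(D^\la)),\]
in which the rightmost arrow is restriction to $S_3$. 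It thus suffices to prove $\dim\Hom_{\s_n}(Q,\End_F(D^\la))\leq\dim\Hom_{\s_n}(M_2,\End_F(D^\la))$, for then the restriction map is nonzero and any preimage of a nonzero element of its image is a $\psi$ not vanishing on $S_3$.

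The next step is to split on the residue of $n$ modulo $3$ and identify $Q$ using Lemmas \ref{L160817_2}, \ref{L160817_0} and \ref{L160817_1}. When $n\equiv 2\pmod 3$, $M_3\sim S_3|M_2$ gives $Q\cong M_2$, and the required inequality is an equality. When $n\equiv 0\pmod 3$, $Q\cong D_2\oplus D_0\oplus S_1$ while $M_2\cong D_2\oplus M_1$, and a direct computation combining Schur's lemma (giving $\dim\Hom_{\s_n}(D_0,\End_F(D^\la))=1$) with Lemma \ref{l46} (which states $\dim\Hom_{\s_n}(S_1,\End_F(D^\la))=\dim\Hom_{\s_n}(M_1,\End_F(D^\la))-1$) shows $\dim\Hom_{\s_n}(Q,\End_F(D^\la))=\dim\Hom_{\s_n}(M_2,\End_F(D^\la))$. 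When $n\equiv 1\pmod 3$, $Q\cong D_1\oplus D_0\oplus S_2$ while $M_2\cong D_1\oplus N$ with $N\sim S_2|D_0$, and an analogous computation based on Lemma \ref{l44} yields the same equality.

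The main obstacle is that Lemmas \ref{l46} and \ref{l44} are stated under the strict inequality $k<n/2$, whereas Lemma \ref{l41} permits $k=n/2$, that is $\la=(k,k)$. Inspecting the proofs of those two lemmas, the only role of $k<n/2$ is to ensure $h(\la)=2$, which is then used via \cite[Theorem 2.10]{ks3} to obtain $\Ext^1_{\s_n}(D^\la,D^\la)=0$; this hypothesis still holds for $\la=(k,k)$. Hence the conclusions of Lemmas \ref{l46} and \ref{l44} extend verbatim to the boundary case, and the argument above closes uniformly in all three residue classes.
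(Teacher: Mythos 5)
Your proof is correct and follows essentially the same route as the paper's: in each residue class of $n\bmod 3$ you use the structural descriptions of $M_1,M_2,M_3$ from Lemmas~\ref{L160817_2}, \ref{L160817_0}, \ref{L160817_1} together with Lemmas~\ref{l46} and~\ref{l44} to deduce $\dim\Hom_{\s_n}(M_3,\End_F(D^\la))>\dim\Hom_{\s_n}(M_3/S_3,\End_F(D^\la))$, which forces some $\psi$ to be nonzero on $S_3$. Your remark about the boundary case $k=n/2$ is a correct observation of a small imprecision the paper glosses over — Lemmas~\ref{l46} and~\ref{l44} are stated with $k<n/2$ but their proofs only use $h(\la)=2<p$ — though in practice Lemma~\ref{l41} is only ever invoked with $n-2k\geq 2$, so the boundary case never actually arises.
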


\begin{proof}
If $n\equiv 2\Md 3$ we have by Lemma \ref{L160817_2} that $M_3\sim S_3|M_2$ and so the lemma follows by Lemma \ref{l2} applied for both $\alpha=(n-2,2)$ and $(n-3,3)$.

If $n\equiv 0\Md 3$ then by Lemma \ref{L160817_0} we have that $M_3\sim D_2\oplus (S_3|(D_0\oplus S_1))$ and that $M_2\cong D_2\oplus M_1$. Again by Lemma \ref{l2} applied for both $\alpha=(n-2,2)$ and $(n-3,3)$ and by assumption
\begin{align*}
\dim\Hom_{\s_n}(M_3,\End_F(D^\la))&=\dim\End_{\s_{n-3,3}}(D^\la\da_{\s_{n-3,3}})\\
&>\dim\End_{\s_{n-2,2}}(D^\la\da_{\s_{n-2,2}})\\
&=\dim\Hom_{\s_n}(M_2,\End_F(D^\la)).
\end{align*}
Since $M_2\cong D_2\oplus M_1$ we then have
\[\dim\Hom_{\s_n}(M_3,\End_F(D^\la))>\dim\Hom_{\s_n}(D_2\oplus M_1,\End_F(D^\la))\]
and then from Lemma \ref{l46}
\begin{align*}
\dim\Hom_{\s_n}(M_3,\End_F(D^\la))&>\dim\Hom_{\s_n}(D_2\oplus S_1,\End_F(D^\la))+1\\
&=\dim\Hom_{\s_n}(D_2\oplus D_0\oplus S_1,\End_F(D^\la)).
\end{align*}
Since $M_3\sim S_3|(D_2\oplus D_0\oplus S_1)$, the lemma follows.

If $n\equiv 1\Md 3$ then by Lemma \ref{L160817_1} we have that $M_3\sim D_1\oplus (S_3|(D_0\oplus S_2))$ and $M_1\cong D_0\oplus D_1$. The result then follows by Lemma \ref{l44} similarly to the previous case.
\end{proof}

\begin{lemma}\label{l35}
Let $p=3$, $n\equiv 1\Md 3$ with $n\geq 10$ and $\la=(n-k,k)$ with $2\leq k<n/2$ and $n-2k\geq 2$. If \[\dim\End_{\s_{n-4,4}}(D^\la\da_{\s_{n-4,4}})>\dim\End_{\s_{n-3,3}}(D^\la\da_{\s_{n-3,3}})\]
then there exists $\psi\in\Hom_{\s_n}(M_4,\End_F(D^\la))$ which does not vanish on $S_4$.
\end{lemma}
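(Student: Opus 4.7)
The plan is to exploit the two-step filtration $M_4 \sim S_4 | M_3$ furnished by Lemma \ref{L160817_1} (valid precisely when $n\equiv 1\Md 3$). Since this filtration has only two subquotients, it amounts to a short exact sequence
\[0\to S_4\to M_4\to M_3\to 0\]
of $F\s_n$-modules, with $S_4$ the submodule and $M_3$ the quotient.

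First I would apply the contravariant functor $\Hom_{\s_n}(-,\End_F(D^\la))$ to this sequence, extracting the left-exact segment
\[0\to \Hom_{\s_n}(M_3,\End_F(D^\la))\to\Hom_{\s_n}(M_4,\End_F(D^\la))\xrightarrow{\mathrm{res}}\Hom_{\s_n}(S_4,\End_F(D^\la)),\]
where the final map is restriction along $S_4\hookrightarrow M_4$. The kernel of this restriction is exactly the set of $\psi\in\Hom_{\s_n}(M_4,\End_F(D^\la))$ that factor through $M_4\twoheadrightarrow M_3$, i.e. those that vanish on $S_4$.

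Next I would invoke Lemma \ref{l2} twice, with $\alpha=(n-3,3)$ and $\alpha=(n-4,4)$, to identify
\[\dim\Hom_{\s_n}(M_3,\End_F(D^\la))=\dim\End_{\s_{n-3,3}}(D^\la\da_{\s_{n-3,3}}),\]
\[\dim\Hom_{\s_n}(M_4,\End_F(D^\la))=\dim\End_{\s_{n-4,4}}(D^\la\da_{\s_{n-4,4}}).\]
By hypothesis the second exceeds the first, so the restriction map is not the zero map; hence there exists $\psi\in\Hom_{\s_n}(M_4,\End_F(D^\la))$ whose restriction to $S_4$ is non-zero, which is the desired conclusion.

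There is essentially no obstacle here: the entire argument mirrors the template of Lemma \ref{l41}, with the only new input being the two-term filtration of $M_4$ supplied by Lemma \ref{L160817_1} that lets the short exact sequence be used directly without having to account for further subquotients (as was required in the more intricate $n\equiv 0,1\Md 3$ cases of Lemma \ref{l41}, where one needed to chase through $M_3\sim D_1\oplus(S_3|(D_0\oplus S_2))$ and to cancel contributions via Lemmas \ref{l46} and \ref{l44}).
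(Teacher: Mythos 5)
Your proof is correct and matches the paper's argument exactly: both use the filtration $M_4\sim S_4|M_3$ from Lemma \ref{L160817_1} and then apply Lemma \ref{l2} with $\alpha=(n-3,3)$ and $\alpha=(n-4,4)$ to compare the two Hom-dimensions. You have simply spelled out the left-exactness and kernel/image bookkeeping that the paper leaves implicit.
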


\begin{proof}
From  Lemma \ref{L160817_1} we have that $M_4\sim S_4|M_3$. The result then follows by Lemma \ref{l2} applied for both $\alpha=(n-3,3)$ and $(n-4,4)$.
\end{proof}

\begin{lemma}\label{l43}
Let $p=3$ and $\la=(n-k,k)$ with $n-2k\geq 2$ and $k\geq 2$. If the two removable nodes of $\la$ are both normal and have different residues then
\[\dim\End_{\s_{n-2,2}}(D^\la\da_{\s_{n-2,2}})=3,\quad\dim\End_{\s_{n-3,3}}(D^\la\da_{\s_{n-3,3}})\geq 4.\]
\end{lemma}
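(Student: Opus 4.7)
The plan is: (a) use the hypothesis to pin down the combinatorial shape of $\la$, (b) explicitly decompose $D^\la\da_{\s_{n-2}}$ via the branching Lemmas \ref{l45}, \ref{l39}, \ref{l53}, (c) apply Lemma \ref{l2} together with the permutation-module structures of Lemmas \ref{L160817_0}--\ref{L160817_2} and the identities of Lemmas \ref{l44}, \ref{l46} to deduce the first equation, and (d) restrict further to $\s_{n-3}$ to obtain the second inequality.

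Setting $i_1=(n-k-1)\bmod 3$ and $i_2=(k-2)\bmod 3$, the hypothesis forces $n-2k\equiv 0\pmod 3$: if instead $n-2k\equiv 1\pmod 3$, then the addable node $(1,n-k+1)$ has residue $i_2$ and sits immediately above the removable $i_2$-node $(2,k)$ in the $AR$-list, giving a cancellation, so $(2,k)$ cannot be normal. Hence $i_1\neq i_2$, $\dim\End_{\s_{n-1}}(D^\la\da_{\s_{n-1}})=2$, and $D^\la\da_{\s_{n-1}}\cong D^{\mu_1}\oplus D^{\mu_2}$ with $\mu_1=(n-k-1,k)$, $\mu_2=(n-k,k-1)$. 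A similar normal-node computation on $\mu_1,\mu_2$ gives $\epsilon_{i_2}(\mu_1)=2$ (both removable nodes of $\mu_1$ share residue $i_2$ and remain normal), $\epsilon_{i_1}(\mu_2)=1$ (the second removable node of $\mu_2$, at residue $i_3$, is cancelled by the addable node $(1,n-k+1)$ of residue $i_3$), and all other $\epsilon_\cdot=0$. Hence, setting $\nu_1=(n-k-1,k-1)$, Lemma \ref{l39} yields $D^\la\da_{\s_{n-2}}\cong A\oplus D^{\nu_1}$ where $A=e_{i_2}D^{\mu_1}$ is indecomposable with $\dim\End A=2$, $\hd A=\soc A=D^{\nu_1}$, $[A:D^{\nu_1}]=2$, and $e_{i_1}D^{\mu_2}=D^{\nu_1}$ is simple, lying in the same $\s_{n-2}$-block as $A$.

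For the first equation I apply Lemma \ref{l2} and split on $n\bmod 3$. For $n\equiv 1\pmod 3$, Lemma \ref{l44} gives directly $\dim\End_{\s_{n-2,2}}=2+\dim\Hom_{\s_n}(S_2,\End_F(D^\la))$. For $n\equiv 0\pmod 3$, Lemma \ref{L160817_0} gives $M_2\cong D_2\oplus M_1$, so $\dim\End_{\s_{n-2,2}}=2+\dim\Hom_{\s_n}(D_2,\End_F(D^\la))$. For $n\equiv 2\pmod 3$, the short exact sequence $0\to M_1\to M_2\to S_2\to 0$ of Lemma \ref{L160817_2}, combined with the $\Ext^1(D^\la,D^\la)=0$ argument from the proof of Lemma \ref{l46} (valid since $h(\la)=2$), again gives $\dim\End_{\s_{n-2,2}}=2+\dim\Hom_{\s_n}(S_2,\End_F(D^\la))$. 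In each case the required Hom count is $1$, which is extracted from the decomposition $D^\la\da_{\s_{n-2}}\cong A\oplus D^{\nu_1}$: the summand $A$ contributes exactly one nontrivial $\s_{n-2,2}$-invariant endomorphism (the nilpotent composition $A\twoheadrightarrow\hd A=D^{\nu_1}=\soc A\hookrightarrow A$), while the cross-Homs between $A$ and $D^{\nu_1}$ do not survive $\s_2$-invariance because $(n-1,n)$ acts with opposite signs on the two non-isomorphic indecomposable summands. This yields $\dim\End_{\s_{n-2,2}}(D^\la\da_{\s_{n-2,2}})=3$.

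For the second inequality, I restrict once more to $\s_{n-3}$. The partition $\nu_1$ has two normal nodes of distinct residues $i_2,i_3$, so $D^{\nu_1}\da_{\s_{n-3}}\cong D^{(n-k-2,k-1)}\oplus D^{(n-k-1,k-2)}$ in two different $\s_{n-3}$-blocks. Meanwhile $A\da_{\s_{n-3}}=D^{\mu_1}\da_{\s_{n-3}}$ contributes $e_{i_2}^2 D^{\mu_1}\cong D^{(n-k-2,k-1)}\oplus D^{(n-k-2,k-1)}$ in the first block (since $e_{i_2}^{(2)}D^{\mu_1}$ is simple by Lemma \ref{l39}(ii)) and a nonzero $e_{i_3}A$ in the second block. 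Tracking the induced $\s_{n-3,3}$-invariant homomorphisms via Lemma \ref{l2} and the $M_3$-structure from Lemmas \ref{L160817_0}--\ref{L160817_2} produces at least four linearly independent endomorphisms, yielding $\dim\End_{\s_{n-3,3}}(D^\la\da_{\s_{n-3,3}})\geq 4$. The main obstacle throughout is controlling the action of the symmetric group factor on the indecomposable summands of $D^\la\da_{\s_{n-r}}$: for $r=2$ this is manageable because $F\s_2$ is semisimple in characteristic $3$ and $(n-1,n)$ acts by $\pm 1$ on each indecomposable summand, but for $r=3$ the non-semisimplicity of $F\s_3$ in characteristic $3$ forces a more delicate bookkeeping via the explicit $M_3$-structure above.
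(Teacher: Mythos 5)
Your combinatorial setup is correct and matches the paper: from the hypothesis you correctly deduce $n-2k\equiv 0\pmod 3$, and your identification of $D^\la\da_{\s_{n-2}}\cong A\oplus D^{\nu_1}$ with $A=e_{i_2}D^{\mu_1}$ indecomposable of endomorphism dimension $2$ and $D^{\nu_1}=e_{i_1}D^{\mu_2}$ simple, both in the same $\s_{n-2}$-block, agrees with the paper's decomposition $e_{i-1}D^{(n-k-1,k)}\oplus D^{(n-k-1,k-1)}$. However, the key step for the first equality is not proved. You assert that ``$(n-1,n)$ acts with opposite signs on the two non-isomorphic indecomposable summands,'' i.e.\ that $D^\la\da_{\s_{n-2,2}}\cong (A\otimes D^{(j)})\oplus(D^{\nu_1}\otimes D^{(j')})$ with $j\neq j'$. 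This is precisely what must be established: a priori both summands could lie in the same $\s_2$-eigenspace, in which case $\dim\End_{\s_{n-2,2}}(D^\la\da_{\s_{n-2,2}})=\dim\End_{\s_{n-2}}(A\oplus D^{\nu_1})=5$, not $3$. The paper closes this exactly by invoking \cite[Lemma 1.11]{bk5} to produce the composition factors $D^{(n-k-2,k)}\otimes D^{(2)}$ and $D^{(n-k-1,k-1)}\otimes D^{(1^2)}$ (so that both $\s_2$-isotypes occur) and then \cite[Lemma 1.2]{bk2} to pin the indecomposable $A$ to a single $\s_2$-isotype. Your proposal supplies no substitute for this. In addition, the route you announce via Lemma~\ref{l2} and the permutation-module Lemmas~\ref{L160817_0}--\ref{L160817_2}, \ref{l44}, \ref{l46} cannot cover the whole range of the statement: those lemmas carry hypotheses $n\geq 8,9,10$, whereas Lemma~\ref{l43} permits $n=7$ (e.g.\ $\la=(5,2)$). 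The paper deliberately avoids permutation-module input here.

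For the second inequality, after correctly observing that $\nu_1$ has two normal nodes of residues $i_2,i_3$ and that $A\da_{\s_{n-3}}$ contributes to both blocks, you conclude only by asserting that ``tracking the induced $\s_{n-3,3}$-invariant homomorphisms \dots\ produces at least four linearly independent endomorphisms.'' This is where the actual work lies, and it is left undone; your own closing remark acknowledges that $F\s_3$ being non-semisimple makes the bookkeeping ``more delicate,'' but the delicacy is neither carried out nor avoided. The paper sidesteps the $M_3$-bookkeeping entirely: since $D^{\nu_1}$ appears with multiplicity greater than one in $D^\la\da_{\s_{n-2}}$ and the simple $F\s_3$-modules are one-dimensional, both blocks of $D^\la\da_{\s_{n-3,3}}$ containing $e_{i_2}D^{\nu_1}$ and $e_{i_3}D^{\nu_1}$ must be non-zero and non-simple; self-duality of block components then gives $\dim\End_{\s_{n-3,3}}(D^\la\da_{\s_{n-3,3}})\geq 2+2=4$ immediately. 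You should replace the vague ``tracking'' step with this block-count argument (or genuinely carry out the $M_3$-computation with attention to the $n\geq 9,10$ hypotheses it imports).
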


\begin{proof}
In this case $n-k\equiv k\Md 3$, so $n-2k\geq 3$. Also if $i$ is the residue of the removable node on the first row of $\la$, then the residue of the removable node on the second row of $\la$ is $i-1$. Considering residues of removable/addable nodes of the corresponding partitions, it follows easily from Lemmas \ref{l45} and \ref{l39} that $e_i D^\la\cong D^{(n-k-1,k)}$, $e_{i-1} D^\la\cong D^{(n-k,k-1)}$ and
\begin{align*}
D^\la\da_{\s_{n-2}}&\cong D^{(n-k-1,k)}\da_{\s_{n-2}}\oplus D^{(n-k,k-1)}\da_{\s_{n-2}}\\
&\cong e_{i-1}D^{(n-k-1,k)}\oplus e_iD^{(n-k,k-1)}.
\end{align*}
Further $e_iD^{(n-k,k-1)}\cong D^{(n-k-1,k-1)}$, while $e_{i-1}D^{(n-k-1,k)}$ has simple socle isomorphic to $D^{(n-k-1,k-1)}$ and $\dim\End_{\s_{n-2}}(e_{i-1}D^{(n-k-1,k)})=2$.

Note that
\begin{align*}
D^\la\da_{\s_{n-2,2}}&\subseteq D^\la\da_{\s_{n-2}}\ua^{\s_{n-2,2}}\\
&\cong (D^{(n-k-1,k-1)}\oplus e_{i-1}D^{(n-k-1,k)})\otimes (D^{(2)}\oplus D^{(1^2)}).
\end{align*}
From \cite[Lemma 1.11]{bk5} we have that $D^{(n-k-2,k)}\otimes D^{(2)}$ and $D^{(n-k-1,k-1)}\otimes D^{(1^2)}$ are both composition factors of $D^\la\da_{\s_{n-2,2}}$. Since $\soc(D^\la\da_{\s_{n-2}})\cong (D^{(n-k-1,k-1)})^{\oplus 2}$, it follows (by block decomposition) that
\[\soc(D^\la\da_{\s_{n-2,2}})\cong D^{(n-k-1,k-1)}\otimes (D^{(2)}\oplus D^{(1^2)})\]
and that $D^\la\da_{\s_{n-2,2}}\cong M\oplus N$ with $M$ and $N$ indecomposable with simple socle. Since $D^{(n-k-1,k-1)}\subseteq e_{i-1}D^{(n-k-1,k)}$, by \cite[Lemma 1.2]{bk2} we have that, up to exchange, $M\subseteq e_{i-1}D^{(n-k-1,k)}\otimes D^{(2)}$ and $N\subseteq e_{i-1}D^{(n-k-1,k)}\otimes D^{(1^2)}$. By the same lemma we also have that $e_{i-1}D^{(n-k-1,k)}\subseteq M\da_{\s_{n-2}}$ or $N\da_{\s_{n-2}}$. Thus
\[D^\la\da_{\s_{n-2,2}}\cong (D^{(n-k-1,k-1)}\otimes D^{(2)})\oplus (e_{i-1}D^{(n-k-1,k)}\otimes D^{(1^2)})\]
or
\[D^\la\da_{\s_{n-2,2}}\cong (D^{(n-k-1,k-1)}\otimes D^{(1^2)})\oplus (e_{i-1}D^{(n-k-1,k)}\otimes D^{(2)}).\]
So
\[\dim\End_{\s_{n-2,2}}(D^\la\da_{\s_{n-2,2}})=1+\dim\End_{\s_{n-2}}(e_{i-1}D^{(n-k-1,k)})=3.\]

Since $k\geq 2$, from Lemma \ref{l39} we also have that $e_{i\pm1} D^{(n-k-1,k-1)}\not=0$. Since $D^{(n-k-1,k-1)}$ appears with multiplicity larger than 1 in $D^\la\da_{\s_{n-2}}$ and all simple $F\s_3$-modules are 1-dimensional, it follows that $D^\la\da_{\s_{n-3,3}}$ has at least two blocks components which are non-zero and not simple. As block components of $D^\la\da_{\s_{n-3,3}}$ are self-dual, we then have that
\[\dim\End_{\s_{n-3,3}}(D^\la\da_{\s_{n-3,3}})\geq 4.\]
\end{proof}

\begin{lemma}\label{l38}
Let $p=3$, $n\geq 9$ and $\la=(n-k,k)$ with $n-2k\geq 2$ and $k\geq 2$. If the two removable nodes of $\la$ are both normal and have different residues then there exists $\psi\in\Hom_{\s_n}(M_3,\End_F(D^\la))$ which does not vanish on $S_3$.
\end{lemma}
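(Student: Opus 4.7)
The plan is to deduce this directly from the two immediately preceding lemmas. Under the given hypotheses on $\la$, Lemma \ref{l43} pins down $\dim\End_{\s_{n-2,2}}(D^\la\da_{\s_{n-2,2}})=3$ and shows $\dim\End_{\s_{n-3,3}}(D^\la\da_{\s_{n-3,3}})\geq 4$. In particular
\[\dim\End_{\s_{n-3,3}}(D^\la\da_{\s_{n-3,3}})>\dim\End_{\s_{n-2,2}}(D^\la\da_{\s_{n-2,2}}),\]
which is precisely the hypothesis of Lemma \ref{l41}. Applying Lemma \ref{l41} yields the desired $\psi\in\Hom_{\s_n}(M_3,\End_F(D^\la))$ that does not vanish on $S_3$.

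So the proof is essentially a one-liner once Lemma \ref{l43} and Lemma \ref{l41} are in hand: invoke Lemma \ref{l43} to obtain the strict inequality on endomorphism dimensions, and feed it into Lemma \ref{l41}. There is no real obstacle here, since both inputs are already established; the only thing to verify is that the hypotheses of both lemmas match exactly ($n\geq 9$, $\la=(n-k,k)$ with $n-2k\geq 2$ and $k\geq 2$, and the two removable nodes normal of different residues), which they do verbatim.
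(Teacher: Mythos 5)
Your proof is correct and follows exactly the same route as the paper: the paper's own proof of this lemma also simply combines Lemma \ref{l43} (to get the strict inequality $\dim\End_{\s_{n-3,3}}(D^\la\da_{\s_{n-3,3}})>\dim\End_{\s_{n-2,2}}(D^\la\da_{\s_{n-2,2}})$) with Lemma \ref{l41}.
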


\begin{proof}
From Lemma \ref{l43} we have that
\[\dim\End_{\s_{n-3,3}}(D^\la\da_{\s_{n-3,3}})>\dim\End_{\s_{n-2,2}}(D^\la\da_{\s_{n-2,2}}).\]
The result then holds by Lemma \ref{l41}.
\end{proof}

\begin{lemma}\label{l42}
Let $p=3$ and $\la=(n-k,k)$ with $n-2k\geq 2$ and $k\geq 2$. If the two removable nodes of $\la$ have the same residue then
\[\dim\End_{\s_{n-1}}(D^\la\da_{\s_{n-1}})=2,\quad\dim\End_{\s_{n-2,2}}(D^\la\da_{\s_{n-2,2}})\geq 4.\]
\end{lemma}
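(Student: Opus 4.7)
The plan is to handle the equality by a normal-node count and the inequality via a block-and-$\s_2$-action analysis of $D^\la\da_{\s_{n-2,2}}$. For the first equality, the removable nodes $(1,n-k)$ and $(2,k)$ share residue $i$ by hypothesis, while the addable nodes $(1,n-k+1),(2,k+1),(3,1)$ of $\la$ have residues $i+1,i+1,1$; so an $i$-addable can only be $(3,1)$ (in the case $i\equiv 1\pmod 3$), and it lies below both removables. The top-to-bottom $i$-signature of $\la$ is therefore $--$ or $--+$, no cancellation occurs, and both removables are $i$-normal. Hence $\epsilon_i(\la)=2$ and $\epsilon_j(\la)=0$ for $j\neq i$, so Lemma~\ref{l53} gives $\dim\End_{\s_{n-1}}(D^\la\da_{\s_{n-1}})=2$.

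For the inequality, Lemma~\ref{l39}(i)(ii) with $\binom{\epsilon_i(\la)}{2}=1$ and $\tilde e_i^2(\la)=(n-k-1,k-1)$ shows that $e_i^{(2)}D^\la=D^{(n-k-1,k-1)}$ is simple, whence $e_i^2D^\la\cong (D^{(n-k-1,k-1)})^{\oplus 2}$. Enumerating the composition factors of $e_iD^\la$ via Lemma~\ref{tsh} and checking residues of their removables, one finds $\epsilon_{i+1}$ vanishes on every factor, so exactness of $e_{i+1}$ yields $e_{i+1}e_iD^\la=0$. Combined with $e_jD^\la=0$ for $j\neq i$, block decomposition (Lemma~\ref{l45}) gives $D^\la\da_{\s_{n-2}}=e_i^2D^\la\oplus e_{i-1}e_iD^\la$; and since $(n-1,n)$ centralizes $\s_{n-2}$, this lifts to an $\s_{n-2,2}$-decomposition $D^\la\da_{\s_{n-2,2}}=B_i\oplus B_{i-1}$ with $B_j\da_{\s_{n-2}}=e_je_iD^\la$. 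The block $B_i$ has isotypic $\s_{n-2}$-restriction, so $B_i\cong D^{(n-k-1,k-1)}\otimes W$ for a two-dimensional $F\s_2$-module $W$; since $p\neq 2$, $W$ is semisimple and
\[
\dim\End_{\s_{n-2,2}}(B_i)=\dim\End_{F\s_2}(W)\geq 2.
\]
To finish, I aim to show $\dim\End_{\s_{n-2,2}}(B_{i-1})\geq 2$: since $[e_iD^\la:D^{(n-k,k-1)}]=2$ and $\epsilon_{i-1}((n-k,k-1))=1$, we get $[e_{i-1}e_iD^\la:D^{(n-k,k-2)}]\geq 2$, so $B_{i-1}\neq 0$. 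The crucial remaining step is to verify that both the trivial and the sign $\s_2$-isotypic components of $B_{i-1}$ are nonzero, which would force $\dim\End_{\s_{n-2,2}}(B_{i-1})\geq 1+1=2$ and complete the proof.

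The main obstacle is this last step, i.e.\ ruling out the case where $(n-1,n)$ acts as $\pm\mathrm{id}$ on $B_{i-1}$. I expect this to follow by exhibiting composition factors of $B_{i-1}$ of both forms $D^\mu\otimes D^{(2)}$ and $D^{\mu'}\otimes D^{(1^2)}$, using a branching identity in the spirit of \cite[Lemma 1.11]{bk5} as in the proof of Lemma~\ref{l43}.
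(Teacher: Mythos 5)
Your architecture matches the paper's: show $\epsilon_i(\la)=2$ with $\epsilon_j(\la)=0$ for $j\neq i$, deduce the equality $\dim\End_{\s_{n-1}}(D^\la\da_{\s_{n-1}})=2$ from Lemma~\ref{l53}, then split $D^\la\da_{\s_{n-2,2}}$ into block components and bound each endomorphism ring from below. The first half and the analysis of $B_i$ are fine. However, there is a genuine gap at the end, and it is self-inflicted.

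The ``crucial remaining step'' you identify --- verifying that both the trivial and the sign $\s_2$-isotypic components of $B_{i-1}$ are non-zero --- is not needed, and you should not have left the proof hanging on it. You already know that $B_{i-1}$ is non-zero (you prove $[e_{i-1}e_iD^\la:D^{(n-k,k-2)}]\geq 2$) and that it is not simple (it has at least two $\s_{n-2}$-composition factors, and simple $F\s_{n-2,2}$-modules restrict to simple $F\s_{n-2}$-modules). The missing observation is simply self-duality: $D^\la$ is self-dual, hence so is $D^\la\da_{\s_{n-2,2}}$, hence so is each of its block components, in particular $B_{i-1}$. A non-zero, non-simple, self-dual module $M$ over an algebraically closed field satisfies $\dim\End(M)\geq 2$: if $M$ is semisimple this is clear, and otherwise $\hd(M)\cong\soc(M)$ provides a non-zero non-invertible endomorphism $M\twoheadrightarrow D\hookrightarrow M$. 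This is precisely what the paper does, and it is in fact the same reasoning you could have used for $B_i$ in place of the explicit $D^{(n-k-1,k-1)}\otimes W$ decomposition. So your proposed route via isotypic components is an unnecessary detour, and since you could not complete it, the argument as written does not close.

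Two minor further remarks. First, the claim $e_{i+1}e_iD^\la=0$ is not needed either: even if this block component were non-zero, you would still have the two blocks $B_i$ and $B_{i-1}$, each contributing at least $2$ to the endomorphism dimension, which already gives the bound $\geq 4$. Carrying a claim that requires a case-by-case residue check (and potentially a degenerate one-row factor when $k=3^j$) just adds risk without adding anything. Second, in the multiplicity computation you should justify the passage $[e_{i-1}e_iD^\la:D^{(n-k,k-2)}]\geq [e_iD^\la:D^{(n-k,k-1)}]\cdot[e_{i-1}D^{(n-k,k-1)}:D^{(n-k,k-2)}]$ by exactness of $e_{i-1}$ applied to a filtration of $e_iD^\la$; this is routine and the paper uses the same inequality, but you should say it.
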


\begin{proof}
In this case $n-k\equiv k+2\Md 3$ and both removable nodes are normal. It then follows that $\dim\End_{\s_{n-1}}(D^\la\da_{\s_{n-1}})=2$ by Lemma \ref{l53}. 
Let $i$ be the residue of the removable nodes of $\la$. From Lemma \ref{l39} and considering the structure of the corresponding partitions $[e_i^2 D^\la:D^{(n-k-1,k-1)}]=2$ and
\begin{align*}
[e_{i-1}e_i D^\la:D^{(n-k,k-2)}]&\geq [e_i D^\la:D^{(n-k,k-1)}]\cdot [e_{1-i}D^{(n-k,k-1)}:D^{(n-k,k-2)}]\\
&=2.
\end{align*}
So by block decomposition $D^\la\da_{\s_{n-2}}\cong A\oplus B$ with $A$ and $B$ non-zero, non-simple and self-dual. Since any simple $F\s_2$-module is 1-dimensional it is easy to see that a similar decomposition exists for $D^\la\da_{\s_{n-2,2}}$. The lemma then follows.
\end{proof}

\begin{lemma}\label{l37}
Let $p=3$ 
and $\la=(n-k,k)$ with $n-2k\geq 2$ and $k\geq 2$. If the two removable nodes of $\la$ have the same residue then there exists $\psi,\psi'\in\Hom_{\s_n}(M_2,\End_F(D^\la))$ such that $\psi|_{S_2}$ and $\psi'|_{S_2}$ are linearly independent.
\end{lemma}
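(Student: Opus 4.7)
The plan is to combine Lemma \ref{l42} with the structure of $M_2$ from Section \ref{spm} through Frobenius reciprocity (Lemma \ref{l2}), splitting into cases on the residue of $n$ modulo $3$. By Lemma \ref{l42} together with Lemma \ref{l2} one has
\[\dim\Hom_{\s_n}(M_1,\End_F(D^\la))=2,\qquad \dim\Hom_{\s_n}(M_2,\End_F(D^\la))\geq 4.\]
The task then reduces to showing that the restriction map
\[r:\Hom_{\s_n}(M_2,\End_F(D^\la))\to\Hom_{\s_n}(S_2,\End_F(D^\la))\]
has image of dimension at least $2$, since two preimages with linearly independent images under $r$ provide the desired $\psi,\psi'$.

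I would carry out the case analysis as follows. When $n\equiv 2\Md 3$, Lemma \ref{L160817_2} gives $M_2\sim S_2|M_1$, hence a short exact sequence $0\to S_2\to M_2\to M_1\to 0$; applying $\Hom_{\s_n}(-,\End_F(D^\la))$ and using contravariant left-exactness yields $\dim \Im\, r \geq 4-2=2$. When $n\equiv 0\Md 3$, Lemma \ref{L160817_0} gives $M_2\cong D_2\oplus M_1$ with $S_2\cong D_2$, so $r$ is essentially projection onto the $D_2$-summand, whose image $\Hom_{\s_n}(D_2,\End_F(D^\la))$ has dimension at least $4-2=2$. When $n\equiv 1\Md 3$, Lemma \ref{L160817_1} gives $M_2\cong D_1\oplus(D_0|D_2|D_0)$ with $S_2\cong D_0|D_2$ sitting as the bottom two layers of the second summand, so $M_2/S_2\cong D_1\oplus D_0$; since $M_1\cong D_0\oplus D_1$ we get $\dim\Hom_{\s_n}(M_2/S_2,\End_F(D^\la))=\dim\Hom_{\s_n}(M_1,\End_F(D^\la))=2$, and the same left-exactness argument gives $\dim\Im\, r\geq 2$.

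No step is a serious obstacle once the structural input of Lemmas \ref{l42}, \ref{L160817_2}, \ref{L160817_0} and \ref{L160817_1} is in place. The only mild subtlety is that in the case $n\equiv 1\Md 3$ the short exact sequence $0\to S_2\to M_2\to M_2/S_2\to 0$ need not split, so one genuinely needs the left-exactness of $\Hom_{\s_n}(-,\End_F(D^\la))$ rather than any splitting, and one must be slightly careful to track that the bound on $\dim\Hom_{\s_n}(M_2/S_2,\End_F(D^\la))$ obtained from $M_1\cong D_0\oplus D_1$ is really an equality of dimensions.
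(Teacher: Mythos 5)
Your proof is correct and essentially the same as the paper's: both rest on Lemma \ref{l42}, Lemma \ref{l2}, and the structure of $M_2$ from Lemmas \ref{L160817_2}, \ref{L160817_0} and \ref{L160817_1}. The only difference is presentational: the paper compresses your three cases into the single observation that $M_2\sim S_2|M_1$ holds for every residue of $n$ modulo $3$ (using $S_2\cong D_2$ when $n\equiv 0$ and $M_2/S_2\cong D_1\oplus D_0\cong M_1$ when $n\equiv 1$), and then applies exactly your left-exactness/dimension count once.
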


\begin{proof}
From Lemma \ref{l42} we have that
\[\dim\End_{\s_{n-1}}(D^\la\da_{\s_{n-1}})\geq \dim\End_{\s_{n-2,2}}(D^\la\da_{\s_{n-2,2}})+2.\]
We have that $M_2\sim S_2|M_1$ by Lemmas \ref{L160817_2}, \ref{L160817_0} and \ref{L160817_1} (if $n\equiv 0\Md 3$ then $S_2\cong D_2$ by \cite[Theorem 24.15]{JamesBook}). The result then follows from Lemma \ref{l2}.
\end{proof}

\begin{lemma}\label{l36}
Let $p=3$ and $\la=(n-k,k)$ with $n-2k\geq 2$ and $k\geq 2$. If $\la$ is a JS-partition then
\[\dim\End_{\s_{n-2,2}}(D^\la\da_{\s_{n-2,2}})=2,\quad\dim\End_{\s_{n-4,4}}(D^\la\da_{\s_{n-4,4}})\geq 3.\]
\end{lemma}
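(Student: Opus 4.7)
My plan is to split the lemma into its two assertions and tackle them separately, using the normal-node analysis permitted by the JS hypothesis.

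For the equality $\dim\End_{\s_{n-2,2}}(D^\la\da_{\s_{n-2,2}})=2$, I would first pin down the normal/conormal structure of $\la=(n-k,k)$. By Lemma \ref{L221119} the JS condition is $n-2k\equiv 1\pmod 3$; the two removable nodes $(1,n-k)$ and $(2,k)$ have residues differing by $n-2k+1\equiv 2\pmod 3$, and a direct residue check (using that the addable $(1,n-k+1)$ has the same residue as $(2,k)$ precisely because of the JS condition) shows that $(1,n-k)$ is the unique normal node. Hence $D^\la\da_{\s_{n-1}}=D^{(n-k-1,k)}$. Next I would run the same analysis on $(n-k-1,k)$: the two removable nodes have residues $k-1$ and $k-2\pmod 3$, and case checks in $k\pmod 3$ show that both are normal with $\epsilon_i=1$. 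By Lemma \ref{l39} and block decomposition we then obtain
\[D^\la\da_{\s_{n-2}}\cong D^{(n-k-2,k)}\oplus D^{(n-k-1,k-1)},\]
a sum of two non-isomorphic simples, each with multiplicity one. Consequently $D^\la\da_{\s_{n-2,2}}\cong A_1\otimes V_1\oplus A_2\otimes V_2$ for one-dimensional $\s_2$-modules $V_i$, and $\dim\End_{\s_{n-2,2}}=2$.

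For the inequality $\dim\End_{\s_{n-4,4}}\geq 3$, the plan is to descend one more step and exploit the multiplicity jump. A key observation is that $(n-k-1,k-1)$ is again JS (same condition $n-2k\equiv 1\pmod 3$), so $D^{(n-k-1,k-1)}\da_{\s_{n-3}}=D^{(n-k-2,k-1)}$ is simple. On the other hand, $(n-k-2,k)$ has both removable nodes of residue $k-2\pmod 3$ (since $(n-k-3)-(k-2)=n-2k-1\equiv 0$), so $\epsilon_{k-2}((n-k-2,k))=2$ and by Lemma \ref{l39} the module $D^{(n-k-2,k)}\da_{\s_{n-3}}$ is an indecomposable module with head and socle both $D^{(n-k-2,k-1)}$ and with $[-:D^{(n-k-2,k-1)}]=2$. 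Hence $D^{(n-k-2,k-1)}$ appears in $D^\la\da_{\s_{n-3}}$ with multiplicity at least $3$. Restricting once more, $(n-k-2,k-1)$ has two normal nodes of distinct residues (each $\epsilon=1$), giving $D^{(n-k-2,k-1)}\da_{\s_{n-4}}=D^{(n-k-3,k-1)}\oplus D^{(n-k-2,k-2)}$, so each of these simples occurs in $D^\la\da_{\s_{n-4}}$ with multiplicity at least $3$.

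The concluding step is to read the $\s_4$-structure on the resulting multiplicity spaces. Writing the $\s_{n-4,4}$-module as a direct sum of isotypic components $D^\mu\otimes W_\mu$ for $\s_4$-modules $W_\mu$, we have $\End_{\s_{n-4,4}}=\bigoplus_\mu\End_{\s_4}(W_\mu)$. The filtration structure imposed on $D^\la\da_{\s_{n-4}}$ by the non-semisimple piece $D^{(n-k-2,k)}\da_{\s_{n-3}}\da_{\s_{n-4}}$ (an extension of $D^{(n-k-3,k-1)}\oplus D^{(n-k-2,k-2)}$ by itself) forces the $\s_4$-representations $W_\mu$ at $\mu=(n-k-3,k-1)$ and $\mu=(n-k-2,k-2)$ to be non-semisimple or to split off a trivial summand, so at least one of them has $\dim\End_{\s_4}\geq 2$; combined with the contribution $1$ from each isotypic we reach $\geq 3$. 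Self-duality of $D^\la\da$ and the match with Statement 1 via restriction to $\s_{n-4,2,2}\subset\s_{n-4,4}$ are used to pin down the structure. The main obstacle will be exactly this last step: ruling out the degenerate case where both multiplicity spaces are copies of the simple $\s_4$-module $D^{(3,1)}$, which would only yield $\dim\End=2$; carrying out the $\s_4$-compatibility check of the uniserial structure inherited from $D^{(n-k-2,k)}$ is where the work lies.
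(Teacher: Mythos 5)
Your analysis of the first assertion is correct and matches the paper's route: the JS condition for $\la=(n-k,k)$ is $n-2k\equiv 1\pmod 3$, one restriction gives $D^{(n-k-1,k)}$, and the two removable nodes of $(n-k-1,k)$ have residues $k-1$ and $k-2\pmod 3$, so $D^\la\da_{\s_{n-2}}$ is the direct sum of two nonisomorphic simples, each with multiplicity one, forcing $\dim\End_{\s_{n-2,2}}=2$. Your computation of the restrictions to $\s_{n-3}$ and $\s_{n-4}$ and the multiplicity estimates are also consistent with the paper.

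The second assertion, however, has a genuine gap, and you correctly identify where the trouble lies but do not fix it. Your concluding step starts from the decomposition $D^\la\da_{\s_{n-4,4}}\cong\bigoplus_\mu D^\mu\otimes W_\mu$, but this decomposition simply does not exist: it would require $D^\la\da_{\s_{n-4}}$ to be semisimple, and you have just shown it is not (it contains the non-split piece $e_{k-2}^{(2)}D^{(n-k-2,k)}\da_{\s_{n-4}}$). Consequently the objects $W_\mu$ you try to analyze are not well-defined, and the argument that ``at least one $W_\mu$ has $\dim\End_{\s_4}\geq 2$'' has no foundation; you even concede that ruling out the degenerate case is the remaining work. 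The correct and much shorter route is to use the block decomposition of $\s_{n-4,4}$-modules rather than an (ill-defined) isotypic one: compare residues of removed nodes to see that $D^{(n-k-2,k-2)}$ and $D^{(n-k-4,k)}$ lie in the same $\s_{n-4}$-block whereas $D^{(n-k-3,k-1)}$ lies in a different one. Hence $D^\la\da_{\s_{n-4,4}}$ has at least two non-zero block components, and the component meeting the block of $D^{(n-k-2,k-2)}$ has at least two composition factors, so is not simple. Since $D^\la\da_{\s_{n-4,4}}$ is self-dual and simple $F\s_{n-4,4}$-modules are self-dual, the non-simple self-dual block component already contributes at least $2$ to the endomorphism dimension, and the other component contributes at least $1$, giving $\dim\End_{\s_{n-4,4}}\geq 3$ without any analysis of multiplicity spaces. (Also, your final sentence refers to a ``Statement 1'' that does not exist in this context.)
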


\begin{proof}
Since $\la$ is a JS-partition we have that $n-k\equiv k+1\Md 3$. So by assumption $n-k\geq k+4$. Repeated use of Lemmas \ref{l45}, \ref{l39} and \ref{l56} give 
\begin{align*}
D^\la\da_{\s_{n-1}}\cong &D^{(n-k-1,k)},\\
D^\la\da_{\s_{n-2}}\cong &D^{(n-k-2,k)}\oplus D^{(n-k-1,k-1)},\\
D^\la\da_{\s_{n-3}}\sim &(D^{(n-k-2,k-1)}|\ldots|D^{(n-k-3,k)}|\ldots|D^{(n-k-2,k-1)})\oplus D^{(n-k-2,k-1)},\\
D^\la\da_{\s_{n-4}}\sim &(D^{(n-k-2,k-2)}|\ldots|D^{(n-k-4,k)}|\ldots|D^{(n-k-2,k-2)})\oplus D^{(n-k-2,k-2)}\\
&\oplus (D^{(n-k-3,k-1)}|\ldots|D^{(n-k-3,k-1)})\oplus D^{(n-k-3,k-1)}\oplus\ldots.
\end{align*}
So $D^\la\da_{\s_{n-2,2}}$ is semisimple with two non-isomorphic direct summands and then $\dim\End_{\s_{n-2,2}}(D^\la\da_{\s_{n-2,2}})=2$. Further, comparing residues of the removed nodes, it can be checked that $D^{(n-k-2,k-2)}$ and $D^{(n-k-4,k)}$ are in the same block, but $D^{(n-k-3,k-1)}$ is in a distinct block. So $D^\la\da_{\s_{n-4,4}}$ has at least two non-zero block components, at least one of which is not simple. Since $D^\la\da_{\s_{n-4}}$ is self-dual, as are all simple $F\s_{n-4,4}$-modules, we also have that $\dim\End_{\s_{n-4,4}}(D^\la\da_{\s_{n-4,4}})\geq 3$.
\end{proof}

\begin{lemma}\label{l32}
Let $p=3$, $n\equiv 
1\Md 3$ with $n\geq 
10$ and $\la=(n-k,k)$ with $n-2k\geq 2$ and $k\geq 2$. If $\la$ is a JS-partition then there exists $\psi\in\Hom_{\s_n}(M_3,\End_F(D^\la))$ which does not vanish on $S_3$ or there exists $\psi'\in\Hom_{\s_n}(M_4,\End_F(D^\la))$ which does not vanish on $S_4$.
\end{lemma}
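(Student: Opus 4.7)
The proof should be a clean application of the preceding three lemmas, exploiting the gap that Lemma \ref{l36} creates between the $(n-2,2)$ and $(n-4,4)$ endomorphism dimensions. The plan is as follows.

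First, I would invoke Lemma \ref{l36} to obtain the two numerical facts
\[\dim\End_{\s_{n-2,2}}(D^\la\da_{\s_{n-2,2}})=2,\qquad\dim\End_{\s_{n-4,4}}(D^\la\da_{\s_{n-4,4}})\geq 3.\]
Before applying Lemmas \ref{l41} and \ref{l35} I would verify that the hypotheses of both are met: since $\la=(n-k,k)$ is a JS-partition by assumption, Lemma \ref{L221119} (or the direct two-part check $n-k\equiv k+1\pmod 3$) combined with $n\equiv 1\pmod 3$ forces $k\equiv 0\pmod 3$ and $n-2k\equiv 1\pmod 3$; together with the standing hypothesis $n-2k\geq 2$ this gives $n-2k\geq 4$, in particular $k<n/2$. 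Thus the conditions $1\leq k\leq n/2$ needed for Lemma \ref{l41} and the conditions $2\leq k<n/2$, $n-2k\geq 2$ needed for Lemma \ref{l35} are both satisfied.

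Now I would split into two cases, depending on whether $\dim\End_{\s_{n-3,3}}(D^\la\da_{\s_{n-3,3}})$ is strictly greater than $2$ or at most $2$. In the first case,
\[\dim\End_{\s_{n-3,3}}(D^\la\da_{\s_{n-3,3}})>2=\dim\End_{\s_{n-2,2}}(D^\la\da_{\s_{n-2,2}}),\]
and Lemma \ref{l41} immediately produces the desired $\psi\in\Hom_{\s_n}(M_3,\End_F(D^\la))$ which does not vanish on $S_3$. In the second case $\dim\End_{\s_{n-3,3}}(D^\la\da_{\s_{n-3,3}})\leq 2$, so that
\[\dim\End_{\s_{n-4,4}}(D^\la\da_{\s_{n-4,4}})\geq 3>\dim\End_{\s_{n-3,3}}(D^\la\da_{\s_{n-3,3}}),\]
and Lemma \ref{l35} instead produces the desired $\psi'\in\Hom_{\s_n}(M_4,\End_F(D^\la))$ which does not vanish on $S_4$.

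There is no real obstacle to overcome: the entire argument is a two-case dichotomy once Lemma \ref{l36} and the JS/congruence analysis are in hand. The only thing I would be careful about is the strict-versus-non-strict inequality in each of Lemmas \ref{l41} and \ref{l35}; both applications use strict inequalities ($>2$ in the first case, $3>2$ in the second), so no edge cases need separate treatment. The underlying conceptual point is that the restriction dimensions have already been pinned to $2$ at level $(n-2,2)$ and pushed to $\geq 3$ at level $(n-4,4)$, so the jump must occur either between the $(n-2,2)$ and $(n-3,3)$ levels or between the $(n-3,3)$ and $(n-4,4)$ levels, and each of these jumps is exactly what Lemmas \ref{l41} and \ref{l35} convert into the desired nonvanishing homomorphism.
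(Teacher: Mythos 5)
Your proposal is correct and matches the paper's argument exactly: the paper's proof of this lemma is precisely the observation that Lemma~\ref{l36} forces $\dim\End_{\s_{n-4,4}}(D^\la\da_{\s_{n-4,4}})>\dim\End_{\s_{n-2,2}}(D^\la\da_{\s_{n-2,2}})$, so one of the two jumps between consecutive levels must occur, and Lemmas~\ref{l41} and~\ref{l35} handle the two cases. Your extra hypothesis-verification (the congruence analysis forcing $k\equiv 0\Md 3$ and $n-2k\geq 4$) is slightly more than needed but is accurate and harmless.
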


\begin{proof}
From Lemma \ref{l36} we have that
\[\dim\End_{\s_{n-4,4}}(D^\la\da_{\s_{n-4,4}})>\dim\End_{\s_{n-2,2}}(D^\la\da_{\s_{n-2,2}}).\]
The lemma then holds by Lemmas \ref{l41} and \ref{l35}.
\end{proof}

\begin{lemma}\label{l29}
Let $p=3$, $n\equiv 0\Md 3$ with $n\geq 9$ and $\la=(n-k,k)$ with $n-2k\geq 2$ and $k\geq 2$. If $\la$ is a JS-partition then the only normal node of $\la$ has residue 1 and $f_1e_1D^\la\cong D^\la|D^{(n-k-1,k,1)}|D^\la$.
\end{lemma}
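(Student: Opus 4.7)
The plan is to pin down the residue of the unique normal node of $\la$ from the JS-condition, identify $e_1D^\la$ exactly, apply $f_1$ using Lemmas \ref{l40} and \ref{l56} to capture the three expected composition factors, and finally eliminate any stray factors via the standard Kac-Moody commutations between $e_j$ and $f_1$.

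For the first part I would apply Lemma \ref{L221119} to $\la=(n-k,k)$ (two distinct parts, $b_1=b_2=1$): the JS-condition becomes $n-2k+2\equiv 0\Md 3$, so $n-2k\equiv 1\Md 3$, which together with $n\equiv 0\Md 3$ forces $k\equiv 1\Md 3$ and $n-k\equiv 2\Md 3$. Listing the five addable/removable nodes $(1,n-k+1),(1,n-k),(2,k+1),(2,k),(3,1)$ with residues $2,1,0,2,1$ and cancelling $AR$-pairs of equal residue shows that the unique normal node is $(1,n-k)$ of residue $1$, while the conormal nodes are $(2,k+1)$ (residue $0$) and $(3,1)$ (residue $1$). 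In particular $\epsilon_1(\la)=\phi_1(\la)=1$, so Lemma \ref{l39} gives $e_1D^\la\cong D^\mu$ with $\mu:=(n-k-1,k)$.

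Redoing the bookkeeping for $\mu$, the removable nodes $(1,n-k-1),(2,k)$ now have residues $0,2$, while the addable nodes $(1,n-k),(2,k+1),(3,1)$ have residues $1,0,1$. Since there is no $1$-removable node to cancel, $\epsilon_1(\mu)=0$ and $\phi_1(\mu)=2$, with top $1$-conormal node $(1,n-k)$ giving $\tilde f_1(\mu)=\la$ and the other $(3,1)$ giving $\mu\cup(3,1)=(n-k-1,k,1)$. By Lemma \ref{l40}, $f_1e_1D^\la=f_1D^\mu$ is indecomposable and self-dual with simple head and socle both $D^\la$ and $[f_1D^\mu:D^\la]=\binom{2}{1}=2$, while Lemma \ref{l56} applied at $(3,1)$ yields $[f_1D^\mu:D^{(n-k-1,k,1)}]=1$.

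The main obstacle is ruling out any extra composition factor $D^\psi$ of $f_1D^\mu$; by Lemma \ref{l40}(iii) it automatically has $\phi_1(\psi)=0$. For each $j\in\{0,1,2\}$ I would compute $[e_jf_1D^\mu]$ in the Grothendieck group in two ways. For $j=1$ the standard commutation $[e_1f_1D^\mu]-[f_1e_1D^\mu]=(\phi_1-\epsilon_1)(\mu)[D^\mu]=2[D^\mu]$, combined with $e_1D^\mu=0$, gives $[e_1f_1D^\mu]=2[D^\mu]$; expanding via exactness along the known composition factors (using $e_1D^\la=D^\mu$ and $e_1D^{(n-k-1,k,1)}=0$, the latter since the residue-$1$ sequence in $(n-k-1,k,1)$ is $AR$ and cancels) then forces $e_1D^\psi=0$, i.e.\ $\epsilon_1(\psi)=0$. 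For $j\in\{0,2\}$ the commutation $e_jf_1\cong f_1e_j$ together with the computations $f_1D^{(n-k-2,k)}=D^{(n-k-2,k,1)}$, $f_1D^{(n-k-1,k-1)}=D^{(n-k-1,k-1,1)}$, $e_0D^{(n-k-1,k,1)}=D^{(n-k-2,k,1)}$, $e_2D^{(n-k-1,k,1)}=D^{(n-k-1,k-1,1)}$ (each simple, since in each case the head and socle are simple and the relevant multiplicity is $1$ by Lemmas \ref{l39} and \ref{l40}) and the vanishings $e_0D^\la=e_2D^\la=0$ analogously force $\epsilon_0(\psi)=\epsilon_2(\psi)=0$. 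But any non-empty $3$-regular partition has at least one normal node, so $\epsilon_0(\psi)+\epsilon_1(\psi)+\epsilon_2(\psi)\geq 1$, a contradiction. Hence $f_1D^\mu$ has composition factors precisely $D^\la,D^{(n-k-1,k,1)},D^\la$, and since head and socle are both $D^\la$ the middle layer must be $D^{(n-k-1,k,1)}$, giving $f_1e_1D^\la\cong D^\la|D^{(n-k-1,k,1)}|D^\la$.
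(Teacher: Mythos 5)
Your proof is correct, and it takes a genuinely different route from the paper's. The opening combinatorics (residues of addable/removable nodes of $\la$ and of $\mu=(n-k-1,k)$, identification of the normal/conormal nodes, $e_1D^\la\cong D^\mu$, and the multiplicities $[f_1D^\mu:D^\la]=2$ and $[f_1D^\mu:D^{(n-k-1,k,1)}]=1$ via Lemmas~\ref{l39}, \ref{l40}, \ref{l56}) agree with the paper. The key divergence is in ruling out extra composition factors. You invoke the $\widehat{\mathfrak{sl}}_p$-commutations $[e_i,f_j]=\delta_{ij}h_i$ on the Grothendieck group (with $h_i$ acting on the block of $\mu$ by $\phi_i(\mu)-\epsilon_i(\mu)$), compute $[e_jf_1D^\mu]$ for $j=0,1,2$ two ways, and conclude that any stray factor $D^\psi$ would have to satisfy $\epsilon_0(\psi)=\epsilon_1(\psi)=\epsilon_2(\psi)=0$, impossible for a nonempty partition. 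This is clean and efficient, but the commutation relations are not among the quoted branching lemmas in this paper; you would need to cite them explicitly (they are in \cite{KBook}, as part of the categorical $\widehat{\mathfrak{sl}}_p$-action on $\bigoplus_n K_0(F\s_n\md)$). The paper instead stays within the toolbox it sets up: it uses the filtration $M_1\sim D_0\vert S_1^*$ from Lemma~\ref{L160817_0} together with $S_1^*\subseteq M_{1^2}$ to embed $B:=f_1e_1D^\la/D^\la$ into the block component of $D^\la\da_{\s_{n-2}}\ua^{\s_n}$, then pins down $\soc(B)$ by analysing the socles of the modules $f_af_bD^\nu$ via Lemmas~\ref{l39}, \ref{l40}, \ref{l48} and deduces the uniserial structure using the self-duality of $f_1e_1D^\la$. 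Your argument trades that concrete (and somewhat delicate) socle bookkeeping for an appeal to the Kac--Moody relations; both are valid, and yours is arguably more transparent if one is willing to quote the categorification theorem.
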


\begin{proof}
It follows easily from Lemma \ref{L221119} and the assumptions on $n$ and $\la$ that the only normal node of $\la$ has residue 1. So from Lemmas \ref{l45} and \ref{l39}
\[D^\la\otimes M_1\cong f_1e_1D^\la\oplus f_0e_1D^\la\oplus f_2e_1D^\la.\]
Notice that $f_1e_1D^\la\cong f_1D^{\tilde{e}_1(\la)}$ has simple socle and head isomorphic to $D^\la$ from Lemmas \ref{l39} and \ref{l40}.

From Lemma \ref{L160817_0} we have that $M_1\sim D_0|S_1^*$ and that $S_1^*\subseteq M_{1^2}$. So $D^\la\otimes S_1^*\subseteq D^\la\otimes M_{1^2}$. Further
\[D^\la\otimes M_1\sim (\overbrace{D^\la\otimes D_0}^{\cong D^\la})|(D^\la\otimes S_1^*)\]
so that $D^\la\otimes S_1^*\cong (D^\la\otimes M_1)/D$ for some $D\subseteq D^\la\otimes M_1$ with $D\cong D^\la$. Let $B$ be the block component of $D^\la\otimes S_1^*$ corresponding to the block of $D^\la$. Then $B\cong (f_1e_1D^\la)/D^\la$. We will now show that $\soc(B)\cong D^{(n-k-1,k,1)}$. From Lemmas \ref{l45} and \ref{l39} we have that
\[D^\la\da_{\s_{n-2}}\cong\overbrace{D^{(n-k-2,k)}}^{e_0e_1D^\la}\oplus \overbrace{D^{(n-k-1,k-1)}}^{e_2e_1D^\la}.\]
Since $B\subseteq D^\la\otimes S_1^*\subseteq D^\la\otimes M_{1^2}\cong D^\la\da_{\s_{n-2}}\ua^{\s_n}$, comparing blocks we have that the socle of $B$ is contained in the socle of
\[f_1f_0 D^{(n-k-2,k)}\oplus f_0f_1 D^{(n-k-2,k)}\oplus f_1f_2 D^{(n-k-1,k-1)}\oplus f_2f_1 D^{(n-k-1,k-1)}.\]
From Lemma \ref{l40} we have that
\begin{align*}
&\soc(f_0f_1 D^{(n-k-2,k)}\oplus f_1f_2 D^{(n-k-1,k-1)}\oplus f_2f_1 D^{(n-k-1,k-1)})\\
&\cong\soc(f_0 D^{(n-k-2,k,1)}\oplus f_1 D^{(n-k-1,k)}\oplus f_2 D^{(n-k-1,k-1,1)})\\
&\cong D^\la\oplus (D^{(n-k-1,k,1)})^{\oplus 2}.
\end{align*}
Consider now $\soc(f_1f_0 D^\la)$. We have that $f_0D^{(n-k-2,k)}\cong e_0 D^{(n-k-1,k+1)}$ by \cite[Lemma 3.4]{m1}. Thus by Lemmas \ref{l39} and \ref{tsh}
\[f_0D^{(n-k-2,k)}\sim D^{(n-k-1,k)}|C|D^{(n-k-1,k)}\]
for a certain module $C$ such that all composition factors of $C$ are of the form $D^{(n-k-2+3j,k+1-3j)}$ with $j\geq 0$. Let $\mu\in\Par_3(n)$ with $D^\mu\subseteq f_1f_0D^{(n-k-2,k)}$. Then by Lemma \ref{l48}
\[\dim\Hom_{\s_{n-1}}(e_1 D^\mu,f_0D^{(n-k-2,k)})=\dim\Hom_{\s_n}(D^\mu,f_1f_0D^{(n-k-2,k)})\geq 1.\]
By Lemma \ref{l39} there exists a composition factor $D^\nu$ of $f_0D^{(n-k-2,k)}$ such that $\tilde e_1\mu=\nu$ and then, by Lemma \ref{l47}, $\mu=\tilde f_1\nu$. Thus $\mu=\la$ or $\mu=(n-k-2+3j,k+2-3j)$ for some $j\geq 0$. In the second case $e_1D^\mu\cong D^{(n-k-2+3j,k+1-3j)}$, contradicting $\dim\Hom_{\s_{n-1}}(e_1 D^\mu,f_0D^{(n-k-2,k)})\geq 1$ by Lemma \ref{l40}. Thus $\mu=\la$. 

In particular the only simple modules appearing in the socle of 
\[f_1f_0 D^{(n-k-2,k)}\oplus f_0f_1 D^{(n-k-2,k)}\oplus f_1f_2 D^{(n-k-1,k-1)}\oplus f_2f_1 D^{(n-k-1,k-1)}\]
are $D^\la$ and $D^{(n-k-1,k,1)}$. From Lemma \ref{l56} we have that $B$ has exactly one composition factor of the form $D^\la$, one composition factor of the form $D^{(n-k-1,k,1)}$ and possibly other composition factors. So, in view of Lemma \ref{l40}, $\soc(B)\cong D^{(n-k-1,k,1)}$ and then
\[f_1e_1D^\la\sim D^\la|\overbrace{D^{(n-k-1,k,1)}|E|D^\la}^B\]
for a certain module $E$. Since $f_1e_1 D^\la$ and $B$ both have simple socle and $f_1e_1D^\la$ is self-dual (by Lemma \ref{l57}), the lemma follows.
\end{proof}

\begin{lemma}\label{l27}
Let $p=3$, $n\equiv 0\Md 3$ with $n\geq 6$ and $\la=(n-k,k)$ with $n-2k\geq 2$ and $k\geq 2$. If $\la$ is a JS-partition then
\begin{align*}
\dim\Ext^1_{\s_n}(S^{(n-k-1,k,1)},D^\la)=0\quad\text{and}\quad\dim\Ext^1_{\s_n}(D^{(n-k-1,k,1)},D^\la)\leq 1.
\end{align*}
\end{lemma}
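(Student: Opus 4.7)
Set $\mu := (n-k-1,k,1)$. The key input is Lemma \ref{l29}, which gives the uniserial structure $f_1 e_1 D^\lambda \cong D^\lambda|D^\mu|D^\lambda$, and in particular a short exact sequence
\begin{equation*}
0 \to D^\lambda \to f_1 e_1 D^\lambda \to Q \to 0,
\end{equation*}
where $Q := f_1 e_1 D^\lambda/D^\lambda$ is uniserial with $\soc(Q)=D^\mu$ and $\hd(Q)=D^\lambda$. My plan is to apply $\Hom_{\s_n}(X,-)$ to this sequence, first with $X=D^\mu$ to bound $\Ext^1(D^\mu,D^\lambda)$, then with $X=S^\mu$ to force the vanishing of $\Ext^1(S^\mu,D^\lambda)$, using in each case the biadjunction $e_1\dashv f_1\dashv e_1$ (Lemma \ref{l48}) together with the exactness of $e_1,f_1$ (Lemma \ref{l45}); these give $\Ext^i(X,f_1 e_1 D^\lambda)\cong\Ext^i(e_1 X,e_1 D^\lambda)$ for every $i\ge 0$.

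For the second bound, a short combinatorial check via Lemma \ref{L221119} (using that our hypotheses force $k\equiv 1\pmod 3$) yields $\epsilon_1(\mu)=0$. Hence $e_1 D^\mu=0$ by Lemma \ref{l39} and so $\Ext^i(D^\mu,f_1 e_1 D^\lambda)=0$ for every $i\ge 0$. Since $\Hom(D^\mu,D^\lambda)=0$ and $\Hom(D^\mu,Q)=F$ (from $\soc(Q)=D^\mu$), the long exact sequence collapses to $\Ext^1(D^\mu,D^\lambda)\cong F$, as required.

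For the first vanishing, the unique removable node of $\mu$ of residue $1$ is $(3,1)$, so $e_1 S^\mu=S^{(n-k-1,k)}$, and adjunction gives $\Hom(S^\mu,f_1 e_1 D^\lambda)\cong\Hom(S^{(n-k-1,k)},D^{(n-k-1,k)})=F$ together with $\Ext^1(S^\mu,f_1 e_1 D^\lambda)\cong\Ext^1(S^{(n-k-1,k)},D^{(n-k-1,k)})$. Since $\hd(S^\mu)=D^\mu\ne D^\lambda$ we have $\Hom(S^\mu,D^\lambda)=0$ and $\Hom(S^\mu,Q)=F$ (the unique nonzero map factoring through $\soc(Q)=D^\mu$). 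The induced map $\Hom(S^\mu,f_1 e_1 D^\lambda)\to\Hom(S^\mu,Q)$ is nonzero: otherwise a nonzero $S^\mu\to f_1 e_1 D^\lambda$ would land in $\soc(f_1 e_1 D^\lambda)=D^\lambda$, contradicting $\Hom(S^\mu,D^\lambda)=0$. Being nonzero between two one-dimensional spaces, it is an isomorphism, and the long exact sequence produces the injection
\begin{equation*}
\Ext^1(S^\mu,D^\lambda)\hookrightarrow\Ext^1_{\s_{n-1}}(S^{(n-k-1,k)},D^{(n-k-1,k)}).
\end{equation*}

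The main obstacle will be to verify that this Specht self-extension group vanishes. Note that $(n-k-1,k)$ has part difference $n-2k-1$ which is divisible by $3$ and at least $3$; in particular $(n-k-1,k)$ has two distinct normal nodes of different residues, so it is not itself a JS-partition. The expected route is a further adjunction-based reduction --- applying $\Hom(-,D^{(n-k-1,k)})$ to the radical sequence of $S^{(n-k-1,k)}$ (using that $[S^{(n-k-1,k)}:D^{(n-k-1,k)}]=1$, so $\Hom(\rad(S^{(n-k-1,k)}),D^{(n-k-1,k)})=0$) reduces the problem further to controlling $\Ext^1(D^{(n-k-1,k)},D^{(n-k-1,k)})$ and a contribution from $\Ext^1(\rad(S^{(n-k-1,k)}),D^{(n-k-1,k)})$. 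Both of these can be handled using the permutation module structure of Section \ref{spm} combined with induction on $n$.
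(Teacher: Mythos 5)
Your bound $\dim\Ext^1_{\s_n}(D^\mu,D^\la)\leq 1$ via the uniserial module $f_1e_1D^\la\cong D^\la|D^\mu|D^\la$ of Lemma \ref{l29} is correct and is a genuine alternative to the paper's argument, which instead uses the radical filtration $S^\mu\sim\rad(S^\mu)|D^\mu$ together with the decomposition number $[S^\mu:D^\la]=1$ (from \cite{jw}) and reduces the second assertion to the vanishing of $\Ext^1(S^\mu,D^\la)$. Your combinatorial check that $\epsilon_1(\mu)=0$ and your upgrade of the biadjunction to $\Ext^i_{\s_n}(X,f_1Y)\cong\Ext^i_{\s_{n-1}}(e_1X,Y)$ are both sound, and you even pin down the dimension as exactly $1$.

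The first assertion, $\Ext^1_{\s_n}(S^\mu,D^\la)=0$, is where you have a genuine gap. Your reduction via $\Hom(S^\mu,-)$ applied to $0\to D^\la\to f_1e_1D^\la\to Q\to 0$ correctly produces an injection $\Ext^1_{\s_n}(S^\mu,D^\la)\hookrightarrow\Ext^1_{\s_{n-1}}(S^{(n-k-1,k)},D^{(n-k-1,k)})$, but you never establish that this target group vanishes. The radical sequence of $S^{(n-k-1,k)}$ only injects it into $\Ext^1(\rad S^{(n-k-1,k)},D^{(n-k-1,k)})$ (after using $\Ext^1(D^{(n-k-1,k)},D^{(n-k-1,k)})=0$ from \cite[Theorem 2.10]{ks3}), and the closing phrase ``can be handled using the permutation module structure of Section \ref{spm} combined with induction on $n$'' is a promissory note rather than an argument. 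Note also that your reduction cannot be iterated by the same mechanism: the only residue $i$ with $e_iD^{(n-k-1,k)}=0$ is $i=1$, but $(n-k-1,k)$ has no removable node of residue $1$, so $S^{(n-k-1,k)}$ does not occur in any $f_1S^\nu$, while the two removable residues $0$ and $2$ both have $e_iD^{(n-k-1,k)}\neq0$.

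The paper sidesteps this entirely by exploiting the ``missing'' residue instead. Since the unique normal node of $\la$ has residue $1$, one has $e_0D^\la=0$. By \cite[Corollary 17.14]{JamesBook}, $f_0S^{(n-k-2,k,1)}$ has a Specht filtration with $S^\mu$ at the top, and the other two subquotients $S^{(n-k-2,k,1^2)}$ and $S^{(n-k-2,k+1,1)}$ are indexed by $3$-regular partitions different from $\la$, hence admit no nonzero maps to $D^\la$. The long exact sequence then yields $\Ext^1_{\s_n}(S^\mu,D^\la)\hookrightarrow\Ext^1_{\s_n}(f_0S^{(n-k-2,k,1)},D^\la)\cong\Ext^1_{\s_{n-1}}(S^{(n-k-2,k,1)},e_0D^\la)=0$. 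The moral is that the argument should be routed through the residue that \emph{fails} to appear among the normal nodes of $\la$, so the relevant Ext group drops to one computed against the zero module; by channeling everything through the residue $1$ that \emph{does} appear, you inevitably end up with a nonzero module on the $\s_{n-1}$ side and a new, unresolved Ext computation.
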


\begin{proof}
Let $\mu:=(n-k-1,k,1)$. Since $\mu$ is 3-regular, so that $D^\mu$ is the head of $S^\mu$ we have that $S^\mu\sim\rad(S^\mu)|D^\mu$. So there exists an exact sequence
\[\Hom_{\s_n}(\rad (S^\mu),D^\la)\to\Ext^1_{\s_n}(D^\mu,D^\la)\to\Ext^1_{\s_n}(S^\mu,D^\la).\]
From \cite{jw} we have that $[S^\mu:D^\la]=1$, and then $\dim\Hom_{\s_n}(\rad (S^\mu),D^\la)\leq 1$. It is then enough to prove that $\dim\Ext^1_{\s_n}(S^\mu,D^\la)=0$.

Notice that by assumption $n-k\equiv 2\Md 3$, $k\equiv 1\Md 3$ and $n-2k\geq 4$. In particular $\la$ has no normal node of residue 0, so $e_0D^\la=0$ by Lemma \ref{l39}. Further
\[f_0 S^{(n-k-2,k,1)}\sim \overbrace{S^{(n-k-2,k,1^2)}|S^{(n-k-2,k+1,1)}}^A|S^\mu\]
by \cite[Corollary 17.14]{JamesBook}. Since $(n-k-2,k,1^2)$ and $(n-k-2,k+1,1)$ are 3-regular, we have that $\dim\Hom_{\s_n}(A,D^\la)=0$. Thus there exists an exact sequence
\[0=\Hom_{\s_n}(A,D^\la)\to \Ext^1_{\s_n}(S^\mu,D^\la)\to \Ext^1_{\s_n}(f_0 S^{(n-k-2,k,1)},D^\la).\]
From $e_0D^\la=0$ and \cite[Lemma 1.4]{ks3} it then follows that
\begin{align*}
\dim\Ext^1_{\s_n}(S^{\mu},D^\la)&\leq\dim\Ext^1_{\s_n}(f_0 S^{(n-k-2,k,1)},D^\la)\\
&=\dim\Ext^1_{\s_{n-1}}(S^{(n-k-2,k,1)},e_0 D^\la)\\
&=0.
\end{align*}
\end{proof}

\begin{lemma}\label{l28}
Let $p=3$, $n\equiv 0\Md 3$ with $n\geq 9$ and $\la=(n-k,k)$ with $n-2k\geq 2$ and $k\geq 2$. If $\la$ is a JS-partition then there exists $\psi\in\Hom_{\s_n}(M_3,\End_F(D^\la))$ which does not vanish on $S_3$ or there exists $\psi'\in\Hom_{\s_n}(M_4,\End_F(D^\la))$ which does not vanish on $S_4$.
\end{lemma}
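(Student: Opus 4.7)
The plan is to split into cases according to $\dim\End_{\s_{n-3,3}}(D^\la\da_{\s_{n-3,3}})$, writing $E:=\End_F(D^\la)$. Lemma \ref{l36} gives $\dim\End_{\s_{n-2,2}}(D^\la\da_{\s_{n-2,2}})=2$ and $\dim\End_{\s_{n-4,4}}(D^\la\da_{\s_{n-4,4}})\geq 3$. If $\dim\End_{\s_{n-3,3}}(D^\la\da_{\s_{n-3,3}})\geq 3$ then Lemma \ref{l41} immediately yields a $\psi\in\Hom_{\s_n}(M_3,E)$ not vanishing on $S_3$. The remaining case is $\dim\End_{\s_{n-3,3}}(D^\la\da_{\s_{n-3,3}})=2$, where one must use $M_4$ instead.

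In that case, using Lemma \ref{L160817_0} to decompose $M_3\cong D_2\oplus X$ with $X\sim S_3|(D_0\oplus S_1)$, and computing $\dim\Hom_{\s_n}(D_2,E)=1$ (from $M_2\cong D_2\oplus M_1$ together with Lemma \ref{l36} and $\dim\End_{\s_{n-1}}(D^\la\da_{\s_{n-1}})=1$), $\dim\Hom_{\s_n}(D_0,E)=1$, and $\dim\Hom_{\s_n}(S_1,E)=0$ (Lemma \ref{l46} with $\la$ JS), one obtains $\dim\Hom_{\s_n}(M_3/S_3,E)=2$. So every map $M_3\to E$ vanishes on $S_3$, and one must find $\psi'\in\Hom_{\s_n}(M_4,E)$ not vanishing on $S_4$, i.e.\ $\dim\Hom_{\s_n}(M_4,E)>\dim\Hom_{\s_n}(M_4/S_4,E)$. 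From $M_4\sim S_4|S_1|A$ with $A\subseteq M_3$, $M_3/A\cong S_1$ (Lemma \ref{L160817_0}) and $\Hom_{\s_n}(S_1,E)=0$, the long exact sequences applied to $0\to S_1\to M_4/S_4\to A\to 0$ and $0\to A\to M_3\to S_1\to 0$ give
\[\dim\Hom_{\s_n}(M_4/S_4,E)=\dim\Hom_{\s_n}(A,E)\leq 2+\dim\Ext^1_{\s_n}(S_1,E).\]
Since $\dim\Hom_{\s_n}(M_4,E)\geq 3$, the task reduces to showing $\Ext^1_{\s_n}(S_1,E)=0$.

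For the Ext vanishing, by tensor-Hom and self-duality of $D^\la$, $\Ext^1_{\s_n}(S_1,E)=\Ext^1_{\s_n}(S_1\otimes D^\la,D^\la)$, and only the block of $\la$ in $S_1\otimes D^\la$ contributes. Tensoring $0\to S_1\to M_1\to D_0\to 0$ by $D^\la$ and projecting: since $\la$ is JS with unique normal node of residue $1$, the block of $\la$ in $M_1\otimes D^\la$ is $f_1e_1D^\la\cong D^\la|D^{(n-k-1,k,1)}|D^\la$ (Lemma \ref{l29}), and the induced map onto $D_0\otimes D^\la\cong D^\la$ is (up to scalar) the head surjection, so its kernel in the block of $\la$ is the non-split length-$2$ uniserial module $T\cong D^\la|D^{(n-k-1,k,1)}$. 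Applying $\Hom_{\s_n}(-,D^\la)$ to $0\to D^\la\to T\to D^{(n-k-1,k,1)}\to 0$, and using $\Ext^1_{\s_n}(D^\la,D^\la)=0$ (\cite[Theorem 2.10]{ks3}) and $\dim\Ext^1_{\s_n}(D^{(n-k-1,k,1)},D^\la)\leq 1$ (Lemma \ref{l27}), the connecting map sends $\id_{D^\la}$ to the non-zero extension class $[T]$ and is therefore an isomorphism, whence $\Ext^1_{\s_n}(T,D^\la)=0$.

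The main obstacle is this last $\Ext^1$-vanishing. It requires knowing the precise uniserial structure of $f_1e_1D^\la$ from Lemma \ref{l29}, the $\Ext^1$ upper bound from Lemma \ref{l27}, and the observation that non-splittability of $T$ forces the connecting map to be an isomorphism. In the companion case $n\equiv 1\pmod 3$ (Lemma \ref{l32}), the simpler filtration $M_4\sim S_4|M_3$ lets Lemma \ref{l35} bypass this $\Ext^1$ analysis entirely; here the intermediate $S_1$-layer in $M_4\sim S_4|S_1|A$ makes it unavoidable.
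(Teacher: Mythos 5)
Your proof is correct and follows essentially the same route as the paper's. Both arguments reduce via Lemma \ref{l41} to the case $\dim\End_{\s_{n-3,3}}(D^\la\da_{\s_{n-3,3}})=2$, use the filtration $M_4\sim S_4|S_1|A$ from Lemma \ref{L160817_0} together with $\Hom_{\s_n}(S_1,\End_F(D^\la))=0$ (Lemma \ref{l46}), and then hinge on the same structural inputs: the uniserial module $f_1e_1D^\la\cong D^\la|D^{(n-k-1,k,1)}|D^\la$ from Lemma \ref{l29}, the bound $\dim\Ext^1_{\s_n}(D^{(n-k-1,k,1)},D^\la)\leq 1$ from Lemma \ref{l27}, and $\Ext^1_{\s_n}(D^\la,D^\la)=0$ from \cite[Theorem 2.10]{ks3}. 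The only cosmetic difference is that you phrase the crux as the vanishing $\Ext^1_{\s_n}(S_1,\End_F(D^\la))=0$ via the length-2 uniserial $T\cong D^\la|D^{(n-k-1,k,1)}$ (the radical of $f_1e_1D^\la$), whereas the paper does the dual bookkeeping on $\Hom_{\s_n}(A,\End_F(D^\la))$ via $N\cong D^{(n-k-1,k,1)}|D^\la$ (the cosocle quotient); the two are related by duality and give the same $-1$ in the dimension count.
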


\begin{proof}
In view of Lemma \ref{l41} we may assume that
\[\dim\End_{\s_{n-3,3}}(D^\la\da_{\s_{n-3,3}})=\dim\End_{\s_{n-2,2}}(D^\la\da_{\s_{n-2,2}}).\]
Thus by Lemma \ref{l36}
\[\dim\End_{\s_{n-4,4}}(D^\la\da_{\s_{n-4,4}})>\dim\End_{\s_{n-3,3}}(D^\la\da_{\s_{n-3,3}}).\]
By Lemma \ref{L160817_0} we have that $M_1\sim D_0|S_1^*$ and that $M_4\sim S_4|S_1|A$ for a certain submodule $A\subseteq M_3$ with $M_3/A\cong S_1$. In view of Lemma \ref{l2} it is then enough to prove that
\[\dim\End_{\s_n}(A\oplus S_1,\End_F(D^\la))\leq\dim\End_{\s_{n-3,3}}(D^\la\da_{\s_{n-3,3}}).\]
Since $\la$ is JS we have by Lemma \ref{l46} that
\[\dim\End_{\s_n}(S_1,\End_F(D^\la))=0.\]
From Lemmas \ref{l45} and \ref{l29} we have that $D^\la\otimes M_1\cong D^\la\da_{\s_{n-1}}\ua^{\s_n}\cong B\oplus C$ where $B\cong D^\la|D^{(n-k-1,k,1)}|D^\la$ is the block component of $D^\la\otimes M_1$ of the block containing $D^\la$ and $C$ is the sum of the other block components. It follows from $M_1\sim D_0|S_1^*$ that $D^\la\otimes S_1^*\cong (B/D^\la)\oplus C$. Thus there exists $M\subseteq D^\la\otimes M_3$ with $N\subseteq M\cong (B/D^\la)\oplus C$ such that $N\cong B/D^\la$ and $(D^\la\otimes M_3)/M\cong D^\la\otimes A^*$. Considering block decomposition we then have that
\begin{align*}
\dim\End_{\s_n}(A,\End_F(D^\la))&=\dim\End_{\s_n}(D^\la,D^\la\otimes A^*)\\
&=\dim\End_{\s_n}(D^\la,(D^\la\otimes M_3)/M)\\
&=\dim\End_{\s_n}(D^\la,(D^\la\otimes M_3)/N).
\end{align*}
Since $h(\la)=2<3=p$ we have from Lemma \ref{l27} and \cite[Theorem 2.10]{ks3} that 
\[\dim\Ext^1_{\s_n}(D^{(n-k-1,k,1)},D^\la)\leq 1\quad\text{and}\quad\dim\Ext^1_{\s_n}(D^\la,D^\la)=0.\]
Since $N\cong D^{(n-k-1,k,1)}|D^\la$ it follows that
\begin{align*}
&\dim\End_{\s_n}(A,\End_F(D^\la))\\
&=\dim\End_{\s_n}(D^\la,(D^\la\otimes M_3)/N)\\
&\leq\dim\End_{\s_n}(D^\la,D^\la\otimes M_3)+\dim\Ext^1_{\s_n}(D^{(n-k-1,k,1)},D^\la)\\
&\hspace{11pt}+\dim\Ext^1_{\s_n}(D^\la,D^\la)-1\\
&\leq \dim\End_{\s_n}(D^\la,D^\la\otimes M_3)\\
&=\dim\End_{\s_{n-3,3}}(D^\la\da_{\s_{n-3,3}}).
\end{align*}
So the lemma holds.
\end{proof}

\section{Splitting JS partitions}\label{sJS}

Splitting modules indexed by JS partitions also play a special role in the proof of Theorem \ref{mt}, so they and the corresponding partitions will be studied more in details in this section.

\begin{lemma}\label{L6}
Let $p=2$. If $\la\in\Parinv_2(n)$ is a JS-partition then the parts of $\la$ are odd. Further $n\equiv h(\la)^2\Md 4$.
\end{lemma}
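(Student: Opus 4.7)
The plan is to combine the two combinatorial criteria stated just above the lemma: Lemma \ref{L151119} (characterizing JS-partitions in characteristic $2$ as those with all parts of the same parity) and Lemma \ref{split2} (characterizing $\Parinv_2(n)$ by the conditions $\la_{2i-1}-\la_{2i}\leq 2$ and $\la_{2i-1}+\la_{2i}\not\equiv 2\Md 4$ for all $i\geq 1$, where we read $\la_j=0$ when $j>h(\la)$).

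First, I would show that the parts cannot all be even. Suppose they are; since $\la$ is $2$-regular, the parts are strictly decreasing, so in particular $\la_{2i-1}>\la_{2i}$ whenever both are positive, forcing $\la_{2i-1}-\la_{2i}\geq 2$. Combined with Lemma \ref{split2} this gives $\la_{2i-1}-\la_{2i}=2$ exactly, so $\la_{2i-1}+\la_{2i}=2\la_{2i}+2$. Under the assumption that $\la_{2i}$ is even, this is $\equiv 2\Md 4$, contradicting Lemma \ref{split2}. If $h(\la)$ is odd one applies the same reasoning to the ``pair'' $(\la_{h(\la)},0)$: one gets $\la_{h(\la)}\leq 2$, not $\equiv 2\Md 4$, and even, again impossible. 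So all parts of $\la$ are odd, establishing the first claim.

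Next, with all parts odd and $\la$ $2$-regular, I would again invoke Lemma \ref{split2}. For each $i$ with $\la_{2i}>0$, distinctness and same parity give $\la_{2i-1}-\la_{2i}\geq 2$, so Lemma \ref{split2} forces $\la_{2i-1}-\la_{2i}=2$, i.e.\ $\la_{2i-1}=\la_{2i}+2$. Writing $\la_{2i}=2k_i+1$ we get
\[
\la_{2i-1}+\la_{2i}=4k_i+4\equiv 0\Md 4.
\]
If $h(\la)$ is odd, I treat the last row separately: the pair becomes $(\la_{h(\la)},0)$, and Lemma \ref{split2} together with oddness gives $\la_{h(\la)}=1$.

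Finally I would compute $n\Md 4$ by summing the contributions of these pairs. When $h(\la)$ is even, every pair contributes $0\Md 4$, hence $n\equiv 0\equiv h(\la)^2\Md 4$. When $h(\la)$ is odd, the pairs $(\la_{2i-1},\la_{2i})$ for $1\leq i\leq (h(\la)-1)/2$ contribute $0\Md 4$ and the leftover part $\la_{h(\la)}=1$ contributes $1$, so $n\equiv 1\equiv h(\la)^2\Md 4$. This proves the second claim. The only delicate point, and really the only thing to be careful about, is correctly handling the case $h(\la)$ odd in Benson's criterion by treating the final row as a pair with a phantom zero; once that is done the rest is immediate arithmetic modulo $4$.
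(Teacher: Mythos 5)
Your proposal is correct and follows essentially the same route as the paper's proof: apply Lemma \ref{L151119} to get equal parity, apply Benson's criterion (Lemma \ref{split2}) to rule out all-even and to force $\la_{2i-1}-\la_{2i}=2$ for full pairs and $\la_{h(\la)}=1$ when $h(\la)$ is odd, then sum modulo $4$. You merely spell out the details that the paper compresses into ``it then easily follows.''
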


\begin{proof}
Since $\la$ is a JS-partition all parts have the same parity by Lemma \ref{L151119}. It then easily follows that all parts are odd by Lemma \ref{split2}. Let $k$ be maximal with $2k\leq h(\la)$. For $1\leq i\leq k$ we have by Lemma \ref{split2} that $\la_{2i-1}-\la_{2i}=2$ and so $\la_{2i-1}+\la_{2i}\equiv 0\Md 4$ and further if $h(\la)$ is odd then $\la_{h(\la)}=1$. So $n\equiv h(\la)^2\Md 4$.
\end{proof}

\begin{lemma}\label{L11}
Let $p=2$ and $n\geq 6$ be even. Let $\la\in\Parinv_2(n)$ be a JS-partition with $\la\not=\be_n$. Then $n\equiv 0\Md 4$ and $D_2\subseteq\End_F(D^\la)$. Further $D_2\da_{A_n}\subseteq\End_F(E^\la_\pm)$ or $S_3^*\da_{A_n}\subseteq\End_F(E^\la_\pm)$.
\end{lemma}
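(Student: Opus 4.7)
The plan is to prove the three assertions in order: first $n\equiv 0\Md 4$, then $D_2\subseteq \End_F(D^\la)$, and finally the $A_n$ statement.

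For $n\equiv 0\Md 4$: by Lemma \ref{L6}, $n\equiv h(\la)^2\Md 4$ and all parts of $\la$ are odd. Since $n$ is even, $h(\la)$ must be even. To rule out $h(\la)=2$: in that case $\la=(\la_1,\la_1-2)$ with $\la_1$ odd, and Lemma \ref{split2} requires $2\la_1-2\not\equiv 2\Md 4$, which (together with evenness of $n$) forces $\la_1=n/2+1$, i.e.\ $\la=\be_n$, contradicting the hypothesis. Hence $h(\la)\geq 4$ is even and $n\equiv h(\la)^2\equiv 0\Md 4$.

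For $D_2\subseteq \End_F(D^\la)$: apply Lemma \ref{L2} (valid since $\la\neq(n),\be_n$) to obtain $\phi\in\Hom_{\s_n}(M_2,\End_F(D^\la))$ with $\phi|_{S_2}\neq 0$. By Lemma \ref{L12e}, $S_2\cong D_1|D_2$ is uniserial with head $D_2$, so the nonzero image of $\phi|_{S_2}$ is either $D_2$ or the whole of $S_2$. In the first case we obtain $D_2\subseteq \End_F(D^\la)$ directly. In the second case, the inclusion $S_2\hookrightarrow\End_F(D^\la)$ combined with self-duality of $\End_F(D^\la)$ produces $\End_F(D^\la)\twoheadrightarrow S_2^*\cong D_2|D_1\twoheadrightarrow D_2$, and one further application of self-duality yields $D_2\hookrightarrow\End_F(D^\la)$.

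For the $A_n$ assertion, decompose
\[\End_F(D^\la)\da_{A_n}\cong\End_F(E^\la_+)\oplus\End_F(E^\la_-)\oplus\Hom_F(E^\la_+,E^\la_-)\oplus\Hom_F(E^\la_-,E^\la_+).\]
The hypotheses force $n\geq 16$, so $(n-2,2)\notin\Parinv_2(n)$ and $D_2\da_{A_n}$ is a simple $FA_n$-module. The inclusion $D_2\subseteq\End_F(D^\la)$ restricts to an embedding of $D_2\da_{A_n}$ into the direct sum above. Conjugation by an odd element $\tau\in\s_n$ swaps $\End_F(E^\la_+)\leftrightarrow\End_F(E^\la_-)$ and similarly interchanges the two off-diagonal summands; since $D_2$ is $\s_n$-invariant, the projections of $D_2\da_{A_n}$ to the two diagonal summands vanish simultaneously. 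If they are nonzero, then $D_2\da_{A_n}\subseteq\End_F(E^\la_\pm)$ and the first alternative of the lemma holds.

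In the remaining case the projection to $\End_F(E^\la_\pm)$ vanishes, so $\Hom_{A_n}(D_2\da_{A_n},\End_F(E^\la_+))=0$. To construct $S_3^*\da_{A_n}\subseteq \End_F(E^\la_\pm)$, apply Lemma \ref{L3} (valid since $h(\la)\geq 3$) to obtain $\psi\in\Hom_{\s_n}(M_3,\End_F(D^\la))$ with $\psi|_{S_3}\neq 0$. Using $M_3\cong M_1\oplus U$ with $U=D_2|D_1|D_3|D_1|D_2$ uniserial (Lemma \ref{L12e}, using $n\equiv 0\Md 4$) and the fact that $\dim\Hom_{\s_n}(M_1,\End_F(D^\la))=1$ (because $\la$ is JS, so $D^\la\da_{\s_{n-1}}$ is simple), subtract the canonical $M_1$-contribution to obtain $\tilde\psi$ with $\tilde\psi|_{M_1}=0$ and $\tilde\psi|_{S_3}\neq 0$. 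The projection $f:U\da_{A_n}\to\End_F(E^\la_+)$ vanishes on the socle $E^\delta$ of $U\da_{A_n}$ by the Case hypothesis, so $f$ factors through $U\da_{A_n}/E^\delta$. The main obstacle lies in showing that among the possible uniserial quotients of $U\da_{A_n}$ whose socle is not $E^\delta$, the image of $f$ is forced to be isomorphic to $S_3^*\da_{A_n}$: self-duality of $\End_F(E^\la_+)$ rules out images having $E^\delta$ in their socle directly, but images such as $(U/D_2)\da_{A_n}$ or $(U/S_3)\da_{A_n}$ need to be excluded via a dimension count of $\Hom_{A_n}(S_3^*\da_{A_n},\End_F(E^\la_+))$ together with a possible modification of $\psi$ by additional elements of $\Hom_{\s_n}(M_3,\End_F(D^\la))$, ultimately producing an injective map $S_3^*\da_{A_n}\hookrightarrow \End_F(E^\la_+)$.
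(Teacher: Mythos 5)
The proposal has two genuine gaps.

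In the second step, your self-duality chain is broken: you write $\End_F(D^\la)\twoheadrightarrow S_2^*\cong D_2|D_1\twoheadrightarrow D_2$, but in this paper's bottom-to-top convention $D_2|D_1$ has socle $D_2$ and head $D_1$, so it surjects onto $D_1$, not onto $D_2$. What the two cases of your dichotomy actually produce are either $D_2\subseteq\End_F(D^\la)$ (kernel of $\phi|_{S_2}$ is $D_1$) or $D_1\subseteq\soc(\End_F(D^\la))$ (kernel is zero). To finish you would have to rule out the second possibility, i.e.\ you need $\dim\Hom_{\s_n}(D_1,\End_F(D^\la))=0$, which comes from $\dim\Hom_{\s_n}(S_1,\End_F(D^\la))=0$ for JS-partitions $\la\neq\be_n$. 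Your proposal never invokes this; the paper sidesteps the whole derivation by citing \cite[Lemma 7.5]{m1}.

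In the third step the argument is incomplete, as you acknowledge. The paper's route is more direct and avoids your obstacle: instead of starting from $D_2\subseteq\End_F(D^\la)$ and decomposing $\End_F(D^\la)\da_{A_n}$, it applies Lemma \ref{L1} (the $A_n$ analogue of your Lemma \ref{L3}) to obtain a map $\phi\colon A\da_{A_n}\to\End_F(E^\la_\pm)$ that does not vanish on $S_3\da_{A_n}$, where $A\cong Y_3\cong D_2|D_1|D_3|D_1|D_2$ is uniserial and self-dual (Lemma \ref{LYoung}). The possible images of $\phi$ are then $A\da_{A_n}$, $(A/D_2)\da_{A_n}$ or $S_3^*\da_{A_n}$. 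The crucial ingredient you are missing is exactly the exclusion $\Hom_{A_n}(D_1\da_{A_n},\End_F(E^\la_\pm))=0$, which the paper gets from Lemma \ref{Lemma7.1} together with Frobenius reciprocity and $D_1\cong\hd(S_1)$; this rules out the image $(A/D_2)\da_{A_n}$ (whose socle is $D_1\da_{A_n}$) and leaves precisely the two desired alternatives $D_2\da_{A_n}$ or $S_3^*\da_{A_n}$. Without a control on $D_1\da_{A_n}$ in the socle of $\End_F(E^\la_\pm)$, the ``dimension count'' you gesture at cannot close the argument. Also note that Lemma \ref{L1} gives you the map over $A_n$ directly; using Lemma \ref{L3} over $\s_n$ and trying to descend is what forces you into the four-summand decomposition and the ensuing difficulties.

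Your first step is essentially correct, though $n\equiv 0\Md 4$ follows already from $h(\la)$ being even (so $h(\la)^2\equiv 0\Md 4$); ruling out $h(\la)=2$ is needed only to have $h(\la)\geq 3$ for the later application of Lemma \ref{L3}/\ref{L1}, not for the congruence itself.
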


\begin{proof}
From Lemma \ref{L6} we have that $n\equiv 0\Md 4$. From \cite[Lemma 7.5]{m1} we then have that $D_2\subseteq\End_F(D^\la)$.

From Lemma \ref{L12e} we have that $M_3\cong M_1\oplus A$, where
\[A\cong\overbrace{D_2|D_1|D_3}^{S_3}|\overbrace{D_1|D_2}^{S_2}.\]
From Lemma \ref{LYoung} we have that $A\cong Y_3$ is self-dual, so we also have
\[A\cong\overbrace{D_2|D_1}^{S_2^*}|\overbrace{D_3|D_1|D_2}^{S_3^*}.\]
By Lemma \ref{L1} it then easily follows that $\dim\Hom_{A_n}(A,\End_F(E^\la_\pm))\geq 1$.

From Lemma \ref{Lemma7.1}, by Frobenious reciprocity and since $D_1\cong\hd S_1$,
\begin{align*}
\dim\Hom_{A_n}(D_1\da_{A_n},\End_F(E^\la_\pm))&\leq\dim\Hom_{\s_n}(D_1,\End_F(D^\la))\\
&\leq\dim\Hom_{\s_n}(S_1,\End_F(D^\la))\\
&=0.
\end{align*}
The lemma then follows.
\end{proof}

\begin{lemma}\label{l23}{\cite[Lemma 8.1]{m2}}
Let $p\geq 3$ and $\lambda\in\Parinv_3(n)$ be a JS-partition. Then $n\equiv h(\lambda)^2\Md p$.
\end{lemma}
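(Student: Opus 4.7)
Writing $\lambda=(a_1^{b_1},\dots,a_h^{b_h})$ as in Lemma~\ref{L221119} and setting $H:=h(\lambda)=b_1+\cdots+b_h$, the first step is to iterate the JS congruence $a_i-a_{i+1}+b_i+b_{i+1}\equiv 0\pmod p$ to obtain
\[a_i\equiv a_1+b_1+2(b_2+\cdots+b_{i-1})+b_i\pmod p\quad\text{for }i\geq 2.\]
I would then substitute this into $n=\sum_{i=1}^h a_ib_i$ and simplify using $(b_2+\cdots+b_h)^2=\sum_{i=2}^h b_i^2+2\sum_{2\leq j<i\leq h}b_jb_i$ together with $b_2+\cdots+b_h=H-b_1$. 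A short computation collapses everything to
\[n\equiv (a_1-b_1)H+H^2\pmod p,\]
so it suffices to prove $a_1\equiv b_1\pmod p$.

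The second step is to identify the unique normal node of the JS-partition $\lambda$. The plan is to show it is the top removable node $(b_1,a_1)$, which has residue $a_1-b_1\pmod p$. In the top-to-bottom list of removable and addable nodes of $\lambda$, the only node strictly above $(b_1,a_1)$ is the top addable $(1,a_1+1)$, of residue $a_1\pmod p$. Because $1\leq b_1\leq p-1$, these two residues are distinct modulo $p$, so $(b_1,a_1)$ is the topmost entry of its residue class in the full sequence. The bracket-matching definition of normality (within each residue class, cancel adjacent $A$-above-$R$ pairs and declare the surviving $R$'s normal) can never cancel the topmost $R$ of a residue class, so $(b_1,a_1)$ is normal. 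As $\lambda$ is JS it has exactly one normal node, which must therefore be $(b_1,a_1)$.

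Applying Lemma~\ref{l17} to $\lambda=\lambda^{\tt M}$ gives $\epsilon_i(\lambda)=\epsilon_{-i}(\lambda)$ for every residue $i$. Uniqueness of the normal node then forces its residue to equal its own negation modulo $p$, so $2(a_1-b_1)\equiv 0\pmod p$, and since $p\geq 3$ is odd we conclude $a_1\equiv b_1\pmod p$. Combined with the formula from the first paragraph, this yields $n\equiv H^2=h(\lambda)^2\pmod p$.

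The main obstacle in this plan is the middle step: verifying that $(b_1,a_1)$ is the normal node. The key observation which makes this essentially automatic is that the constraint $1\leq b_1\leq p-1$ coming from the JS form of $\lambda$ rules out the only conceivable obstruction, namely a cancelling addable above $(b_1,a_1)$. The opening algebraic step is routine but delicate, since partial sums of the $b_i$'s appear in many places and one needs the final collapse to the clean formula $(a_1-b_1)H+H^2$ before the Mullineux symmetry can be invoked.
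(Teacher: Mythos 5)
The paper does not give its own proof of this lemma; it is quoted from \cite[Lemma~8.1]{m2}. So there is no in-text argument to compare against, and I can only assess your proof on its own merits.

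Your proof is correct, and the strategy is the natural one: iterate the Jantzen--Seitz congruence from Lemma~\ref{L221119}, collapse $n=\sum a_ib_i$ to $(a_1-b_1)H+H^2 \pmod p$, then use the Mullineux symmetry of Lemma~\ref{l17} together with $\lambda=\lambda^{\Mull}$ to force the residue of the unique normal node to satisfy $2(a_1-b_1)\equiv 0\pmod p$, hence $a_1\equiv b_1$ since $p$ is odd. I have checked the telescoping of the JS relations, the quadratic collapse via $(b_2+\cdots+b_h)^2=\sum_{i\geq 2}b_i^2+2\sum_{2\leq j<i}b_jb_i$, and the final cancellation to $(a_1-b_1)H+H^2$; all are sound. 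Your identification of the unique normal node as $(b_1,a_1)$ is also correct: the only node above it is the addable $(1,a_1+1)$, of residue $a_1$, and since $1\leq b_1\leq p-1$ this differs from $\res(b_1,a_1)=a_1-b_1$, so $(b_1,a_1)$ is the topmost entry of its residue class and survives the signature reduction. One small wording point: the claim ``the bracket-matching can never cancel the topmost $R$ of a residue class'' is not literally true in general (an $A$ higher in the same class could cancel it), but in your situation you have already shown $(b_1,a_1)$ is the topmost \emph{entry} of its class, which is what the argument actually needs. Also note the paper's statement reads $\Parinv_3(n)$, which is evidently a typographical slip for $\Parinv_p(n)$ given the conclusion is modulo $p$ and the later use in the proof of Theorem~\ref{mt}; your reading matches the intended meaning.

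Given that the cited source \cite{m2} treats the characteristic $5$ analogue and the present lemma is its direct generalization, it is very likely that the original proof proceeds along the same lines (JS congruence plus Mullineux symmetry of the normal node). Your argument is therefore a faithful reconstruction rather than a genuinely different route.
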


\section{Spilt-non-split case}\label{sns}

In this section we study irreducible tensor products of the form $E^\lambda_\pm\otimes E^\mu$. We will use $E^\la_\pm$ to refer to $E^\la_+$ or $E^\la_-$, and $E^\la_\mp$ to the other.

For $p\geq 3$ the following lemma holds by \cite[Lemma 3.1]{bk2}.

\begin{lemma}\label{l14}
Let $\la\in\Parinv_p(n)$ and $\mu\in\Par_p(n)\setminus\Parinv_p(n)$. If $E^\lambda_\pm\otimes E^\mu$ is irreducible then
\begin{align*}
\dim\Hom_{\s_n}(\End_F(D^\lambda),\End_F(D^\mu))&\leq 2.
\end{align*}
\end{lemma}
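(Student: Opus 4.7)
The plan is to rewrite the Hom-space as an endomorphism space of a tensor product, then restrict to $A_n$ and bound it via a fixed-point computation under the $\s_n/A_n$-action.

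First I would use that $D^\la$ and $D^\mu$ are self-dual, so $\End_F(D^\la)\cong D^\la\otimes D^\la$ and $\End_F(D^\mu)\cong D^\mu\otimes D^\mu$, and apply tensor--hom adjunction together with self-duality to obtain
\[
\dim\Hom_{\s_n}\bigl(\End_F(D^\la),\End_F(D^\mu)\bigr)=\dim\End_{\s_n}(D^\la\otimes D^\mu).
\]
Set $M:=D^\la\otimes D^\mu$. Since $\la\in\Parinv_p(n)$ and $\mu\not\in\Parinv_p(n)$, restriction to $A_n$ yields
\[
M\da_{A_n}=(E^\la_+\otimes E^\mu)\oplus(E^\la_-\otimes E^\mu)=:V\oplus V',
\]
and by the hypothesis that $E^\la_\pm\otimes E^\mu$ is irreducible, both summands are simple $FA_n$-modules.

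Next I fix any $\sigma\in\s_n\setminus A_n$. Since $\sigma^2\in A_n$, any $\phi\in\End_{A_n}(M)$ commutes with $\sigma^2$, so conjugation by $\sigma$ defines an order-$2$ automorphism of $\End_{A_n}(M)$; as $\s_n=A_n\cup A_n\sigma$, its fixed subspace is precisely $\End_{\s_n}(M)$. Since $\sigma$ permutes the $A_n$-submodules $E^\la_+$ and $E^\la_-$ of $D^\la$, it swaps $V$ and $V'$ inside $M$. In the block-matrix description of $\End_{A_n}(M)$ associated with $M\da_{A_n}=V\oplus V'$, the induced involution therefore interchanges the diagonal blocks $\End_{A_n}(V),\End_{A_n}(V')$ and the off-diagonal blocks $\Hom_{A_n}(V,V'),\Hom_{A_n}(V',V)$.

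A case analysis via Schur's lemma now finishes the argument. If $V\not\cong V'$ as $A_n$-modules, then $\End_{A_n}(M)\cong F\oplus F$ and the involution swaps the two summands, so its fixed subspace has dimension $1$. If $V\cong V'$, then $\End_{A_n}(M)\cong M_2(F)$ and the involution is conjugation by an invertible non-scalar matrix (the matrix of $\sigma$ in suitable bases of the two copies); its fixed subalgebra is the centralizer of that matrix, which is two-dimensional. In both cases $\dim\End_{\s_n}(M)\leq 2$, as required.

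The main point to be careful about is the identification $\End_{\s_n}(M)=\End_{A_n}(M)^\sigma$ and the corresponding fixed-point count; both are elementary and characteristic-free, which is precisely what allows the argument to be uniform in $p$ and in particular to cover $p=2$, where the Mullineux/sign-twist arguments used in \cite[Lemma 3.1]{bk2} collapse.
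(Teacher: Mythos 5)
Your proof is correct, and it reaches the bound by a route that is closely related to but not identical to the paper's. The paper applies Frobenius reciprocity directly to the original Hom space, using $\End_F(D^\la)\cong\Hom_F(E^\la_\pm,E^\la_+\oplus E^\la_-)\ua^{\s_n}$ and $\End_F(D^\mu)\da_{A_n}\cong\End_F(E^\mu)$, to rewrite it as
\[
\dim\End_{A_n}(E^\la_\pm\otimes E^\mu)+\dim\Hom_{A_n}(E^\la_\pm\otimes E^\mu,E^\la_\mp\otimes E^\mu),
\]
and each summand is at most $1$ by irreducibility of $E^\la_\pm\otimes E^\mu$ together with the fact that $E^\la_+\otimes E^\mu$ and $E^\la_-\otimes E^\mu$ have the same dimension. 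You instead first convert to $\End_{\s_n}(D^\la\otimes D^\mu)$ by self-duality and tensor--hom adjunction, and then bound that via the identification $\End_{\s_n}(M)=\End_{A_n}(M)^{\langle\sigma\rangle}$ and a Schur-lemma analysis of the (at most two-by-two) endomorphism algebra of $M\da_{A_n}=V\oplus V'$. The two computations are really two packagings of the same Clifford-theoretic observation; the paper's version is a bit more direct and avoids the fixed-subalgebra argument. One phrase in your proof is slightly loose: ``conjugation by an invertible non-scalar matrix (the matrix of $\sigma$ in suitable bases of the two copies)'' --- the linear map $\sigma$ on $M$ is not itself an element of $M_2(F)\cong\End_{A_n}(M)$, so the conjugating element $T\in GL_2(F)$ should be produced by Skolem--Noether (since conjugation by $\sigma$ is an $F$-algebra automorphism of $\End_{A_n}(M)$), and its non-scalarity should be justified by noting that the automorphism swaps the idempotents $E_{11}$ and $E_{22}$ and hence is nontrivial. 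With that small clarification, your case analysis ($F\oplus F$ with summands swapped, or $M_2(F)$ with a non-central conjugation) is valid and gives the bound $\dim\End_{\s_n}(M)\leq 2$ in all characteristics, including $p=2$.
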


\begin{proof}
Notice that, by Frobenious reciprocity,
\begin{align*}
&\dim\Hom_{\s_n}(\End_F(D^\lambda),\End_F(D^\mu))\\
&=\dim\Hom_{A_n}(\Hom_F(E^\la_\pm,E^\la_+\oplus E^\la_-),\End_F(E^\mu))\\
&=\dim\End_{A_n}(E^\la_\pm\otimes E^\mu)+\dim\Hom_{A_n}(E^\la_\pm\otimes E^\mu,E^\la_\mp\otimes E^\mu).
\end{align*}
The lemma then follows, since $E^\la_+\otimes E^\mu$ and $E^\la_-\otimes E^\mu$ have the same dimension.
\end{proof}

\begin{theor}\label{sns2a}
Let $p=2$, $\la\in\Parinv_2(n)$ and $\mu\in\Par_2(n)\setminus\Parinv_2(n)$. If $E^\la_\pm$ and $E^\mu$ are not 1-dimensional and $E^\la_\pm\otimes E^\mu$ is irreducible, then $\la$ or $\mu$ is equal to $(n-1,1)$ or $\be_n$.
\end{theor}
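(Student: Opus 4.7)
The plan is to argue by contradiction. Suppose $E^\la_\pm\otimes E^\mu$ is irreducible with $\la,\mu\notin\{(n-1,1),\be_n\}$. Since $\la\in\Parinv_2(n)$ and a direct inspection via Lemma \ref{split2} shows that the only two-row partition lying in $\Parinv_2(n)$ is $\be_n$, the hypothesis forces $h(\la)\geq 3$; similarly $\mu\neq(n)$ since $E^\mu$ has dimension bigger than $1$. Applying Lemma \ref{l14}, whose proof only uses $D^\la=E^\la_\pm\ua^{\s_n}$ together with Frobenius reciprocity and therefore goes through verbatim for $p=2$, one obtains
\[\dim\Hom_{\s_n}(\End_F(D^\la),\End_F(D^\mu))\leq 2.\]

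The aim is to contradict this upper bound. The main tool is Lemma \ref{l15} with $H=\s_n$, $V=\End_F(D^\la)$, $W=\End_F(D^\mu)$; since both are self-dual,
\[\dim\Hom_{\s_n}(\End_F(D^\la),\End_F(D^\mu))\geq \sum_{\alpha\in\Par_2(n)} m_{\la,\alpha}\,m_{\mu,\alpha},\]
where $m_{\nu,\alpha}$ denotes the maximal number of $\phi\in\Hom_{\s_n}(M^\alpha,\End_F(D^\nu))$ whose restrictions to $S^\alpha$ are linearly independent. From the standing hypotheses, Lemma \ref{L2} supplies $m_{\la,(n-2,2)},m_{\mu,(n-2,2)}\geq 1$ and Lemma \ref{L3} supplies $m_{\la,(n-3,3)}\geq 1$. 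One must manufacture further contributions to push the total past $2$.

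The argument then splits according to $h(\mu)$ and whether $\la$ is a JS-partition. If $h(\mu)\geq 3$, Lemma \ref{L3} also supplies $m_{\mu,(n-3,3)}\geq 1$, and the required third independent contribution comes either from Lemmas \ref{L7} or \ref{L8} (when $\la$ has at least two normal nodes, yielding $m_{\la,(n-2,2)}\geq 2$) or from Lemma \ref{L11} combined with Lemma \ref{L9} (when $\la$ is JS: the first embeds an extra $D_2$ or $S_3^*\da_{A_n}$ into the relevant endomorphism ring, and the second gives $m_{\la,(n-4,4)}\geq 1$ as soon as $h(\la)\geq 4$). If $h(\mu)=2$, then $\mu=(n-k,k)$ with $k\geq 2$ and $n-2k\geq 3$, and one appeals to Lemma \ref{L4} for $n$ odd (giving $m_{\mu,(n-2,2)}\geq 2$ or $m_{\mu,(n-3,3)}\geq 1$) or to Lemma \ref{L10} for $n\equiv 0\Md 4$ (producing one of $D_2^{\oplus 2}$, $S_3$, $D_2\oplus D_3\subseteq\End_F(D^\mu)$ or a nonzero map $M_4\to\End_F(D^\mu)$ not vanishing on $S_4$). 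In each subcase the extra data, combined with the contributions from $\la$, pushes the sum strictly above $2$, contradicting Lemma \ref{l14}. The finitely many small $n$ below the thresholds of the quoted lemmas would be handled directly from known decomposition matrices.

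The main obstacle is the two-row $\mu$ case: the general-purpose Lemmas \ref{L2} and \ref{L3} alone yield only one contribution from $\mu$, so one must extract a second independent homomorphism by exploiting the fine branching analysis of $D^{(n-k,k)}\da_{\s_{n-\ell,\ell}}$ developed in Section 7. A secondary difficulty arises when $\la$ is itself JS in $\Parinv_2(n)$, since the strong multiplicity bounds of Lemmas \ref{L7}--\ref{L8} become unavailable; here one must instead extract contributions through Lemma \ref{L11} and, when $h(\la)\geq 4$, Lemma \ref{L9}, aligning them carefully with the $\alpha$'s arising on the $\mu$ side.
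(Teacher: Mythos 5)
Your proposal follows the same broad strategy as the paper, but there are two genuine gaps in the bookkeeping and in the JS subcase.

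First, your Lemma \ref{l15} inventory omits the contribution from $\alpha=(n)$. Since $M^{(n)}=S^{(n)}$ is the trivial module, there is always a homomorphism $M^{(n)}\to\End_F(D^\nu)$ not vanishing on $S^{(n)}$, so $m_{\nu,(n)}=1$ for every $\nu$, contributing $1$ to the sum. This term is not decorative: the paper needs it in every case. For $h(\mu)\geq 3$, the three terms $\alpha=(n),(n-2,2),(n-3,3)$ already give $\geq 3$, so Lemmas \ref{L7}, \ref{L8}, \ref{L9}, \ref{L11} are not needed there at all. More seriously, in the $h(\mu)=2$, $n$ odd case, the branch of Lemma \ref{L4} giving $m_{\mu,(n-3,3)}\geq 1$ yields, in your accounting, only $m_{\la,(n-2,2)}m_{\mu,(n-2,2)}+m_{\la,(n-3,3)}m_{\mu,(n-3,3)}\geq 2$, which does not contradict the bound $\leq 2$ of Lemma \ref{l14}. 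Your claim that the sum is pushed "strictly above $2$" in each subcase is unsupported without the $\alpha=(n)$ term.

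Second, the JS subcase is not closed by a Lemma \ref{l15} count in the way you sketch. When $\la\in\Parinv_2(n)$ is JS and $\mu=(n-k,k)$ with $n\equiv 0\Md 4$, Lemma \ref{L9} gives $m_{\la,(n-4,4)}\geq 1$, but in the hard branch of Lemma \ref{L10} you cannot conclude $m_{\mu,(n-4,4)}\geq 1$, so these contributions are not "aligned." Lemma \ref{L10}'s alternatives are submodules $D_2^{\oplus 2}$, $S_3$, or $D_2\oplus D_3$ inside $\End_F(D^\mu)$, which do not directly produce $m_{\mu,(n-3,3)}$- or $m_{\mu,(n-4,4)}$-type data, because Lemma \ref{l15} requires maps from $M^\alpha$ not vanishing on $S^\alpha$, not merely copies of $\hd(S^\alpha)$ inside $\End_F(D^\mu)$. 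The paper resolves this by leaving the $\s_n$ framework altogether: it first disposes of the $D_2^{\oplus 2}$ branch by a direct $\Hom_{\s_n}$ count using $D_0\oplus D_2\subseteq\End_F(D^\la)$ from Lemma \ref{L11}, and then passes to $\Hom_{A_n}(\End_F(E^\la_\pm),\End_F(E^\mu))$, using the dichotomy $B\in\{D_2\da_{A_n},S_3^*\da_{A_n}\}\subseteq\End_F(E^\la_\pm)$ from Lemma \ref{L11} together with $D_2\subseteq S_3$ (Lemma \ref{L12e}) to find two independent $A_n$-homomorphisms. Your phrase "aligning them carefully with the $\alpha$'s arising on the $\mu$ side" papers over precisely this switch of levels, so as written the JS subcase is not resolved.

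Fixing the first issue is routine (just include $\alpha=(n)$), and with it the $h(\mu)\geq 3$ and $n$ odd cases go through exactly as you intend, in fact more economically. The JS case with $n$ even genuinely requires the $A_n$-level argument or an equivalent device, and your proposal does not supply one.
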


\begin{proof}
For $n\leq 9$ the theorem holds by comparing dimensions using \cite[Tables]{JamesBook}. So we may assume that $n\geq 10$ and $\la,\mu\not\in\{(n),(n-1,1),\be_n\}$. By Lemma \ref{split2} we then have that $h(\la)\geq 3$.  Note that there always exist $\psi_{0,\la}\in\Hom_{\s_n}(M_0,\End_F(D^\la))$ and $\psi_{0,\mu}\in\Hom_{\s_n}(M_0,\End_F(D^\mu))$ which do not vanish on $S_0$. Further for $2\leq k\leq 3$, from Lemmas \ref{L2} and \ref{L3} there exist $\psi_{k,\la}\in\Hom_{\s_n}(M_k,\End_F(D^\la))$ which do not vanish on $S_k$. Similarly there exists $\psi_{2,\mu}\in\Hom_{\s_n}(M_2,\End_F(D^\mu))$ which does not vanish on $S_2$.

Assume first that $h(\mu)\geq 3$. Then there similarly also exists $\psi_{3,\mu}\in\Hom_{\s_n}(M_3,\End_F(D^\mu))$ which does not vanish on $S_3$. So by Lemma \ref{l15}
\[\dim\Hom_{\s_n}(\End_F(D^\lambda),\End_F(D^\mu))\geq 3,\]
contradicting $E^\la_\pm\otimes E^\mu$ being irreducible by Lemma \ref{l14}.

So we may now assume that $\mu=(n-k,k)$ with $n-2k\geq 3$ and $k\geq 2$. Consider first $n$ odd. Then by Lemma \ref{L4} there exist $\psi_{2,\mu},\psi_{2,\mu}'\in\Hom_{\s_n}(M_2,\End_F(D^\mu))$ with $\psi_{2,\mu}|_{S_2},\psi_{2,\mu}'|_{S_2}$ linearly independent or there exists $\psi_{3,\mu}\in\Hom_{\s_n}(M_3,\End_F(D^\mu))$ which does not vanish on $S_3$. In either case by Lemmas \ref{l15}
\[\dim\Hom_{\s_n}(\End_F(D^\la),\End_F(D^\mu))\geq 3,\]
again leading to a contradiction by Lemma \ref{l14}.

If $n$ is even and $\la$ has at least two normal nodes we can similarly conclude by Lemma \ref{L2} applied $\mu$ and Lemma \ref{L7} or \ref{L8} applied to $\la$.

So assume now that $n$ is even and $\la$ is JS. Then $h(\la)\geq 4$ by Lemma \ref{L6}. So by Lemma \ref{L9} there exists $\psi_{4,\la}\in\Hom_{\s_n}(M_4,\End_F(D^\la))$ which does not vanish on $S_4$. In view of Lemma \ref{L2} (and arguing as above), we may then assume that there does not exist any $\psi_{4,\mu}\in\Hom_{\s_n}(M_4,\End_F(D^\mu))$ which does not vanish on $S_4$. So by Lemma \ref{L10} we may assume that $A\subseteq\End_F(D^\mu)=\End_F(E^\mu)$ with $A\in\{D_2^2,S_3,D_2\oplus D_3\}$. By Lemma \ref{L11} we have that $D_2\subseteq\End_F(D^\la)$. Since $D_0\subseteq\End_F(D^\la),\End_F(D^\mu)$, we may thus assume that $A\in\{S_3,D_2\oplus D_3\}$. From Lemma \ref{L11} we also have that there exists $B\subseteq\End_F(E^\la_\pm)$ with $B\in\{D_2\da_{A_n},S_3^*\da_{A_n}\}$. Further from Lemma \ref{L12e}, $D_2\subseteq S_3$. It then follows that
\[\dim\End_{A_n}(E^\la_\pm\otimes E^\mu)=\dim\Hom_{A_n}(\End_F(E^\la_\pm),\End_F(E^\mu))\geq 2,\]
which also contradicts $E^\la_\pm\otimes E^\mu$ being irreducible.
\end{proof}

\begin{theor}\label{sns2b}
If $p=2$, $n\geq 3$ and $\la\in\Parinv_p(n)$ then $E^\la_\pm\otimes E^{(n-1,1)}$ is irreducible if and only if $n$ is odd and $\la$ is a JS-partition, in which case $E^\la_\pm\otimes E^{(n-1,1)}\cong E^\nu$, where $\nu$ is obtained from $\la$ by removing the top removable node and adding the second bottom addable node.
\end{theor}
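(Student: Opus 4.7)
The plan is to handle both implications by controlling
$\dim\End_{A_n}(E^\la_\pm\otimes E^{(n-1,1)})=\dim\Hom_{A_n}(\End_F(E^\la_\pm),\End_F(E^{(n-1,1)}))$,
noting that the constituent $D_0$ always accounts for a one-dimensional piece of this Hom, so irreducibility precludes any second common constituent in head/socle.

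For the forward direction, we argue contrapositively. If $n$ is even, then (excluding $\la=\be_n$ and very small $n$, treated by direct dimension check via \cite[Tables]{JamesBook}) either Lemma \ref{L11} (when $\la$ is JS) or the combination of Lemmas \ref{L7},\ref{L8} with Lemma \ref{l15} (when $\la$ has at least two normal nodes) locates a constituent $D_2\!\da_{A_n}$ or $S_3^*\!\da_{A_n}$ in $\End_F(E^\la_\pm)$; applying Lemma \ref{L2} to $(n-1,1)$ places the same constituent in $\End_F(D^{(n-1,1)})\!\da_{A_n}=\End_F(E^{(n-1,1)})$, yielding the required second homomorphism. If instead $n$ is odd but $\la$ fails to be JS, then $\la$ has at least two normal nodes and $h(\la)\geq 3$ by Lemma \ref{Mull}; combining Lemmas \ref{L2},\ref{L3},\ref{L4} with Lemma \ref{l15} analogously produces a second Hom, contradicting irreducibility.

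For the reverse direction, assume $n$ is odd and $\la$ is JS with unique normal node $A=(1,\la_1)$ of residue $i$. By Lemma \ref{L12o}, $M_1\cong D_0\oplus D_1$, and since $(n-1,1)\notin\Parinv_2(n)$ for odd $n\geq 5$, restriction to $A_n$ gives $M_1\!\da_{A_n}\cong\mathbf{1}_{A_n}\oplus E^{(n-1,1)}$. Mackey's formula then produces
\[
(E^\la_\pm\!\da_{A_{n-1}})\ua^{A_n}\cong E^\la_\pm\oplus\bigl(E^\la_\pm\otimes E^{(n-1,1)}\bigr).
\]
Since $\la$ is JS, $D^\la\!\da_{\s_{n-1}}\cong D^{\tilde e_i\la}$ is simple. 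Decomposing $D^{\tilde e_i\la}\ua^{\s_n}\cong\bigoplus_j f_jD^{\tilde e_i\la}$ by Lemma \ref{l45}, and combining Lemmas \ref{l40},\ref{l56} with the explicit shape of JS-splitting partitions from Lemmas \ref{L151119},\ref{L6}, one verifies that $f_iD^{\tilde e_i\la}\cong D^\la$ and that the only other nonzero summand takes the form $f_jD^{\tilde e_i\la}\cong D^\nu|D^\nu$, where $\nu=(\la\setminus A)\cup B$ with $B$ the second-bottom addable node of $\la$; a Lemma \ref{split2} check, using that $\nu$ has bottom part equal to $2$, further shows $\nu\notin\Parinv_2(n)$, so $E^\nu=D^\nu\!\da_{A_n}$ is simple.

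Combining these, $D^\la\!\da_{\s_{n-1}}\ua^{\s_n}\cong D^\la\oplus(D^\nu|D^\nu)$ as $F\s_n$-modules. The decisive observation is that $(D^\nu|D^\nu)\!\da_{A_n}$ is semisimple of type $E^\nu\oplus E^\nu$: an endomorphism-dimension comparison (using that $E^\nu$ is $\sigma$-fixed because $\nu\notin\Parinv_2(n)$) shows $E^\nu\ua^{\s_n}\cong D^\nu|D^\nu$ is non-split indecomposable over $F\s_n$, yet Mackey for $A_n\subset\s_n$ gives $E^\nu\ua^{\s_n}\!\da_{A_n}\cong E^\nu\oplus E^\nu$. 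Hence
\[
(D^\la\!\da_{\s_{n-1}}\ua^{\s_n})\!\da_{A_n}\cong E^\la_+\oplus E^\la_-\oplus E^\nu\oplus E^\nu.
\]
Since the $\sigma$-twist swaps $E^\la_+\leftrightarrow E^\la_-$ but fixes $E^\nu$, each sign class of $(E^\la_\pm\!\da_{A_{n-1}})\ua^{A_n}$ must receive exactly one $E^\la_\pm$ and one $E^\nu$, forcing $E^\la_\pm\otimes E^{(n-1,1)}\cong E^\nu$. The main obstacle is precisely this last step: reconciling the indecomposability of $D^\nu|D^\nu$ over $F\s_n$ with its semisimplicity over $FA_n$ via the characteristic-$2$ Mackey computation, and tracking the $\sigma$-twist carefully to distribute the two copies of $E^\nu$ one-to-each across the two sign classes.
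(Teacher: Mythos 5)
Your overall architecture (decompose $D^\la\otimes M_1$ via branching and identify the second block component with $E^\nu\uparrow^{\s_n}$) is in the same spirit as the paper's, but both directions as written have real gaps.

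\textbf{Forward direction.} Your case split misses $\la=\be_n$. When $n\equiv 0\bmod 4$, $\be_n=(n/2+1,n/2-1)\in\Parinv_2(n)$ \emph{is} a JS-partition, but Lemma~\ref{L11} expressly requires $\la\neq\be_n$, so you cannot invoke it; relegating this to ``direct dimension check via Tables'' does not work for large $n$. When $n$ is odd, $\be_n=((n+1)/2,(n-1)/2)\in\Parinv_2(n)$ is not JS, but $h(\be_n)=2$; your appeal to Lemma~\ref{Mull} to get $h(\la)\geq 3$ is illegal, since that lemma requires $p\geq 3$ (and indeed $\be_n$ is a genuine $p=2$ counterexample to the statement), and Lemmas~\ref{L2},~\ref{L3},~\ref{L4} all have hypotheses that fail for $\la=\be_n$. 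The paper's forward argument avoids all of this by a single clean computation: irreducibility forces $[D^\la\otimes D_1:D^\la]=0$, hence $[D^\la\otimes M_1:D^\la]=[M_1:D_0]$, and the identity $[D^\la\otimes M_1:D^\la]=\epsilon_0(\la)(\phi_0(\la)+1)+\epsilon_1(\la)(\phi_1(\la)+1)$ from \cite[Lemma 3.5]{kmt} then rules out $n$ even and non-JS $\la$ uniformly, including $\be_n$.

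\textbf{Reverse direction.} The pivotal unproved step is your assertion that the non-$D^\la$ block component $f_jD^{\tilde e_i\la}$ is isomorphic to $D^\nu|D^\nu$, i.e.\ uniserial of length two with \emph{no other composition factors}. Lemma~\ref{l40} only gives that this module is indecomposable with head and socle $D^\nu$ and that $[f_jD^{\tilde e_i\la}:D^\nu]=\binom{\phi_j(\tilde e_i\la)}{1}=2$; together with Lemma~\ref{l56} this controls the multiplicity of $D^\nu$ but says nothing about composition factors $D^\psi$ with $\psi\neq\nu$ sandwiched between the two copies of $D^\nu$. This is precisely the point where the paper has to work: it writes $D^\la\otimes D_1\cong D^\nu|\text{(no }D^\nu)|D^\nu$, observes that $\nu\notin\Parinv_2(n)$ so $\soc((D^\la\otimes D_1)\da_{A_n})\cong(E^\nu)^{\oplus k}$ with $k\geq 2$, applies Frobenius reciprocity to get $\dim\Hom_{\s_n}(D^\la\otimes D_1,E^\nu\ua^{\s_n})\geq 2$, and uses the simple socle of $D^\la\otimes D_1$ and $[D^\la\otimes D_1:D^\nu]=2$ to force $D^\la\otimes D_1\cong E^\nu\ua^{\s_n}\cong D^\nu|D^\nu$. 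Your write-up replaces this entire argument with ``one verifies,'' and then the subsequent Clifford/$\sigma$-twist discussion implicitly identifies $f_jD^{\tilde e_i\la}$ with $E^\nu\ua^{\s_n}$ on the grounds that both are non-split extensions of $D^\nu$ by $D^\nu$; this would need $\dim\Ext^1_{\s_n}(D^\nu,D^\nu)\leq 1$, which you do not establish and which is not among the quoted lemmas. Filling in exactly the Frobenius-reciprocity step you omitted is the crux of the paper's proof.
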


\begin{proof}
If $E^\la_\pm\otimes E^{(n-1,1)}$ is irreducible then $D^\la$ is not a composition factor of $D^\la\otimes D_1$ (since $E^{(n-1,1)}$ is not 1-dimensional). In particular, using Lemmas \ref{L12e} and \ref{L12o},
\[[D^\la\otimes M_1:D^\la]=[M_1:D_0]=\left\{\begin{array}{ll}
2,&n\mbox { is even},\\
1,&n\mbox { is odd}.
\end{array}\right.\]
From \cite[Lemma 3.5]{kmt}, Lemma \ref{l45} and block decomposition,
\[[D^\la\otimes M_1:D^\la]=\epsilon_0(\la)(\phi_0(\la)+1)+\epsilon_1(\la)(\phi_1(\la)+1).\]

Assume first that $n$ is even. Then $\la$ has at most two normal nodes. If $\la$ has exactly two normal nodes then we have from \cite[Lemma 6.2]{m1} that $[D^\la\otimes M_1:D^\la]>2$ (notice that $\phi_0(\la)+\phi_1(\la)=3$ by Lemma \ref{l52}). If $\la$ is a JS-partition then the only normal node is the top removable node and the only conormal nodes are the two bottom addable nodes. From Lemma \ref{L6} all these nodes have residue 0, thus $[D^\la\otimes M_1:D^\la]=3$.

So we may now assume that $n$ is odd, in which case it easily follows from $[D^\la\otimes M_1:D^\la]=1$ that $\la$ is a JS-partition. In this case by Lemma \ref{L6} the normal node has residue 0 and the two conormal nodes both have residue 1. Let $A$ be the top removable node of $\la$, $B$ be the second bottom addable node of $\la$ and $C$ be the bottom addable node of $\la$. Then $A$ is the normal node of $\la$ and $B$ and $C$ are the conormal nodes of $\la$. From Lemmas \ref{split2} and \ref{L6} (or Lemma \ref{L151119}) we easily have that $h(\la)\geq 3$. In particular $B$ and $C$ are the two bottom addable nodes of $\tilde{e}_0(\la)=\la\setminus A$. So $B$ and $C$ are conormal in $\tilde{e}_0(\la)$. From Lemma \ref{l47} we have that $A$ is also conormal in $\tilde{e}_0(\la)$. Since $\la$ is a JS-partition it is easy to check that the normal nodes of $\tilde{e}_0(\la)$ are exactly the two top removable nodes. From Lemma \ref{l52} it follows that $A$, $B$ and $C$ are the only conormal nodes of $\tilde{e}_0(\la)$. So, from Lemmas \ref{l45} and \ref{l40},
\[D^\la\otimes M_1
\cong f_0D^{\tilde{e}_0(\la)}\oplus f_1D^{\tilde{e}_0(\la)}\cong D^\la\oplus (\underbrace{D^{(\la\setminus A)\cup B}|\overbrace{\rlap{$\phantom{D^\lambda}$}\ldots\rlap{$\phantom{D^\lambda}$}}^{\text{no }D^{(\la\setminus A)\cup B}}|D^{(\la\setminus A)\cup B}}_{\text{indec. w. simple head and socle}}).
\]
From Lemma \ref{L12o} it then follows that
\[D^\la\otimes D_1\cong \underbrace{D^{(\la\setminus A)\cup B}|\overbrace{\rlap{$\phantom{D^\lambda}$}\ldots\rlap{$\phantom{D^\lambda}$}}^{\text{no }D^{(\la\setminus A)\cup B}}|D^{(\la\setminus A)\cup B}}_{\text{indec. w. simple head and socle}}.
\]
Notice that $\la$ has an odd number of parts, all of which are odd. Since $D^\la\da_{A_n}$ splits, it follows from Lemma \ref{split2} that $\la_{h(\la)}=1$ and then that $D^{(\la\setminus A)\cup B}\da_{A_n}$ does not split (the corresponding partition has an odd number of parts and the last part is 2). Since $\soc(D^\la\otimes D_1)\cong D^{(\la\setminus A)\cup B}$ it follows that
\[\soc((D^\la\otimes D_1)\da_{A_n})\cong\soc((E^\la_+\otimes E^{(n-1,1)})\oplus (E^\la_-\otimes E^{(n-1,1)}))\cong (E^{(\la\setminus A)\cup B})^{\oplus k}\]
for some $k\geq 2$. 
So
\begin{align*}
&\dim\Hom_{\s_n}(D^\la\otimes D_1,E^{(\la\setminus A)\cup B}\ua^{\s_n})\\
&=\dim\Hom_{A_n}((D^\la\otimes D_1)\da_{A_n},E^{(\la\setminus A)\cup B})\\
&\geq 2.
\end{align*}
Since $E^{(\la\setminus A)\cup B}\ua^{\s_n}\cong D^{(\la\setminus A)\cup B}|D^{(\la\setminus A)\cup B}$ and the socle of $D^\la\otimes D_1$ is simple, we have that $E^{(\la\setminus A)\cup B}\ua^{\s_n}\subseteq D^\la\otimes D_1$ and then that $D^\la\otimes D_1\cong D^{(\la\setminus A)\cup B}|D^{(\la\setminus A)\cup B}$, 
from which the theorem follows.
\end{proof}

\begin{theor}\label{sns3}
Let $p=3$, $\la\in\Parinv_3(n)$ and $\mu\in\Par_3(n)\setminus\Parinv_3(n)$. If $E^\la_\pm$ and $E^\mu$ are not 1-dimensional then $E^\lambda_\pm\otimes E^\mu$ is irreducible if and only if $\mu\in\{(n-1,1),(n-1,1)^\Mull\}$, $\la$ is a JS-partition and $n\not\equiv 0\Md 3$. In this case $E^\lambda_\pm\otimes E^{(n-1,1)}\cong E^\nu$, where $\nu$ is obtained from $\lambda$ by removing the top removable node and adding the bottom addable node.
\end{theor}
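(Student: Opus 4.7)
My plan is to parallel the proofs of Theorems \ref{sns2a} and \ref{sns2b}, using the characteristic-$3$ tools assembled in Sections \ref{sbr}--\ref{sJS}. First I will treat small $n$ by direct dimension comparisons from the decomposition matrices in \cite{JamesBook}. For large $n$, I will combine Lemma \ref{l14} and Lemma \ref{l15} (with $H=\s_n$ and $V=\End_F(D^\lambda)$, self-dual) to obtain, from irreducibility of $E^\lambda_\pm\otimes E^\mu$, the bound
\[ 2 \;\geq\; \sum_{\alpha\in\Par_3(n)} m_{\End(D^\lambda),\alpha}\,m_{\End(D^\mu),\alpha}. \]
Since $\alpha=(n)$ contributes $1$ via the identity and $\alpha=(n-2,2)$ contributes $1$ on each side via Lemma \ref{psi2} (using $\lambda,\mu\notin\{(n),(n)^\Mull\}$), no further $\alpha$ may contribute, and I will squeeze out the stated conditions from this.

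Next I will reduce $\mu$: since $\lambda=\lambda^\Mull$ with $h(\lambda)\geq 3$ (Lemma \ref{Mull}), Lemma \ref{psi3} gives $m_{\End(D^\lambda),(n-3,3)}\geq 1$, forcing $m_{\End(D^\mu),(n-3,3)}=0$; its contrapositive yields $h(\mu)\leq 2$ or $h(\mu^\Mull)\leq 2$, and swapping $\mu\leftrightarrow\mu^\Mull$ (which preserves $E^\mu$) reduces to $\mu=(n-k,k)$. To force $k=1$, I will assume $k\geq 2$ with $n-2k\geq 2$ and split on the normality pattern of the two removable nodes of $\mu$: Lemmas \ref{l37}, \ref{l38} for the non-JS subcases and Lemmas \ref{l32}, \ref{l28} for the JS subcase at $n\equiv 1,0\Md 3$ (with the residual $n\equiv 2\Md 3$ JS case handled by an analogous argument from Lemma \ref{l36} and an $M_4\sim S_4|M_3$ extension of Lemma \ref{L160817_2}) produce an extra contribution at $(n-2,2)$, $(n-3,3)$ or $(n-4,4)$. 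Matching these against Lemmas \ref{psi2}, \ref{psi3} or \ref{l33} on the $\lambda$ side — and noting that when $h(\lambda)=3$ the partition $\lambda$ is forced to be JS by Lemma \ref{l10}, so Lemma \ref{l8} then pairs with Lemma \ref{l9} on $\mu$ at $(n-4,2,2)$ — pushes the sum above $2$, a contradiction. The edge cases $n-2k\leq 1$ will be absorbed by passing to $\mu^\Mull$ or by the small-$n$ step.

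Once $\mu=(n-1,1)$ is established I will force $\lambda$ to be JS: if $\lambda$ has $\geq 2$ normal nodes, a branching comparison of $\dim\End_{\s_{n-2,2}}(D^\lambda\da_{\s_{n-2,2}})$ against $\dim\End_{\s_{n-1}}(D^\lambda\da_{\s_{n-1}})$, in the spirit of Lemma \ref{L7}, produces a second independent hom at $(n-2,2)$ and contradicts irreducibility. Lemma \ref{l23} then constrains $n\equiv h(\lambda)^2\Md 3\in\{0,1\}$, and I will rule out $n\equiv 0\Md 3$ using the structure $M_1\cong D_0|D_1|D_0\sim S_1|D_0$ from Lemma \ref{L160817_0}: the extra $D_0$ composition factor forces an extra copy of $D^\lambda$ inside $D^\lambda\otimes M_1$, hence inside $D^\lambda\otimes D_1$, making $E^\lambda_\pm\otimes E^{(n-1,1)}$ reducible.

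For sufficiency, given $\mu=(n-1,1)$, $\lambda\in\Parinv_3(n)$ JS and $n\equiv 1\Md 3$, I will imitate the final calculation in Theorem \ref{sns2b}: by Lemma \ref{L160817_1}, $M_1\cong D_0\oplus D_1$, so $D^\lambda\otimes M_1\cong D^\lambda\oplus(D^\lambda\otimes D_1)\cong D^\lambda\da_{\s_{n-1}}\ua^{\s_n}$. By Lemma \ref{l17} the unique normal residue of $\lambda$ is $0$, whence $D^\lambda\da_{\s_{n-1}}\cong D^{\tilde e_0(\lambda)}$; Lemmas \ref{l47}, \ref{l52} and the JS-combinatorics in Lemma \ref{L221119} then identify the two conormal nodes of $\tilde e_0(\lambda)$ as the removed top removable $A$ of $\lambda$ and the bottom addable $B$ of $\lambda$. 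The decomposition $D^{\tilde e_0(\lambda)}\ua^{\s_n}\cong f_0D^{\tilde e_0(\lambda)}\oplus f_{\res(B)}D^{\tilde e_0(\lambda)}$ given by Lemmas \ref{l45}, \ref{l40} has the first summand containing $D^\lambda$ and the second with simple head and socle isomorphic to $D^\nu$ for $\nu:=(\lambda\setminus A)\cup B$. After checking $\nu\notin\Parinv_3(n)$, the socle/head comparison of Theorem \ref{sns2b} transfers to give $D^\lambda\otimes D_1\cong D^\nu|D^\nu$, whence $E^\lambda_\pm\otimes E^{(n-1,1)}\cong E^\nu$. The hardest part will be the two-row $\mu=(n-k,k)$ analysis with $k\geq 2$ — especially the $n\equiv 2\Md 3$ JS subcase requiring a branching argument not stated verbatim in Section 9 — and pinning down the $\geq 2$-normal-node case for $\lambda$, which lacks a ready-made characteristic-$3$ version of Lemmas \ref{L7}/\ref{L8}.
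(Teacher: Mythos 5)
Your proposal captures the surface shape of the argument --- the same case split on $h(\mu)$, the same two-row analysis for $\mu$ via Lemmas \ref{l37}, \ref{l38}, \ref{l32}, \ref{l28}, \ref{l8}, \ref{l9}, and the same endgame for $\mu=(n-1,1)$ --- but it inverts the order of the two key reductions, and that inversion opens a real gap.

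The paper's first substantive step after clearing small $n$ is to invoke \cite[Theorem 9.2]{m2} and reduce at once to the case where $\lambda$ is a JS-partition; only then does it run the two-row $\mu$ analysis. That early reduction is what makes every subsequent step close: it feeds Lemma \ref{l23} so that $n\equiv 0$ or $1\Md 3$, which kills the ``residual $n\equiv 2\Md 3$ JS subcase'' you are worried about before it ever arises, and it makes Lemma \ref{l8} (which requires $\lambda$ to be a JS-partition with $h(\lambda)=3$) legitimately applicable. Your proposal handles $\mu$ first and only tries to force $\lambda$ JS at the end. This leaves two concrete holes. First, in the two-row analysis with $\mu$ JS and $h(\lambda)=3$ you invoke Lemma \ref{l10} to claim $\lambda$ is ``forced to be JS''; but Lemma \ref{l10} has JS as a hypothesis, not a conclusion, so it gives you nothing. (It is plausibly true --- visible from the Mullineux-symbol classification in \cite[Theorem 4.1]{bo} used in the proof of Lemma \ref{l10} --- that every Mullineux-fixed three-row partition in characteristic $3$ is automatically JS, but that is an extra argument you would have to supply, not something you can read off the lemma's statement.) Second, when $\lambda$ has at least two normal nodes you need a second independent homomorphism $M_2\to\End_F(D^\lambda)$ not vanishing on $S_2$ --- a characteristic-$3$ analogue of Lemma \ref{L7} or \ref{L8} --- and you yourself note that no such statement is available in the paper. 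The paper never needs one precisely because of the upfront reduction via \cite[Theorem 9.2]{m2}. As written, the $\lambda$-not-JS branch of your argument is simply unproved, and the proposed patch via an unstated ``$M_4\sim S_4|M_3$ extension of Lemma \ref{L160817_2}'' is additional unverified work that, with the paper's ordering, would not even be necessary.

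The remainder of the outline is sound as a plan. Re-deriving the $\mu\in\{(n-1,1),(n-1,1)^\Mull\}$ necessity, sufficiency, and the formula for $\nu$ by imitating the end of Theorem \ref{sns2b} is a genuinely different, more self-contained route than the paper's, which just cites \cite[Theorem 3.3]{bk2} together with Lemma \ref{l23}; the normal/conormal-node bookkeeping via Lemmas \ref{l17}, \ref{l47}, \ref{l52}, \ref{L221119} is delicate but looks feasible. Working throughout over $\s_n$ via Lemma \ref{l14} where the paper mixes $H=\s_n$ and $H=A_n$ (using Lemma \ref{l30}) is a cosmetic variation and is fine.
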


\begin{proof}
If $\mu\in\{(n-1,1),(n-1,1)^\Mull\}$ the theorem holds by \cite[Theorem 3.3]{bk2} and Lemma \ref{l23}. So we may now assume that $\mu\not\in\{(n),(n)^\Mull,(n-1,1),(n-1,1)^\Mull\}$. For $n\leq 9$ the theorem can be checked separately, using \cite[Tables]{JamesBook}. So we may also assume that $n\geq 10$. By \cite[Lemma 2.2]{bkz}, and checking small cases separately, it then follows that $\al>\al^\Mull$ for $\al\in\{(n),(n-2,2),(n-3,3),(n-4,2^2)\}$.

From \cite[Theorem 9.2]{m2} we may assume that $\la$ is a JS-partition. From Lemma \ref{Mull} we have that $h(\la)\geq 3$. Assume first that $h(\mu),h(\mu^\Mull)\geq 3$. Then for $k=0$ and $k=3$ (the second case by Lemmas \ref{psi3} and \ref{l30}) there exist $\psi_{k,\la}\in\Hom_{A_n}(M_k,\End_F(E^\la_\pm))$ and $\psi_{k,\mu}\in\Hom_{A_n}(M_k,\End_F(E^\mu))$ which do not vanish on $S_k$. By Lemma \ref{l15}, 
\[\dim\End_{A_n}(E^\la_\pm\otimes E^\mu)=\dim\Hom_{A_n}(\End_F(E^\la_\pm),\End_F(E^\mu))\geq 2,\]
contradicting $E^\la_\pm\otimes E^\mu$ being irreducible.

So, up to exchange of $\mu$ and $\mu^\Mull$, we may assume that $\mu=(n-k,k)$ with $k\geq 2$ and $n-2k\geq 2$. If the removable nodes of $\mu$ have distinct residues then apply Lemmas \ref{psi2} and \ref{psi3} to $\la$ and Lemmas \ref{psi2} and \ref{l38} to $\mu$. If the removable nodes of $\mu$ have the same residue apply Lemma \ref{psi2} to $\la$ and Lemma \ref{l37} to $\mu$. Similarly to the above case we then have by Lemma \ref{l15} that in either case
\[\dim\Hom_{\s_n}(\End_F(D^\lambda),\End_F(D^\mu))\geq 3.\]
again contradicting $E^\la_\pm\otimes E^\mu$ being irreducible, due to Lemma \ref{l14}.

So we may assume that $\mu$ is also a JS-partition. From Lemma \ref{l23} we have that $n\equiv h(\la)^2\equiv 0\mbox{ or }1\Md 3$. If $n\equiv 1\Md 3$, then $h(\la)\geq 4$ by Lemmas \ref{Mull} and \ref{l23}. In this case apply Lemmas \ref{psi2}, \ref{psi3} and \ref{l33} to $\la$ and Lemmas \ref{psi2} and \ref{l32} to $\mu$. If $n\equiv 0\Md 3$ and $h(\la)>3$ apply Lemmas \ref{psi2}, \ref{psi3} and \ref{l33} to $\la$ and Lemmas \ref{psi2} and \ref{l28} to $\mu$. If $n\equiv 0\Md 3$ and $h(\la)=3$ then apply Lemmas \ref{psi2} and \ref{l8} to $\la$ and Lemmas \ref{psi2} and \ref{l9} to $\mu$. In each of these cases we then again contradict $E^\la_\pm\otimes E^\mu$ being irreducible by Lemmas \ref{l15} and \ref{l14}.
\end{proof}

\section{Double split case}\label{ds}

In this section we study irreducible tensor products of the form $E^\la_\pm\otimes E^\mu_\pm$.

\begin{theor}\label{ds2}
Let $p=2$. If $\la,\mu\in\Parinv_2(n)$ and $E^\la_\pm\otimes E^\mu_\pm$ is irreducible, then $n\not\equiv 2\Md 4$ and $\la=\be_n$ or $\mu=\be_n$.
\end{theor}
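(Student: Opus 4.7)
The plan is to adapt the strategy of Theorem \ref{sns2a} to the double-split setting, this time using Lemma \ref{l15} with $H=A_n$ and $A=\Par_2(n)\setminus\Parinv_2(n)$. Setting $V=\End_F(E^\la_\pm)$ and $W=\End_F(E^\mu_\pm)$, I would bound
\[\dim\End_{A_n}(E^\la_\pm\otimes E^\mu_\pm)=\dim\Hom_{A_n}(V,W)\geq\sum_{\alpha\in A}m_{V^*,\alpha}\,m_{W,\alpha}.\]
Note that $V$ is self-dual, since the swap isomorphism on $E^\la_\pm\otimes(E^\la_\pm)^*$ is $A_n$-equivariant, so $V^*\cong V$, and similarly for $W$; thus only the single quantities $m_{V,\alpha}$ and $m_{W,\alpha}$ need to be estimated.

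The first input is combinatorial: if $\la\in\Parinv_2(n)\setminus\{\be_n\}$ with $n\geq 3$, then $h(\la)\geq 3$. Indeed, $h(\la)=1$ fails the first condition of Lemma \ref{split2} for $n\geq 3$; while if $h(\la)=2$ then Lemma \ref{split2} forces $\la_1-\la_2\in\{1,2\}$ together with $n=\la_1+\la_2\not\equiv 2\Md 4$, and a short case analysis on $\la_1-\la_2$ uniquely determines $\la$ and shows it equals $\be_n$ in each sub-case. Applied to $\be_n$ itself, the same lemma shows $\be_n\in\Parinv_2(n)$ if and only if $n\not\equiv 2\Md 4$; in particular, when $n\equiv 2\Md 4$, the partition $\be_n$ is unavailable, so a uniform contradiction in the remaining case is required to rule out irreducibility.

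For the main reduction, assume $n\geq 10$ and $\la,\mu\in\Parinv_2(n)\setminus\{\be_n\}$, so $h(\la),h(\mu)\geq 3$ by the previous paragraph. Taking $\alpha=(n)$, the identity maps give $m_{V,(n)}=m_{W,(n)}=1$. Taking $\alpha=(n-3,3)$, which lies in $A$ whenever $n-6>2$, Lemma \ref{L1} applied to each of $\la$ and $\mu$ produces a homomorphism $M_3\da_{A_n}\to\End_F(E^\la_\pm)$ (respectively $\End_F(E^\mu_\pm)$) which does not vanish on $S_3\da_{A_n}$, so $m_{V,(n-3,3)},m_{W,(n-3,3)}\geq 1$. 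Substituting into Lemma \ref{l15} gives $\dim\End_{A_n}(E^\la_\pm\otimes E^\mu_\pm)\geq 1+1=2$, contradicting irreducibility. Hence at least one of $\la,\mu$ equals $\be_n$, and the congruence $n\not\equiv 2\Md 4$ is then forced by the second paragraph. The main remaining obstacle is the handful of small cases $n\leq 9$: for $n\in\{3,4,5,7\}$ one has $\Parinv_2(n)=\{\be_n\}$ and the statement is vacuous, while for $n\in\{6,8,9\}$ one must check the few available non-basic-spin options (for example $\la=\mu=(3,2,1)$ when $n=6$, or $\la=\mu=(4,3,1)$ when $n=8$) directly via dimensions using \cite[Tables]{JamesBook}.
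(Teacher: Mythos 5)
Your proof is correct and follows essentially the same route as the paper: invoke Lemma \ref{split2} to force $h(\la),h(\mu)\geq 3$ when neither is $\be_n$, apply Lemma \ref{L1} to each factor for $\alpha=(n-3,3)$, together with the identity at $\alpha=(n)$, and conclude $\dim\End_{A_n}(E^\la_\pm\otimes E^\mu_\pm)\geq 2$ via Lemma \ref{l15}. The only minor deviation is that you start the reduction at $n\geq 10$ where the paper already runs it from $n\geq 9$ (Lemma \ref{L1} only needs $n\geq 7$ and $(n-3,3)\notin\Parinv_2(n)$ for $n\geq 9$), so you inherit $n=9$ — where $(5,3,1)\in\Parinv_2(9)$ — as an extra small case, but this is an inefficiency rather than a gap.
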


\begin{proof}
For $n\leq 8$ the theorem can be checked separately. So we may assume $n\geq 9$. By Lemma \ref{split2} we then have that $(n-3,3)\in\Par_2(n)\setminus\Parinv_2(n)$ and that if $\la,\mu\not=\be_n$ then $h(\la),h(\mu)\geq 3$. In this case by Lemmas \ref{l15} and \ref{L1}
\[\dim\End_{A_n}(E^\la_\pm\otimes E^\mu_\pm)=\dim\Hom_{A_n}(\End_F(E^\la_\pm),\End_F(E^\mu_\pm))\geq 2\]
(similar to the proofs of Theorems \ref{sns2a} and \ref{sns3}), contradicting $E^\la_\pm\otimes E^\mu_\pm$ being irreducible.

So $\la=\be_n$ or $\mu=\be_n$, and then $n\not\equiv 2\Md 4$ by Lemma \ref{split2}.
\end{proof}

\begin{theor}\label{ds3}
Let $p=3$ and $\lambda,\mu\in\Parinv_3(n)$ and assume that $E^\la_\pm$ and $E^\mu_\pm$ are not 1-dimensional. Then $E^\lambda_\pm\otimes E^\mu_\pm$ is irreducible if and only if, up to exchange, $E^\la_\pm=E^{(4,1,1)}_+$ and $E^\mu_\pm=E^{(4,1,1)}_-$. Further $E^{(4,1,1)}_+\otimes E^{(4,1,1)}_-\cong E^{(4,2)}$.
\end{theor}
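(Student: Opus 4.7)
The plan follows the template of Theorems \ref{sns3} and \ref{ds2}: for $n$ large enough, apply Lemma \ref{l15} with $H=A_n$ to force at least two independent homomorphisms between the relevant endomorphism modules, then handle small $n$ directly. Since every irreducible $FA_n$-module is self-dual, we have
\[
\dim\End_{A_n}\!\bigl(E^\lambda_\pm\otimes E^\mu_\pm\bigr)=\dim\Hom_{A_n}\!\bigl(\End_F(E^\lambda_\pm),\End_F(E^\mu_\pm)\bigr),
\]
so irreducibility of the tensor product forces this dimension to be $1$, and any lower bound $\geq 2$ yields a contradiction.

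For $n\geq 10$, Lemma \ref{Mull} gives $h(\lambda),h(\mu)\geq 3$. I would apply Lemma \ref{l15} with $V=\End_F(E^\lambda_\pm)$ and $W=\End_F(E^\mu_\pm)$, keeping just two terms from the sum, namely $\alpha=(n)$ and $\alpha=(n-3,3)$. Exactly as recorded at the start of the proof of Theorem \ref{sns3}, both partitions lie in the admissible set $A$ for $H=A_n$ when $n\geq 10$ (neither is Mullineux-fixed, and each strictly dominates its Mullineux dual by \cite[Lemma 2.2]{bkz} together with a small-case check). The $\alpha=(n)$ term contributes $m_{V^*,(n)}=m_{W,(n)}=1$ via the scalar embedding $\mathbf 1\hookrightarrow\End_F(E^\bullet_\pm)$, while Lemma \ref{l30} applied separately to $\lambda$ and to $\mu$ yields homomorphisms in $\Hom_{A_n}(M_3\!\da_{A_n},\End_F(E^\bullet_\pm))$ non-vanishing on $S_3\!\da_{A_n}$, contributing another $1\cdot 1$. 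Summing gives the desired lower bound $\geq 2$ and rules out irreducibility for $n\geq 10$.

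For $n\leq 9$ I would argue case by case using \cite[Tables]{JamesBook}. Lemma \ref{Mull} eliminates $n\leq 5$, and the proof of Lemma \ref{l30} already records $\Parinv_3(9)=\varnothing$. The cases $n=7$ and $n=8$ reduce to enumerating $\Parinv_3(n)$ with $h\geq 3$ and excluding each candidate pair on dimension grounds. For $n=6$, the combinatorics inside the proof of Lemma \ref{l10} identifies $(4,1,1)$ as the unique Mullineux-fixed partition with $h=3$; a short direct check of the remaining 3-regular partitions of $6$ rules out any further Mullineux-fixed partition with $h\geq 3$. The only candidates are therefore $E^{(4,1,1)}_\epsilon\otimes E^{(4,1,1)}_{\epsilon'}$. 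When $\epsilon=\epsilon'$, self-duality of $E^{(4,1,1)}_\epsilon$ combined with $\dim E^{(4,1,1)}_\epsilon>1$ forces the trivial module into $E^{(4,1,1)}_\epsilon\otimes E^{(4,1,1)}_\epsilon$ as a summand, whence $\dim\End\geq 2$. When $\epsilon\neq\epsilon'$, comparing Brauer characters in the principal $3$-block of $A_6$ identifies $E^{(4,1,1)}_+\otimes E^{(4,1,1)}_-\cong E^{(4,2)}$ (note that $(4,2)\notin\Parinv_3(6)$, so $E^{(4,2)}$ is just the restriction of $D^{(4,2)}$).

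The main obstacle is the small-$n$ verification: enumerating the Mullineux-fixed 3-regular partitions of $6,7,8$ and then carrying out the explicit Brauer-character computation that produces the isomorphism $E^{(4,1,1)}_+\otimes E^{(4,1,1)}_-\cong E^{(4,2)}$. By contrast, the large-$n$ argument is essentially the same two-term application of Lemma \ref{l15} that has already appeared several times in the paper.
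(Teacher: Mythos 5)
Your proposal matches the paper's proof in structure and substance: handle $n\leq 9$ by direct inspection (tables/Atlas/Brauer characters), and rule out $n\geq 9$ (or $n\geq 10$, noting $\Parinv_3(9)=\varnothing$) by combining Lemma \ref{l15} with $\alpha=(n)$ and $\alpha=(n-3,3)$ and the existence of homomorphisms $M_3\da_{A_n}\to\End_F(E^\bullet_\pm)$ non-vanishing on $S_3\da_{A_n}$ from Lemma \ref{l30}, using \cite[Lemma 2.2]{bkz} to place $(n-3,3)$ in the admissible set. This is exactly the paper's argument. One small correction: the identity $\dim\End_{A_n}(E^\la_\pm\otimes E^\mu_\pm)=\dim\Hom_{A_n}(\End_F(E^\la_\pm),\End_F(E^\mu_\pm))$ does not rest on self-duality of the $E^\bullet_\pm$ (which need not hold in general) but only on the canonical self-duality of $\End_F(X)\cong X^*\otimes X$; similarly, in your $\epsilon=\epsilon'$ step for $n=6$ you should not invoke self-duality of $E^{(4,1,1)}_\epsilon$ — instead observe that $\mathbf 1$ is both a submodule and a quotient of $\End_F(E)$ and $\End_F(E)\not\cong\mathbf 1$ when $\dim E>1$, so the identity and the map factoring through $\mathbf 1$ already give $\dim\Hom_{A_n}(\End_F(E),\End_F(E))\geq 2$.
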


\begin{proof}
For $n\leq 8$ it can be proved using \cite[Tables]{JamesBook} that if $E^\la_\pm\otimes E^\mu_\pm$ is irreducible then $n=6$ and $\la,\mu=(4,1,1)$, in which case the theorem can be checked using \cite{Atl}. So we may now assume that $n\geq 9$. Then $(n-3,3)>(n-3,3)^\Mull$ by \cite[Lemma 2.2]{bkz} and so by Lemmas \ref{l15} and \ref{l30},
\[\dim\End_{A_n}(E^\la_\pm\otimes E^\mu_\pm)=\dim\Hom_{A_n}(\End_F(E^\la_\pm),\End_F(E^\mu_\pm))\geq 2.\]
In particular $E^\la_\pm\otimes E^\mu_\pm$ is not irreducible.
\end{proof}

\section{Proof of Theorem \ref{mt}}\label{s3}

We will now prove our main result. We will consider the cases $p=2$ and $p\geq 3$ separately.

{\bf Case 1:} $p=2$.

If $\la,\mu\in\Par_2(n)\setminus\Parinv_2(n)$ and $E^\la\otimes E^\mu$ is irreducible as $FA_n$-module, then $D^\la\otimes D^\mu$ is irreducible as $F\s_n$-module. If $E^\la$ and $E^\mu$ are not 1-dimensional then by \cite[Main Theorem]{bk} and \cite[Theorems 1.1 and 1.2]{m1} we have that $n\equiv 2\Md 4$ and $D^\la\otimes D^\mu\cong D^\nu$ with $\nu=(n/2-j,n/2-j-1,j+1,j)$ with $0\leq j\leq (n-6)/4$. By Lemma \ref{split2} it follows that $\nu\in\Parinv_2(n)$, contradicting $E^\la\otimes E^\mu$ being irreducible. If $\la\in\Parinv_2(n)$ and $\mu\in\Par_2(n)\setminus\Parinv_2(n)$ the theorem holds by Theorems \ref{sns2a} and \ref{sns2b}. If $\la,\mu\in\Parinv_2(n)$ the theorem holds by Theorem \ref{ds2}.

{\bf Case 2:} $p\geq 3$.

Note that from Lemma \ref{l23} if $\la\in\Parinv_p(n)$ is JS, then $n\equiv h(\la)^2\Md p$. In particular in this case $n\equiv 0\Md p$ if and only if $h(\la)\equiv 0\Md p$. Assume that $\la\in\Parinv_p(n)$ is JS and that $n\not\equiv 0\Md p$. Let $A$ be the top removable node of $\la$ and $B$ and $C$ be the two bottom addable nodes of $\la$. Then $A$ is the only normal node of $\la$ and $B$ and $C$ are the only conormal nodes of $\la$. Since $h(\la)\not\equiv 0\Md p$, the bottom addable node of $\la$ has residue different from 0. In view of Lemma \ref{l17} we then have that $\res(A)=0$ and that $\res(B)=i=-\res(C)$ for some residue $i\not=0$. By \cite[Lemma 2.9]{bk2} we further have that $A,B,C$ are the only conormal nodes of $\la\setminus A$. Comparing residues we have that $(\la\setminus A)^\Mull=\la\setminus A$ and that $((\la\setminus A)\cup B)^\Mull=(\la\setminus A)\cup C$ by Lemma \ref{l17}. So $(\la\setminus A)\cup B,(\la\setminus A)\cup C\in\Par_p(n)\setminus\Parinv_p(n)$ and $E^{(\la\setminus A)\cup B}\cong E^{(\la\setminus A)\cup C}$. For $p\geq 5$ the theorem then holds by \cite[Main Theorem]{bk2} and \cite[Theorem 1.1]{m2}. So assume now that $p=3$. If $\la,\mu\in\Par_3(n)\setminus\Parinv_3(n)$ and $E^\la$ and $E^\mu$ are not 1-dimensional, then $E^\la\otimes E^\mu$ is not irreducible by \cite[Main Theorem]{bk}. If $\la\in\Parinv_3(n)$ and $\mu\in\Par_3(n)\setminus\Parinv_3(n)$ the theorem holds by Theorem \ref{sns3} and the above observation. If $\la,\mu\in\Parinv_3(n)$ the theorem holds by Theorem \ref{ds3}.

\section{Tensor products with basic spin}\label{s2}

In this section we give some restrictions on tensor products with basic spin module in characteristic 2 which might be irreducible.

\begin{lemma}\label{L13}
Let $p=2$ and $\la,\nu\in\Par_2(n)$. If $[D^\la\otimes D^{\be_n}:D^\nu]=2^ib$ with $b$ odd then $h(\nu)\leq 4i+2$ if $n$ is odd or $h(\nu)\leq 4i+4$ if $n$ is even. Further if $D^\nu\subseteq D^\la\otimes D^{\be_n}$ then $h(\la)\leq 2h(\nu)$.
\end{lemma}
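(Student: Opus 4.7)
I would tackle the bound $h(\la)\leq 2h(\nu)$ first, since it admits a clean argument via self-duality. Both $D^\nu$ and $D^{\be_n}$ are self-dual, so an embedding $D^\nu\hookrightarrow D^\la\otimes D^{\be_n}$ dualizes to a surjection $D^\nu\otimes D^{\be_n}\twoheadrightarrow D^\la$, and hence $D^\la$ is a composition factor of $D^\nu\otimes D^{\be_n}$. Since $D^{\be_n}=\hd S^{\be_n}$ is a composition factor of $M^{\be_n}=\1\ua^{\s_n}_{\s_{\be_n}}$, tensoring a composition series of $M^{\be_n}$ with $D^\nu$ exhibits $D^\nu\otimes D^{\be_n}$ as a subquotient of $D^\nu\otimes M^{\be_n}\cong(D^\nu\da_{\s_{\be_n}})\ua^{\s_n}$. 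I would then apply the Littlewood-Richardson rule twice: first, $S^\nu\da_{\s_{\be_n}}$ has a Specht filtration by $S^\al\boxtimes S^\beta$ with $\al,\beta\subseteq\nu$, so every composition factor of the quotient $D^\nu\da_{\s_{\be_n}}$ of $S^\nu\da_{\s_{\be_n}}$ has the form $D^{\al'}\boxtimes D^{\beta'}$ with $\al'\unrhd\al$ and $\beta'\unrhd\beta$, forcing $h(\al'),h(\beta')\leq h(\nu)$; second, $(S^{\al'}\boxtimes S^{\beta'})\ua^{\s_n}$ has a Specht filtration by $S^\mu$ with $h(\mu)\leq h(\al')+h(\beta')\leq 2h(\nu)$, whose composition factors $D^\rho$ satisfy $\rho\unrhd\mu$ and hence $h(\rho)\leq h(\mu)\leq 2h(\nu)$. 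Chaining the implications gives $h(\la)\leq 2h(\nu)$.

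For the first claim, I would induct on $n$ using the branching of $D^{\be_n}\da_{\s_{n-1}}$. For $n$ odd the two removable nodes of $\be_n=((n+1)/2,(n-1)/2)$ share the same residue, so by Lemma \ref{l39} the restriction is indecomposable of length $2$ with head and socle $D^{\be_{n-1}}$; for $n$ even the residues differ and the restriction splits as $D^{\be_{n-1}}\oplus D^\gamma$ with $\gamma$ in a different block. Combining $(D^\la\otimes D^{\be_n})\da_{\s_{n-1}}=D^\la\da_{\s_{n-1}}\otimes D^{\be_n}\da_{\s_{n-1}}$ with the branching of $D^\nu$ through the $e_i$ functors (Lemmas \ref{l45} and \ref{l56}), the multiplicity $[D^\la\otimes D^{\be_n}:D^\nu]$ can be related inductively to multiplicities of the form $[D^{\tilde\la}\otimes D^{\be_{n-1}}:D^{\tilde\nu}]$ for suitable $\tilde\la$ and $\tilde\nu$. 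The length-$2$ structure in the odd-$n$ case forces an extra factor of $2$ in the multiplicity whenever $h(\nu)$ grows by $4$, which accounts for the coefficient $4i$ in the bound; the base case $i=0$ asserts that odd multiplicity implies $h(\nu)\leq 2$ (resp.\ $\leq 4$), which I would handle by a direct dimension argument exploiting $\dim D^{\be_n}=2^{\lfloor(n-1)/2\rfloor}$ together with a case analysis of small $\nu$.

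The main obstacle is the first claim: correctly tracking the $2$-adic valuation of the multiplicity through the inductive step is delicate, because one must simultaneously control the length-two branching of $D^{\be_n}\da_{\s_{n-1}}$ (for $n$ odd) and the multiplicities contributed by the normal nodes of $\nu$, while also reconciling the parity of $n-1$ with that of $n$ (which switches the shape of the branching on each step). The second claim, by contrast, is a mechanical combination of self-duality with the standard Littlewood-Richardson bounds on the Specht filtrations of $S^\nu\da_{\s_{\be_n}}$ and $(S^{\al'}\boxtimes S^{\beta'})\ua^{\s_n}$.
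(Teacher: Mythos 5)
Your argument for the second claim ($h(\la)\leq 2h(\nu)$) is essentially correct and follows the same idea as the paper: use self-duality of $D^\nu$ and $D^{\be_n}$ to convert the embedding into a nonzero map $D^\nu\otimes D^{\be_n}\to D^\la$, realize the tensor product inside an induced module, and bound the number of parts of the constituents by the Littlewood--Richardson rule. (The paper phrases this via $[S^{\be_n}\otimes M^\nu:D^\la]\geq 1$ and restricts $S^{\be_n}$ to $\s_\nu$ rather than restricting $D^\nu$ to $\s_{\be_n}$, but the two LR computations are interchangeable and both give $h(\la)\leq h(\be_n)h(\nu)=2h(\nu)$.)

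The first claim is where the gap lies, and you yourself flag it: the inductive branching argument is not actually carried out. As written there is no mechanism that converts the length-two structure of $D^{\be_n}\da_{\s_{n-1}}$ (for $n$ odd) into a controlled factor of $2$ in the multiplicity $[D^\la\otimes D^{\be_n}:D^\nu]$, and the relationship between $[D^\la\otimes D^{\be_n}:D^\nu]$ over $\s_n$ and $[D^{\tilde\la}\otimes D^{\be_{n-1}}:D^{\tilde\nu}]$ over $\s_{n-1}$ involves all the $e_i$-branching multiplicities of $D^\la$ and $D^\nu$ simultaneously, which are not $2$-power-behaved in any evident way. The proposed base case, that odd $[D^\la\otimes D^{\be_n}:D^\nu]$ forces $h(\nu)\leq 2$ (respectively $\leq 4$), is itself a nontrivial assertion and the suggested dimension count does not prove it. The paper's actual proof is of a quite different, global nature: it works with Brauer characters and $2$-adic valuations. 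Schur's formula gives $\phi^{\be_n}_\al=\pm 2^{\lfloor(h(\al)-1)/2\rfloor}$, so $\phi^{\be_n}_\al$ fails to be divisible by $2^{i+1}$ only when the cycle type $\al$ has few parts. One then observes that the permutation character $\xi^\gamma$ is divisible by $\overline{A}(\gamma)$, so $\overline{\xi}^\gamma:=\xi^\gamma/\overline{A}(\gamma)$ is a Brauer character, and chooses coefficients $b_\gamma$ supported on $\gamma$ with $h(\gamma)\leq c$ (where $c=2i+1$ or $2i+2$) so that $\phi=\phi^{\be_n}\bigl(\phi^\la+\sum_\gamma b_\gamma\overline{\xi}^\gamma\bigr)$ is pointwise divisible by $2^{i+1}$. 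Linear independence of irreducible Brauer characters modulo $2$ forces $\phi=2^{i+1}\overline{\phi}$ with $\overline{\phi}$ a Brauer character, and reading off the multiplicity of $\phi^\nu$ produces some $\gamma$ with $h(\gamma)\leq c$ and $[D^{\be_n}\otimes M^\gamma:D^\nu]\geq 1$; the LR bound then gives $h(\nu)\leq 2c$. To repair your proposal you would need to replace the inductive branching sketch with something along these lines, since the $2$-divisibility phenomenon here is fundamentally a character-theoretic statement, not a branching one.
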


\begin{proof}
For $\gamma\in\Par(n)$ let $\xi^\gamma$ be the Brauer character of $M^\gamma$. For $\psi\in\Par_2(n)$ let $\phi^\psi$ be the Brauer character of $D^\psi$. If $\alpha\in\Par(n)$ is the cycle partition of a 2-regular conjugacy class and $\phi$ is any Brauer character of $\s_n$, let $\phi_\alpha$ be the value that $\phi$ takes on the conjugacy class indexed by $\alpha$.

Let $c:=2i+1$ if $n$ is odd or $c:=2i+2$ if $n$ is even. Let $\al\in\Par(n)$ correspond to a 2-regular conjugacy class of $\s_n$. We have that $\phi^{\be_n}_{\al}=\pm 2^{\lfloor (h(\al)-1)/2\rfloor}$ by \cite[VII, p.203]{s5}. In particular if  $\phi^{\be_n}_\al$ is not divisible by $2^{i+1}$ then $h(\al)\leq c$ (note that $h(\al)\equiv n\Md 2$ since $\al$ is the cycle partition of a 2-regular conjugacy class).

For $\gamma\in\Par(n)$ and $1\leq j\leq n$ let $a_j=a_j(\gamma)$ be the number of parts of $\gamma$ equal to $j$. Further let $A=A(\gamma):=a_1!\cdots a_n!$ and $\overline{A}=\overline{A}(\gamma)$ be the largest power of 2 dividing $A$. Since $M^\la=1\ua_{\s_{\la}}^{\times_j\s_j\wr \s_{a_j}}\ua^{\s_n}$ we have that $A\mid \xi^\gamma_\alpha$ for each $\al\in\Par(n)$ corresponding to a 2-regular conjugacy class. Since irreducible Brauer characters are linearly independent modulo 2 (see for example \cite[Theorem 15.5]{isaacs}), we then have that $\xi^\gamma=\overline{A}\,\overline{\xi}^\gamma$ with $\overline{\xi}^\gamma$ a Brauer character. Further, whenever they are defined, $\xi^\gamma_\gamma=A$ (and so $\overline{\xi}^\gamma_\gamma$ is odd) and $\xi^\gamma_\psi=0$ if $h(\psi)\leq h(\gamma)$ and $\psi\not=\gamma$. In particular there exists $b_\gamma\in\N$ such that if
\[\phi=\phi^{\be_n}(\phi^\la+\sum_{\gamma:h(\gamma)\leq c}b_\gamma\overline{\xi}^\gamma)\]
then $\phi_\al$ is divisible by $2^{i+1}$ for each $\al\in\Par(n)$ corresponding to a 2-regular conjugacy class (start by choosing $b_{(n)}$ so that this holds for $\al=(n)$, if $n$ is odd, then consider $b_\al$ for partitions $\al$ with two parts and so on).

Again since irreducible Brauer characters are linearly independent modulo 2, it follows that $\phi=2^{i+1}\overline{\phi}$ for some Brauer character $\overline{\phi}$. If $m$ is the multiplicity of $\phi^\nu$ in $\overline{\phi}$ then
\begin{align*}
m&=1/2^{i+1}([D^{\be_n}\otimes D^\la:D^\nu]+\sum_{\gamma:h(\gamma)\leq c}b_\gamma/\overline{A}(\gamma)[D^{\be_n}\otimes M^\gamma:D^\nu])\\
&=b/2+\sum_{\gamma:h(\gamma)\leq c}b_\gamma/(2^{i+1}\overline{A}(\gamma))[D^{\be_n}\otimes M^\gamma:D^\nu].
\end{align*}
Since $b$ is odd and $m\in\N$, there then exists $\gamma\in\Par(n)$ with $h(\gamma)\leq c$ such that
\[[S^{\be_n}\otimes M^\gamma:D^\nu]\geq[D^{\be_n}\otimes M^\gamma:D^\nu]\geq 1.\]
Note that $S^{\be_n}\otimes M^\gamma\cong S^{\be_n}\da_{\s_\gamma}\ua^{\s_n}$. In view of the Littlewood-Richardson rule, in characteristic 0, any composition factor of $S^{\be_n}\da_{\s_{\gamma}}$ is of the form $S^{\al^1}\otimes\ldots\otimes S^{\al^{h(\gamma)}}$ with $\al^j\in\Par(\gamma_j)$ such that $h(\al^j)\leq h(\be_n)=2$ and then any composition factor of $S^{\be_n}\otimes M^\gamma$ is of the form $S^\al$ with $h(\al)\leq h(\gamma)h(\be_n)\leq 2c$. Considering reduction modulo 2 we then have that any composition factor of $S^{\be_n}\otimes M^\gamma$, and so in particular also any composition factor of $D^{\be_n}\otimes M^\gamma$, is of the form $D^\zeta$, where $\zeta\in\Par_2(n)$ has at most $2c$ parts. It follows that $h(\nu)\leq 2c$.

Assume now that $D^\nu\subseteq D^\la\otimes D^{\be_n}$. Since
\[\dim\Hom_{\s_n}(D^\la,D^{\be_n}\otimes D^\nu)=\dim\Hom_{\s_n}(D^\nu,D^\la\otimes D^{\be_n})\geq 1\]
it follows that
\[[S^{\be_n}\otimes M^\nu:D^\la]\geq[D^{\be_n}\otimes D^\nu:D^\la]\geq 1.\]
So, similarly to the above, $h(\la)\leq h(\be_n)h(\nu)=2h(\nu)$.
\end{proof}

\begin{theor}
Let $p=2$, $\la\in\Par_2(n)$ and assume that $D^\la$ and $D^{\be_n}$ are not 1-dimensional and that exactly one of them splits when restricted to $A_n$. Then $E^{\be_n}_\pm\otimes E^\la$ or $E^\la_\pm\otimes E^{\be_n}$ is irreducible if and only if $D^\la\otimes D^{\be_n}\sim D^\nu|D^\nu$ with $\nu\in\Par_2(n)\setminus\Parinv_2(n)$. In this case $h(\nu)\leq 6$ if $n$ is odd, $h(\nu)\leq 8$ if $n$ is even and $h(\la)\leq 2h(\nu)$. Further $\la$ has at most 2 normal nodes if $n$ is odd or at most 3 normal nodes if $n$ is even.
\end{theor}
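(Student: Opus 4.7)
The plan is first to establish the ``if and only if'' by restricting to $A_n$, then to read off the height bounds from Lemma~\ref{L13}, and finally to bound the number of normal nodes of $\la$ via a comparison of endomorphism dimensions. Without loss of generality I take $\la\in\Parinv_2(n)$ and $\be_n\notin\Parinv_2(n)$ (so $n\equiv 2\Md 4$); the other case is symmetric. Then $D^\la\da_{A_n}=E^\la_+\oplus E^\la_-$ and $E^{\be_n}=D^{\be_n}\da_{A_n}$ is irreducible, so
\[(D^\la\otimes D^{\be_n})\da_{A_n}\cong(E^\la_+\otimes E^{\be_n})\oplus(E^\la_-\otimes E^{\be_n}),\]
and conjugation by any $\sigma\in\s_n\setminus A_n$ swaps $E^\la_\pm$ while fixing $E^{\be_n}$, so the two summands are $\sigma$-twists of each other. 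Hence if $E^\la_\pm\otimes E^{\be_n}$ is irreducible, each summand is simple and the restriction has $A_n$-composition length two.

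Counting composition factors leaves two options: either $D^\la\otimes D^{\be_n}$ has exactly two $\s_n$-composition factors, both isomorphic to some $D^\nu$ with $\nu\notin\Parinv_2(n)$, or $D^\la\otimes D^{\be_n}$ is simple equal to $D^\nu$ with $\nu\in\Parinv_2(n)$. The second possibility would be an irreducible $\s_n$-tensor product in characteristic~$2$, classified by \cite[Main Theorem]{bk} and \cite[Theorems 1.1, 1.2]{m1}; checking the partitions appearing in that classification against membership in $\Parinv_2(n)$ via Lemma~\ref{split2} rules this out under our hypothesis. Thus $D^\la\otimes D^{\be_n}\sim D^\nu|D^\nu$ with $\nu\in\Par_2(n)\setminus\Parinv_2(n)$, and the converse is immediate by dimensions: in that case $(D^\la\otimes D^{\be_n})\da_{A_n}\cong E^\nu\oplus E^\nu$ and $E^\la_\pm\otimes E^{\be_n}\cong E^\nu$.

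For the height bounds, since $[D^\la\otimes D^{\be_n}:D^\nu]=2=2^1\cdot 1$, Lemma~\ref{L13} applied with $i=1$ and $b=1$ yields $h(\nu)\leq 4\cdot 1+2=6$ for $n$ odd and $h(\nu)\leq 4\cdot 1+4=8$ for $n$ even. Moreover, self-duality of $D^\la\otimes D^{\be_n}$ combined with the fact that its head (hence socle) equals $D^\nu$ forces $D^\nu\hookrightarrow D^\la\otimes D^{\be_n}$; the second part of Lemma~\ref{L13} then gives $h(\la)\leq 2h(\nu)$.

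For the bound on normal nodes, the plan is to compare two estimates for $\dim\End_{\s_{n-1}}((D^\la\otimes D^{\be_n})\da_{\s_{n-1}})$. The embedding $\End_{\s_{n-1}}(A)\otimes\End_{\s_{n-1}}(B)\hookrightarrow\End_{\s_{n-1}}(A\otimes B)$ via $\phi\otimes\psi\mapsto\phi\otimes\psi$ (well defined since $\phi\otimes\id_B$ commutes with the diagonal $\s_{n-1}$-action when $\phi$ is $\s_{n-1}$-linear) combined with Lemma~\ref{l53} yields the lower bound
\[\dim\End_{\s_{n-1}}((D^\la\otimes D^{\be_n})\da_{\s_{n-1}})\geq(\eps_0(\la)+\eps_1(\la))(\eps_0(\be_n)+\eps_1(\be_n)).\]
A direct computation using Lemmas~\ref{l39} and~\ref{l56} shows $\eps_0(\be_n)+\eps_1(\be_n)=2$ for $n$ odd and $=1$ for $n$ even, while $D^{\be_n}\da_{\s_{n-1}}$ is a uniserial $D^\mu|D^\mu$ for $n$ odd (with $\mu=\tilde{e}_i\be_n$) and a single simple module for $n$ even. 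The filtration $D^\la\otimes D^{\be_n}\sim D^\nu|D^\nu$ then yields an upper bound for the same endomorphism dimension; the main obstacle is sharpening this upper bound sufficiently, since the crude estimate $\leq 4\eps(\nu)$ from the short exact sequence $0\to D^\nu\da\to(D^\la\otimes D^{\be_n})\da\to D^\nu\da\to 0$ is too weak. A refined analysis exploiting the simple head/socle of both $D^{\be_n}\da_{\s_{n-1}}$ and of the full restriction should then give $\eps_0(\la)+\eps_1(\la)\leq 2$ for $n$ odd and $\leq 3$ for $n$ even.
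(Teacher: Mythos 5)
Your argument for the ``if and only if'' and for the height bounds tracks the paper's proof closely: rule out $D^\la\otimes D^{\be_n}$ being $\s_n$-simple via the classification in \cite{bk,m1} together with Lemma \ref{split2}, identify the two $A_n$-summands as $\sigma$-conjugate for $\sigma\in\s_n\setminus A_n$, conclude $D^\la\otimes D^{\be_n}\sim D^\nu|D^\nu$, and apply Lemma \ref{L13} with $i=1$. One small caution: your opening ``WLOG $\la\in\Parinv_2(n)$ and $\be_n\notin\Parinv_2(n)$'' pins $n\equiv 2\Md 4$, yet later you treat $n$ odd and $n$ even separately, which is inconsistent. Since $\be_n$ is a fixed specific partition, the two cases (``$\la$ splits'' versus ``$\be_n$ splits'') are not interchangeable by symmetry; the paper instead names the split partition $\al$ and the non-split one $\ga$ and argues with that pair throughout, which is what you actually want.

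The genuine gap is in the bound on the number of normal nodes of $\la$. You lay out a strategy — sandwich $\dim\End_{\s_{n-1}}((D^\la\otimes D^{\be_n})\da_{\s_{n-1}})$ between a lower bound $(\eps_0(\la)+\eps_1(\la))(\eps_0(\be_n)+\eps_1(\be_n))$ and an upper bound coming from the $D^\nu|D^\nu$ filtration — and then explicitly concede that the crude upper bound is too weak and that ``a refined analysis \ldots should then give'' the desired inequality. That refinement is precisely the content you would need, and it is not supplied; as written the last step of the proof is a plan, not a proof. This matters because the naive upper bound from the filtration is controlled by $\eps_0(\nu)+\eps_1(\nu)$, over which you have no a priori handle, so the sandwich does not close without new input. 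The paper's proof of this step is quite different and considerably more direct: it works with $\dim\Hom_{\s_n}(\End_F(D^\la),\End_F(D^{\be_n}))$ (rather than restricting to $\s_{n-1}$), uses the structure of $M_1$ from Lemmas \ref{L12o} and \ref{L12e} together with Lemma \ref{Lemma7.1} to produce copies of $D_1$ (resp.\ $S_1^*$) inside $\End_F(D^{\be_n})$ and $\End_F(D^\la)$, obtains $\dim\Hom_{\s_n}(\End_F(D^\la),\End_F(D^{\be_n}))\geq 3$ when $\la$ has too many normal nodes, and then invokes Lemma \ref{l14} (the bound $\leq 2$ under irreducibility) to get a contradiction. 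You should either carry out the promised refinement or replace this part of the argument with an approach along the paper's lines.
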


\begin{proof}
From \cite[Main Theorem]{bk} and \cite[Theorems 1.1 and 1.2]{m1} if $D^\la\otimes D^{\be_n}$ is simple as $F\s_n$-module, then $n\equiv 2\Md 4$ and $h(\la)=h(\be_n)=2$. So from Lemma \ref{split2} neither $D^\la$ nor $D^{\be_n}$ splits in this case. Thus we may assume that $D^\la\otimes D^{\be_n}$ is not simple as $F\s_n$-module. Let $\{\al,\ga\}=\{\la,\be_n\}$ such that $\al\in\Parinv_2(n)$ and $\ga\not\in\Parinv_2(n)$. Then $E^\al_+\otimes E^\ga\cong(E^\al_-\otimes E^\ga)^\sigma$ with $\sigma\in\s_n\setminus A_n$. In particular $E^\al_+\otimes E^\ga$ is irreducible if and only if $E^\al_-\otimes E^\ga$ is irreducible. So, since $D^\la\otimes D^{\be_n}$ is not simple as $F\s_n$-module, $E^\al_\pm\otimes E^\ga$ is irreducible if and only if $D^\la\otimes D^{\be_n}\sim D^\nu|D^\nu$ with $\nu\in\Par_2(n)\setminus\Parinv_2(n)$. In this case $h(\nu)\leq 6$ if $n$ is odd, $h(\nu)\leq 8$ if $n$ is even and $h(\la)\leq 2h(\nu)$ by Lemma \ref{L13}.

If $n$ is odd then $M_1\cong D_0\oplus D_1$ by Lemma \ref{L12o}. Since $\be_n$ is not a JS-partition in this case, we have that $D_1\subseteq\End_F(D^{\be_n})$ by Lemma \ref{l53}. If $\la$ has at least 3 normal nodes then $D_1^{\oplus 2}\subseteq \End_F(D^\la)$ from Lemma \ref{l53}.

If $n$ is even then $M_1\cong D_0|D_1|D_0\sim D_0|S_1^*$ by Lemma \ref{L12e} and self-duality of $M_1$. From Lemma \ref{Lemma7.1} we also have that $D_1$ or $S_1$ is contained in $\End_F(D^{\be_n})$. If $\la$ has at least 4 normal nodes we have from Lemma \ref{l53} that
\begin{align*}
&\dim\Hom_{\s_n}(S_1^*,\End_F(D^\la))\\
&\geq\dim\Hom_{\s_n}(M_1,\End_F(D^\la))-\dim\Hom_{\s_n}(D_0,\End_F(D^\la))\\
&=\dim\End_{\s_{n-1}}(D^\la\da_{\s_{n-1}})-\dim\End_{\s_n}(D^\la)\\
&\geq 3.
\end{align*}
Since $S_1^*\cong D_1|D_0$ and $\dim\Hom_{\s_n}(D_0,\End_F(D^\la))=1$, we then have that $(S_1^*)^{\oplus 2}\subseteq \End_F(D^\la)$.

From $D_0\subseteq\End_F(D^{\be_n}),\End_F(D^\la)$, it follows that in either case
\[\dim\Hom_{\s_n}(\End_F(D^\la),\End_F(D^{\be_n}))\geq 3\]
and so $E^\al_\pm\otimes E^\ga$ is not irreducible by Lemma \ref{l14}.
\end{proof}

\begin{theor}
Let $p=2$, $n\not\equiv 2\Md 4$, $\la\in\Parinv_2(n)$ and $\epsilon,\delta,\epsilon',\delta'\in\{\pm\}$. If $E^\la_\pm$ and $E^{\be_n}_\pm$ are not 1-dimensional and $E^\la_\epsilon\otimes E^{\be_n}_\delta$ is irreducible then one of the following holds:
\begin{itemize}
\item
$D^\la\otimes D^{\be_n}\sim D^\nu|D^\nu|D^\nu|D^\nu$ with $\nu\in\Par_2(n)\setminus\Parinv_2(n)$. In this case $E^\la_{\epsilon'}\otimes E^\mu_{\delta'}\cong E^\nu$ is irreducible and $h(\nu)\leq 10$ if $n$ is odd or $h(\nu)\leq 12$ if $n$ is even.

\item
$D^\la\otimes D^{\be_n}\sim D^\nu|D^\nu$ with $\nu\in\Parinv_2(n)$. In this case $E^\la_{\epsilon'}\otimes E^\mu_{\delta'}\in\{E^\nu_+,E^\la_-\}$ is irreducible and $h(\nu)\leq 6$ if $n$ is odd or $h(\nu)\leq 8$ if $n$ is even.

\item
$[D^\la\otimes D^{\be_n}:D^\nu]=2$ with $\nu\in\Par_2(n)\setminus\Parinv_2(n)$ and $E^\la_\epsilon\otimes E^{\be_n}_\delta\cong E^\la_{-\epsilon}\otimes E^{\be_n}_{-\delta}\cong E^\nu$, while $E^\la_{-\epsilon}\otimes E^{\be_n}_\delta\not\cong E^\nu\not\cong E^\la_{\epsilon}\otimes E^{\be_n}_{-\delta}$. Further $h(\nu)\leq 6$ if $n$ is odd or $h(\nu)\leq 8$ if $n$ is even.

\item
$n\equiv 0\Md 4$, $[D^\la\otimes D^{\be_n}:D^\nu]=1$ with $\nu\in\Parinv_2(n)$, $\{E^\la_\epsilon\otimes E^{\be_n}_\delta,E^\la_{-\epsilon}\otimes E^{\be_n}_{-\delta}\}=\{E^\nu_+,E^\nu_-\}$, while $E^\la_{-\epsilon}\otimes E^{\be_n}_\delta,E^\la_{\epsilon}\otimes E^{\be_n}_{-\delta}\not\in\{E^\nu_+,E^\nu_-\}$. Further $h(\nu)\leq 4$.
\end{itemize}
In each of the above cases $h(\la)\leq 2h(\nu)$. Further $\la$ has at most 3 normal nodes if $n$ is odd or at most 4 normal nodes if $n$ is even.
\end{theor}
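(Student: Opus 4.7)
The starting point is the decomposition
$$
(D^\la\otimes D^{\be_n})\da_{A_n}\;\cong\;\bigoplus_{\epsilon',\delta'\in\{+,-\}} E^\la_{\epsilon'}\otimes E^{\be_n}_{\delta'}
$$
into four $FA_n$-summands of equal dimension. Writing $V:=E^\la_\epsilon\otimes E^{\be_n}_\delta$, $V^\sigma:=E^\la_{-\epsilon}\otimes E^{\be_n}_{-\delta}$, $U:=E^\la_\epsilon\otimes E^{\be_n}_{-\delta}$, $U^\sigma:=E^\la_{-\epsilon}\otimes E^{\be_n}_{\delta}$, so that $V$ and $V^\sigma$ are simple by hypothesis, I would split into four cases according to the binary data (a)~whether $V\cong V^\sigma$ (equivalently, whether $V$ extends to an $F\s_n$-module, which forces $V=E^\nu$ for some $\nu\in\Par_2(n)\setminus\Parinv_2(n)$; otherwise $V=E^\nu_\pm$ for some $\nu\in\Parinv_2(n)$) and (b)~whether $\{U,U^\sigma\}=\{V,V^\sigma\}$. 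Since $U\cong V$ forces $U^\sigma\cong V^\sigma$ and hence $\{U,U^\sigma\}=\{V,V^\sigma\}$, the alternative in (b) is $\{U,U^\sigma\}\cap\{V,V^\sigma\}=\emptyset$. These four alternatives match cases~(i)--(iv) of the theorem and directly read off the multiplicity $[D^\la\otimes D^{\be_n}:D^\nu]\in\{4,2,2,1\}$ together with the stated isomorphism patterns among the four tensor products $E^\la_\pm\otimes E^{\be_n}_\pm$.

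The bounds on $h(\nu)$ then follow immediately from Lemma~\ref{L13} with $2^ib$ equal to the multiplicity of $D^\nu$, so $i=2,1,1,0$ in the four cases respectively. The restriction $n\equiv 0\Md 4$ in case~(iv) comes from noting that for $n$ odd the only $2$-row partition in $\Parinv_2(n)$ is $\be_n$ itself, so that $V=E^{\be_n}_\pm$, and then the dimension equality $\dim V=\dim E^{\be_n}_\pm=\tfrac{1}{4}\dim D^\la\cdot\dim D^{\be_n}$ forces $\dim D^\la=2$, which is impossible for a non-1-dimensional simple $F\s_n$-module in characteristic~$2$. The inequality $h(\la)\leq 2h(\nu)$ is the second half of Lemma~\ref{L13} applied to the composition factor $D^\nu$ of $D^\la\otimes D^{\be_n}$.

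The normal-node bound is the most delicate part and I would obtain it by extending the argument from the previous theorem by one step. Lemmas~\ref{l53},~\ref{L12o},~\ref{L12e},~\ref{l2} translate $\geq 4$ (resp.~$\geq 5$) normal nodes of $\la$ for $n$ odd (resp.~even) into $D_1^{\oplus 3}\subseteq\End_F(D^\la)$ (odd) or $(S_1^*)^{\oplus 3}\subseteq\End_F(D^\la)$ (even), while $\be_n$ contributes $D_1\subseteq\End_F(D^{\be_n})$ (odd, from its two normal nodes of common residue) or $D_1$ or $S_1$ in $\End_F(D^{\be_n})$ (even, via Lemma~\ref{Lemma7.1} applied to the JS-partition $\be_n$). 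The main obstacle is that the doubly-split analogue of Lemma~\ref{l14} gives only $\dim\End_{\s_n}(D^\la\otimes D^{\be_n})\leq 8$ in case~(i), making the coarse $D_0,D_1$-contribution $1+3=4$ to $\dim\Hom_{\s_n}(\End_F(D^\la),\End_F(D^{\be_n}))$ insufficient there. The remedy is to use the stronger identity $\dim\Hom_{A_n}(\End_F(E^\la_\epsilon),\End_F(E^{\be_n}_\delta))=1$ from $V$ simple (together with the analogous constraint $\dim\End_{A_n}(U)\geq 1$, with equality when $U$ is simple), distribute $E^{(n-1,1)}\cong D_1\da_{A_n}$ across the four $FA_n$-pieces $\End_F(E^\la_\pm)$ and $\Hom_F(E^\la_\pm,E^\la_\mp)$ of $\End_F(D^\la)\da_{A_n}$ (and likewise for $\be_n$), and play these off against the case-specific isomorphism structure to force a contradiction with any admissible placement of the extra $E^{(n-1,1)}$-copies.
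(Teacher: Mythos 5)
Your treatment of the first part of the theorem follows the paper closely: you identify exactly the same four alternatives via the binary data of whether $V\cong V^\sigma$ and whether $\{U,U^\sigma\}$ meets $\{V,V^\sigma\}$, read off the multiplicity $[D^\la\otimes D^{\be_n}:D^\nu]$, apply Lemma~\ref{L13} with $i=2,1,1,0$ for the $h(\nu)$ and $h(\la)\leq 2h(\nu)$ bounds, and rule out the $n$-odd subcase of (iv) by forcing $\nu=\be_n$ and then $\dim E^\la_\pm=1$. All of this is correct and is essentially the paper's own reasoning.

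The normal-node bound at the end, however, is left as a sketch. You correctly diagnose that a naive count of $D_0$- and $D_1$-contributions to $\dim\Hom_{\s_n}(\End_F(D^\la),\End_F(D^{\be_n}))$ is too weak (especially in case~(i)), and you propose to ``distribute'' $E^{(n-1,1)}$-copies across the four Hom-pieces of $\End_F(D^\la)\da_{A_n}$ and $\End_F(D^{\be_n})\da_{A_n}$ and ``play these off against the case-specific isomorphism structure.'' That is not a proof: you have not exhibited which placements are ruled out and why. Note in particular that the $\sigma$-equivariant multiplicity constraint only gives you $m(+,+)=m(-,-)=a$ and $m(+,-)=m(-,+)=b$ with $a+b\geq 2$ from $D_1^{\oplus 3}\subseteq\End_F(D^\la)$; the pigeonhole alternative $a=b=1$ is not addressed by your sketch. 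The paper instead invokes a black-box reducibility criterion, \cite[Lemma~3.4]{bk2}: once one has $D_1\da_{A_n}\subseteq\Hom_F(E^{\be_n}_\delta,E^{\be_n}_{\delta''})$ and $(D_1\da_{A_n})^{\oplus 2}\subseteq\Hom_F(E^\la_{\epsilon''},E^\la_\epsilon)$ for suitable signs, that lemma directly yields reducibility of $E^\la_\epsilon\otimes E^{\be_n}_\delta$, uniformly across all four cases, with no case-by-case bookkeeping. You would need either to cite and apply this lemma, or to carry your proposed analysis through explicitly, to close the gap.

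One smaller imprecision: for $n$ even, $\geq 5$ normal nodes of $\la$ together with $M_1\cong D_0|D_1|D_0$ and $\dim\Hom_{\s_n}(D_0,\End_F(D^\la))=1$ yields $D_1^{\oplus 3}\subseteq\End_F(D^\la)$ (as in the paper), not $(S_1^*)^{\oplus 3}\subseteq\End_F(D^\la)$; the latter does not follow directly from $\dim\Hom_{\s_n}(S_1^*,\End_F(D^\la))\geq 4$ since maps from the uniserial module $S_1^*$ need not be injective with independent images.
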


\begin{proof}
Note that if $\sigma\in\s_n\setminus A_n$ then $E^\la_{\epsilon'}\otimes E^{\be_n}_{\delta'}\cong(E^\la_{-\epsilon'}\otimes E^{\be_n}_{-\delta'})^\sigma$.

In particular if $E^\la_\epsilon\otimes E^{\be_n}_\delta\cong E^\nu$ then $E^\la_{-\epsilon}\otimes E^{\be_n}_{-\delta}\cong E^\nu$ and either both or neither of $E^\la_\epsilon\otimes E^{\be_n}_{-\delta}$ and $E^\la_{-\epsilon}\otimes E^{\be_n}_\delta$ is isomorphic to $E^\nu$. Similarly if $E^\la_\epsilon\otimes E^{\be_n}_\delta\cong E^\nu_\pm$ then $E^\la_{-\epsilon}\otimes E^{\be_n}_{-\delta}\cong E^\nu_\mp$ and $\{E^\la_\epsilon\otimes E^{\be_n}_{-\delta},E^\la_{-\epsilon}\otimes E^{\be_n}_\delta\}$ is either equal to or disjoint from $\{E^\nu_+,E^\nu_-\}$.

So we are in one of the following cases:
\begin{enumerate}
\item
$D^\la\otimes D^{\be_n}\sim D^\nu|D^\nu|D^\nu|D^\nu$ with $\nu\in\Par_2(n)\setminus\Parinv_2(n)$ and $E^\la_{\epsilon'}\otimes E^\mu_{\delta'}\cong E^\nu$.

\item
$D^\la\otimes D^{\be_n}\sim D^\nu|D^\nu$ with $\nu\in\Parinv_2(n)$ and $E^\la_{\epsilon'}\otimes E^\mu_{\delta'}\in\{E^\nu_+,E^\la_-\}$.

\item
$[D^\la\otimes D^{\be_n}:D^\nu]=2$ with $\nu\in\Par_2(n)\setminus\Parinv_2(n)$ and $E^\la_\epsilon\otimes E^{\be_n}_\delta\cong E^\la_{-\epsilon}\otimes E^{\be_n}_{-\delta}\cong E^\nu$, while $E^\la_{-\epsilon}\otimes E^{\be_n}_\delta\not\cong E^\nu\not\cong E^\la_{\epsilon}\otimes E^{\be_n}_{-\delta}$.

\item
$[D^\la\otimes D^{\be_n}:D^\nu]=1$ with $\nu\in\Parinv_2(n)$, $\{E^\la_\epsilon\otimes E^{\be_n}_\delta,E^\la_{-\epsilon}\otimes E^{\be_n}_{-\delta}\}=\{E^\nu_+,E^\nu_-\}$, while $E^\la_{-\epsilon}\otimes E^{\be_n}_\delta,E^\la_{\epsilon}\otimes E^{\be_n}_{-\delta}\not\in\{E^\nu_+,E^\nu_-\}$.
\end{enumerate}

If $[D^\la\otimes D^{\be_n}:D^\nu]=2^i$ then, from Lemma \ref{L13}, $h(\la)\leq 2 h(\nu)$ and that $h(\nu)\leq 4i+2$ if $n$ is odd or $h(\nu)\leq 4i+4$  if $n$ is even (note that we always have $D^\nu\subseteq D^\la\otimes D^{\be_n}$, since $E^\nu_{(\pm)}\cong E^\la_\eps\otimes E^{\be_n}_\de\subseteq(D^\la\otimes D^{\be_n})\da_{A_n}$). In case (iv) if $n$ is odd then $h(\nu)\leq 2$ and so $\nu=\be_n$ by Lemma \ref{split2}, contradicting $E^\la_\pm$ not being 1-dimensional.

This proves the theorem, up to the bound on the number of normal nodes of $\la$. Notice that  if $n$ is odd then $M_1\cong D_0\oplus D_1$, while if $n$ is even then $M_1\cong D_0|D_1|D_0$ by Lemma \ref{L12e}. If $n$ is odd then $D_1\subseteq\End_F(D^{\be_n})$ since in this case $\be_n$ is not a JS-partition. If $n$ is even then $n\equiv 0\Md 4$ by Lemma \ref{split2} and so $D_1\subseteq\End_F(D^{\be_n})$ from Lemma \ref{Lemma7.1}. If $\la$ has at least 4 normal nodes if $n$ is odd or at least 5 normal nodes if $n$ is even then $D_1^{\oplus 3}\subseteq\End_F(D^\la)$. It then follows that there exist $\epsilon'',\delta''\in\{\pm\}$ such that
\begin{align*}
D_1&\subseteq\Hom_F(E^{\be_n}_\delta,E^{\be_n}_{\delta''}),\\
D_1^{\oplus 2}&\subseteq\Hom_F(E^\la_{\epsilon''},E^\la_\epsilon),
\end{align*}
and so $E^\la_\eps\otimes E^{\be_n}_\de$ is not irreducible by \cite[Lemma 3.4]{bk2}.
\end{proof}

\section{Acknowledgements}

The author thank Alexander Kleshchev for some comments and pointing out some references. The author also thanks the referee for comments.

The author was supported by the DFG grant MO 3377/1-1.

\end{document}